\begin{document}


\theoremstyle{plain}
\newtheorem{thm}{\bf Theorem}[section]
\newtheorem{lem}[thm]{\bf Lemma}
\newtheorem{cor}[thm]{\bf Corollary}
\newtheorem{prop}[thm]{\bf Proposition}
\newtheorem{notation}[thm]{\bf Notation}
\theoremstyle{remark}
\newtheorem{Case}{\bf Case}
\newtheorem{rem}[thm]{\bf Remark}
\newtheorem{claim}[thm]{\bf Claim}

\theoremstyle{definition}
\newtheorem{Def}{\bf Definition}
\newtheorem*{pf}{\bf Proof}
\newtheorem{Conj}{\bf Conjecture}

\newcommand{\nc}{\newcommand}
\newcommand{\rc}{\renewcommand}

\newcommand{\ca}{{\mathcal A}}
\newcommand{\BB}{{\mathcal B}} 
\newcommand{\CC}{{\mathcal C}}
\newcommand{\DD}{{\mathcal D}}
\newcommand{\EE}{{\mathcal E}}
\newcommand{\FF}{{\mathcal F}}
\newcommand{\GG}{{\mathcal G}}
\newcommand{\HH}{{\mathcal H}}
\newcommand{\II}{{\mathcal I}}
\newcommand{\JJ}{{\mathcal J}}
\newcommand{\KK}{{\mathcal K}}
\newcommand{\LL}{{\mathcal L}}
\newcommand{\MM}{{\mathcal M}}
\newcommand{\NN}{{\mathcal N}}
\newcommand{\OO}{{\mathcal O}}
\newcommand{\PP}{{\mathcal P}}
\newcommand{\QQ}{{\mathcal Q}}
\newcommand{\RR}{{\mathcal R}}
\newcommand{\TT}{{\mathcal T}}
\newcommand{\UU}{{\mathcal U}}
\newcommand{\VV}{{\mathcal V}}
\newcommand{\WW}{{\mathcal W}}
\newcommand{\ZZ}{{\mathcal Z}}
\newcommand{\XX}{{\mathcal X}}
\newcommand{\YY}{{\mathcal Y}}
\nc{\bba}{{\mathbb A}}
\nc{\bbb}{{\mathbb B}}
\nc{\bbc}{{\mathbb C}}
\nc{\bbd}{{\mathbb D}}
\nc{\bbe}{{\mathbb E}}
\nc{\bbf}{{\mathbb F}}
\nc{\bbg}{{\mathbb G}}
\nc{\bbh}{{\mathbb H}}
\nc{\bbi}{{\mathbb I}}
\nc{\bbj}{{\mathbb J}}
\nc{\bbk}{{\mathbb K}}
\nc{\bbl}{{\mathbb L}}
\nc{\bbm}{{\mathbb M}}
\newcommand{\N}{{\mathbb N}}
\nc{\bbo}{{\mathbb O}}
\nc{\bbp}{{\mathbb P}}
\nc{\bbq}{{\mathbb Q}}
\nc{\bbr}{{\mathbb R}}
\nc{\bbs}{{\mathbb S}}
\newcommand{\bb}{{\mathbb T}}
\nc{\bbu}{{\mathbb U}}
\nc{\bbv}{{\mathbb V}}
\nc{\bbw}{{\mathbb W}}
\nc{\bbx}{{\mathbb X}}
\nc{\bby}{{\mathbb Y}}
\nc{\bbz}{{\mathbb Z}}
\nc{\fA}{{\mathfrak A}}
\nc{\fB}{{\mathfrak B}}
\nc{\fC}{{\mathfrak C}}
\nc{\fD}{{\mathfrak D}}
\nc{\fE}{{\mathfrak E}}
\nc{\fF}{{\mathfrak F}}
\nc{\fG}{{\mathfrak G}}
\nc{\fH}{{\mathfrak H}}
\nc{\fI}{{\mathfrak I}}
\nc{\fJ}{{\mathfrak J}}
\nc{\fK}{{\mathfrak K}}
\nc{\fL}{{\mathfrak L}}
\nc{\fM}{{\mathfrak M}}
\nc{\fN}{{\mathfrak N}}
\nc{\fO}{{\mathfrak O}}
\nc{\fP}{{\mathfrak P}}
\nc{\fQ}{{\mathfrak Q}}
\nc{\fR}{{\mathfrak R}}
\nc{\fS}{{\mathfrak S}}
\nc{\fT}{{\mathfrak T}}
\nc{\fU}{{\mathfrak U}}
\nc{\fV}{{\mathfrak V}}
\nc{\fW}{{\mathfrak W}}
\nc{\fZ}{{\mathfrak Z}}
\nc{\fX}{{\mathfrak X}}
\nc{\fY}{{\mathfrak Y}}
\nc{\fa}{{\mathfrak a}}
\nc{\fb}{{\mathfrak b}}
\nc{\fc}{{\mathfrak c}}
\nc{\fd}{{\mathfrak d}}
\nc{\fe}{{\mathfrak e}}
\nc{\ff}{{\mathfrak f}}
\nc{\fh}{{\mathfrak h}}
\nc{\fj}{{\mathfrak j}}
\nc{\fk}{{\mathfrak k}}
\nc{\fl}{{\mathfrak{l}}}
\nc{\fm}{{\mathfrak m}}
\nc{\fn}{{\mathfrak n}}
\nc{\fo}{{\mathfrak o}}
\nc{\fp}{{\mathfrak p}}
\nc{\fq}{{\mathfrak q}}
\nc{\fr}{{\mathfrak r}}
\nc{\fs}{{\mathfrak s}}
\nc{\ft}{{\mathfrak t}}
\nc{\fu}{{\mathfrak u}}
\nc{\fv}{{\mathfrak v}}
\nc{\fw}{{\mathfrak w}}
\nc{\fz}{{\mathfrak z}}
\nc{\fx}{{\mathfrak x}}
\nc{\fy}{{\mathfrak y}}

\nc{\al}{{\alpha }}
\nc{\be}{{\beta }}
\nc{\ga}{{\gamma }}
\nc{\de}{{\delta }}
\nc{\ep}{{\varepsilon }}
\nc{\vap}{{\tepsilon }}

\nc{\ze}{{\zeta }}
\nc{\et}{{\eta }}
\nc{\vth}{{\vartheta }}

\nc{\io}{{\iota }}
\nc{\ka}{{\kappa }}
\nc{\la}{{\lambda }}
\nc{\vpi}{{     \varpi          }}
\nc{\vrho}{{    \varrho         }}
\nc{\si}{{      \sigma          }}
\nc{\ups}{{     \upsilon        }}
\nc{\vphi}{{    \varphi         }}
\nc{\om}{{      \omega          }}

\nc{\Ga}{{\Gamma }}
\nc{\De}{{\Delta }}
\nc{\nab}{{\nabla}}
\nc{\Th}{{\Theta }}
\nc{\La}{{\Lambda }}
\nc{\Si}{{\Sigma }}
\nc{\Ups}{{\Upsilon }}
\nc{\Om}{{\Omega }}

\newcommand{\inv}{^{-1}}
\newcommand{\noi}{\noindent}
\newcommand{\lra}{{\longrightarrow}}
\nc{\st}{{\; | \; }}
\nc{\trm}{\textreferencemark}
\nc{\imo}{{i-1}}
\nc{\imob}{ _\imo}
\nc{\imot}{ ^\imo }
\nc{\ben}{	\begin{enumerate}\item		}
\nc{\een}{	\end{enumerate}			}
\nc{\bi}{    \begin{itemize}\item    }
\nc{\ei}{    \end{itemize}   }
\rc{\i}{  \item }
\nc{\notat}{\noi {\bf Notation. }}

\nc{\ps }{Pre_{sSet}}
\nc{\pa}{Pre _{sAb}}
\nc{\tps}{\tilde Pre_{sSet}}
\nc{\tpa}{\tilde Pre_{sAb}}
\nc{\csha}{\check {Sh}_{sAb}}
\nc{\cshs}{\check {Sh}_{sSet}}
\nc{\shs}{Sh_{sSet}}
\nc{\sha}{Sh_{sAb}}
\nc{\psc}{{\check {Sh}_{sAb}}}
\nc{\psl}{\ps ^{loc}}
\nc{\cna}{\check{N\ca}}
\nc{\h}{hom(\YY , \cna)}
\nc{\cnao}{\check {N \ca [1] ^{a,b}}}

\nc{\tih}{{\tilde \HH}}
\nc{\vb}{\text{vector bundle}}
\nc{\vbm}{{\VV ect \BB und _M}}
\nc{\vs}{{Vect_{fd}}}
\nc{\cxn}{\text{connection}}
\nc{\bla}{{\mathfrak g ^* \rtimes \mathfrak g}}
\nc{\gc}{\text{generalized complex }}
\nc{\gcstr}{\text{generalized complex structure }}
\nc{\gcstrs}{\text{generalized complex structures }}
\nc{\spic}{\text{Picard $\omega$-category }}
\nc{\spics}{ { Pic_{\om} }}
\nc{\cab}{{Ch^{+}( A b)}}
\nc{\qis}{\text{quasi-isomorphism }}
\nc{\uu}{\underline u}
\nc{\vv}{\underline v}
\nc{\ww}{\underline w}
\nc{\td}{\tilde \De}
\nc{\lla}{{\longleftarrow}}

\begin{title}{Abelian groups in $\omega$-categories}
\end{title}
\author{Brett Milburn\footnote{milburn@math.utexas.edu}}
\date{}
\maketitle

\begin{abstract}  We study abelian group objects in $\om$-categories and discuss the well-known Dold-Kan correspondence from the perspective of $\om$-categories as a model for strict $\infty$-categories.  The first part of the paper is intended to compile results from the existing literature and to fill some gaps therein.  We go on to consider a parameterized Dold-Kan correspondence, i.e. a Dold-Kan correspondence for presheaves of $\om$-categories.  The main result is to describe the descent or sheaf condition in terms of a glueing condition that is familiar for 1 and 2-stacks.  
\end{abstract}

\tableofcontents 

\section{Introduction}

Our goal is to investigate abelian group objects in $\infty$-categories.  There are many notions of $\infty$-category.  Lurie \cite{lur} and Leinster \cite{lei} provide good--though not exhaustive--surveys of various definitions.  Other models such as complete Segal spaces \cite{lur2} and crossed complexes \cite{brh} also appear in the literature.  The approach taken in this paper is to consider $\om$-categories as strict $\infty$-categories.  \\

Roughly, an $\om$-category coincides with the intuitive description of an $\infty$-category.  It has objects and n-morphisms for $n\geq 1$.  One can compose n-morphisms and take the k-th source or target of an n-morphism to get a k-morphism.   In contrast, quasicategories (simplicial sets which admit fillers for inner horns) are also a model for $\infty$-categories.  While simplicial sets are useful from the perspective of homotopy theory, they are not endowed with all of the desired structure that one would like for an $\infty$-category.  Namely, there is no natural choice for identity morphisms or composition.  From this perspective, it is useful to consider $\om$-categories, which have the advantage of not having the same deficits.  Furthermore, $\om$-categories enjoy many nice properties.  For example, n-categories are easily defined.  However,  $\om$-categories are strict $\infty$-categories in the sense that composition is associative on the nose, and they do not contain the coherence data that one might desire for weak $\infty$-categories (although weak $\om$-categories have also been studied \cite{str4, lei}).  Presently our interest is in abelian group objects in $\infty$-categories.  We will see that abelian group objects in simplicial sets are in fact strict $\infty$-categories, and we may therefore interpret them as $\om$-categories.   \\

Sections 1-4 are primarily expository.  We begin by defining $\om$-categories in \S \ref{omcatssection} and introducing the notion of equivalence of $\om$-categories.  In \S \ref{piccatssection}, we formulate and prove the well-known statement that abelian groups objects in $\infty$-categories are the same as chain complexes of abelian groups in non-negative degrees.  Furthermore, we show that this equivalence induces a derived equivalence.  Two generalizations are pursued.  Firstly, we introduce the notion of an $I$-category for any partially ordered set $I$.  Of particular interest are the $\bbz $-categories.  Abelian group objects in $\bbz$-categories are equivalent to chain complexes of abelian groups.  The analogue of the correspondence between $\om$-categories and simplicial sets is now between $\bbz$-categories and combinatorial spectra (cf. \cite{jarch}), though this is not made precise here.  Similarly, the relationship between chain complexes and $\bbz$-categories is also analogous to the Quillen equivalence between chain complexes of abelian groups and $H\bbz$-module spectra described by Schwede and Shipley \cite{scs,ss2}.  Secondly, we observe that since an $\om$-category is determined by a set equipped with some structure maps, we may think of an $\om$-category as an $\om$-category in $Sets$.  This can be extended to define an $\om$-category in an arbitrary category with fibered products.  We generalize the equivalence between Chain complexes $Ch^+(Ab)$ of abelian groups in non-negative degree and $Pic_\om$, abelian group objects in $\om$-categories.  For any abelian category $\CC$ with countable direct sums, we show that $Ch^+(\CC)$ is equivalent to $\CC _\om$, $\om$-categories in $\CC$.  \\

Simplicial abelian groups, denoted $sAb$, can also be thought of as abelian group objects in $\infty$-categories.  The Dold-Kan correspondence \cite{dk} states that there is an equivalence $Ch^+(Ab) \simeq sAb$.  In section \S \ref{dksection} we cite a recent result of Brown, Higgins, and Sivera \cite{bro} that relates the Dold-Kan correspondence to the equivalence $Ch^+ (Ab) \simeq Pic _\om$.  To put it succinctly, there is a nerve functor, due to Street \cite{str}, $N: \om Cat \lra sSet$ from $\om$-categories to simplicial sets, which when restricted to $Pic _\om$ gives $N : Pic _\om \lra sAb$.  The Dold-Kan equivalence $Ch^+(Ab) \lra sAb$ is, up to isomorphism, the composition of $Ch^+(Ab) \lra Pic _\om $ with the nerve functor.  Each of $Ch^+(Ab)$, $Pic _\om $, and $sAb$ are naturally equivalent not just as categories but also as model categories.  \\

The core of the paper is in \S \ref{ommdescentsection}, where we consider descent for presheaves of $\om$-categories.  Descent for $\om$-categories has been considered by Street in \cite{str2,str4}, and Verity showed \cite{ver2} that Street's definition of descent is equivalent to the standard notion of descent for presheaves of simplicial sets, where the two are related by the nerve functor.  The Dold-Kan correspondence extends to presheaves with values in $Ch^+(Ab)$, $Pic _\om $, or $sAb$.  We consider several model structures on the presheaf categories, in particular one where the fibrant objects are precisely the sheaves (i.e. those satisfying descent with respect to all hypercovers) and one where the fibrant objects are those satisfying \v Cech descent (i.e. descent with respect to open covers).  We show that the homotopy category of simplicial sheaves of abelian groups on a space $X$ satisfying descent is equivalent to the derived category in non-negative degrees $D^{\geq 0} (\ca b)$ of sheaves of abelian groups on $X$.  \\ 

The key result is Theorem \ref{equivdescent} which states that a presheaf of simplicial abelian groups on site $\mathcal S$ satisfies \v Cech descent if and only if it satisfies a more concrete glueing condition, which can be explained roughly as being able to glue objects and n-morphism from local sections.  In more detail, a presheaf $\ca$ of simplicial abelian groups, $\ca $ satisfies \v Cech descent if and only if for every $X\in \mathcal S$ and open cover $\UU = \{ U_i\}_{i \in I}$ of $X$, 
\ben given local objects $x_i \in \ca (U_i)_0$ which are glued together by 1-morphisms and higher degree morphisms in a coherent way, there exists a global object $x\in \ca (X)$, unique up to isomorphism, which glues the $x_i$, and 
\i for any n-morphisms $x,y \in \ca (X)_n$, the presheaf $Hom_\ca(x,y)$ whose objects are the $(n+1)$-morphisms from $x$ to $y$ satisfies the above glueing condition for objects.  
\een 

This can be interpreted as providing a computational tool for determining whether a presheaf satisfies \v Cech descent or a way of constructing a sheafification of a given presheaf.  We hope that this has applications in the study of n-gerbes.  Let $G$ be an abelian group and $X$ a topological space.  With the appropriate notion of torsor, one may view an n-gerbe for $G$ on $X$ as a torsor for a presheaf of simplicial abelian groups, namely it is generated by $G$ in degree $n$ and $0$ elsewhere.  Given the computational descent condition, it should become apparent that isomorphism classes of $n$-gerbes for $G$ are given by $\check H^n (X,G)$.  In this way, this paper is a step towards viewing sheaves of $\infty$-categories as geometric realizations of cohomology classes.  This point of view is further explained in the final section, where we draw on the insights of Fiorenza, Sati, Schreiber, and Stasheff \cite{fss,sss,sch2} to describe torsors for sheaves valued in $\infty$-groups.

\section{$\om$-categories}\label{omcatssection}

We begin by defining $\om$-categories, which are a model for strict $\infty$-categories. 

\begin{Def}~\label{16nov1} The data for an \emph{$\om$-category} is a set ${A}$ with maps $s_i , t_i : {A} \lra {A} $ for $i \in \N$ and maps $*_i : {A} \times _{A} {A} \lra {A}$, where  ${A} \times _{A} {A} \lra {A}$ is the fibered product, given maps $s_i : {A} \lra {A} $ and $t_i : {A} \lra {A}$.  Let $\rho _i ,\si _i \in \{s_i , t_i \}$ denote any source or target map. \\

\noi $({A} , s_i , t_i , *_i )_{i \in \N}$ is said to be an $\om$-category if the following 3 conditions are satisfied:
\begin{enumerate} 
\item For all $i \in \N$, $({A} , s_i , t_i , *_i)$ is a category.  In other words, 
\begin{enumerate}
\item $\rho _i \sigma _i = \sigma _i$ 
\item $a *_i s_i(a) = t_i (a) *_i a = a$
\item $(a*_i b)*_i c = a *_i (b *_i c)$
\item $s_i (a*_ib) = s_i b$, and $t_i (a*_i b) = t_i a$
\end{enumerate}
\item For all $i < j$,$( A _i
, A_j)$ is a strict 2-category.  That is,
\begin{enumerate}
\item $\rho _j \sigma _i = \sigma _i$
\item $\sigma _i \rho _j = \sigma _i$
\item $\rho _j (a *_i b) = \rho _j a *_i \rho _j b$
\item $(a*_jb)*_i(\alpha *_j \beta) = (a*_i\alpha) *_j (b*_i \beta)$ whenever
both sides are defined.  
\end{enumerate}
\item For all $a \in {A}$, there is some $i \in \N$ such that $s_i a = t_i a = a$. 
\end{enumerate}

\end{Def}

\begin{Def}\label{homkdef} 
\ben For an $\om$-category ${A} $ and $i \in \N$, \emph{i-objects} in ${A}$ are ${A} _i : = s_i {A}$, and \emph{strict i-objects} are ${A} _i \setminus {A} _{i-1}$.  In conforming to convention, we also refer to i-objects as \emph{i-morphisms}.  
\i Let $\om$Cat denote the category whose objects are $\om$-categories and morphisms are functors between $\om$-categories, meaning maps of sets which preserve all structures $s_i$, $t_i$, $*_i$ for all $i$.  
\i We write $Ob : \om Cat \lra  S ets $ for the forgetful functor which sends an $\om$-category to its underlying set. 
\i For ${A} \in \om Cat$ and $a,b \in {A} _i$, let $Hom ^{i} _{A} (a,b) := \{x \in {A} \st  s_i x = a, \; \textrm{and}\; t_i x = b \}$.  
\een
\end{Def}

\begin{rem}\label{productslemma}  $\om$Cat is a symmetric monoidal category. For ${A} $, ${B} \in \infty$-cat, The product ${A} \times {B}$ is just the cartesian product as sets, and source, target, and composition maps are defined componentwise.  
\end{rem}

\begin{Def} We say that $A \in \om Cat$ is a \emph{groupoid} if every n-morphism $n\geq 1$ is an isomorphism, meaning that if $x \in A_n$, there exists for every $j<n$ a $y \in A$ such that $x*_{j}y = t_{j}x$ and $y*_{j} x = s_{j}x$.  
\end{Def}

\begin{rem} It is a well known result of Brown and Higgins \cite{brh2} that there is an equivalence between $\om$-groupoids and crossed complexes.
\end{rem}

\subsection{ Equivalences of  $\om$-categories}

\begin{Def}\label{1octa} 

\begin{enumerate}
\item For any $\om$-category $ A$, two i-objects $a , a' \in {A} _i$ are said to be isomorphic if there exists $u \in Hom ^{i+1}(a , a')$ and $v \in Hom^{i+1}(a' ,a)$ such that $u*_i v = a'$ and $v*_i u = a$.  (In the language of Street \cite{str}, $a$ and $a'$ are 1-equivalent.)

\item Let $F : {A} \lra {B}$ be a functor of $ $ $\om$-categories.  We say that $F$ is an equivalence of $ $ $\om$-categories if
\begin{enumerate}
\item\label{1octa2i} Any 0-object, $b \in {B}$ is isomorphic to $Fa$ for some 0-object $a $ in ${A}$,  
\item\label{1octa2ii}  for any $i \geq 0$ and $a , a ^\prime  \in A_i$ such that $s _{i-1} a = s _{i-1} a^\prime$, $t_{i-1 }a = t_{i-1} a ^\prime$ and $\psi \in Hom ^{i} _{B} (Fa , Fa')$, there exists $\phi \in Hom ^{i} _{A} (a , a')$ and an isomorphism $\be \in Hom ^{i+1} _{B}(F\phi , \psi)$, and 
\item\label{1octa2iii} for i-objects $a,a^\prime \in A$, if $Fa$ is isomorphic to $Fa'$ in ${B}$, then $a$ is isomorphic to $a'$ in ${A}$
\end{enumerate}

\noi Conditions \ref{1octa2i}, \ref{1octa2ii}, and \ref{1octa2iii} are the higher-categorical analogues of being essentially surjective, full, and faithful respectively.   The meaning of this definition is roughly that $F{A} _i$ should be the same as ${B} _i$, up to (i+1)-isomorphism.  In the notation of definition \ref{16nov1}, the first two conditions can be restated as:
\begin{enumerate}
\item For all $y \in {B} _0$, there exists $x \in {A} _0$ and isomorphism $f \in {B} _1$ such that $s_0 f = Fx$ and $t_0f =  y$.

\item If $a , a' \in {A} _i$ such that $s_{i-1}a  = s_{i-1}a ^\prime$, $t_{i-1} a = t_{i-1} a^\prime$, and $\psi \in {B} _{i+1}$ such that $s_i \psi = Fa$ and $t_i \psi = Fa'$, then there exists $\phi \in {A} _{i+1}$ and an isomorphism $\be \in {B} _{i+2}$ such that $s_i \phi = a $, $t_i \phi = a'$, $s_{i+1}\be = F \phi$, and $t_{i+1} \be = \psi$.

\end{enumerate}
\end{enumerate}
\end{Def}

\begin{rem} Note that when ${A}$ and ${B}$ are groupoids, then $F : {A} \lra {B}$ automatically satisfies condition \ref{1octa2iii} of Definition \ref{1octa} if it satisfies \ref{1octa2i} and \ref{1octa2ii}.  Thus, the third condition is superfluous when we are dealing with groupoids.  Furthermore, when $A$ and $B$ are groupoids, condition \ref{1octa2i} can be viewed as a special case of condition \ref{1octa2ii} if we add a point $\{*\} = A_{-1} = B_{-1}$ in degree $-1$.  
\end{rem}

Equivalences of $\om$-categories are in fact weak equivalences in a cofibrantly generated model structure on $\om Cat$ \cite{lmw}.  Ara and M\'etayer showed in \cite{arm} that this model structures restricts to one on $\om$-groupoids in a way that is compatible with Brown and Golansi\'nski's model structure on crossed complexes \cite{brg}. 

\section{Picard $\om$-categories}\label{piccatssection}
 
 \begin{Def} A \emph{\spic } is an abelian group object in $\om$Cat. We let $Pic_\om$ denote the category of Picard $\om$-categories, where $Hom_{Pic_\om}({A} , {B}) = \{ F \in Hom_{\om cat}({A}, {B}) \st F\circ + = + \circ (F\times F) \}$.  
\end{Def}

\begin{rem}For a $ $ Picard $\om$-category ${A}$, the fact that $+ : {A} \times {A} \lra {A}$ is a functor implies that each ${A} _i$ is a subgroup.  Also, we observe that if $A $ is a Picard $\om$-category, then $Ob(A)$ is an abelian group.  
\end{rem}

\begin{prop}\label{groupobjects} 
\begin{enumerate} 
\item An $\omega$-category ${A}$ such that $Ob({A})$ is endowed with the structure of an abelian group is a Picard $\om$-category if and only if
\begin{enumerate}
 \item $+ : {A} \times {A} \lra {A}$ is a functor of $\om$-categories \\
 and 
 \item $x * _i y = x +y - s_i (x) $ whenever the left hand side is defined.
  \end{enumerate}
\item $Pic _\om$ is an abelian category. 
\end{enumerate}   
\end{prop}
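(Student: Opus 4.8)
The plan is to handle the two parts separately, using functoriality of $+$ to drive part (1) and the explicit composition formula (b) to drive part (2).

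For the forward direction of (1), if $A$ is a Picard $\om$-category then $+ : A \times A \lra A$ is by definition a functor, which is exactly (a); the work is in deriving (b). Since source, target and composition on $A \times A$ are componentwise (Remark \ref{productslemma}), functoriality of $+$ is precisely the interchange law
\[
(a *_i c) + (b *_i d) = (a + b) *_i (c + d)
\]
valid whenever $a *_i c$ and $b *_i d$ are defined, and it also forces $s_i,t_i$ to be group homomorphisms with $s_i 0 = t_i 0 = 0$. To obtain (b), suppose $x *_i y$ is defined and put $m = s_i(x) = t_i(y)$; I would apply interchange to the pairs $(x,0)$ and $(m,\,y-m)$. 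Since $x *_i m = x$ and $0 *_i (y-m) = y-m$ are identity laws (both composites are legal because $t_i(y-m) = m - m = 0 = s_i(0)$), this collapses to $x *_i y = x + (y-m) = x + y - s_i(x)$. The only real choice is this decomposition, engineered so that both components reduce to identities; the rest is bookkeeping.

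For the converse I would take $0$ and negation from the group structure on $Ob(A)$ as the remaining structure maps and verify they are functors. Functoriality of $+$ already makes $s_i,t_i$ homomorphisms fixing $0$, which settles $0 : \{*\} \lra A$ and the source/target compatibility of $-$; substituting (b) then gives $-(x *_i y) = (-x) *_i (-y)$, so $-$ preserves $*_i$. The abelian-group-object identities are equalities of functors, and a functor of $\om$-categories is determined by its underlying map of sets, so they hold because they already hold on the abelian group $Ob(A)$; hence $A$ is a Picard $\om$-category. (Note that (b) is in fact forced by (a) together with the group structure, via the computation above, so it is stated mainly to make the formula available below.)

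For (2) I would first establish that $Pic_\om$ is additive: pointwise addition makes each $Hom_{Pic_\om}(A,B)$ an abelian group, where the interchange law in $B$ is exactly what shows $F+G$ preserves $*_i$; the one-object Picard $\om$-category is a zero object, and the product of Remark \ref{productslemma} is a biproduct. Kernels are immediate, $\ker F = \{a : F(a)=0\}$ being closed under all structure maps. The main obstacle is the cokernel, and this is where (b) is indispensable: I would set $\mathrm{coker}\,F = Ob(B)/F(Ob(A))$, observe that $s_i,t_i$ descend because they are homomorphisms preserving $F(A)$, and that $*_i$ descends because (b) expresses it through $+,-,s_i$, which descend; the $\om$-category axioms, being equational, pass to the quotient. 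Balancedness is the last point: the canonical map $A/\ker F \lra F(A)$ from coimage to image is a bijective functor, and a bijective functor of $\om$-categories is an isomorphism since its set-theoretic inverse automatically preserves all structure. Together these show $Pic_\om$ is abelian. As an alternative to the hands-on cokernel, one could instead use (1) to identify a Picard $\om$-category with an abelian group carrying a compatible family of idempotent endomorphisms $\{s_i,t_i\}$ subject to the relations among the source/target maps in Definition \ref{16nov1} (with $*_i$ reconstructed by (b)), realizing $Pic_\om$ as a subcategory of modules over $\bbz\langle s_i,t_i\rangle$ (modulo those relations) closed under kernels, cokernels and finite biproducts, which is abelian for formal reasons.
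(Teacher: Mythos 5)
Your proposal is correct and takes essentially the same route as the paper: the identical interchange decomposition $(x+0)*_i\bigl(s_i x + (y - t_i y)\bigr)$ for deriving (b), the same faithfulness-of-$Ob$ argument for the converse, and the same group-level constructions for part (2) (pointwise addition on Hom groups, kernels and cokernels taken on underlying abelian groups, with $*_i$ on the cokernel defined by formula (b)). The places where you go beyond the paper are worth keeping: the paper's proof of (2) never verifies that coimage agrees with image, and your remark that a bijective functor of $\om$-categories is automatically an isomorphism supplies exactly that missing step; your parenthetical observation that (b) is already forced by (a) together with the group structure is also correct.
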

 
\begin{proof}
\begin{enumerate}
\item First suppose that ${A} \in Pic_\om$.  Then $+$ is a functor.  Furthermore, it is clear that $Ob(A)$ must be an abelian group object in $Set$.  We only need to verify that $x * _i y = x +y - s_i (x) $ whenever the left-hand side is defined.  Since $+$ is a functor, $(x+y)*_n (x^\prime + y^\prime) = (x*_n x^\prime )+(y*_ny^\prime)$ if the right side is defined.  Suppose that $s_ix = t_iy$. Then $x*_iy = (x+0)*_i (s_i x +(y-t_iy)) = x*_is_ix + 0*_i (y - t_i y) = x + (t_i y - t_iy )*_i (y-t_iy) = x + t_iy *_i y +(-t_iy*_i-t_iy) = x + y +-t_iy = x+y-s_ix $. \\ 

Now suppose that ${A} \in \om Cat$ such that $Ob(A)$ is an abelian group and conditions 1a and 1b are satisfied.  We wish to show that $A \in Pic _\om$.  Since $Ob$ is faithful and $Ob A$ is an abelian group object in $Set$, $A$ is an abelian group object in $\om Cat $ provided that addition $+$ and inverse $\io : A \lra A$ are functors of $\om$-categories.  By condition 1a, $+$ is a functor, so it only remains to see that $\io $ is a functor. 
 Since $0= \rho_n 0 = \rho _n (x+ x\inv)= \rho _n x + \rho _n x \inv$, $\rho _n x\inv = (\rho _n x )\inv$ for $\rho _n \in \{s_n, t_n \}$ so that $\io$
 respects source and target maps. 
 Also, since $(x*_n y)\inv = (x+y  - s_nx)\inv = x\inv + y \inv - s_nx\inv = x\inv *_n y\inv$, $\io$
 respects compositions.  We conclude that ${A}$ is a group object in $\om$Cat.  

\item For ${A} $, ${B} \in Pic_\om$, $Hom({A} , {B})$ is an abelian group.  The sum $\phi + \psi$ of two functors preserves all source and target maps and also preserves composition because $+$ is a functor: $(\phi + \psi)x*_ny = \phi x *_n \phi y + \psi x *_n \psi y = (\phi x + \psi x) *_n (\phi y + \psi y) = (\phi + \psi )x *_n (\phi + \psi )y$.  Direct sums, kernels, and cokernels are gotten by taking each on the level of abelian groups, e.g. $Ob (Ker \phi) = Ker Ob(\phi)$.  
It is clear how to define source, targets and compositions on direct sums and kernels.  For cokernels, since functors respect source and target maps, there is no difficulty in defining source and target maps on a cokernel.  Composition in a cokernel is defined by letting $x*_n y = x+y-s_nx$.    
\end{enumerate}
\end{proof}

\begin{rem} \label{forgetfuncts}The forgetful functors $\FF _{Ab}$, $\FF _{Set}= Ob$, $\FF _\om$ taking values in ${A} b$, $Set$, and $\om$Cat respectively are all faithful functors.  It follows from Proposition \ref{groupobjects}b that if $A$, $B \in Pic _\om$ and $g \in Hom_{Ab}(\FF_{Ab} A , \FF_{Ab} B)$ respects all source and target maps, then $g = \FF_{Ab} f$ for some $f \in Hom_{Pic _\om} (A,B)$.   
\end{rem}

\subsection{Picard $\om$-Categories and Chain Complexes}

\noi {\bf Notation.} Let $Ch^+ ({A}b)$ denote the category of complexes of abelian groups in non-negative degrees. \\

Let $Pic$ denote the category of Picard categories in the sense of Deligne \cite{del}.  That is, a Picard category $\CC$ is a quadruple $(\CC , + , \sigma, \tau)$, where $+: \CC \times \CC \lra \CC$ is a functor such that for all objects $x \in \CC$, $x+: \CC \lra \CC$ is an equivalence, and addition is commutative and associative up to isomorphisms $\tau$ and $\sigma$.  Deligne made the observation that one can assign to a complex $A^1 \lra A^0$ of abelian groups a Picard category $PA$, whose objects are $A_0$ and $Hom_{PA}(a,b)= \{f \in A_1 \st df = b-a \}$.  He goes on to show that $P : Ch^{0,1}(Ab) \lra Pic$ is an equivalence and also induces an equivalence between $D^{0,1}(Ab)$ and $Pic$ modulo natural isomorphism. \\ 

Define $Pic_\om ^1 = \{ {A} \in Pic_\om \st {A}  = {A} _1 \}$.  The relationship between $Pic _\om ^1$ and $Pic$ is explained in Proposition \ref{deligne}, the proof of which is found in section \ref{delignedetails} of the appendix.  Furthermore, when restricted to short complexes in degrees $1$ and $0$ only, Theorem \ref{26oct1} is a strictification theorem which states that $Pic _\om ^1$ and $Pic$ are equivalent.  

\begin{prop}\label{deligne} $Pic ^1 _{strict} $ consists of all small Picard categories in $Pic$ such that $+$ is strictly associative and commutative (i.e. $\tau$ and $\si$ are identities) and for each $x \in ob(\CC)$, $x+ : \CC \lra \CC$ is an isomorphism, not just an equivalence.
\end{prop}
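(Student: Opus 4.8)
\section*{Proof proposal for Proposition \ref{deligne}}

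The plan is to make precise a construction $A \mapsto \CC_A$ sending each $A \in Pic_\om^1$ to a Picard category in Deligne's sense, to show that $\CC_A$ is always strict (i.e.\ $\tau$ and $\si$ are identities) with each $x+$ an isomorphism, and conversely that every small Picard category with these two properties is isomorphic to some $\CC_A$. Since $A = A_1$, the only nontrivial data on $A$ are $s_0, t_0, *_0$ and the addition $+$, so I read off $\CC_A$ with objects $A_0$, morphisms $Hom^0_A(a,b)$, composition $*_0$, and symmetric monoidal product $+$. This is a symmetric monoidal groupoid, the groupoid property coming from Proposition \ref{groupobjects}(1b), which provides $b + a - f$ as a two-sided $*_0$-inverse of $f \in Hom^0_A(a,b)$.

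For the forward direction I would verify the two distinguishing properties. Strict associativity and commutativity hold because $+$ makes the underlying set an abelian group, so the objects $(x+y)+z$ and $x+(y+z)$, respectively $x+y$ and $y+x$, are literally equal and the constraints $\tau, \si$ are forced to be identities. For an object $x \in A_0$ the functor $x + (-)$ is simply addition of the group element $x$ on the set $A$, a bijection with inverse $-x + (-)$ that respects $s_0, t_0, *_0$; hence $x+$ is an isomorphism of categories, not merely an equivalence.

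The core is the converse. Given a small strict Picard category $\CC$ in which each $x+$ is an isomorphism, I set $A$ to be the set of morphisms of $\CC$, with $A_0$ its objects (viewed as identities), $s_0, t_0$ source and target, $*_0$ composition, $+$ the monoidal product, and $s_i = t_i = \mathrm{id}$ for $i \geq 1$, so that $A = A_1$. The three things to check are: (a) $A_0$ is an honest abelian group---this is exactly the point at which the isomorphism hypothesis is indispensable, since bijectivity of $x+$ on objects is what upgrades the invertibility-up-to-isomorphism of a general Picard category to strict inverses $x + (-x) = 0$; (b) all of $A$ is an abelian group under $+$, with associativity, commutativity and unit supplied by strictness, and with the additive inverse of $f \colon a \to b$ produced as the unique $g \in Hom^0_A(-a,-b)$ satisfying $f + g = 0$, the map $g \mapsto f + g$ being a bijection because it factors as $g \mapsto a + g$ (bijective by the isomorphism hypothesis) followed by post-composition with the $*_0$-isomorphism $f + (-b)$; and (c) the identity $x *_0 y = x + y - s_0 x$. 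For (c) I write $x = x *_0 b$ and $y = b *_0 y$ with $b = s_0 x = t_0 y$, and apply functoriality of $+$ twice to $(x+y)+(-b) = \bigl((x+b)*_0(b+y)\bigr) + \bigl((-b)*_0(-b)\bigr)$, which collapses to $x *_0 y$ after using the strict unit and $b + (-b) = 0$. With (a)--(c) established, Proposition \ref{groupobjects} shows $A \in Pic_\om$, hence $A \in Pic_\om^1$, and by construction $\CC_A \cong \CC$.

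I expect the converse, and specifically steps (b) and (c), to be the main obstacle. The delicate point is that two different notions of inverse are in play---the compositional inverse coming from the groupoid structure and the additive inverse coming from $+$---and the argument must keep them separate while showing, via the identity of Proposition \ref{groupobjects}(1b), that they are compatible. This compatibility is precisely what reconciles the strictness demanded by a Picard $\om$-category with the a priori weaker Picard axioms, and it is where all of the content of the isomorphism (rather than equivalence) hypothesis is spent.
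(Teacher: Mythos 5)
Your proposal is correct and takes essentially the same route as the paper: an easy forward verification, and a converse that makes $Hom_\CC$ into an abelian group and derives $x *_0 y = x + y - s_0 x$ from the interchange law together with bijectivity of the translation functors, finishing with Proposition \ref{groupobjects} --- this is precisely the content of the paper's Lemma \ref{13nov1}. The one imprecision is attributing the unit law $\mathrm{id}_0 + f = f$ to strictness: it is not among the hypotheses (the definition of $Pic$ has no unit constraint) and must instead be proved by the same cancellation device you use elsewhere --- since $\mathrm{id}_0 + \mathrm{id}_0 = \mathrm{id}_{0+0} = \mathrm{id}_0$, injectivity of $0 + (-)$ on hom-sets gives $\mathrm{id}_0 + f = f$ --- which is exactly how the paper handles it.
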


Deligne's correspondence extends to longer complexes.  The correspondence in Proposition \ref{functlemma1} was considered by Bourn, Steiner, Brown, Higgins \cite{bourn, ste, brh}, et al.   
  
\begin{prop}\label{functlemma1} A complex $A$ of abelian groups defines a Picard
  $\om$-category $ P(A) $.  This assignment ${P} :Ch^+( {A}b) \lra \spics $ is a functor.  
\end{prop}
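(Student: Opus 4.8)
The plan is to build $P(A)$ by hand and then invoke Proposition \ref{groupobjects} to recognize it as a Picard $\om$-category, so that the substantive work is reduced to checking the axioms of Definition \ref{16nov1}. Writing the complex as $\cdots \xrightarrow{d} A_2 \xrightarrow{d} A_1 \xrightarrow{d} A_0$, I would take the underlying set of $P(A)$ to be the abelian group $U = \bigoplus_{n \ge 0} A_n$, an element being a finitely supported sequence $a = (a_0, a_1, \ldots)$ whose degree is its top nonzero index. I define source and target by truncation, with the target twisted by the differential in top degree: $(s_i a)_k = a_k$ for $k \le i$ and $0$ otherwise, while $(t_i a)_k = a_k$ for $k < i$, $(t_i a)_i = a_i + d a_{i+1}$, and $(t_i a)_k = 0$ for $k > i$. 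Composition is then defined by the formula forced by Proposition \ref{groupobjects}(1b), namely $x *_i y = x + y - s_i x$ whenever $s_i x = t_i y$.

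Since $s_i$, $t_i$, $*_i$ are all assembled from the homomorphism $d$ and the group operations of $U$, they are group homomorphisms, and condition (3) of Definition \ref{16nov1} is immediate: an element of top degree $N$ has vanishing degree-$(N+1)$ part, so $s_N a = t_N a = a$. The bulk of the verification is axioms (1) and (2). The one-dimensional category axioms (units, associativity, $s_i(a *_i b) = s_i b$, $t_i(a *_i b) = t_i a$) follow by substituting $x *_i y = x + y - s_i x$ and using $s_i s_i = s_i$ together with commutativity of $U$. The genuinely structural points are the globular identities $\rho_j \sigma_i = \sigma_i$ and $\sigma_i \rho_j = \sigma_i$ for $i < j$; these rest on $s_i s_j = s_j s_i = s_i$ and, crucially, on $d^2 = 0$ — for example $t_0 t_1 a$ and $t_0 s_1 a$ agree in degree $0$ precisely because $d(a_1 + d a_2) = d a_1$. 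The interchange law 2(d) looks like the subtlest axiom, but after expanding both sides via the composition formula it collapses to an identity in $U$: both sides equal $a + b + \alpha + \beta - s_j a - s_j \alpha - s_i b$, so commutativity does all the work.

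Having checked that $P(A)$ is an $\om$-category, I note that $+ : U \times U \to U$ is automatically a functor, since $s_i, t_i$ are homomorphisms and $*_i$ is given by the formula; a one-line computation with the formula shows $+$ preserves each $*_i$. Thus $P(A)$ satisfies hypotheses (1a) and (1b) of Proposition \ref{groupobjects}(1), which immediately yields $P(A) \in \spics$.

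For functoriality, a chain map $\phi : A \to B$ induces $P(\phi) : P(A) \to P(B)$ applied componentwise, $(P(\phi) a)_k = \phi_k(a_k)$. This is a homomorphism commuting with every $s_i$ and $t_i$, where compatibility with $t_i$ is exactly the chain-map relation $\phi_i(a_i + d a_{i+1}) = \phi_i(a_i) + d\,\phi_{i+1}(a_{i+1})$; it then respects the composition formula, so by Remark \ref{forgetfuncts} it is a morphism in $\spics$, and $P(\mathrm{id}) = \mathrm{id}$, $P(\psi \phi) = P(\psi) P(\phi)$ are clear. I expect the main obstacle to be the globular bookkeeping of the second paragraph: choosing the $d$-twist in the target map so that the globular identities hold is the one place where the full complex structure (rather than mere abelian-group data) is used, and it is exactly $d^2 = 0$ that makes it consistent.
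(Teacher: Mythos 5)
Your proof is correct, but your model of $P(A)$ is not the one the paper builds in its proof of Proposition \ref{functlemma1} --- it is the direct-sum presentation that the paper only records afterwards, in \S\ref{17nov2}. The paper defines $P(A)$ as the set of sequences of pairs $((x_0^-,x_0^+),(x_1^-,x_1^+),\dots)$ with $x_i^\pm\in A^i$ subject to the coherence condition $dx_i^\alpha = x_{i-1}^+-x_{i-1}^-$; sources and targets act by truncation and duplication, composition is written out explicitly, and the one substantive check is well-definedness, i.e.\ that $x*_iy$ again satisfies the coherence condition --- in those coordinates the globular identities are essentially trivial. In your free model $\bigoplus_{n\ge0}A_n$ there is no coherence condition and hence no well-definedness to check, composition is forced by the Picard formula $x*_iy=x+y-s_ix$, and the work migrates into the globular axioms, which is exactly where $d^2=0$ appears (in $t_it_j=t_i$ for $j=i+1$), as you correctly observe. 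The two models are isomorphic via the map the paper gives in \S\ref{17nov2}, $(x_0,\dots,x_n,0,\dots)\mapsto((x_0,x_0+dx_1),\dots,(x_{n-1},x_{n-1}+dx_n),(x_n,x_n),(0,0),\dots)$, so this is the same construction in different coordinates, with the verification burden redistributed. Your route buys two things: axiom (3) of Definition \ref{16nov1} (finite support) is automatic, whereas the paper's definition as stated omits the finiteness requirement that it only adds later when treating $\bbz$-categories; and your checking of (1b)--(1d), (2c), (2d) from $\bbz$-linear $s_n,t_n$ satisfying (1a), (2a), (2b) is in substance the proof of the paper's later Proposition \ref{16nov2}, so your argument could be compressed by quoting that result. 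The paper's route buys explicit bookkeeping of the sources and targets inside each cell, which pays off notationally in the subsequent proofs (Theorem \ref{26oct1}, Proposition \ref{qisprop}). Beyond that the skeletons coincide: both verify that $+$ is a functor, invoke Proposition \ref{groupobjects} to upgrade the $\om$-category to a Picard $\om$-category, and define $P(f)$ componentwise (your appeal to Remark \ref{forgetfuncts} for functoriality is valid).
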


\begin{proof} Let ${P} = {P}(A) $ consist of sequences $ x =
  ( (x_0 ^- , x_0 ^+ ) , (x_1 ^- , x_1 ^+ ), ...) $ with $x_i ^\alpha
  \in A^i$ such that $dx_i ^\alpha = x^+ _{i-1} -  x^- _{i-1} $ for
  all $i\geq 1$, $\alpha \in \{+,-\}$. 
We define source and target maps by $s_i x =  ( (x_0 ^- , x_0 ^+ ) , (x_{i-1} ^- , x_{i-1} ^+ ), (x_i ^- , x_i ^- ), (0,0), ...) $, and $t_i x = ( (x_0 ^- , x_0 ^+ ) , (x_{i-1} ^- , x_{i-1} ^+ ), (x_i ^+ , x_i ^+ ) , (0,0), ...) $.  If $s_i x = t_i y$, define
\[ x *_i y =  ( (x_0 ^- , x_0 ^+ ) ,...,(x_{i-1} ^- , x_{i-1} ^+ ),  (y_i ^- , x_i ^+ ),(x_{i+1} ^-
  + y_{i+1} ^- ,x_{i+1} ^+
  + y_{i+1} ^+ ), 
...)
\]
We need to check that $x*_iy$ is an element of $PA$.  Firstly, $dx_i ^+ = dx_i ^- = dy_i ^+ = dy _i ^- $ since $s_ix = t_iy$.  Secondly, $d( x_{i+1} ^+
  + y_{i+1} ^+)= d(x_{i+1} ^-
  + y_{i+1} ^-) = (x_i ^+  - x_i ^-) + (y_i ^+ - y_i ^-) = x_i ^+ - y_i
  ^-$ since $x_i ^- = y_i ^+$.  Finally, for $j>i+1$, it is obvious
  that $d (x*_iy)_j ^\alpha = d((x*_iy)_{j-1} ^+ -(x*_iy)_{j-1} ^-)$.
  Hence, $(x*_iy) \in  {P} $.
It is easily checked that ${P}A$ is an $\om$-category. \\

\noi We now define an operation ${P} \times
{P} \lra {P} $  which
makes ${P}$ into a \spic. 
This is the obvious operation 
\[ x+ y =  ( (x_0 ^- +y_0 ^- , x_0 ^+ +y_0 ^+) , (x_1 ^- + y_1 ^-, x_1
^+ +   y_1 ^+ ), ...), 
\]
which obviously satisfies $x+y - s_n x = x*_ny$ when composition is defined.  To see that $+$ is a functor, let $x ,y ,a,b \in {P}$ such that $s_n x = t_na$ and $s_ny = t_n b$.  Then
\begin{eqnarray*} + ((x,y)*_i (a,b) )& =& (x*_i a) + (y*_ib)  \\ &=&  
( (x_0 ^-  , x_0 ^+ ) ,...,
(a_i ^- , x_i ^+), (a_{i+1} ^- + x_{i+1} ^-, a_{i+1}
^+ +   x_{i+1} ^+ ),...) \\
&& \qquad + 
( (y_0 ^-  , y_0 ^+ ) ,...,
(b_i ^- , y_i ^+), (b_{i+1} ^- + y_{i+1} ^-, b_{i+1}
^+ +   y_{i+1} ^+ ),...) \\ & =& 
( (x_0 ^- +y_0 ^- , x_0 ^+ +y_0 ^+ ) ,...,
(a_i ^-  +b_i ^- , x_i ^+ + y_i ^+), (a_{i+1} ^- + x_{i+1} ^- + b_{i+1} ^- + y_{i+1} ^-, a_{i+1}
^+ +   x_{i+1} ^+ + b_{i+1}
^+ +   y_{i+1} ^+ ),...) \\ & =& (x+y)*_i (a+b),
\end{eqnarray*}
so by Proposition \ref{groupobjects}, ${P}$ a \spic. 
The assignment $ {P} : C^{\leq 0} ({A} b) \lra \spics $ is obviously a functor; for a morphism $f: A \lra B$ in $Ch^+(Ab)$, $Pf$ is given by $({P} f x)^ \al_i = f(x_i ^\al)$ for $\al \in \{+, -\}$. 
\end{proof} 
\noi Henceforth, we shall denote a sequence $x = ( (x_0 ^- , x_0 ^+ ) , (x_1 ^- , x_1 ^+ ), ...) $ by $( x_i )_{i \in \N}$ or simply $(x_i)$, where $x_i = (x_i ^- , x_i ^+)$. 

\begin{lem}\label{7nov2} A Picard $\om$-category $A$ defines a chain complex ${Q}(A) \in Ch^+(Ab)$ in such a way that ${Q} : \spics \lra Ch^+ ({A}b)$ is a functor.\end{lem}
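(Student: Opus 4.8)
The plan is to define $Q$ as an inverse to the construction $P$ of Proposition \ref{functlemma1}, by extracting from a Picard $\om$-category $A$ its ``normalized'' pieces. The source maps equip $A$ with an increasing filtration $A_0 \subseteq A_1 \subseteq A_2 \subseteq \cdots$ by subgroups: indeed $A_i = s_i A$, and for $i < j$ axiom 2(a) gives $s_j s_i = s_i$, so $A_i \subseteq A_j$, while each $A_i$ is a subgroup because $s_i$ is a group homomorphism (in the proof of Proposition \ref{groupobjects} one sees $\rho_n$ preserves $0$ and inverses). For $i \geq 1$ I would set
\[ Q(A)^i := \ker\bigl( s_{i-1} : A_i \lra A_{i-1}\bigr), \]
with $Q(A)^0 := A_0$, each a subgroup since $s_{i-1}$ restricts to a homomorphism $A_i \to A_{i-1}$, and take the differential to be $\partial = t_{i-1} : Q(A)^i \to Q(A)^{i-1}$. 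Note that $s_{i-1} = 0$ on $Q(A)^i$, so $t_{i-1}$ agrees with the more symmetric $t_{i-1} - s_{i-1}$. The choice is guided by a sanity check against $P$: applying this recipe to $PA$ one finds $Q(PA)^i \cong A^i$ via $x \mapsto x_i^-$ (the relation $dx_i^\alpha = x_{i-1}^+ - x_{i-1}^-$ forces the remaining entries), under which $\partial$ recovers $d$.

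First I would check that $\partial$ is well defined, i.e. $t_{i-1} x \in Q(A)^{i-1}$ for $x \in Q(A)^i$. That $t_{i-1}x \in A_{i-1}$ follows from axiom 1(a), since $s_{i-1} t_{i-1} = t_{i-1}$. To see $s_{i-2}(t_{i-1}x) = 0$ I would invoke the face relations of axioms 2(a),(b): for $i-2 < i-1$ they give $s_{i-2} t_{i-1} = s_{i-2}$ and $s_{i-2} s_{i-1} = s_{i-2}$, hence $s_{i-2}(t_{i-1}x) = s_{i-2}x = s_{i-2}(s_{i-1}x) = 0$. Because $t_{i-1}$ is a group homomorphism, $\partial$ is one as well.

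Next I would verify $\partial^2 = 0$. For $x \in Q(A)^i$, axiom 2(b) gives $t_{i-2} t_{i-1} = t_{i-2}$, so $\partial^2 x = t_{i-2} x$; and again by 2(b), $t_{i-2} s_{i-1} = t_{i-2}$, whence $t_{i-2}x = t_{i-2}(s_{i-1}x) = t_{i-2}(0) = 0$. Thus $(Q(A)^\bullet, \partial)$ is an object of $Ch^+(Ab)$. Functoriality is then immediate: a morphism $f \in Hom_{Pic_\om}(A,B)$ preserves $+$, the $s_i$, and the $t_i$, so it carries $A_i$ into $B_i$ and $\ker(s_{i-1})$ into $\ker(s_{i-1})$, inducing homomorphisms $Q(f)^i : Q(A)^i \to Q(B)^i$ that commute with $\partial = t_{i-1}$; identities and composites are clearly respected.

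The individual verifications are short once the definition is fixed, so the real obstacle is conceptual rather than computational: choosing $Q(A)^i = \ker(s_{i-1}|_{A_i})$ with $\partial = t_{i-1}$ — rather than an intersection of kernels of all lower source maps, as in the simplicial normalized complex — and recognizing that the nested source--target relations 2(a),(b) collapse the higher faces, so that this single-kernel normalization already yields a complex. The cross-check $Q\circ P \cong \mathrm{id}$ is not required for the lemma but motivates the construction and would be the natural next step.
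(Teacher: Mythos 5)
Your proof is correct, but it builds a different (naturally isomorphic) model of $Q$ than the paper does. The paper takes quotients: it sets $Q^i := A_i/A_{i-1}$ (and $Q^0 = A_0$) with differential induced by $d_0 = t_{i-1}-s_{i-1}$, which requires checking that $d_0$ is a homomorphism, that $A_{i-1}\subseteq \ker d_0$ so that $d_0$ descends, and that the induced map squares to zero; functoriality then uses the fact that a morphism preserves the filtration and hence descends to the quotients. You instead take subgroups: $Q^i := \ker(s_{i-1}|_{A_i})$ with $\partial = t_{i-1}$, so the homomorphism property and functoriality are essentially automatic (morphisms of Picard $\om$-categories preserve these kernels and commute with $t_{i-1}$, no descent argument needed), and the only genuine verifications are closure of the kernel under $\partial$ and $\partial^2=0$, both of which you derive correctly from the axioms; this is the analogue of the normalized Moore complex in the simplicial Dold--Kan story, whereas the paper's construction is the quotient (degenerate-killing) version. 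The two models are isomorphic via $A_i/A_{i-1}\ni [x]\mapsto x-s_{i-1}x\in\ker(s_{i-1}|_{A_i})$, which is precisely the splitting $A_i\simeq A_i/A_{i-1}\oplus A_{i-1}$ that the paper records in \S \ref{17nov2}; this isomorphism is natural in $A$ and compatible with the two differentials (using $t_{i-1}s_{i-1}=s_{i-1}$), so the downstream Theorem \ref{26oct1} remains valid for your $Q$, although the paper's explicit formulas there for $h$ and $\vphi$ are written in terms of classes $[x]$ and would have to be transported along this isomorphism if one adopts your model. Two minor points: the identities $s_{i-2}t_{i-1}=s_{i-2}$ and $s_{i-2}s_{i-1}=s_{i-2}$ that you invoke are both instances of axiom 2(b) of Definition \ref{16nov1} rather than of 2(a); and your single-kernel normalization in fact coincides with the intersection of the kernels of all lower source maps, since $s_jx=s_js_{i-1}x=0$ for $j<i-1$ whenever $s_{i-1}x=0$, so the contrast you draw with the simplicial case is only apparent.
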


\begin{proof} Since $+ {A} \times {A} \lra {A}$ is a functor, it respects all operators $*_i $, $s_i$, $t_i$.  Hence, $+ {A} _i \times {A} _i \lra {A} _i$, so ${A} _i$ is a subgroup of ${A}$.  Therefore it makes sense to define ${Q}^{i}:= ({Q}A)^i: = {A} _i / {A} _{i-1}$ for $i >0$ and $Q^0 = A_0$. We define, for each $i > 0$ an operation $d : {Q}^i \lra {Q} ^{i - 1} $ by first defining a homomorphism $d _0 : {A} _i \lra {A} _{i-1} $.  Define $d _0 = t_{i-1} - s_{i-1}$.  We must check that this is a homomorphism.  If $x,y \in {A} _i$,
\begin{eqnarray*}
d _0 (x +y ) &=& t\imob (x+y) - s\imob (x+y ) \\
&=& t\imob x + t\imob y - (s\imob x + s\imob y) \\
&=& t\imob x - s\imob x + (t\imob y - s\imob y)\\
&=& d_0  x + d _0 y.
\end{eqnarray*}

\noi If $x \in {A} \imob$, then $t \imob x = x= s\imob x$, so $d _0 ({A} \imob ) = 0$.  Therefore, $d _0$ is a group homomorphism such that 
${A} \imob \subset Ker d _0$.
  This determines a homomorphism $d : {Q}^i \lra {Q}^{i-1}$.  To see that $d ^2 = 0$, for $x \in {Q}^i$,
 $d ^2 x = d (t\imob x - s\imob x) 
= t_{i-2}(t\imob x - s\imob x) - s_{i-2}(t\imob x - s\imob x) 
= t_{i-2} x
- t_{i-2} x - (s_{i-2} x - s_{i-2} x)
= 0$.
Hence, ${Q}$ is a complex of abelian groups. \\

 We have constructed ${Q}$ from ${A}$, which gives a map ${Q} : \spics \lra C^{\leq 0} ({A} b) $.  If $F: {A} \lra {B}$ is a map of $\om$-categories, $F : {A} _i \lra {B} _i$ for each $i \geq0$, so $F$ descends to a map ${Q}(F) : {A} _i / {A} \imob \lra {B} _i / {B} \imob$, which is easily seen to be a map of complexes.  This makes ${Q}$ is a functor of 1-categories.
\end{proof}  

\begin{thm}\label{26oct1}(\cite{bourn}) ${Q} \circ {P} \simeq id$  and ${P} \circ {Q} \simeq id$.  Therefore, ${Q}$ and ${P}$ are equivalences of categories.  
\end{thm}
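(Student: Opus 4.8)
The plan is to prove both natural isomorphisms by exploiting the canonical filtration $A_0 \subseteq A_1 \subseteq \cdots$ of a Picard $\om$-category and inducting on degree; I treat the two composites separately.

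For $Q \circ P \simeq id$, fix a complex $A$. First I would describe the $i$-objects of $P(A)$ explicitly: from the formula for $s_i$, an element $x$ lies in $P(A)_i$ if and only if $x_i^- = x_i^+$ and $x_j^\alpha = 0$ for $j > i$. The assignment $x \mapsto x_i^-$ (equal to $x_i^+$) is then a homomorphism $P(A)_i \lra A^i$. I would check it is surjective (given $a \in A^i$, the sequence whose $i$-th entry is $(a,a)$, whose $(i-1)$-st entry is $(0,da)$, and whose other entries vanish lies in $P(A)_i$ and hits $a$, using $d^2=0$) and has kernel exactly $P(A)_{i-1}$ (if $x_i^\alpha = 0$ then the defining relation $dx_i^\alpha = x_{i-1}^+ - x_{i-1}^-$ forces $x_{i-1}^- = x_{i-1}^+$, so $x \in P(A)_{i-1}$). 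This yields a natural isomorphism $Q(P(A))^i = P(A)_i/P(A)_{i-1} \cong A^i$. Finally I would verify it is a chain map: since the differential on $Q(P(A))$ is induced by $t_{i-1}-s_{i-1}$, applying it to a representative of degree $i$ gives the element of $P(A)_{i-1}$ whose $(i-1)$-st entry is $x_{i-1}^+ - x_{i-1}^-$, which by the defining relation equals $dx_i$; under the identification this is precisely the differential of $A$.

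For $P \circ Q \simeq id$, fix $A \in \spics$ and define $\Phi : A \lra P(Q(A))$ by $\Phi(a)_i = ([s_i a],[t_i a])$, where $[\,\cdot\,]$ is the projection $A_i \lra A_i/A_{i-1} = Q^i$. Using the identities of Definition \ref{16nov1} (such as $s_{i-1}s_i = s_{i-1}$ and $t_{i-1}s_i = t_{i-1}$) together with the fact that the differential of $Q$ is induced by $t_{i-1}-s_{i-1}$, one checks that $\Phi(a)$ satisfies the defining relation of $P(Q(A))$, so $\Phi$ is well defined. That $\Phi$ preserves $s_i$, $t_i$, and $+$ is a direct computation from the same identities and from $s_i,t_i$ being homomorphisms; that it preserves $*_i$ I would deduce from $s_i(a*_ib) = s_i b$, $t_i(a*_ib) = t_i a$ and the key formula $a *_i b = a + b - s_i a$ of Proposition \ref{groupobjects}, which controls the entries in degrees $\neq i$. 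Hence $\Phi$ is a morphism in $\spics$.

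It remains to prove $\Phi$ is bijective, and this is where the real content lies. For injectivity I would set $c = a-b$ with $\Phi(c)=0$, pick $N$ with $c \in A_N$ (condition 3 of Definition \ref{16nov1}), and run a downward induction: if $c \in A_k$ then $t_k c = c$, so $[t_k c] = 0$ forces $c \in A_{k-1}$, and at the bottom $Q^0 = A_0$ gives $c = 0$. For surjectivity I would induct on the top degree $n$ of a given $x \in P(Q(A))$, which is finitely supported by condition 3. The relation at index $n+1$ forces $x_n^- = x_n^+ =: \xi$; choosing any lift $\tilde a \in A_n$ of $\xi$, the element $x - \Phi(\tilde a)$ vanishes in all degrees $\geq n$, so by induction it equals $\Phi(b)$ for some $b \in A_{n-1}$, whence $\Phi(\tilde a + b) = x$. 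Since $\Phi$ is a bijective functor, its set-theoretic inverse automatically preserves every structure map, so $\Phi$ is an isomorphism in $\spics$, and naturality in $A$ is immediate because $\Phi$ is assembled from the structure maps and the quotient projections. The main obstacle is exactly this reconstruction step: recovering an honest element of $A$ from its images in the successive quotients $A_i/A_{i-1}$ is the strictification heart of the theorem, and it is what forces essential use of the additive structure and of the formula $a*_ib = a+b-s_i a$.
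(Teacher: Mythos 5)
Your proposal is correct and takes essentially the same route as the paper's proof: you identify $Q(P(A))^i \cong A^i$ through the explicit filtration of $P(A)$ (the paper builds the inverse map $x \mapsto [\hat x]$ instead), and your $\Phi(a)_i = ([s_i a],[t_i a])$, with injectivity forced by the filtration and surjectivity proved by induction on the top degree using additivity to subtract off a lift, is exactly the paper's $\vphi$ argument. The only differences are cosmetic: you run the first isomorphism in the opposite direction and phrase injectivity as a downward induction rather than as a contradiction with the minimality of $\mu(x)$.
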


\begin{proof} 
From a complex $A \in Ch^+ ({A}b)$, we get a complex $ {Q} = {Q} ({P} A)$, where $ {Q}^i =  ({P}A) _i /  ({P}A )\imob$.  First define a map $A^i \lra ({P} A) _i$, $x \mapsto \hat x$, in the following way: 
 
\begin{displaymath} 
(\hat x)_j = 
\left\{ 
\begin{array}{ll}
(0, 0)     & \textrm{if $ j< \imo $}\\
(0 , d x) & \textrm{ if $j = \imo$}\\
(x,x)   & \textrm{ if $j=i$ }.
\end{array}
\right.
\end{displaymath}

\noi Now define a map $h : A^i \lra {Q} ^i$ by $h(x) = [\hat x] \in {P}{A} _i / {P}{A} \imob$.  Observe that for $y \in ({P} A) _i$, $[y] = [y - s \imob y ] = [\hat {y _i ^+}]= h(y_i ^+)$.    Therefore, $h$ is surjective.  It is clear that $h$ is a homomorphism and that it is injective. But $h$ must also be a map of complexes.  If $x \in A^i$, $dh(x) = d [\hat x] = [ t \imob \hat x - s\imob \hat x ]$, where

\begin{displaymath} 
(t \imob \hat x - s\imob \hat x)_j= 
\left\{ 
\begin{array}{ll}
(0, 0)     & \textrm{if $ j \neq \imo $}\\
(d x , d x) &  \textrm{if $ j=i-1$}
\end{array}
\right.
\end{displaymath}

Since $d ^2 = 0$, this is obviously the class of $\hat{ d x} $.  Hence, $h$ is a morphism of chain complexes, and we conclude by injectivity and surjectivity that $h_A : A \lra {Q} {P} (A)$ is an isomorphism.  However, to be an isomorphism of functors, ${Q} {P} \tilde \rightarrow 1$, these maps must satisfy $h_B \circ f = (({Q} {P} (f)) \circ h_A$ for any map of complexes $f: A \lra B$.  For $x \in A^i$, $h_B (f(x))= [\hat {(f(x))} ] \in ({P}B) _i / ({P} B) \imob $.  Applying the right-hand side to $x$, we get $(({Q} {P} (f)) \circ h_A x ={Q}({P} (f)) [\hat x] = [{P} (f) \hat x] = [\hat{ (f(x))}]$.  So $h$ does in fact define an isomorphism between endofunctors ${Q} {P} $ and $1$ of $Ch^+ ({A}b)$.  \\

Now we wish to show that for $A \in Pic _\om$, there is an isomorphism $\vphi : {A} \lra PQ A$ in $\spics$.  For the rest of the proof, for $x \in {A} _i$, let $[x]$ be its image in ${Q}^{i} = {A} _i / {A} \imob$, and for any $x \in {A}$, let $\mu (x) :=  min\{m \in \N \st s_m x = x \} $.  Now, for $x \in {A} $, define $\vphi x = \{(\vphi x)_i\}_{i \in \N}$ by:

\begin{displaymath} 
(\vphi x)_i = 
\left\{ 
\begin{array}{ll}
([s_i x ] , [t_i x])    & \textrm{ if $i \leq \mu (x)$}\\
(0, 0)     & \textrm{if $i > \mu (x)$}.
\end{array}
\right.
\end{displaymath}

\noi It is clear that $\vphi x \in {P}{Q} A$, but we still must check that $\vphi$ is a functor.  For $\rho _i \in \{ s_i , t_i \}$, it must be shown that $\vphi (\rho _i x) = \rho _i \vphi x$.  If $i > \mu (x)$, this is obvious.  If $i < \mu (x)$, 

\begin{displaymath} 
(\vphi (\rho _i x))_j = 
\left\{ 
\begin{array}{ll}
([s_i x ] , [t_i x])    & \textrm{ if $i <j$}\\
([\rho _i x], [\rho _i x])     & \textrm{if $i=j$}\\
(0,0)   & \textrm{ if $j>i$},
\end{array}
\right.
\end{displaymath}

\noi which is exactly the same formula for $(\rho _i (\vphi x))_j$.  Additionally, $\vphi$ must be a homomorphism of abelian groups, but this follows easily.  For simplicity, assume $\mu (x) \leq \mu (y)$.

\begin{align*} 
(\vphi (x+y))_i & =
\left\{ 
\begin{array}{ll}
([s_i (x+y) ] , [t_i (x+y)])    & \textrm{ if $i \leq \mu (x +y) = \mu (y)$}\\
([x+y] , [x +y])    & \textrm{ if $i = \mu (y)$}\\
(0, 0)     & \textrm{if $i > \mu (y)$}
\end{array} 
\right. \\ 
 & =  \left\{
\begin{array}{ll}
([s_i x] +[s_i y ] , [t_i x] +[t_i y])    & \textrm{ if $i \leq \mu (x +y) = \mu (y)$}\\
([x] +[y] , [x] + [y])    & \textrm{ if $i = \mu (y)$}\\
(0, 0)     & \textrm{if $i > \mu (y)$}
\end{array}
\right. \\
&=  (\vphi (x) + \vphi (y) ) _i.
\end{align*}

\noi By Remark \ref{forgetfuncts}, $\vphi$ is a morphism of Picard $\om$-categories. \\

\noi To show that $\vphi$ is an isomorphism of $\om$-categories, we simply show that $\vphi$ is bijection of sets.  Let ${P} = {P}{Q} (A)$.  First we show that $\vphi$ is surjective.  We prove by induction that $( {P} )_n$ is in the image of $\vphi$ for each each $n \in \N$.  If $n=0$, let $a = ((a_0 , a_0), (0,0),...) \in {P}  _0$, so $a_0 \in A^0 = {{P} } _0$, and $\vphi (a_0) = a$.  Now suppose that ${P}  _k  \subset \vphi ({A})$.  Let $x =(x_i)_{i \in \N} = (([a_i ^- ] , [a_i ^+]))_{i \in \N} \in {P} _{k+1}$, $a_i ^\pm \in {A} _i$.  Then $[a_{k+1}^- ] = [a_{k+1}^+ ]$, and $\vphi (a_{k+1}^+ ) - x \in ({P} )_k  \subset \vphi ({A})$ by induction hypothesis, so  $\vphi (a_{k+1}^+ ) - x = \vphi (z) $ for some $z \in {A} _k$.  Hence, $x = \vphi (a_{k+1}^+ -z) \in \vphi ({A})$.  Thus, ${P} _{k+1} \subset \vphi ({A})$ and therefore $\vphi$ is surjective.\\ 

Now we demonstrate that $\vphi$ is injective.  Let $x \in Ker\phi$ and $\mu = \mu (x)$ as above.  Then $s_n x = x $ for all $n \geq \mu$ and $s_k x \neq x $ for all $k < \mu$.  If $\phi x = 0$, $[s_kx] = 0 $ for all $k$, whence $s_k x \in {A} _{k-1}$.  In particular, $s_\mu x \in {A} _{\mu -1}$, which implies that $s_{\mu -1}x = x$, which contradicts the minimality of $\mu$.  Therefore to avoid a contradiction, $x$ must be $0$ and $Ker \phi = 0$.  Therefore, $\vphi$ is an isomorphism.\\

\noi For each ${A} \in \spics$, we've produced an $\vphi _A :  {A}  \tilde \lra {P} {Q} ({A}) $.  To complete the proof, we simply must see if this satisfies compatibility with morphisms in $\spics$.  For $x \in {A}$, we require that $ \vphi _B \circ f (x) = ({P} {Q} (f)) \circ \vphi _A$.  We consider the i-th entry in each sequence and see that the are the same.  Since $(\vphi _A (x) )_i = ([s _i x] , [t_i x])$, with $[s_ix], [t_i x] \in {A} _i / {A} \imob$, we have that $({P} ({Q} f)(\vphi _A (x))_i =  ({Q} f)(\vphi _A (x))_i = (({Q} (f))[s_i x] , ({Q} (f))[t_i x] )= ([f (s_ix)] , [f (t_ix)] ) = ( [s_i f(x)] , [t_i f(x)] ) = (\vphi _B \circ f (x))_i$. 
\end{proof}

\subsection{Equivalences of Picard $\om$-categories}
\begin{prop}\label{qisprop}For complexes $A,B \in C^{+} ({A} b)$ and map of complexes $f: A \lra B$, $f$ is a quasi-isomorphism if and only if ${P} f : {P} A \lra {P} B$ is an equivalence of $ $ $\om$-categories.
\end{prop}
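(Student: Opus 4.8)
The plan is to reduce both sides to statements about the induced maps $f_*\colon H_n(A)\to H_n(B)$ on homology. Since $PA$ and $PB$ are Picard $\om$-categories they are groupoids (the $*_j$-inverse of an $n$-morphism $x$ is $s_jx+t_jx-x$), so by the Remark following Definition \ref{1octa} every morphism is an isomorphism and condition \ref{1octa2iii} is automatic. Thus it suffices to show that $Pf$ satisfies essential surjectivity (\ref{1octa2i}) and fullness (\ref{1octa2ii}) if and only if $f_*$ is an isomorphism in every degree. Throughout I will use the explicit description of $PA$: an $i$-object is a sequence $a=((a_0^-,a_0^+),\dots,(a_{i-1}^-,a_{i-1}^+),(a_i,a_i),(0,0),\dots)$ with $da_i=a_{i-1}^+-a_{i-1}^-$, and $Pf$ acts entrywise by $f$.

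For essential surjectivity, a $0$-object is $\hat b_0=((b_0,b_0),(0,0),\dots)$ with $b_0\in B^0$, and a short computation with $1$-morphisms shows that $\hat b_0$ is isomorphic to $Pf(\hat a_0)$ exactly when $b_0-f(a_0)\in dB^1$. Hence condition \ref{1octa2i} holds iff $B^0=f(A^0)+dB^1$, i.e. iff $f_*\colon H_0(A)\to H_0(B)$ is surjective.

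The main work is condition \ref{1octa2ii}. Given $i$-objects $a,a'$ with $s_{i-1}a=s_{i-1}a'$ and $t_{i-1}a=t_{i-1}a'$, the hypotheses force $a$ and $a'$ to agree in all entries below degree $i$, so their difference is determined by $c:=a_i'-a_i$, which satisfies $dc=0$, i.e. $c\in Z_i(A):=\ker(d\colon A^i\to A^{i-1})$. Unwinding the definitions, a morphism $\psi\in Hom^i_{PB}(Pf\,a,Pf\,a')$ is the same datum as an element $w\in B^{i+1}$ with $dw=f(c)$; a lift $\phi\in Hom^i_{PA}(a,a')$ is an element $v\in A^{i+1}$ with $dv=c$; and the existence of the isomorphism $\be\in Hom^{i+1}_{PB}(Pf\,\phi,\psi)$ amounts to $f(v)-w\in dB^{i+2}$. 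Therefore condition \ref{1octa2ii} at level $i$ reads precisely:
\[
\text{for all } c\in Z_i(A)\text{ and } w\in B^{i+1}\text{ with } dw=f(c),\ \exists\,v\in A^{i+1}\text{ with } dv=c\text{ and } f(v)-w\in dB^{i+2}.
\]
I would then check that this is equivalent to ``$f_*$ is injective on $H_i$ and surjective on $H_{i+1}$.'' Indeed, requiring that a $w$ exist forces $f(c)\in dB^{i+1}$, so the conclusion $c\in dA^{i+1}$ gives injectivity on $H_i$; taking $c=0$ makes $w$ an arbitrary cycle in $Z_{i+1}(B)$, and the conclusion $[f(v)]=[w]$ gives surjectivity on $H_{i+1}$. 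Conversely, given those two homological facts one builds the required $v$ as $v_0+v_1$, where $dv_0=c$ comes from injectivity (writing $c$ as a boundary) and $v_1\in Z_{i+1}(A)$ corrects $w-f(v_0)$ using surjectivity on $H_{i+1}$.

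Assembling the pieces: conditions \ref{1octa2i} and \ref{1octa2ii} (over all $i\ge 0$) together say that $f_*$ is surjective on every $H_n$ and injective on every $H_n$, i.e. that $f$ is a quasi-isomorphism, while condition \ref{1octa2iii} is free. The step I expect to be the main obstacle is the bookkeeping in the third paragraph: correctly matching the data of $\psi$, $\phi$, and the higher isomorphism $\be$ with the three elements $w$, $v$, and the $dB^{i+2}$-correction, and in particular recognizing that it is the presence of $\be$ (``fullness only up to higher isomorphism'') that produces surjectivity on $H_{i+1}$ rather than merely on $H_i$. Once that dictionary is set up, both implications are short diagram chases.
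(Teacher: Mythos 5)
Your proposal is correct, and it follows the same basic strategy as the paper---unwinding the explicit description of $PA$ and $PB$ and translating the equivalence conditions into chain-level statements---but it is organized differently, and in one respect it is more complete. The paper proves the two implications separately by explicit element constructions: for ``quasi-isomorphism implies equivalence'' it builds the required lifts directly, while for the converse it deduces injectivity on $H^n$ from condition \ref{1octa2iii} (isomorphic objects must already be isomorphic upstairs) and surjectivity from conditions \ref{1octa2i} and \ref{1octa2ii}. You instead establish a degree-by-degree dictionary---condition \ref{1octa2i} is surjectivity on $H^0$; condition \ref{1octa2ii} at level $i$ is exactly injectivity on $H^i$ plus surjectivity on $H^{i+1}$---from which both implications drop out simultaneously, and condition \ref{1octa2iii} is never invoked (it is free by the groupoid remark). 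The noteworthy gain of your route is the point you flagged yourself: the isomorphism $\be \in Hom^{i+1}_{PB}(F\phi,\psi)$ is precisely what forces surjectivity on $H^{i+1}$, and producing it requires correcting the lift $v$ by a cycle chosen via that surjectivity ($v = v_0 + v_1$ in your notation). The paper's own verification of condition \ref{1octa2ii} in the forward direction constructs the lift $\phi$ (using injectivity on $H^n$) but never exhibits $\be$, so your dictionary in fact repairs a small gap in the published argument rather than merely reorganizing it.
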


\begin{proof} This proof will use the description in the first part of Definition~\ref{1octa}, as it simplifies the notation.  For an object $a \in A^n$ satisfying $da =0$, we denote its image in $H^n (A)$ by $[a]$, and for a map $f: A \lra B$, by abuse of notation, let $f$ also denote the induced map on cohomology. For ease of notation, let $F : {A} \lra {B}$ denote ${P} f : {P} A \lra {P} B$. \\

Let $A \stackrel{f}{\lra} B$ be a \qis of complexes $A$ $B \in Ch^+(Ab)$ . If $y \in PB_0 = B ^0$, there exists $x \in A^0$ such that $[fx] = [y]$, so there exists some $z \in B ^1 $ such that $dz = fx -y$.  Thus, $((y,fx),(z,z),0,...) \in Hom ^1 (fx ,y)$ is an isomorphism as required.  This proves condition (1) of Definition~\ref{1octa}.  To prove condition (2), let $\psi \in Hom_{{P}{B}}^{n}(Fx,Fy)$ with $x$, $y \in {P}{A} _n$ such that $s_{n-1}x = s_{n-1}y$ and $t_{n-1}x = t_{n-1}y$.  Then $\psi = ((\psi _0 ^- , \psi _0 ^+),...,(\psi _{n+1} ^- , \psi _{n+1} ^+),0,...)$ with $s_n \psi = Fx = ((fx _0 ^- , fx _0 ^+),...,(fx _n ^- , fx _n ^+),0,...)$ and  $t_n \psi = Fx = ((fy _0 ^- , fy _0 ^+),...,(fy _n ^- , fy _n ^+),0,...)$, so $\psi _n ^- = fx_n ^- = fx_n ^+$ and $\psi _n ^+ = fy_n ^- = fy_n ^+$.  This means that $d\psi_{n+1} ^\pm = fy_{n} ^\pm - fx_n ^\pm$ so that $[fx_n ^+ -fy_n ^+] = 0$ and therefore $[x_n ^+ - y_n ^+] = 0$ because $f$ is a \qis.  Hence, $d\phi = y_n ^+ - x_n ^+$ for some $\phi \in A ^{n+1}$. Since $\sigma _ix = \sigma _iy$ for $i < n$, $\sigma _i \in \{s_i , t_i \}$, $x_i ^\pm = y _i ^\pm $. We conclude that $((x _0 ^- , x _0 ^+),...,, (x_{n-1} ^- , x_{n-1} ^+),(x _n ^+ , y _n ^+),(\phi , \phi), 0,...)$ is an $(n+1)$-morphism from $x$ to $y$ as required.  Since ${P}{A}$ and ${P}{B}$ are groupoids, condition (2) entails condition (3).  Therefore, $F : {A} \lra {B}$ is an equivalence of $\om$-categories. \\

Now suppose that $F$ is an equivalence.  We will show that $f : H^n (A) \lra H^n(B)$ is an isomorphism for each $n$.  Let $x \in Ker(d: A^n \lra A^{n-1})$.  If $[fx] = 0$, then there is some $\psi $ such that $d\psi = f x$.  We see that $(0,...,(0,0),(0,fx),(\psi , \psi ),0...)$ is an $(n+1)$-isomorphism from $0$ to $\hat fx = (0,...,(0,0),(fx,fx),0...)$.  By condition (3) of Definition~\ref{1octa}, $\hat x $ is isomorphic to $ 0$. Such an isomorphism is of the form  $(0,...,(0,0),(0,x),(\phi , \phi ),0...)$ for some $\phi$ satisfying $d\phi = x$, so we see that $[x]=0$ and therefore $f$ is injective.  To see that $f$ is surjective, let $[y] \in H^n(B)$ for $n >0$ and consider $\hat y = (0,...,0,(0,0),(y,y),0,...)$, which is an $n$-isomorphism from $0 = F(0)$ to itself.  By condition (2), there exists an $n$-isomorphism $\hat x = (0,...,0,(x,x),0...)$ in ${A}$ together with an $(n+1)$-morphism $(0,...,0,(fx,y),(z,z),0...)$ from $F\hat x $ to $\hat y$.  Hence, $dz = y - fx $ so that $f[x] = [y]$ and $f$ is surjective.  For $n =0$, let $y \in H^0 (B)$, we use the same argument, except this time invoking condition (1) to ensure the existence of such an $x \in A^0$.  This shows that $f$ is a \qis. \end{proof}




Let $Ho(Pic_\om )$ denote $Pic_\om$ localized at the equivalences.  We now have the following corollary.

\begin{cor} The derived category $D ^{\leq 0} ({A} b)$ of abelian groups in degrees $\leq 0$ is equivalent to the homotopy category $Ho(Pic_\om)$.
\end{cor}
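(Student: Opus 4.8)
The plan is to realize both categories as localizations and transport the equivalence of Theorem~\ref{26oct1} across. By definition $D^{\leq 0}({A}b)$ is the localization of $Ch^+({A}b)$ at the class of quasi-isomorphisms, while $Ho(\spics)$ is the localization of $\spics$ at the class of equivalences of $\om$-categories. Hence it suffices to show that the equivalence of categories $P : Ch^+({A}b) \lra \spics$ of Theorem~\ref{26oct1}, together with its quasi-inverse $Q$, carries one class of weak equivalences onto the other, and then to invoke the universal property of localization.

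First I would record that $P$ and $Q$ match the two classes of weak equivalences exactly. Proposition~\ref{qisprop} states that $f$ is a quasi-isomorphism if and only if $Pf$ is an equivalence; in particular $P$ sends quasi-isomorphisms to equivalences. For the reverse direction, given an equivalence $g : A \lra B$ in $\spics$, I would form $Qg$ and apply $P$: under the natural isomorphisms $PQA \cong A$ and $PQB \cong B$ furnished by Theorem~\ref{26oct1}, the map $PQg$ is identified with $g$ and is therefore an equivalence, so the ``if'' implication of Proposition~\ref{qisprop} forces $Qg$ to be a quasi-isomorphism. Thus $Q$ sends equivalences to quasi-isomorphisms.

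With the matching in hand the argument is formal. The composite $Ch^+({A}b) \xrightarrow{P} \spics \lra Ho(\spics)$ sends quasi-isomorphisms to isomorphisms, so by the universal property of localization it factors through a functor $\bar P : D^{\leq 0}({A}b) \lra Ho(\spics)$; symmetrically $Q$ induces $\bar Q : Ho(\spics) \lra D^{\leq 0}({A}b)$. The natural isomorphisms $QP \cong id$ and $PQ \cong id$ of Theorem~\ref{26oct1} descend to the localizations, since a natural isomorphism of functors on a category becomes a natural isomorphism after post-composing with the localization functor. This yields $\bar Q \bar P \cong id$ and $\bar P \bar Q \cong id$, so $\bar P$ is an equivalence of categories.

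I do not expect a serious obstacle, since the substantive work is already contained in Theorem~\ref{26oct1} and Proposition~\ref{qisprop}. The one point requiring care is the reverse matching, namely that $Q$ preserves weak equivalences: this is not literally the content of Proposition~\ref{qisprop} and must be deduced from it together with the natural isomorphism $PQ \cong id$, as above. One should also keep the indexing conventions straight, identifying $D^{\leq 0}({A}b)$ with the localization of $Ch^+({A}b)$ at quasi-isomorphisms so that the two localizations being compared are the intended ones.
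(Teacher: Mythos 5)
Your proposal is correct and matches the paper's intent: the paper states this corollary without proof, treating it as an immediate consequence of Theorem~\ref{26oct1} and Proposition~\ref{qisprop}, and your argument is exactly the formal localization argument being invoked. The one step the paper leaves implicit that you rightly spell out is that $Q$ carries equivalences to quasi-isomorphisms, deduced from $PQ \cong id$ together with Proposition~\ref{qisprop}.
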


\subsection{Useful Facts for Picard $\om$-categories}\label{17nov2}

Lemma \ref{extensionlem} shows that if $A$ is a subcategory of $B$, and $B$ can be extended to an $\om$-category $\CC$, then $A$ can be extended to an $\omega $-sub-category of $\CC$.  

\begin{lem}\label{extensionlem} Let ${A}$ and ${B}$ be 1-categories with ${A}$ a subcategory of ${B}$.  If there is an $\om$-category $\CC$ such that $(\CC _1 , s_0 , t_0 , *_0 ) = {B}$, then there is an $\om$-subcategory $\CC '$ of $\CC$ such that $(\CC ' _1 , s_0 , t_0 , *_0 ) = {A}$.
\end{lem}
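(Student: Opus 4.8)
The plan is to realize $\CC'$ as the sub-collection of cells of $\CC$ that ``sit over'' $A$. Concretely, identify the morphisms of $A$ with a subset of $B = (\CC_1, s_0, t_0, *_0)$ and set
\[
\CC' = \{\, x \in \CC : s_1 x \in A \ \text{and}\ t_1 x \in A \,\}.
\]
The first thing I would check is that this already pins down the underlying $1$-category. If $a \in A$ then, regarded as a $1$-cell of $\CC$, it satisfies $s_1 a = t_1 a = a \in A$, so $a \in \CC'$ and $a = s_1 a \in s_1\CC' = \CC'_1$; conversely every element of $\CC'_1 = s_1\CC'$ lies in $A$ by definition. Since $s_0 s_1 = s_0$ and $t_0 s_1 = t_0$, the $0$-cells of $\CC'$ are exactly $Ob(A)$, and the source, target and $*_0$ inherited from $\CC$ are those of $B$ restricted to $A$; because $A$ is a \emph{subcategory} of $B$, these agree with the structure maps of $A$. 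Hence $(\CC'_1, s_0, t_0, *_0) = A$, provided $\CC'$ is closed under all the structure maps.

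So the real work is to show that $\CC'$ is closed under every $s_i$, $t_i$, $*_i$. Closure under sources and targets is bookkeeping with the category and $2$-category axioms of Definition~\ref{16nov1}: for $i \geq 2$ one has $s_1 s_i = s_1$ and $t_1 s_i = t_1$ from condition (2)(b), for $i = 1$ one uses $s_1 s_1 = t_1 s_1 = s_1$ from (1)(a), and for $i = 0$ one uses $s_1 s_0 = t_1 s_0 = s_0$ from (2)(a) together with $s_0 x = s_0 s_1 x \in Ob(A)$. For the compositions, the case $i = 1$ is immediate from $s_1(x *_1 y) = s_1 y$ and $t_1(x *_1 y) = t_1 x$ in (1)(d), while the case $i = 0$ is exactly where the hypothesis that $A$ is a subcategory (not a mere subset) is needed: by (2)(c), $s_1(x *_0 y) = s_1 x *_0 s_1 y$ and $t_1(x *_0 y) = t_1 x *_0 t_1 y$, and these lie in $A$ because $A$ is closed under $*_0$ (one checks they are defined using $s_0 x = t_0 y$).

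The main obstacle I foresee is the case $i \geq 2$, because axiom (2)(c) only lets one push a \emph{higher} source/target through a \emph{lower} composite, never the reverse. I expect to clear it with the derived lower-boundary-preservation identity: for $p < q$ with $x *_q y$ defined, $s_p(x *_q y) = s_p x$ and $t_p(x *_q y) = t_p x$. This follows by combining $s_p = s_p s_q$ (from (2)(b)) with the same-dimension rule $s_q(x *_q y) = s_q y$ of (1)(d) and the equality $s_p x = s_p y$ (a consequence of $s_q x = t_q y$ and (2)(b)); explicitly $s_p(x *_q y) = s_p s_q(x *_q y) = s_p s_q y = s_p y = s_p x$, and dually for $t_p$. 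Taking $p = 1$ and $q = i \geq 2$ yields $s_1(x *_i y) = s_1 x \in A$ and $t_1(x *_i y) = t_1 x \in A$, so $\CC'$ is closed under $*_i$ as well. Finally, being a subset of $\CC$ closed under all operations, $\CC'$ inherits the equational conditions (1) and (2) of Definition~\ref{16nov1} from $\CC$, and it satisfies condition (3) because for each $a \in \CC'$ the stabilizing index $i$ with $s_i a = t_i a = a$ is inherited from $\CC$. Thus $\CC'$ is an $\om$-subcategory of $\CC$ with $(\CC'_1, s_0, t_0, *_0) = A$, as required.
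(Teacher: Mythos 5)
Your proposal is correct and takes essentially the same route as the paper's proof: define $\CC'$ as the cells of $\CC$ whose $1$-source and $1$-target lie in $A$, verify that $\CC'$ is stable under all $s_j$, $t_j$ (splitting into the cases $j>1$, $j=1$, $j=0$, the last using that $A$ is a subcategory), and then verify closure under each $*_i$, with the $i=0$ case again using closure of $A$ under composition. Your explicit derivation of $s_1(x *_i y) = s_1 x = s_1 y$ for $i \geq 2$ is in fact more careful than the paper, which simply asserts $s_1(x *_i y) = s_1 y$ for all $i \geq 1$ without noting that for $i\geq 2$ this requires combining axioms (2)(b) and (1)(d) rather than (1)(d) alone.
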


\begin{proof}  Define $Ob(\CC ') = \{ c \in Ob(\CC) \st s_1 c \; , \; t_1 c \in \CC ' _1 \}$.  We show that $\CC ^\prime$ is an $\om$-category by first showing that it is stable under all source and target maps and then showing that it is closed under composition.  If $x \in Ob (\CC ^\prime)$ and $j > 1$, $ \rho _1 \si _j  x = \rho _1 x \in \CC ' _1 $.  If $j = 1$, $\rho _1 \si _j x = \si _j x \in \CC ' _1$.  Finally, if $j = 0$, $\rho _1 \si _0 x = \si _0 x = \si _0 \rho _1 x$.  Since ${A}$ is a category, $\si _0 \rho _1 x \in {A} _0 = \CC ' _0 \subset \CC ' _1$.  This shows that $\CC '$ is stable under maps $\si _j \in \{ s_j , t_j \}$.  \\

\noi Now suppose $x, \, y \in \CC '$.  If $i \geq 1$, $s_1 (x *_i y ) = s_1 y \in \CC '$, and $t_1 (x*_i y ) = t_1 x \in \CC '$.  If $i=0$, $\rho _1 (x*_i y ) = \rho _1 x *_0 \rho _1 y \in {A} \subset \CC '$ since ${A}$ is a category.  This shows that $\CC '$ is stable under all compositions $*_i$.  Clearly $\CC '$ is an $\om$-category because all other necessary properties are inherited from $\CC$.  
\end{proof}

In \cite{str}, Street constructs an $\om$-category $Cat _\infty$ such that $((Cat _\infty)_1 , s_0, t_0 , *_0 ) = \om Cat$ so that $\om$-Cat is a category enriched over itself.  The construction is natural since it comes from an inner hom in $\om$Cat.  Lemma \ref{extensionlem} can be applied to $Pic_\om$: there is an $\om$-category $\CC$ such that $(\CC _1 , s_0 , t_0 , *_0 ) = \spics $.  In fact, we see in sections \ref{homotopy} and \ref{icats} that there is more than one $\om$-category which has $Pic _\om$ as its 0-objects and 1-morphisms.  \\

We now make the observation that for $ {A} \in \spics$, since there is a section of ${A} _i \lra {A} _i / {A} _{i-1}$ sending $[x]$ to $x - s_{i-1}x$.  It follows that ${A} _i \simeq  A_i / A_{i-1} \oplus {A} _{i-1}$ as abelian groups.  Moreover, 
\[ {A} \simeq A_0 \oplus \bigoplus _{i =1} ^\infty  A _i /A_{i-1}
\]
\noi With this identification, for $x \in  A_k/A_{k-1}$, 
\begin{displaymath} 
s_n x = 
\left\{ 
\begin{array}{ll}
x     & \textrm{if $ n \geq k $}\\
0  & \textrm{ if $n < k$}
\end{array}
\right.
\end{displaymath}
 
\begin{displaymath} 
t_n x = 
\left\{ 
\begin{array}{ll}
x     & \textrm{if $ n \geq k $}\\
dx  & \textrm{ if $n = k-1 $}\\
0 & \textrm{ if $n < k-1$}
\end{array}
\right.
\end{displaymath}
 
This can be written explicitly as $s_n : (x_0,...,x_m,...) \mapsto (x_0,...x_n,0,0,...)$ and $t_n : (x_0,...,x_m,...) \mapsto (x_0,...,x_{n-1}, x_n + dx_{n+1},0,0,...)$, where $d = t_n - s_n$.\\
There are several possible identifications of $A$ with $\bigoplus _{i =0} ^\infty  A _i/A_{i-1}$.  It is perhaps most transparent if $A = P(C) $ for $C \in Ch^+(Ab)$, so $C^i \simeq  A_i/A_{i-1}$, and $\bigoplus _{i=0} ^\infty C^i \lra A$ is given by $(x_0,x_1,x_2,...,x_n,0,0,...)\mapsto ((x_0 , x_0 +dx_1), (x_1, x_1 +dx_2), ...,(x_{n-1}, x_{n-1}+ dx_n), (x_n , x_n),(0,0),...) $.

 \begin{prop}\label{shiftfunctor} There is a functor $[-1] : \spics \lra \spics$ such that for $C \in Ch^+({A}b) $, ${P} (C [-1]) = ({P} (C))[-1]$, where $C[-1]$ denotes the complex $...\lra C^1 \lra C^0 \lra 0 \in Ch^+(Ab)$ with $C[-1]^n = C^{n-1}$ for $n>0$ and $C[-1]_0 = 0$.  For ${A} \in \spics$, we define $A[-1]$ by letting $Ob(A[-1]) = Ob(A)$, $( *[-1]_n= *_{n-1} , s[-1]_n = s_{n-1}, t[-1]_n = t_{n-1})$ for all $n \geq 1$, $s[-1]_0 = t[-1]_0 = 0$, and $x*_0 y = x+y$.
 \end{prop}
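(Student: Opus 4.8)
The plan is to establish the proposition in three stages: first, that the prescribed data make $A[-1]$ a genuine object of $\spics$; second, that sending $F$ to $F[-1]$ (the same underlying map of sets) is functorial; and third, that $P$ intertwines the chain-level and $\om$-categorical shifts. Throughout I would lean on Proposition~\ref{groupobjects}, which reduces the Picard condition to checking only that $+$ is a functor for the shifted structure and that $x *[-1]_n y = x + y - s[-1]_n(x)$, and on Lemma~\ref{7nov2} for the functor $Q$.

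For the first stage I would verify the axioms of Definition~\ref{16nov1} by splitting on the degree. For all indices $\geq 1$ the shifted operators are literally the operators of $A$ reindexed by $n \mapsto n-1$, so every axiom involving only positive indices is inherited from $A$ verbatim. The content is concentrated in the relations touching degree $0$, where $s[-1]_0 = t[-1]_0$ is the constant map to $0$ and $*[-1]_0 = +$. The category axioms at level $0$ reduce to the unital, associative, and commutative group laws on $Ob(A)$ (for instance $a *[-1]_0 s[-1]_0 a = a + 0 = a$), and the mixed $2$-category axioms for $0 < j$ reduce to the additivity of $\rho_{j-1}$ together with the identity $\rho_{j-1} 0 = 0$; both hold because $+$ is a functor on $A$, since $\rho_n 0 = \rho_n(0+0) = \rho_n 0 + \rho_n 0$ forces $\rho_n 0 = 0$, exactly as in the proof of Proposition~\ref{groupobjects}. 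The global condition (3) is immediate: a witness $i$ for $a$ in $A$ gives the witness $i+1$ in $A[-1]$. The Picard condition then follows from Proposition~\ref{groupobjects}: $+$ respects the shifted operators by the same case split (in degree $0$ one needs $(x+y)+(a+b) = (x+a)+(y+b)$, i.e. commutativity), while $x *[-1]_n y = x + y - s[-1]_n x$ holds for $n \geq 1$ by the Picard property of $A$ and for $n = 0$ because $s[-1]_0 x = 0$. Functoriality of $[-1]$ is then easy: taking $F[-1]$ to be $F$ on underlying sets, additivity of $F$ gives $F(0)=0$ and compatibility with $+ = *[-1]_0$ and $s[-1]_0 = t[-1]_0 = 0$, while the positive-degree operators are respected because the original ones were; compatibility with composition and identities in $\spics$ is trivial since the assignment leaves set-maps unchanged.

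The comparison with $P$ is where the only genuine subtlety lies. Note first that $P(C[-1])$ and $(P(C))[-1]$ do not have literally equal underlying sets: an element of $P(C[-1])$ is a sequence whose degree-$0$ entry is forced to be $(0,0)$ because $C[-1]^0 = 0$, whereas $(P(C))[-1]$ carries the full underlying set of $P(C)$. The equality in the statement is therefore realized by the canonical shift bijection $\Phi \colon (P(C))[-1] \to P(C[-1])$ that prepends $(0,0)$ and sends the degree-$i$ entry of $x$ to the degree-$(i+1)$ entry of $\Phi(x)$. I would check that $\Phi$ lands in $P(C[-1])$: the defining constraint $d_{C[-1]} y_{i+1}^{\pm} = y_i^+ - y_i^-$ becomes the constraint defining $P(C)$ for $i \geq 1$ and is the trivial $0=0$ for $i=0$, since $d_{C[-1]}$ restricted to $C[-1]^1 = C^0$ is the zero map. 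Then I would verify that $\Phi$ is additive and intertwines the shifted operators with those of $P(C[-1])$; the one point to notice is that every object of $P(C[-1])$ has $s_0 = t_0 = 0$, so the Picard formula gives $y *_0 y' = y + y'$ there, matching the additive $*[-1]_0$ of $(P(C))[-1]$. Naturality of $\Phi$ in $C$ is clear from the formula. Alternatively, and more cheaply, one observes that $Q$ strictly commutes with the two shifts, $Q(A[-1]) = (Q(A))[-1]$ (the degree-$0$ quotient collapses to $0$ and each higher quotient is reindexed), and then deduces the statement for $P$ from the equivalence $PQ \simeq \mathrm{id}$ of Theorem~\ref{26oct1}.

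The main obstacle is not depth but the consistent handling of the degree-$0$ collapse: one must ensure that forcing $s_0 = t_0 = 0$ does not break the interchange and source/target axioms at the boundary between degree $0$ and the higher degrees, and that the group addition legitimately plays the role of level-$0$ composition. Proposition~\ref{groupobjects} is precisely the tool that makes this consistency automatic, since its characterization $x *_n y = x + y - s_n x$ degenerates to $x *_0 y = x + y$ as soon as $s_0 \equiv 0$.
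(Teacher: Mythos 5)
Your proposal is correct and takes essentially the same approach as the paper: the paper's proof simply declares that $A[-1] \in Pic_\om$ is evident and then identifies $P(C[-1])$ with $(P(C))[-1]$ via exactly the bijection you construct, namely prepending $(0,0)$ and shifting each entry up one degree. Your write-up just supplies the degree-$0$ verifications and the naturality/functoriality checks that the paper leaves implicit (plus an optional alternative via $Q$), so it matches the paper's argument in substance.
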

 \begin{proof}  That $A[-1] \in Pic _\om$ is evident. If $A = P(C)$, there is a bijection between $Ob (P{C}[-1] )$ and $Ob(({P}C)[-1])$.  For $ ((x_0 ^- , x_0 ^+ ), (x_1 ^- , x_1 ^+ ),...,(x_{n-1} ^- , x_{n-1}^+) ) \in (PC)[-1]_n$ maps to $ ((0,0), (x_0 ^- , x_0 ^+ ), (x_1 ^- , x_1 ^+ ),... )$. 
 \end{proof}
 
The following proposition is a generalization of a lemma found in \cite{brh}

\begin{prop}\label{16nov2} Let ${A}$ be any abelian group.  If ${A}$ admits $\bbz$-linear maps $s_n \, , t_n : {A} \lra {A}$ for $n \in \N$ satisfying conditions 1a, 2a, and 2b of definition~\ref{16nov1}, then there is a unique $\om$-category structure on ${A}$ such that ${A} \in \spics$.  
\end{prop}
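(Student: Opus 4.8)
The plan is to separate uniqueness, which is immediate, from existence, which is a sequence of direct substitutions. For uniqueness, suppose $A$ together with the prescribed $s_n,t_n$ and some composition maps $*_i$ is a Picard $\om$-category. Then the second clause of part~(1) of Proposition~\ref{groupobjects} forces $x *_i y = x + y - s_i(x)$ whenever the left-hand side is defined. Since $+$ is the ambient group operation and the $s_i$ are fixed, this pins down every $*_i$ on the fibred product $\{(x,y) : s_i x = t_i y\}$, so at most one such structure can exist.

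For existence I would take exactly this formula as the \emph{definition} of $*_i$ and then check the remaining clauses of Definition~\ref{16nov1}, reducing each to the linear hypotheses 1a, 2a, 2b together with $\bbz$-linearity of $s_n,t_n$. Expanding $x*_iy = x+y-s_i x$: the unit law 1b uses $s_i t_i = t_i$; the source/target-of-a-composite law 1d uses $t_i s_i = s_i$ and the composability relation $s_i x = t_i y$; associativity 1c uses $s_i s_i = s_i$; clause 2c uses $\rho_j s_i = s_i$ and $s_i \rho_j = s_i$ (from 2a, 2b); and the interchange law 2d uses $s_i s_j = s_i$ on one side and $s_j s_i = s_i$ on the other, both expansions collapsing to $a+b+\al+\be - s_j a - s_j\al - s_i b$. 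That $+$ is a functor—commuting with every $s_i,t_i$ and satisfying $(x+y)*_i(x'+y') = (x*_i x')+(y*_i y')$—is immediate from $\bbz$-linearity of the source and target maps. With these clauses and the functoriality of $+$ in hand, and granting clause~3 (addressed below), $A$ is an $\om$-category on which the composition formula holds by construction, so part~(1) of Proposition~\ref{groupobjects} certifies $A\in\spics$.

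The step I expect to be the real obstacle is clause~3 of Definition~\ref{16nov1}, that every $a \in A$ satisfies $s_i a = t_i a = a$ for some $i$: unlike the bilinear axioms it does \emph{not} follow from 1a, 2a, 2b. One sees this by equipping the full product $\prod_n \bbz$ with coordinatewise truncation maps, for which 1a, 2a, 2b hold but no index $i$ fixes a sequence with infinitely many nonzero entries. I would therefore either read clause~3 as a standing assumption on the triple $(A, s_n, t_n)$ or verify directly that the fixed-point filtration $\bigcup_i\bigl(\{a : s_i a = a\}\cap\{a : t_i a = a\}\bigr)$ exhausts $A$; granting this, the verifications of the previous paragraph complete existence, and together with the uniqueness argument the proposition follows.
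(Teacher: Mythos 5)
Your proof is correct and follows essentially the same route as the paper's: define $x *_i y = x + y - s_i x$, observe that functoriality of $+$ is then automatic from $\bbz$-linearity, and verify clauses 1b, 1c, 1d, 2c, 2d by direct expansion using 1a, 2a, 2b; your identity-by-identity bookkeeping matches the paper's computations exactly, down to both sides of the interchange law collapsing to $a+b+\al +\be -s_ja-s_j\al -s_ib$. What you add is worth recording. First, you make uniqueness explicit via Proposition \ref{groupobjects}(1b), whereas the paper leaves it implicit. Second, and more substantively, your caution about clause 3 of Definition \ref{16nov1} is justified: the paper's own proof never verifies it, and it genuinely cannot be derived from 1a, 2a, 2b --- your example of $\prod_n \bbz$ with coordinatewise truncation maps satisfies 1a, 2a, 2b while no truncation fixes a sequence with infinitely many nonzero entries. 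So the proposition as literally stated needs clause 3 as a standing hypothesis on the triple $(A, s_n, t_n)$, exactly as you propose. Note that in the paper's intended application --- $\bbz[{A}]$ for ${A} \in \om Cat$ with source and target maps extended $\bbz$-linearly --- clause 3 does hold, since every element is a finite $\bbz$-linear combination of elements of ${A}$, each fixed by $s_i, t_i$ for some $i$, hence the whole combination is fixed once $i$ is taken large enough. Your reading therefore repairs a small gap in the paper rather than taking a detour.
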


\begin{proof} If $a,b \in {A}$ such that $s_na = t_nb$, then we define $a*_nb = a + b - s_n a$ and check that this makes ${A}$ into a $ $ Picard $\om$-category.  If compositions satisfy all $\om$Cat axioms, then it is easily seen that $+$ is a functor, i.e. $(a+b)*_n (a' + b') = (a *_n a')+ (b*_n b')$, whenever the right-hand side is defined.  We can easily check that $ (a+b)*_n (a' + b') = a+ b + a' + b' - s_n (a + b) = a+ b + a' + b' - s_n a  - s_n b =  (a *_n a')+ (b*_n b')$.  Thus, it suffices to check that ${A}$ satisfies all $\om$-category axioms of Definition~\ref{16nov1}.  \\

\noi (1b) $a*_n s_n (a) = a + s_n a - s_n a = a$, and $t_n a *_n a = t_n a + a - s_n t_n a = t_n a + a - t_n a = a$.\\

\noi (1d) $s_n (a*_nb ) = s_n (a + b - s_n a ) = s_n a + s_n b - s_n a= s_n b$, and similarly, $t_n (a*_n b) = t_n ( a+ b - s_n a ) = t_n a + t_n b - s_n a = t_n a$ since $t_n b = s_n a$.\\

\noi (1c) $(a*_n b)*_n c = (a + b - s_n a ) + c - s_n (a*_nb) = a + b + c - s_n a - s_n b$, whereas $a*_n (b*_n c ) = a + (b+ c - s_n b) = s_n a$.  \\

\noi (2c) $\rho _j (a*_i b) = \rho _j ( a + b - s_i a ) = \rho _j a + \rho _j b - \rho _j s_i a = \rho _j a + \rho _j b - s_i (\rho _j a ) = (\rho _j a ) *_i (\rho _j b)$  since $j>i$. \\

\noi (2d) $(a*_j b)*_i (\al *_j \be)= (a + b - s_j a) + (\al + \be - s_j \al ) - s_i (a+ b - s_ja) = a + b + \al + \be - s_j a - s_j \al - s_i a - s_i b + s_i a = a + b + \al + \be - s_j a - s_j \al  - s_i b $.  On the right-hand side we have $(a *_i \al ) *_j ( b *_i \be) = (a + \al - s_i a ) + ( b + \be - s_i b) - s_j ( a*_i \al ) = a + b + \al + \be - s_i a - s_i b - (s_ja *_i s_j \al) = a + b + \al + \be - s_i a - s_i b - (s_ja + s_j \al - s_i a) = a + b + \al + \be - s_j a - s_j \al - s_i b$.
\end{proof}

\noi For a set ${A}$, we let $\bbz [{A}]$ denote the free abelian group on the set ${A}$.  If ${A} \in \om Cat$, we can extend all source and target maps $\bbz$-linearly.  Proposition~\ref{16nov2} implies that $\bbz [{A}] \in \spics$.    

\begin{lem} The functor $\bbz : \om Cat \lra \spics$ sending $A$ to the free abelian group generated by $A$ is left-adjoint to the forgetful functor $\FF _\om : \spics \lra \om Cat$.  
\end{lem}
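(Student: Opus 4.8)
The plan is to verify the universal property of a free object directly: produce a unit $\eta_A : A \to \FF_\om \bbz[A]$ in $\om Cat$ and show that every functor $g : A \to \FF_\om B$ with $B \in \spics$ factors as $g = \FF_\om(\tilde g) \circ \eta_A$ for a unique morphism $\tilde g : \bbz[A] \to B$ in $\spics$. The natural candidate for the unit is the map sending $a \in A$ to the corresponding generator $[a] \in \bbz[A]$, and the natural candidate for $\tilde g$ is the $\bbz$-linear extension of $g$, namely $\tilde g(\sum_a n_a [a]) = \sum_a n_a\, g(a)$. Since $\bbz[A]$ is free as an abelian group on the underlying set of $A$, uniqueness of $\tilde g$ is immediate: any two morphisms agreeing with $g$ on the generators must coincide.

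The easy half is that $\tilde g$ is a morphism in $\spics$. It is additive by construction, and on generators it commutes with source and target maps because $s_n[a] = [s_n a]$ in $\bbz[A]$ (the maps being extended $\bbz$-linearly) while $g$ preserves $s_n, t_n$; by additivity $\tilde g$ then commutes with all $s_n, t_n$. At this point Remark \ref{forgetfuncts} finishes the job: a group homomorphism between Picard $\om$-categories that respects every source and target map is automatically a morphism of Picard $\om$-categories, since composition there is forced to be $x *_n y = x + y - s_n x$ by Proposition \ref{groupobjects}. Thus no separate check that $\tilde g$ preserves $*_n$ is required.

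The main obstacle is establishing that the unit $\eta_A$ is itself a functor of $\om$-categories, equivalently that it respects composition. Preservation of $s_n, t_n$ is clear, but for composition one must compare the image of a composite $a *_n b$ formed in $A$ with the composite of the images formed in $\bbz[A]$. By Proposition \ref{16nov2} the composition on $\bbz[A]$ is the forced one, so $[a] *_n [b] = [a] + [b] - [s_n a]$, whereas $\eta_A(a *_n b) = [a *_n b]$; reconciling these is the crux of the matter, and it is exactly where the Picard $\om$-category structure of $\bbz[A]$ (rather than its bare underlying free abelian group) must be invoked. I would isolate this compatibility as the key lemma and treat it with care, since it is precisely the step that encodes why $\bbz[A]$ deserves to be called the free Picard $\om$-category on $A$.

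Granting that $\eta_A$ is a functor, the factorization $g = \FF_\om(\tilde g) \circ \eta_A$ holds on generators and hence everywhere, giving existence; uniqueness was noted above. Finally I would check naturality of the resulting bijection $\text{Hom}_{\spics}(\bbz[A], B) \cong \text{Hom}_{\om Cat}(A, \FF_\om B)$ in both variables, which is routine because both the unit and the linear extension are defined on, and determined by, generators; the triangle identities then follow formally, completing the proof that $\bbz \dashv \FF_\om$.
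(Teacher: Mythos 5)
You have correctly isolated the crux, but your proposal then defers it (``I would isolate this compatibility as the key lemma and treat it with care''), and this is a genuine gap that cannot be filled: the key compatibility is false. In the free abelian group $\bbz[A]$ the generators are linearly independent, so $[a*_nb]=[a]+[b]-[s_na]$, i.e.\ $[a*_nb]+[s_na]=[a]+[b]$, can hold only if the two sides involve the same generators, which (given composability) forces $b=s_na$ or $a=t_nb$, i.e.\ one of the two factors is an $n$-identity. Hence $\eta_A$ fails to preserve every genuinely non-trivial composite. Concretely, let $A$ be the $1$-category with objects $x,y,z$, non-identity arrows $f: x\lra y$, $g: y\lra z$ and $h=g*_0f$, regarded as an $\om$-category; then $[h]\neq [g]+[f]-[y]$ in $\bbz[A]\cong\bbz^6$, so $\eta_A$ is not a functor. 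Moreover, no alternative choice of unit can rescue the statement: the actual left adjoint of $\FF_\om$ sends $A$ to the quotient $\bbz[A]/R$, where $R$ is the subgroup generated by all elements $[a*_nb]-[a]-[b]+[s_na]$ (one checks that $R$ is stable under every $s_m$ and $t_m$, so the quotient again lies in $\spics$ by Proposition \ref{16nov2}, the composite $A\lra\bbz[A]/R$ is now a functor, and the universal property follows by exactly your argument). For the example above $\bbz[A]/R\cong\bbz^5\not\cong\bbz^6\cong\bbz[A]$, and since left adjoints are unique up to natural isomorphism, $\bbz[-]$ itself cannot be left adjoint to $\FF_\om$. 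The lemma as stated is therefore false; it is correct only for those $A$ in which every composable pair has an identity factor (so that $R=0$), and in general the free Picard $\om$-category on $A$ is $\bbz[A]/R$, not $\bbz[A]$.

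You should also know that the paper's own proof glosses over exactly the point you flagged. Its forward direction is the one you handle correctly (linear extension plus Remark \ref{forgetfuncts}); but for surjectivity it restricts an arbitrary $\psi\in Hom_{\spics}(\bbz[A],B)$ to the generators and asserts that the restriction ``is actually a map of $\om$-categories.'' That assertion is precisely the false compatibility: $\psi|_A$ respects all $s_n$ and $t_n$, but nothing forces $\psi([a*_nb])=\psi([a])+\psi([b])-\psi([s_na])$ --- take $B=\bbz[A]$ and $\psi$ the identity, whose restriction is $\eta_A$. So your instinct that this step is the heart of the matter was right; what is missing from your proposal is the recognition that the step is not merely delicate but impossible, and that repairing the lemma requires replacing $\bbz[A]$ by $\bbz[A]/R$, after which your outline --- unit, linear extension, uniqueness from generation, naturality --- goes through verbatim.
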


\begin{proof} Let ${A} \in \om Cat$ and ${B} \in \spics $. 
 Any $\vphi \in Hom_{\om Cat }({A} , \FF _\om ({B}))$
 can be extended $\bbz$-linearly to a map $\hat \vphi \in Hom_{{A} b} ( \bbz [{A}] , {B}) )$
 of abelian groups.  The fact that all $s_n$ and $t_n $ are $\bbz$-linear means that $\hat \vphi$ commutes with all source and target maps.  But since composition $*_n$ 
in a $ $ Picard $\om$-category is determined by all $+$,
 $s_n$, $t_n$, $\hat \vphi$ also respects compositions $*_n$.
  Therefore $\hat \vphi \in Hom _{\spics } (\bbz [ {A}] , {B})$.
  It is clear that the function $\vphi \mapsto \hat \vphi $ 
is injective.  To see that it is surjective, any $\psi \in  Hom _{\spics } (\bbz [ {A}] , {B})$ 
is also a map of abelian groups, so it comes from some $\vphi \in Hom _{Sets} ( Ob({A} ) , Ob(F({B})))$.
  Of course since $Ob ({A} ) \subset Ob (\bbz [{A}])$,
  $\vphi (a)= \psi (a)$ and so $\vphi$ 
is actually a map of $\om$-categories and 
$\hat \vphi = \psi$.  The inverse map is $\psi \mapsto \psi _{| A}$.\\

\noi To check that $G$ and $F$
 are adjoints, we must also see that we have a map of functors\\ 
$ Hom_{\spics} ( \bbz [ - ] , -) \tilde \lra  Hom_{\om Cat } (- , F - )$.
  In other words, for $f \in Hom _{\om Cat } (A, A')$ 
and $g \in Hom_{\spics} (B , B')$, 
then for $\psi \in Hom_{\spics} (\bbz[A'] , B)$,
 the maps $(g \circ \psi \circ \bbz [ - ])_{|A} =
 F(g) \circ (\psi _{|A'}\circ f \in Hom_{\om Cat} (A , F(B'))$. It is easy to see that these maps agree at the level of sets. 
\end{proof}  

In \cite{str} it is shown that $Ob : \om Cat \lra Set$ is represented by an object $2_\om \in \om Cat$.   

\begin{cor}\label{16nov3} $\bbz [2_\om] $ is a corepresentative for the functor $Ob : \spics \lra S ets$.  
\end{cor}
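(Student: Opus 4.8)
The plan is to realize the corepresentation as a composite of two natural isomorphisms: the adjunction established in the preceding lemma, and Street's representability of $Ob$ on $\om Cat$ by $2_\om$. First I would fix $B \in \spics$ and apply the adjunction $\bbz \dashv \FF _\om$ to obtain a natural bijection
\[ Hom _{\spics}(\bbz [2_\om] , B) \; \cong \; Hom _{\om Cat}(2_\om , \FF _\om B). \]
Since $2_\om$ represents the forgetful functor $Ob : \om Cat \lra Set$, the right-hand side is in turn naturally isomorphic to $Ob(\FF _\om B)$. Chaining these two isomorphisms is the whole of the argument; everything else is bookkeeping.

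The one point that genuinely requires comment is the identification of $Ob(\FF _\om B)$ with $Ob(B)$, where on the right $Ob : \spics \lra Set$ denotes the forgetful functor sending a Picard $\om$-category to its underlying set. This is immediate from the definitions: the underlying set of the $\om$-category $\FF _\om B$ is by construction the underlying set of the Picard $\om$-category $B$, so the two functors $Ob \circ \FF _\om$ and $Ob : \spics \lra Set$ literally agree on objects and on morphisms. Hence composing the two displayed isomorphisms yields a bijection $Hom _{\spics}(\bbz [2_\om] , B) \cong Ob(B)$.

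It then remains to check naturality in $B$, so as to upgrade the pointwise bijection to an isomorphism of functors $Hom _{\spics}(\bbz [2_\om] , -) \cong Ob$. This is automatic: the first isomorphism is natural because it is the hom-set bijection of an adjunction, the second is natural because $2_\om$ represents $Ob$ on $\om Cat$, and the identification $Ob \circ \FF _\om = Ob$ is natural trivially. A composite of natural isomorphisms is natural, which is exactly the assertion that $\bbz [2_\om]$ corepresents $Ob : \spics \lra Set$. I do not expect a genuine obstacle here: the whole argument is formal once the adjunction and the representability of $2_\om$ are in hand. The only thing to be careful about is not conflating the two distinct $Ob$ functors (one on $\om Cat$, one on $\spics$) before confirming that they are compatible through $\FF _\om$.
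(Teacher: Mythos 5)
Your proof is correct and follows exactly the paper's own argument: the adjunction $\bbz \dashv \FF_\om$ from the preceding lemma, Street's representability of $Ob$ on $\om Cat$ by $2_\om$, and the observation that $Ob(\FF_\om B) = Ob(B)$. The paper compresses this into a one-line chain of identifications; your additional remarks on naturality are sound but not a departure in method.
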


\begin{proof}
$Hom_{\spics} (\bbz [2 _\om] , B ) = Hom _{\om Cat} (2_\om , \FF _\om (B)) = Ob (\FF _\om (B)) = Ob (B)$.  
\end{proof}




\subsection{I-categories}\label{icats}

Since the structure maps for an $\om$-category are indexed by natural numbers, we may think of an $\om$-category as an $\mathbb N$-category.  The $\om$-category axioms depended only on $\mathbb N$ being a partially ordered set.  We may therefore extend the definition and define an $I$-category for any partially ordered set $I$.  In particular, we are interested in $\bbz$-categories and show that ``nice'' abelian group objects in $\bbz$-categories are the same as unbounded chain complexes of abelian groups.  

\begin{Def} Let $I$ be a linearly ordered set. 
\begin{enumerate} 
\item An \emph{I-category} is a set quadruple $({A} , s_i , t_i ,*_i)_{i \in I}$ as in Definition~\ref{16nov1} except that instead of $\mathbb N$, we have $I$. Let $I$-Cat denote the category of all I-categories.   
\item Let $Pic_I$ denote abelian group objects in $I$-cat, and let $Pic_I ^0$ denote the full subcategory, called Picard I-categories, the objects of which are ${A} \in Pic _I$ such that for all $x \in {A}$, there exists $n \in I$ such that $s_nx = 0$ and there exists $m\in I$ for which $s_mx = x$. 
\end{enumerate} 
\end{Def}

We can extend some of the results about $\om$-categories to $\bbz$-categories.  Theorem \ref{26oct1}, for instance, can be extended to $\bbz$-categories.

\begin{Def}\label{zcatsequiv} A functor $F : {A} \lra {B}$ of $\bbz$-categories is an equivalence if conditions \ref{1octa2ii} and \ref{1octa2iii} of Definition \ref{1octa} are met and for each $b \in {B}$, there exists $n$ such that $s_nb$ is isomorphic to some $F(a)$. 
\end{Def} 

It follows directly from the definition of $Pic_\bbz ^0$ that any map of Picard $\bbz$-categories which  satisfies conditions \ref{1octa2ii} and \ref{1octa2iii} of Definition \ref{1octa} is an equivalence.  In fact, since Picard $\bbz$-categories are groupoids, condition  \ref{1octa2ii} of Definition \ref{1octa} is sufficient. 

\begin{thm}\label{zcats} The category $Ch (\mathcal Ab)$ of chain complexes of abelian groups is equivalent to the category $Pic_\bbz ^0$ of Picard $\bbz$-categories.  
\end{thm}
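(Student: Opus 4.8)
The plan is to transcribe the proof of Theorem~\ref{26oct1} into the unbounded setting, extending the functors $P$ and $Q$ to $\bbz$-indexed data and letting the two finiteness conditions defining $Pic_\bbz^0$ play the role formerly played by the bottom degree. For $A \in Ch(\mathcal Ab)$ I would define $P(A)$ by the formula of Proposition~\ref{functlemma1}: its elements are the $\bbz$-indexed sequences $x = (x_i)_{i \in \bbz}$ with $x_i = (x_i^-, x_i^+)$, $x_i^\alpha \in A^i$, and $dx_i^\alpha = x_{i-1}^+ - x_{i-1}^-$, subject now to the requirement that $x$ have finite support; the maps $s_i, t_i, *_i, +$ are given by the identical formulas. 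In the other direction I define $Q(A)$ as in Lemma~\ref{7nov2}, but uniformly over all $i \in \bbz$, setting $Q^i = A_i / A_{i-1}$ with $d = t_{i-1} - s_{i-1}$ (there is no longer a distinguished $Q^0$).

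Several of the required verifications are immediate. Finite support of its elements is exactly what places $P(A)$ in $Pic_\bbz^0$: below the support $s_i x = 0$, and at the top $s_{\mu(x)} x = x$, where $\mu(x) = \min\{m : s_m x = x\}$. That $P(A)$ is a Picard $\bbz$-category, and that $Q(A)$ is a complex with $d^2 = 0$, follow from the same computations as before; most efficiently, the Picard structure comes from Proposition~\ref{16nov2}, whose verification of axioms (1b)--(2d) via $a *_n b = a + b - s_n a$ never invoked a least index. Functoriality of $P$ and $Q$ is formal. For the natural isomorphism $QP \cong id$ I would reuse verbatim the map $h_A : A^i \to (PA)_i/(PA)_{i-1}$, $x \mapsto [\hat x]$: this argument proceeds one degree at a time, with no induction and no reference to a bottom, so it and its naturality square remain valid for every $i \in \bbz$.

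The heart of the matter is the isomorphism $\varphi_A : A \to PQ(A)$, $\varphi(x)_i = ([s_i x], [t_i x])$, and here both finiteness conditions are used. Boundedness above supplies $\mu(x)$, so $\varphi(x)$ is supported in degrees $\le \mu(x)$; boundedness below, together with the explicit descriptions of $s_n$ and $t_n$ recorded in \S\ref{17nov2}, forces $\varphi(x)_i = (0,0)$ for $i$ sufficiently negative, so that $\varphi(x)$ has finite support and genuinely lies in $PQ(A)$. That $\varphi$ is a morphism of Picard $\bbz$-categories (using Remark~\ref{forgetfuncts}) and that it is injective are checked exactly as in Theorem~\ref{26oct1}, the injectivity resting only on the minimality of $\mu(x)$, which is still available.

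I expect surjectivity of $\varphi$ to be the one genuinely new point, since the inductive argument of Theorem~\ref{26oct1} was anchored at $n = 0$ by the identity $(PA)_{-1} = 0$, and no such base degree exists over $\bbz$. My plan is to fix $y \in PQ(A)$, choose $N$ with $s_N y = 0$, and pass to the subcategory $\{z : s_N z = 0\}$ of sequences supported above $N$; this is a Picard sub-$\bbz$-category bounded below, hence, after the reindexing furnished by iterating the shift functor of Proposition~\ref{shiftfunctor}, an object of $\spics$ to which Theorem~\ref{26oct1} applies directly. Surjectivity of $\varphi$ onto this piece is then the already-proven surjectivity in the bounded case, and since $PQ(A)$ is the union of these pieces as $N \to -\infty$, every $y$ is hit. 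With surjectivity established, $\varphi$ is a bijective morphism and therefore an isomorphism in $Pic_\bbz^0$; naturality is the same entrywise computation as before, and $P$, $Q$ are mutually inverse equivalences. A fully self-contained alternative replaces this reduction by a finite downward induction on $\mu(y)$, terminating because the support is bounded below by $N$.
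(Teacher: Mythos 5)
Your proposal is correct, and most of it is exactly the paper's proof: the paper likewise defines $P(A)$ as the finite-support $\bbz$-indexed sequences, takes $Q^i = A_i/A_{i-1}$ uniformly in $i$, reuses the bounded verifications, and records that $\mu(x)$ is well defined for $x \neq 0$; moreover, for surjectivity of $\vphi_A : A \lra PQ(A)$ the paper does precisely what you offer as your ``self-contained alternative,'' namely it picks $y$ with $y_i^\pm = 0$ for $i \leq n$ and reruns the induction of Theorem \ref{26oct1} starting at $n$ instead of $0$. Your primary route for surjectivity (truncate and invoke Theorem \ref{26oct1} as a black box) is the one genuine departure from the paper, and it does work, but as written it elides the bridge that turns it into a proof: applying Theorem \ref{26oct1} to a reindexed bounded-below piece only gives information about that piece's own $PQ$, so you must introduce $A^{(N)} = \{x \in A \st s_N x = 0\}$ (the reindexing here is just the relabelling $i \mapsto i-(N+1)$, not an iterate of the shift of Proposition \ref{shiftfunctor}, which moves degrees the other way), show that $\vphi_A$ carries $A^{(N)}$ into $B := \{z \in PQ(A) \st s_N z = 0\}$, identify $B \cong PQ(A^{(N)})$, and check that under this identification $\vphi_A|_{A^{(N)}}$ becomes $\vphi_{A^{(N)}}$. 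The identification is where a small subtlety hides: an element of $B$ carries the extra entry $z_N = (0,z_N^+)$ and the extra relation $dz_{N+1}^- = dz_{N+1}^+ = z_N^+$, which have no counterpart in $PQ(A^{(N)})$, since there the bottom term is $Q(A^{(N)})^0 = A^{(N)}_0 \cong A_{N+1}/A_N$ and nothing constrains $dw_0^\pm$. These extra data are nonetheless redundant: for any $w \in PQ(A^{(N)})$ one has $w_0^+ - w_0^- = dw_1^+$, a boundary in $Q(A)$, so $dw_0^+ = dw_0^-$ automatically, and $z_N$ is recovered as $(0, dz_{N+1}^+)$; hence $B \to PQ(A^{(N)})$ is a bijection and your reduction goes through. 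With these checks supplied, your argument is a clean, modular alternative that quotes the bounded theorem wholesale; the paper's restarted induction avoids the compatibility verification and is correspondingly shorter.
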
 
\begin{proof} The proofs of Proposition~\ref{groupobjects}, Lemma \ref{functlemma1}, \ref{7nov2}, and Theorem \ref{26oct1} extend naturally to $\bbz$-categories with only a few modifications.  First we define ${P} (A) $ to consist of sequences $(....(x_i ^- , x_i ^+),...)$ as before but require that only finitely many $x_i ^\pm$ are nonzero.  Secondly, in lemma \ref{7nov2}, to show the surjectivity of $\phi : {A} \lra {P} {Q} ({A})$, we choose $y \in {P} {Q} ({A})$ and such that $y_i ^\pm = 0$ for $i \leq n \in \bbz$.  To show that $y$ is in the image of $\phi $, we start the induction at $n$ instead of $0$.  Also, we note that $\mu (x) $ of Lemma~\ref{7nov2} is well defined except for when $x=0$.  
\end{proof}

Letting $Ho (Pic_\bbz ^0)$ denote Picard $\bbz$-categories localized at equivalences, we arrive at the following corollary, the proof of which is identical to the proof of Proposition \ref{qisprop}.

\begin{cor} The derived category $D {A} b$ of abelian groups is equivalent to the homotopy category of $Pic_\bbz ^0$.
\end{cor}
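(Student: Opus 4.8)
The plan is to deduce the corollary formally from Theorem~\ref{zcats} together with the $\bbz$-analogue of Proposition~\ref{qisprop}. By definition $D\mathcal{Ab}$ is the localization of $Ch(\mathcal{Ab})$ at the quasi-isomorphisms and $Ho(Pic_\bbz^0)$ is the localization of $Pic_\bbz^0$ at the equivalences of Definition~\ref{zcatsequiv}. Theorem~\ref{zcats} provides inverse equivalences $P : Ch(\mathcal{Ab}) \lra Pic_\bbz^0$ and $Q : Pic_\bbz^0 \lra Ch(\mathcal{Ab})$. Hence, once one knows that $P$ carries quasi-isomorphisms to equivalences (and, via the resulting ``if and only if'' together with the natural isomorphisms $QP \cong \mathrm{id}$ and $PQ \cong \mathrm{id}$, that $Q$ carries equivalences to quasi-isomorphisms), the universal property of localization makes $P$ and $Q$ descend to inverse functors $\bar P$, $\bar Q$ of the localized categories, which remain inverse since the two natural isomorphisms descend. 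This descent step is formal, so the entire content of the corollary is the statement: a map $f : A \lra B$ of (now unbounded) chain complexes is a quasi-isomorphism if and only if $Pf : PA \lra PB$ is an equivalence of $\bbz$-categories.

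To prove this analogue I would copy the proof of Proposition~\ref{qisprop} essentially verbatim, using the description of $P(A)$ furnished in the proof of Theorem~\ref{zcats}: its objects are sequences $(\dots,(x_i^-,x_i^+),\dots)$ now indexed by $\bbz$ with only finitely many nonzero entries. The crucial simplification, already recorded after Definition~\ref{zcatsequiv}, is that $PA$ and $PB$ are Picard $\bbz$-categories and hence groupoids, so condition~\ref{1octa2ii} of Definition~\ref{1octa} alone suffices to recognize an equivalence; the weakened essential-surjectivity clause of Definition~\ref{zcatsequiv} is automatic and contributes no extra work. For the forward direction one therefore only reproduces the fullness computation of Proposition~\ref{qisprop}: given an $n$-morphism $\psi$ between $Fx$ and $Fy$ in $PB$, the quasi-isomorphism hypothesis lets one lift the relevant class $[x_n^+ - y_n^+]$ through a bounding cochain $\phi \in A^{n+1}$ and assemble the desired $(n+1)$-isomorphism. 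Every construction here involves only finitely many consecutive degrees, so it is insensitive to the absence of a lowest degree.

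For the converse I would run the argument of Proposition~\ref{qisprop} showing $f : H^n(A) \lra H^n(B)$ is injective and surjective for each $n$, using the sphere objects $\hat y = (\dots,(0,0),(y,y),(0,0),\dots)$ together with conditions~\ref{1octa2ii} and~\ref{1octa2iii}; these objects occupy a single degree $n$ and make sense for every $n \in \bbz$. The main point requiring care, and the step I expect to be the only genuine obstacle, is precisely this: in the bounded proof the cohomology in degree $0$ was treated as a special case via the essential-surjectivity condition, because there is no sphere one degree lower, whereas in the $\bbz$-setting no degree is distinguished and every $n$ has a degree below it. Thus I must check uniformly over all $n \in \bbz$ (negative included) that condition~\ref{1octa2ii} suffices to produce genuine cohomological preimages and that the finite-support constraint is preserved throughout, rather than silently relying on a bottom degree. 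Once this uniform verification is in place the ``if and only if'' holds, and by the first paragraph the equivalence $Ho(Pic_\bbz^0) \simeq D\mathcal{Ab}$ follows.
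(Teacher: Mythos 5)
Your proposal is correct and takes essentially the same route as the paper, whose entire proof is the remark that the argument is identical to that of Proposition \ref{qisprop}: one establishes that $f$ is a quasi-isomorphism if and only if $Pf$ is an equivalence of $\bbz$-categories, and then descends formally through the localizations using Theorem \ref{zcats}. Your added observations --- that the weakened essential-surjectivity clause of Definition \ref{zcatsequiv} is automatic for objects of $Pic_\bbz^0$, that the groupoid property reduces everything to condition \ref{1octa2ii}, and that sphere objects now exist one degree below every $n \in \bbz$, so the special degree-$0$ case of Proposition \ref{qisprop} disappears rather than causing trouble --- are precisely the modifications the paper leaves implicit.
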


The following results about $\om$-cats also extend to $\bbz$-categories: Prop~\ref{groupobjects}, Proposition~\ref{qisprop}. Also, the equivalence ${P} : Ch ({A} b) \lra Pic_\bbz ^0$ is, just as for Picard $\om$-categories, isomorphic to the one that sends $A \in  Ch ({A} b) $ to $\bigoplus _{n \in \bbz} A ^n \in Pic _\bbz ^0$.  The following proposition is patent.

\begin{prop}\label{spectra} $Pic_\bbz ^0$ is a triangulated category with shift functor given as in Proposition~\ref{shiftfunctor}.  This gives $DPic_\bbz ^0$ the structure of a triangulated category.  The mapping cone of $f : {A} \lra {B}$ is $({B} [-1] \times {A} , s_n = s_{n-1} ^{B} \times s_n ^{A} , t_n = t_{n-1} ^{B} \times (f + t_n ^{A}))$.  The t-structure coming from the standard t-structure on $Ch ({A} b)$ is $D^{\geq 0 } = Ho( Pic _\om)$, $D^{\leq 0} = \{{A} \in Pic _\bbz ^0 \st s _n = id, t_n =id \; \textrm{for all} \; n > 0 \}$.  Its heart is $\{ {A} \in Pic_\om \st \; all \; \sigma _n = id \} \simeq {A} b$. 
\end{prop}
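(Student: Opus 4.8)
The plan is to deduce every assertion by transport of structure along the equivalence $P : Ch(\mathcal{A}b) \to Pic_\bbz^0$ of Theorem \ref{zcats}, whose inverse is $Q$. Since $P$ is an equivalence of categories, it suffices to exhibit, for each piece of structure named in the statement, the corresponding standard structure on $Ch(\mathcal{A}b)$ and to check that $P$ (or $Q$) intertwines them. The homotopy categories are related the same way: by the $\bbz$-analogue of Proposition \ref{qisprop}, $P$ and $Q$ send quasi-isomorphisms to equivalences and vice versa, so $P$ descends to an equivalence $D(\mathcal{A}b) \to DPic_\bbz^0$ of the localizations.

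For the shift, Proposition \ref{shiftfunctor} (in its $\bbz$-graded form) gives $P(C[-1]) = (PC)[-1]$, so the functor $[-1]$ on $Pic_\bbz^0$ is carried to ordinary translation on $Ch(\mathcal{A}b)$. The triangulated structure on $D(\mathcal{A}b)$ therefore pulls back to one on $DPic_\bbz^0$: its distinguished triangles are by definition the images under $P$ of the standard ones, and the rotation and octahedral axioms hold because they hold in $D(\mathcal{A}b)$ and $P$ is an equivalence commuting with $[-1]$. Strictly speaking the triangulated category is $DPic_\bbz^0$; on $Pic_\bbz^0$ itself one has only the shift and the cone construction, and it is this data that descends.

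To identify the mapping cone it is cleanest to use the description $A \simeq \bigoplus_i A_i/A_{i-1}$ recorded before Proposition \ref{shiftfunctor}, under which $d = t_n - s_n$ recovers the differential. Under $P$ the usual cone of $f : A \to B$ in $Ch(\mathcal{A}b)$ has underlying group the shifted sum $B[-1] \times A$; its source maps are the componentwise $s_{n-1}^B \times s_n^A$, since $s_n$ is the truncation onto low degrees and carries no twist, while its differential couples the two summands through $f$. Because the differential is encoded in the target maps via $d = t_n - s_n$, this coupling must appear as $t_n = t_{n-1}^B \times (f + t_n^A)$; computing $t_n - s_n$ on $B[-1]\times A$ then returns the standard cone differential $\left(\begin{smallmatrix} d_B & f \\ 0 & d_A \end{smallmatrix}\right)$, confirming the stated formula. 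That these maps genuinely define an object of $Pic_\bbz^0$ is immediate from Proposition \ref{16nov2}.

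Finally, the t-structure is the transport of the standard one on $D(\mathcal{A}b)$, and the two aisles are identified by reading off the explicit formulas for $s_n$ and $t_n$ on $\bigoplus_i A_i/A_{i-1}$. An object lies in $Ho(\spics)$ precisely when $Q(A)$ is supported in non-negative degrees, which is one aisle; the condition that $s_n = t_n = \mathrm{id}$ for the relevant positive $n$ is equivalent, via $s_n x = x \iff n \geq k$ for $x \in A_k/A_{k-1}$, to $Q(A)$ being supported in the complementary degrees, which is the other aisle. Intersecting the two forces $Q(A)$ to be concentrated in a single degree, i.e. all $\sigma_n = \mathrm{id}$, and there $A \mapsto A_0$ gives the promised equivalence with $\mathcal{A}b$. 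The main point requiring care is bookkeeping: matching the paper's degree convention for $Ch^+$ with the sign convention in the derived category, so that the aisle inequalities and the one-step shift in the cone come out with the correct indices. Everything else is formal.
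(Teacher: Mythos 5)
Your overall strategy---transporting the shift, the cone, and the t-structure across the equivalence $P : Ch(Ab) \lra Pic_\bbz^0$ of Theorem \ref{zcats}---is exactly what the paper intends: the paper in fact offers no proof at all (the proposition is declared ``patent''), and your identification of the two aisles and the heart via the explicit formulas for $s_n$, $t_n$ on $\bigoplus_i A_i/A_{i-1}$ from \S\ref{17nov2} is correct, as is the descent of the triangulated structure to the localization via the $\bbz$-analogue of Proposition \ref{qisprop}.

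However, your verification of the mapping cone formula does not work as written, and this is a genuine gap rather than the deferred ``bookkeeping.'' The usual cone of $f : A \lra B$ shifts the \emph{source}: $\mathrm{cone}(f) \cong A[-1] \oplus B$, with differential $\bigl(\begin{smallmatrix} -d_A & 0 \\ f & d_B \end{smallmatrix}\bigr)$, where $f$ carries the degree-$n$ part of $A[-1]$ (namely $A^{n-1}$) into $B^{n-1}$. On $B[-1] \oplus A$, which you (following the statement) take as the underlying group, a coupling through $f : A \lra B$ \emph{raises} ambient degree by one: for $a$ of degree $n$, $f(a)$ lies in $B_n$, which sits in degree $n+1$ of $B[-1]$. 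Hence your matrix $\bigl(\begin{smallmatrix} d_B & f \\ 0 & d_A \end{smallmatrix}\bigr)$ is not a degree $-1$ differential on $B[-1]\oplus A$, and correspondingly the stated target map $t_n = t_{n-1}^B \times (f + t_n^A)$ violates $t_n t_n = t_n$ (its image is not contained in $C_n$) unless $f$ vanishes; so the appeal to Proposition \ref{16nov2} cannot be made. A one-line test: take $B = 0$; the formula yields $\mathrm{cone}(A \lra 0) = A$, whereas the cone must be the suspension $A[-1]$. The formula in the proposition has the roles of $A$ and $B$ interchanged---it is the cone of a map from $B$ to $A$---and the correct statement, obtained exactly by your method of applying $P$ to the standard cone, is $(A[-1]\times B,\; s_n = s_{n-1}^A\times s_n^B,\; t_n = t_{n-1}^A\times(f+t_n^B))$, where the coupling now goes from the shifted $A$-summand into $B$ (and, made fully precise, applies $f$ only to the top strict-degree piece $(s_{n-1}^A - s_{n-2}^A)(\cdot)$, with a sign on the shifted differential). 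A proof by transport should have detected and corrected this discrepancy rather than ``confirmed'' the printed formula; once that is done, the rest of your argument goes through.
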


\begin{rem} The $\om$-category structure on $Ch({A}b)$ induced from Theorem~\ref{zcats} is given in the following way.  1-objects are maps of complexes.  Strict 2-objects are maps between maps of complexes $F, G : A \lra B$, i.e. $\phi : F \Longrightarrow G$ is a map $\phi \in Hom (A[-1],B)$ such that $d\phi = G - F$.  etc.
\end{rem}

\begin{rem} Proposition \ref{spectra} is just another way to say that the category of abelian group spectra is equivalent to $Ch(Ab)$.  The derived version also holds from this point of view, as was shown by Shipley \cite{shi}.
\end{rem}

\subsection{$\om$-categories in a category $\CC$}
Just as an $\om$-category can be viewed as an $\om$-category in sets and $Pic _\om$ consists of $\om$-categories in abelian groups, we can define $\om$-categories in any category $\CC$ with fibered products, and when $\CC$ is abelian, we generalize Theorem \ref{26oct1}.

\begin{Def} Let $\CC$ be any category with fibered products.  An $\om$-category in $\CC$ is an object $X$ of $\CC$ with maps $s_n $, $t_n : X \lra X$ for $n \in \N$ and compositions $X \times _X X \stackrel{*_n}{\lra} X$ satisfying the axioms in definition~\ref{16nov1} (where $X \times _X X$ is the fibered product with respect to $t_n$ and $s_n$).  Morphisms between $\om$-categories in $\CC$ are simply morphisms in $\CC$ commuting with all source, target, and composition maps.  We denote the category of $\om$-categories in $\CC$ by $\CC _\om $.
\end{Def}

\begin{lem} For any abelian category $\CC$ with infinite direct sums, $Ch^{+} (\CC)$ and $  \CC_\om $ are equivalent.  
\end{lem}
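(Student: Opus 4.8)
The plan is to reproduce the equivalence of Theorem~\ref{26oct1} essentially verbatim, replacing the abelian group structure on underlying sets by the additive structure of $\CC$ and replacing each element-wise verification by an identity of morphisms that can be checked on generalized elements. The key enabling observation is that the functor $P$ of Proposition~\ref{functlemma1} admits the split description recorded after Lemma~\ref{7nov2}: a complex $A = (A^n, d)$ in $\CC$ is sent to
\[ P(A) = \bigoplus_{n=0}^{\infty} A^n, \]
which exists because $\CC$ has countable direct sums. Source and target are the morphisms $s_n, t_n : P(A) \to P(A)$ given on summands by $s_n(x_0,\ldots) = (x_0,\ldots,x_n,0,\ldots)$ and $t_n(x_0,\ldots) = (x_0,\ldots,x_{n-1},x_n+dx_{n+1},0,\ldots)$; these are genuine morphisms of $\CC$, since they are assembled from identities, the canonical projections and injections of the direct sum, and the differential $d$. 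Because every object of an abelian category is canonically an abelian group object via the biproduct, the composition is defined, exactly as in Proposition~\ref{16nov2}, by the morphism $x *_n y = x + y - s_n x$ on the fibered product $P(A) \times_{P(A)} P(A)$ taken over $t_n$ and $s_n$. Functoriality in $A$ is immediate from the formulas.

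First I would check that $P(A)$ lies in $\CC_\om$. The axioms (1a--d) and (2a--d) of Definition~\ref{16nov1} are universally quantified equalities of morphisms of $\CC$, so by the Yoneda embedding it suffices to verify them after applying $Hom(U,-)$ for every $U \in \CC$. For fixed $U$ the set $Hom(U, P(A))$ is an abelian group, the induced source and target maps are group homomorphisms, and $Hom(U, P(A) \times_{P(A)} P(A)) = Hom(U,P(A)) \times_{Hom(U,P(A))} Hom(U,P(A))$ because $Hom(U,-)$ preserves pullbacks; thus each identity reduces to the abelian-group computation already performed in Proposition~\ref{16nov2}. The existential axiom (3), by contrast, does not translate to generalized elements (the index would have to depend on the element), and I would instead read it as the colimit condition $X = \varinjlim_i \mathrm{im}(s_i)$; for $P(A)$ this holds since $\mathrm{im}(s_i) = \bigoplus_{n\le i} A^n$ and $\varinjlim_i \bigoplus_{n\le i}A^n = \bigoplus_n A^n$. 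Next I would define $Q : \CC_\om \to Ch^+(\CC)$ as in Lemma~\ref{7nov2}, using that images and cokernels exist in $\CC$: set $A_i = \mathrm{im}(s_i : X \to X)$, note that $s_i s_{i-1} = s_{i-1}$ forces $A_{i-1} \subseteq A_i$, and put $Q^i(X) = \mathrm{coker}(A_{i-1} \hookrightarrow A_i)$ with differential induced by $t_{i-1} - s_{i-1}$. That $d$ descends and satisfies $d^2 = 0$ is again a morphism identity checked on generalized elements by the computation of Lemma~\ref{7nov2}. The isomorphism $QP \simeq \mathrm{id}$ is then transparent in the split model, where $A_i = \bigoplus_{n\le i} A^n$ gives $Q^i(P(A)) \cong A^i$ compatibly with $d$, naturally in $A$.

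The main obstacle is the isomorphism $PQ \simeq \mathrm{id}$, that is, the natural isomorphism $\varphi : X \to PQ(X)$ in $\CC_\om$. Following Theorem~\ref{26oct1}, the essential point is that the canonical filtration $A_0 \subseteq A_1 \subseteq \cdots$ splits compatibly into its subquotients. Since $s_{i-1}$ is idempotent and $\mathrm{im}(s_{i-1}) = A_{i-1}$, the morphism $1 - s_{i-1}$ kills $A_{i-1} = \ker(\pi)$ and factors through a section $\bar\sigma : Q^i(X) \to A_i$ of the quotient $\pi : A_i \twoheadrightarrow Q^i(X)$; indeed $\pi\bar\sigma\pi = \pi(1-s_{i-1}) = \pi$, and $\pi$ epi gives $\pi\bar\sigma = \mathrm{id}$. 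Thus each $0 \to A_{i-1} \to A_i \to Q^i(X) \to 0$ splits, so $A_i \cong A_{i-1} \oplus Q^i(X)$. Reading axiom (3) as $X = \varinjlim_i A_i$ and assembling the compatible splittings yields $X \cong \bigoplus_{i} Q^i(X) = PQ(X)$; this is precisely the step where countable direct sums are indispensable, since one must both form $\bigoplus_i Q^i(X)$ and identify it with the colimit of the $A_i$.

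The remaining work is to verify that $\varphi$ intertwines all of $s_n, t_n, *_n$ and is a two-sided inverse; as above these are morphism identities that reduce, via $Hom(U,-)$, to the bijectivity and naturality computations of Theorem~\ref{26oct1}. Concretely I would define $\varphi$ as the colimit over $m$ of the finite-level maps $A_m \to \bigoplus_{i\le m} Q^i(X)$, $x \mapsto ([s_i x])_{i\le m}$, and its inverse $\Phi$ from the sections $\bar\sigma$. I expect the only delicate points to be purely those of the infinite direct sum: checking that $\Phi$ and $\varphi$ are mutually inverse amounts, in the element-free setting, to controlling the telescoping sum $\sum_i (s_i - s_{i-1})$ against the colimit presentation $X = \varinjlim_i s_i$, which is exactly where the exhaustion condition (3) does the work. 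Once this is in place the equivalence $Ch^+(\CC) \simeq \CC_\om$ follows, specializing to Theorem~\ref{26oct1} when $\CC = \mathcal Ab$.
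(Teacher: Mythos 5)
Your proposal is correct, and its core constructions coincide with the paper's: the same $P(A)=\bigoplus_{n=0}^\infty A^n$ with componentwise source and target morphisms, the same composition $\pi_1+\pi_2-s_n\pi_1$, the same $Q$ built from the cokernels of $\mathrm{im}(s_{i-1})\hookrightarrow\mathrm{im}(s_i)$, and the same splitting of $A_i\twoheadrightarrow Q^i$ by $1-s_{i-1}$. The execution differs in three respects. First, the paper proves the lemma by showing $P$ is fully faithful (analyzing how a morphism commuting with all $s_n$, $t_n$ must respect the summands and differentials) and essentially surjective, whereas you exhibit $Q$ as a quasi-inverse with natural isomorphisms in both directions; both routes are valid, and yours subsumes the full-faithfulness step. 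Second, where the paper checks the axioms of Definition~\ref{16nov1} directly as morphism identities, with an ad hoc argument for the interchange law (2d) (extending $*_n$ to all of $B\times B$ and then restricting to the fibered product), your reduction along $Hom(U,-)$, which preserves the relevant pullbacks, recycles the computation of Proposition~\ref{16nov2} uniformly, which is cleaner. Third, and most substantively, the paper never specifies how the existential exhaustion axiom (3) is to be read for an $\om$-category in $\CC$, yet its closing claim that $PQB\simeq B$ ``is now clear'' depends on exactly such a reading; your formulation $X=\varinjlim_i \mathrm{im}(s_i)$, the identification $\varinjlim_m\bigoplus_{i\le m}Q^i(X) = \bigoplus_i Q^i(X)$, and the telescoping identity $\sum_{i\le m}(s_i-s_{i-1})=s_m$ on $\mathrm{im}(s_m)$ supply the justification the paper omits and locate precisely where countable direct sums are needed. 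One caveat: your side remark that the inverse verification ``reduces via $Hom(U,-)$ to the bijectivity computations of Theorem~\ref{26oct1}'' should be dropped, since $Hom(U,-)$ does not commute with the colimit over the filtration (the element-wise invariant $\mu(x)$ used there has no generalized-element analogue); but the finite-level construction of $\varphi$ and $\Phi$ that you actually carry out avoids this issue entirely.
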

\begin{proof} We sketch a proof and leave the details to the reader.  First, we define a functor $P: Ch ^+ (\CC) \lra \CC _\om$ as follows.  Let $A$ be a complex in $Ch ^+ (\CC)$, and let $ P(A) = \bigoplus _{n=0} ^\infty A^i$.  



We must show that $B = P(A)$ is an $\om$-category in $\CC$.  We define source and target maps $s_n$, $t_n : \bigoplus _{n=0}^\infty A^i \lra \bigoplus _{n=0}^\infty A^i $ as follows.  First, let $s_n ^{i,j}$, $t_n ^{i,j} : A^i \lra A^j$ be given by  
\begin{displaymath} 
s_n ^{i,j} := 
\left\{
\begin{array}{ll}
1 & \textrm{ if $n \geq i =j$}\\
0     & \textrm{ otherwise}
\end{array} 
\right. 
\end{displaymath}

\begin{displaymath} 
t_n ^{i,j} := 
\left\{
\begin{array}{ll}
1  & \textrm{ if $n\geq i = j$}\\
d    & \textrm{ if $n =j = i-1$}\\
0     & \textrm{ otherwise}.
\end{array} 
\right. 
\end{displaymath}
Now, let $s_n ^i $ be the sum over $j$ of the compositions $A^i \stackrel{s_n ^{i,j}}\lra A^j \lra \bigoplus _{k=0}^\infty A^k$ and similarly for $t_n ^i$.  The morphisms $s_n ^i , \; t_n ^i: A^i \lra PA  $, $i \geq 0$, determine $s_n $, $t_n$.  Defining composition as $*_n  = \pi _2 + \pi _1 - s_n \pi _1$, one may verify that $(*_n , t_n , s_n) _{n\geq 0}$ satisfies conditions of Definition \ref{16nov1}.  For condition (2d) of Definition \ref{16nov1}, the statement for $\om$-categories in $\CC$ should read: $*_i (*_j \pi _1 \times *_j \pi _2) = *_j (*_i \pi _1 \times *_i \pi _2) ((\pi_1 \pi _1 \times \pi _1 \pi _2 )\times (\pi _2 \pi _1 \times \pi _2 \pi _2))$ when restricted to the appropriate subobject of $(B\times B) \times (B \times B)$.  Our definition of composition $*_n$ can be extended to $B \times B \lra B$, and one may check that $*_i (*_j \pi _1 \times *_j \pi _2)$ agrees with $ *_j (*_i \pi _1 \times *_i \pi _2) ((\pi_1 \pi _1 \times \pi _1 \pi _2 )\times (\pi _2 \pi _1 \times \pi _2 \pi _2))$ on $(B\times B) \times (B \times B)$.  Hence, they also agree on the appropriate fibered product.  Therefore, $B$ is an $\om$-category in $\CC$.  \\

Let $B = P(A)$, $D = P(C)$ for $A,C \in Ch ^+ (\CC)$.  The fact that $Hom_{\CC _\om }(B,D) \simeq Hom_{Ch^+(\CC)}(A,C)$ follows easily from a few observations.  Let $f $ be a morphism from $B $ to $D$.  First, since $fs_n = s_n f$, an inductive proof shows that $f(A^i) \lra D$ factors through $C^i \lra D$.  The fact that $f$ commutes with all $t_n $ shows that the induced maps $A^i \lra C^i$ commute with the differentials $A^i \stackrel{d}\lra A^{i-1} $ and  $C^i \stackrel{d}\lra C^{i-1} $.  We conclude that a morphism $f:B\lra D$ is equivalent to a morphism from $A$ to $C$ in $Ch^+(\CC)$.  Therefore, $P$ is fully faithful.  \\

We now show that $P$ is essentially surjective.  Define $Q : \CC _\om \lra Ch^+ (\CC) $ as follows. Given $B \in \CC _\om$, let $A = Q(B)$ be given by letting $A^n $ be the cokernel $B_n / B_{n-1}$ of the monomorphism $B_{n-1} \lra B _n$, where $B_n $ is the image of $s_n : B \lra B$.  The morphism $t_{n-1} - s_{n-1} : B_n \lra B_{n-1}$  induces a morphism from $B_n /B_{n-1} \lra B_{n-1}$, and we denote the composition with $B_{n-1} \lra B_{n-1} / B_{n-2}$ by $d = A^n \lra A^{n-1}$.  Let us see that $PQ B \simeq B$.  Let $f : B _n \lra B _n$ be $f = 1 - s_{n-1}$.  Then $f$ induces a morphism $\overline f : B_n / B_{n-1} \lra B_n$ such that $\pi \overline f = 1$.  Hence, $B_n \simeq B_n / B_{n-1} \oplus B_{n-1}$.  It is now clear that $PQ B \simeq B$.     
\end{proof}

\section{The Dold-Kan Correspondence}\label{dksection}

We begin by laying out basic definitions and notations which can be found in any standard text on the subject, such as \cite{goj}.  Let $\De $ denote the category of ordinals, with objects $[n]= \{ 0,1,...,n \}$ for $n \in \N$ and morphisms the (non-strictly) increasing set morphisms between them.  A \emph{simplicial set} is a functor $X: \De ^{op} \lra Set$.  We define \emph{r-simplices} in a simplicial set $X$ to be the set $X_r := X([r])$.  We let $\De ^n$ denote the simplicial set $Hom_{\De }(- , [n])$ and denote an $r$-simplex $\al : [r] \lra [n] $ by listing $(\al (0),\al(1),...,(\al(r))$.  For a simplicial set $X$, we let $d_i $, $s_i$ denote the face and degeneracy maps $X(\partial _i )$ and $X(\sigma _i)$ respectively, where $[n-1] \stackrel{\partial _i}\lra [n]$ is the morphism which skips only $i$, and $\sigma _i : [n]\lra [n-1]$ is the morphism which repeats only $i$.   For a category $\CC$, a \emph{simplicial object in $\CC$} is a functor from $\De ^{op} $ to $\CC$, and the category of simplicial objects in $\CC$ is denoted simply by $s\CC$.  A \emph{simplicial abelian group} is a simplicial object in the category $Ab$ of abelian groups.   \\

The Dold-Kan correspondence was discovered independently by Dold and Kan and can be found originally in \cite{dk} as well as a number of other references such as \cite{goj}, \cite{wei}.  

\begin{thm} If $\CC$ is an abelian category, there is an equivalence $K: s\CC \lra Ch^+ (\CC)$.  
\end{thm}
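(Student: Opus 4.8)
The plan is to construct the classical Dold--Kan functors explicitly and to exhibit them as mutually inverse equivalences, working entirely with subobjects and morphisms since $\CC$ is abelian. I take $K$ to be the \emph{normalized chain complex} functor and denote its prospective inverse by $\Ga$. First I would define $K = N$: for a simplicial object $X \in s\CC$ set
\[
N(X)_n = \bigcap_{i=0}^{n-1}\ker\big(d_i : X_n \lra X_{n-1}\big),
\]
which exists because $\CC$ is abelian. Using the simplicial identity $d_i d_n = d_{n-1}d_i$ for $i<n$, the last face map restricts to a morphism $N(X)_n \lra N(X)_{n-1}$, and I set $\partial = (-1)^n d_n$. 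Since any $x \in N(X)_n$ lies in $\ker d_{n-1}$, the identity $d_{n-1}d_n = d_{n-1}d_{n-1}$ gives $\partial^2 = 0$, so $N(X) \in Ch^+(\CC)$; functoriality is immediate.

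The heart of the argument is a splitting theorem. Let $D(X)_n = \sum_{i=0}^{n-1}\operatorname{im}(s_i)$ be the degenerate part. I would prove the natural decomposition
\[
X_n \;\cong\; \bigoplus_{\eta:[n]\twoheadrightarrow[k]} N(X)_k,
\]
the sum ranging over surjective monotone maps $\eta$, with the summand for $\eta$ embedded via the corresponding iterated degeneracy $X(\eta) : N(X)_k \lra X_n$; in particular $X_n \cong N(X)_n \oplus D(X)_n$. In the abelian setting this cannot be argued on elements, so I would construct explicit orthogonal idempotent endomorphisms of $X_n$ built from the operators $1 - s_i d_i$ and their products (the normalization projectors), verify idempotency and orthogonality using only the simplicial identities, and identify their images with the summands above. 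This idempotent bookkeeping is the main obstacle: everything else is formal, but showing that these projectors sum to the identity and split $X_n$ as claimed requires careful combinatorics of the epi--mono factorization in $\De$.

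With the decomposition in hand I would define $\Ga : Ch^+(\CC) \lra s\CC$ by
\[
\Ga(C)_n = \bigoplus_{\eta:[n]\twoheadrightarrow[k]} C_k,
\]
specifying for each monotone $\alpha:[m]\lra[n]$ the map $\alpha^\ast$ summand-by-summand through the epi--mono factorization $\alpha\circ\eta = \eta'\circ\delta$: when $\delta$ is an identity one copies the relevant component, when $\delta$ drops exactly one index one applies the differential $d$, and otherwise the component is $0$. A routine check of the simplicial identities shows $\Ga(C)\in s\CC$, and functoriality in $C$ is clear.

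Finally I would produce the natural isomorphisms. The isomorphism $N\Ga(C)\cong C$ holds because normalization isolates the $\eta=\mathrm{id}$ summand and recovers the differential $d$, while $\Ga N(X)\cong X$ is precisely the decomposition of the second paragraph, assembled naturally in $X$. Together these show that $K=N$ and $\Ga$ are inverse equivalences, proving the theorem. (For small $\CC$ one could alternatively invoke the Freyd--Mitchell embedding to reduce the splitting to the case of modules, but the idempotent construction keeps the argument internal to $\CC$.)
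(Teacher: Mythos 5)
The paper itself does not prove this theorem: it is quoted as the classical Dold--Kan correspondence, cited to \cite{dk}, \cite{goj}, \cite{wei}, with only the formula $K(A)_n = \bigcap_{i=0}^{n-1}\ker d_i$, $d = (-1)^n d_n$ recorded afterwards. So your proposal has to be judged as a reconstruction of that standard cited proof, and its overall architecture is the right one: the normalized complex $N$, the splitting $X_n \cong \bigoplus_{\eta:[n]\twoheadrightarrow[k]} N(X)_k$, the inverse functor $\Ga$, and the two natural isomorphisms. Your normalization step (that $d_n$ restricts to $N(X)$ and squares to zero) is correct, as is the statement of the splitting and the projector method you name for proving it.

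There is, however, a genuine error in your definition of $\Ga$, and it sits exactly where the convention matters. Setting aside the typo that $\alpha\circ\eta$ should read $\eta\circ\alpha$ (factored as mono after epi, $\eta\alpha = \delta\circ\eta'$), your rule ``when $\delta$ drops exactly one index one applies the differential $d$'' is wrong: the component must be $d$ (up to the sign fixed by your convention) only when $\delta:[k-1]\lra[k]$ is the monomorphism omitting the \emph{top} element $k$ --- the coface matching your choice of last-face differential $(-1)^n d_n$ on $N$ --- and it must be $0$ for every other monomorphism, including the remaining codimension-one ones. With your rule as literally stated, the key isomorphism $N\Ga \cong \mathrm{id}$ fails: take $C$ concentrated in degrees $0$ and $1$; then $\Ga(C)_1 = C_1\oplus C_0$ and both face maps become $(x,y)\mapsto dx+y$, so $N\Ga(C)_1 = \ker d_0 = \{(x,-dx)\}$ and the induced differential $-d_1$ vanishes identically, whereas it should recover $d$. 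Separately, note that the heart of the whole argument --- that the idempotents built from $1-s_id_i$ are orthogonal, sum appropriately, and split $X_n$ as claimed --- is only announced and explicitly deferred (``the main obstacle''), so even granting the corrected $\Ga$, the proposal is an outline of the standard proof rather than a complete proof.
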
  

$K: s\CC \lra Ch^+ (\CC)$ is given by $K(A)_n = \bigcap _{i=0} ^{n-1} Ker d_i$, and the differential $d: K(A)_n \lra K(A)_{n-1}$ is $d = (-1)^n d_n$.  We will be particularly interested in the case when $\CC = Ab$ or sheaves of abelian groups on some site.

\subsection{$\omega$-categories and quasicategories}  

To extend the idea of the nerve of an ordinary category, Street defines in \cite{str} the nerve of an $\om$-category, which defines a functor $N : \om Cat \lra sSet$.  We first review some background on parity complexes and the Street-Roberts conjecture.  The results presented in this section are a summary of some of the results in \cite{ver}.  The original nerve construction is can be found in \cite{str}, and the ideas were streamlined using the language of parity complexes in \cite{str2, str3}.

\subsubsection{Basics of Parity Complexes}\label{paritysection}

\begin{Def}  A \emph{pre-parity} is a graded set $C = \bigsqcup _{n=0} ^\infty C_n$ and a pair of operations sending $x \in C_n$ to $x^- \subset C_{n-1}$ and $x^+ \subset C_{n-1}$, called negative and positive faces of $x$ respectively.  If $x \in C_0$, we take $x ^- = x^+ = \emptyset$ by convention.  We also say that for $x \in C_n$, a face $a \in x^-$ has parity $1$ (odd) and $a\in x^+ $ has parity $0$ (even). Elements in $C_n$ are said to be n-dimensional.  
\end{Def} 

A \emph{Parity complex} is a pre-parity complex satisfying some additional axioms delineated in \cite{ver, str2}.  The additional technical assumptions do not conern us because the pre-parity complexes which we deal with here are all parity complexes.  \\

For a parity complex $C$ and $S \subset C$, let $|S|_n = \bigcup _{k=0} ^{n} S_k$, where $S_k = S \cap C_k$.  For $S \subset C$ and $\xi \in \{+,- \}$, let $S^\xi = \bigcup _{x\in S} x^\xi$, and let $S^\mp = S^- \setminus S^+$ and $S^\pm = S^+ \setminus S^-$. \\
 
 If $C$ is a graded set, we let $\NN (C)$ denote the $\om$-category with underlying set $\{(M,P) \st M, P \; \textrm{are finite subsets of} \; C\}$.  Source, target and compositions are given by 
\bi $s_n(M,P) = (|M|_n , M_n \cup |P|_{n-1})$
\i $t_n(M,P) = (|M|_{n-1} \cup P_n, |P|_n)$
\i $(N,Q)*_n(M,P) = (M \cup (N \setminus N_n), Q \cup (P\setminus P_n)$.  
\ei

There is another $\om$-category $\OO (C)$ attained from a parity complex $C$, which we will now describe.  For a parity complex $C$ and subsets $S , T \subset C$, we say that $S \perp T$ if $(S^+ \cap T^+) \cup (S^- \cap T ^-) = \emptyset$.  Another way to express this is to say that $S \perp T$ if $S$ and $T$ have no common faces of the same parity.  A subset $S $ of $C$ is called \emph{well-formed} if it has at most one 0-dimensional element and for distinct elements $x,y \in S$, $x\perp y$.  Define $\OO (C)$ to be the subcategory of $\NN (C)$ consisting of all $(M,P) \in \NN (C)$ such that $M$ and $P$ are both non-empty, well-formed subsets of $C$, $P = (M \cup M^+) \setminus M^- = (M \cup P^+) \setminus P^-$, and $M = (P \cup M^- )\setminus M^+ =  (P \cup P^- )\setminus P^+ $.  It is not immediately clear that $\OO (C)$ is an $\om$-category.  However, the work in \cite{str, str2} demonstrates that it is. \\

For a parity complex $C$, there are distinguished elements of $\OO (C)$.  Let $x \in C_n$.  We inductively define subsets $\pi (x) , \; \mu (x) \subset C$.  Let $\pi (x) _m = \mu (x) _m = \emptyset$ for $m >n$, let $\pi (x) _m = \mu (x) _m = \{ x \} $ for $m=n$, and let $\mu (x) _m = \mu (x) _{m+1} ^\mp$ and $\pi (x) _m = \pi (x) _{m+1} ^\pm$ for $0 \leq m <n$.  Then the element $<x> : = (\mu (x), \pi (x) ) \in \OO (C)$ is called an \emph{atom}.  Let $<C> = \{ <x> \st x \in C \}$.  Street proved \cite{str, str2} that $<C>$ freely generates $\OO (C)$ in the sense defined below. First we introduce some notation.  For $n \in \N$ and $B \in \om Cat$, let $|B|_n$ denote the $n$-category $(s_n B , *_i , s_i , t_i)_{0\leq i \leq n}$.

\begin{Def} Let $A$ be an $\om$-category and $G$ a subset of its elements, with grading $G_n = G \cap A_n$.  
\ben $A$ is \emph{freely generated} by $G$ if for all $\om$-categories $B$, all functors $f : |A|_n \lra B$ of $\om$-categories and maps of sets $g : G_{n+1} \lra B$ such that $s_n g(x) = f(s_n x)$ and $t_n g(x) = f(t_nx)$ for all $x\in G_{n+1}$, there exists a unique functor $\hat f: |A|_{n+1} \lra B$ of $\om $-categories such that $\hat f _{||A|_n } = f$ and $f_{|G_{n+1} } = g$.  
\i $A$ is \emph{generated} by $G$ if for each $n \geq 0$, $|A|_{n+1}$ is the smallest sub-$\om$-category of $A$ containing $|A|_n \cup G_{n+1}$.
\een
\end{Def}

If $A$ is freely generated by $G$, then $A$ is generated by $G$ (\cite{ver}).  

For Parity complexes $C$, $D$, a map of sets $f : C \lra D$ which respects the grading induces a morphism $\NN (f) : \NN (C) \lra \NN (D)$, sending $(M, P) $ to $(f(M) , f(P))$.  Let us consider only graded maps of sets $f: C \lra D$ such that 
\bi for all $x \in C_0$, $f(x) \subset D_0$ is a singleton set, and 
\i for all $n\geq 0$ and $x \in C_{n+1}$, $f(x)$ is well formed, $f(x^+ ) = (f(x^-) \cup f(x)^+ ) \setminus f(x)^-$, and  $f(x^- ) = (f(x^+) \cup f(x)^- ) \setminus f(x)^+$.
\ei 
Parity complexes together with graded set maps $f : C \lra D$ with these two properties form a category $Parity$ of parity complexes.  The two conditions are chosen so that the functor $\NN : Graded \; Sets \lra \om Cat$ restricts to a functor $\OO : Parity \lra \om Cat$.  \\

Of particular interest are the parity complexes $\tilde \De ^n$, which we now define.  r-dimensional elements of $\tilde \De ^n$ are subsets $\underline v = \{v_0 < v_1 <...<v_r \} $ of $[n] := \{ 0,1,...n\} \subset \N$ of size $r+1$.  We will often denote such a $\underline v \in \tilde \De ^n _r $ by $(v_0v_1...v_r)$.  The i-th face of $\underline v \in \tilde \De ^n _r$, denoted $\de _i \underline v = \{v_0 , ..v_{i-1}, \hat v_{i}, v_{i+1},...,v_r \} \in \tilde \De ^n _{r+1}$, where $\hat v_i $ denotes omission of $v_i$.  Now define the face operators $\underline v ^\xi = \{ \de _i \underline v \st i \in [r] \; \textrm{and $i$ is of parity} \; \xi \}$ for $\xi \in \{+,-\}$.  A morphism $[n] \stackrel{\al}\lra [n] $ in $\De$ induces a morphism $\tilde \De (\al) : \tilde \De ^m \lra \tilde \De ^m$ sending $\underline v \in \tilde \De ^m _r$ to $\emptyset$ if $\al v_i = \al v_{i+1} $ for some $i$ and to $\al \underline v = \{ \al v_0 , ..., \al v_r \} $ otherwise.  Thus, $\tilde \De $ is a functor from $\De$ to $Parity$, and we obtain the composition $\De \stackrel{\tilde \De }\lra Parity \stackrel{\OO}\lra \om Cat$.  The $\om$-category $\OO (\tilde \De ^n )$ is called the n-th oriental and has a unique non-identity n-morphism $\langle(01..n) \rangle $.  For a morphism $[m] \stackrel{\al}\lra [n]$ in $\De$, $\OO (\tilde \De (\al))$ maps $\langle\underline v \rangle $ to $\langle\al \underline v \rangle$.  \\

The product of parity complexes was shown in \cite{str2} to be a parity complex.  For parity complexes $C$, $D$, let $(C \times D)_n = \bigcup _{p+q = n} C_p \times D_q$, and for $\xi \in \{ + , -\}$, $(x,y)^\xi = x^\xi \times \{y\} \cup \{x\} \times y ^{\xi (p}$, where $\xi (p ) = \xi $ if $p$ is even and has the opposite parity of $p$ is odd.  

\subsubsection{The Nerve of an $\om$-Category and the Street-Roberts Conjecture}

In \cite{str}, Street defines the nerve functor $N: \om Cat \lra sSet$ with left adjoint $F_\om$. 
The nerve of an $\om$-category $A$ consists of composing $\OO \tilde \De $ with the Yoneda embedding $\om Cat \lra Set$.  More explicitly, The n-simplices of $NA$ are $Hom_{\om Cat} (\OO (\tilde \De ^n), A)$.  For an n-simplex $x : \OO (\tilde \De ^n) \lra A$ and morphism $\al : [m] \lra [n] $ in $\De$, $\al ^* x \in NA _m$ is the composition of $x$ with $\OO (\tilde \De (\al ))$.  The left-adjoint $F_\om : sSet \lra \om Cat$ is the left Kan extension of $\OO \circ \tilde \De : \De \lra \om Cat$ along the Yoneda embedding $Y : \De \lra sSet$.  For a simplicial set $X$, $F_\om (X)$ is characterized by the following property.  For each n-simplex $x \in X_n$, there is a a functor $\io _x : \OO (\tilde \De ^n) \lra F_\om (X)$ of $\om$-categories such that for any morphism $\al : [m] \lra [n]$ in $\De$, $\io _{\al ^* x} = \io _x \circ \OO (\tilde \De (\al))$, and for any other $\om$-category $A$ with such a family of maps ${j_x : \OO (\tilde \De ^n)\lra A }$, $n \in \N $, $ x \in X_n$, $j$ factors through $\io$.  For $X \in sSet$ and $x \in X_n$, let $[\![x ]\!] = \io _x (\langle01..n\rangle)$. \\

To get an idea of what the nerve of an $\om$-category looks like, an n-simplex of $NA$ looks like a drawing of an n-simplex in the $\om$ category $A$, meaning an n-simplex labeled with an n-morphism in $A$ and k-dimensional faces are labeled with k-morphisms in $A$.  It is an easy exercise to check that $NA _0 = A_0$, $NA_1 = A_1$.  A 2-simplex $x \in NA _2$ is a functor of $\om$-categories $x : \OO (\tilde \De ^2) \lra A$, which consists of a 0-objects $x(\langle0\rangle), \; x(\langle 1\rangle), \; x(\langle 2\rangle) \in A_0$, 1-morphisms $x(\langle i\rangle) \stackrel{x(\langle ij\rangle)}\lra x(\langle j\rangle)$ in $A_1$ for $i,j \in [2]$, and a 2-morphism $x(\langle 012\rangle) \in A_2$ such that $s_1 x(\langle 012\rangle) = x(\langle 02\rangle)$ and $t_1 x(\langle 012\rangle) = x(\langle 12\rangle)*_0 x(\langle 01\rangle)$.  \\ 

When we restrict to $Pic _\om$, Theorem \ref{dk1} guarantees that $N : Pic_\om \lra sAb$ is an equivalence.  In general, however, $N : \om Cat \lra sSet$ is not an equivalence.  The problem is that viewing an $\om$-category as a simplicial set by taking its nerve loses some information.  The simplicial set no longer remembers which n-simplices represent identity morphisms and so it forgets how to compose morphisms.  To remedy this situation, in \cite{str, rob}, Street and Roberts modify the modify the nerve construction to take values in the category $Cs$ of ``complicial sets.''  A complicial set is a simplicial set $X$ together with a collection of simplices $tX$ called thin simplices which satisfy certain axioms.  To name a few, 
\bi No 0-simplex of $X$ is in $tX$,
\i the only 1-simplices in $tX$ are degenerate 1-simplices,
\i the degenerate simplices of $X$ are in $tX$,
\i and for each $(n-1)$-dimensional k-horn for $n\geq 2$, $0<k<n$ has a unique thin filler.
\ei

The other properties can be found in \cite{ver}.  A morphism of complicial sets $f: (X,tX) \lra (Y, tY)$ is a morphism $f: X \lra Y$ of simplicial sets such that $f(tX) \subset tY$.  

\begin{rem}Complicial sets is a full subcategory of a larger category $Strat$ of \emph{stratified sets} whose objects are pairs $(X,tX)$ but which are not required to satisfy all of the axioms listed above for complicial sets.  Morhphisms, of course, are simply morphisms of simplicial sets which preserve thin simplices. There is a natural way of taking the product $\otimes$ of two stratified sets, where the underlying simplicial set of $X\otimes Y$ is $X\times Y$.  For instance, the thin r-simplices in $\De ^n \otimes \De ^1$ are the simplices $(x,y)\in \De ^n _r \times \De ^1 _r $ such that $x $ is degenerate at some $0\leq j <r$ and $y$ is degenerate at some $k\geq j$.
\end{rem}

The enhanced nerve construction $\fN : \om Cat \lra Cs$ sends $A$ to $(NA , tNA)$, where the thin n-simplices in $NA$ are the simplices $x : \OO (\tilde \De ^n) \lra A$ such that $x( \langle 01...n\rangle) $ is an $(n-1)$-morphism.  Composing $\fN$ with the forgetful functor $Cs \lra sSet$ ($(X, tX) \mapsto X$) gives the original nerve construction.  The nerve $\fN$ has a left adjoint $\mathfrak F _\om$ so that $\FF _\om ((X,tX))$ is attained from $F_\om (X)$ by ``collapsing'' morphisms corresponding to thin simplices, a process described in detail in \cite{ver}.  Theorem \ref{src}, known as the Street-Roberts conjecture, was proven by Verity in \cite{ver}.

\begin{thm}\label{src} $\fN : \om Cat \lra Cs$ is an equivalence of categories. 
\end{thm}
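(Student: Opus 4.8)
The plan is to prove the equivalence through the adjunction $\mathfrak F_\om \dashv \fN$ already recorded in the excerpt, by showing that both the unit $\eta : \mathrm{id}_{Cs} \Rightarrow \fN\mathfrak F_\om$ and the counit $\varepsilon : \mathfrak F_\om\fN \Rightarrow \mathrm{id}_{\om Cat}$ are natural isomorphisms; a standard fact about adjunctions then forces $\fN$ to be an equivalence. The two directions have very different flavors: the counit being an isomorphism is the \emph{full faithfulness} of $\fN$, while the unit being an isomorphism is the statement that every complicial set is the nerve of the $\om$-category it generates, which is the substance of the theorem.

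First I would treat the counit. For $A \in \om Cat$ recall $NA_n = Hom_{\om Cat}(\OO(\tilde\De^n), A)$ and that each oriental $\OO(\tilde\De^n)$ is freely generated by its atoms $\langle\underline v\rangle$. The tautological family of functors $x : \OO(\tilde\De^n) \to A$ comprising $NA_n$ is exactly the data needed to factor through the universal family $\io_x$ defining $F_\om(NA)$, and so yields a canonical comparison $F_\om(NA) \to A$ carrying $[\![x]\!]$ to $x(\langle 01..n\rangle)$. This is surjective, since every $a \in A_n$ is $x(\langle 01..n\rangle)$ for the simplex $x$ it represents, and freeness of the orientals guarantees that no further relations survive. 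Passing to $\mathfrak F_\om$ collapses precisely the morphisms coming from thin simplices, i.e.\ those $x$ for which $x(\langle 01..n\rangle)$ is an $(n-1)$-morphism; these are exactly the cells the comparison already identifies with lower-dimensional data, so the induced map $\varepsilon_A$ is a bijection on underlying sets compatible with all $s_i$, $t_i$, $*_i$, hence an isomorphism in $\om Cat$.

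The unit is the hard direction. For a complicial set $X$ I must show $\eta_X : X \to \fN\mathfrak F_\om X$ is an isomorphism in $Cs$, meaning a bijection on $n$-simplices for every $n$ together with preservation and reflection of thinness. I would argue by induction on simplicial dimension, using the complicial-set axioms quoted in the excerpt, and especially the existence and uniqueness of thin fillers for admissible (inner) horns. At each stage the unique-thin-filler condition is what reconstructs the composition and identity data of $\mathfrak F_\om X$ out of $X$ alone; an $n$-simplex of $\fN\mathfrak F_\om X$ is by definition a functor $\OO(\tilde\De^n) \to \mathfrak F_\om X$, i.e.\ a coherent labelling of the atoms of the $n$-th oriental by cells, and the filler axioms force such a labelling to come from a unique $n$-simplex of $X$. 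The comparison of stratifications then follows because $[\![x]\!]$ is an $(n-1)$-morphism in $\mathfrak F_\om X$ exactly when $x$ is thin in $X$, which reconciles the collapsing done by $\mathfrak F_\om$ with the thin structure $tX$.

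The main obstacle is verifying that the thin structure $tNA$ genuinely satisfies \emph{all} the complicial-set axioms, and conversely that those axioms are strong enough to recover a strict $\om$-category with no loss or surplus of data. Concretely one must prove that every admissible horn in $NA$ has a unique thin filler, which reduces to a delicate analysis of the parity-complex structure of the orientals $\tilde\De^n$, their products, and their horns — this is Street's excision-of-extremals argument, resting on loop-freeness and well-formedness of the relevant parity complexes. Establishing that these combinatorial filler conditions encode precisely the associativity and unit laws of $\om$-composition is the technical heart of the proof and is exactly what Verity carries out in \cite{ver}.
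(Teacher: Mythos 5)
The paper never proves this theorem: it records it as the Street--Roberts conjecture and cites Verity's memoir \cite{ver} for the proof, so there is no in-paper argument to compare yours against. Your outline --- exhibit the adjunction $\mathfrak F_\om \dashv \fN$ and show that the unit and counit are both isomorphisms --- is the correct shape of the argument and is consistent with how the equivalence is actually established. But as written it is a roadmap, not a proof. The counit step rests on the unsubstantiated assertions that ``freeness of the orientals guarantees that no further relations survive'' and that the quotient $q : F_\om(NA) \lra \mathfrak F_\om(NA,tNA)$ collapses exactly the cells already identified with lower-dimensional data; making that precise is a genuine piece of work, since one must control which composites of atoms become equal after collapsing thin simplices. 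The unit step --- that the unique-thin-filler axioms allow one to reconstruct, from any functor $\OO (\tilde \De ^n) \lra \mathfrak F_\om X$, a unique $n$-simplex of $X$ inducing it --- is precisely the hard combinatorial content of the theorem, and you explicitly delegate it to Verity, exactly as the paper does. So your proposal is on par with (indeed more detailed than) the paper's treatment, which is a pure citation; just be aware that neither it nor the paper constitutes a self-contained proof, and that the two ``isomorphism'' claims you assert are the theorem, not routine consequences of the adjunction formalism.
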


\begin{rem}  More recently, in \cite{nik}, Nikolaus defines a model category of \emph{algebraic Kan complexes} similar to the category $Cs$, which specifies a distinguished filler for each horn. He shows that algebraic Kan complexes is Quillen equivalent to simplicial sets.  
\end{rem}

\subsection{The Dold-Kan Triangle} The Dold-Kan correspondence \cite{dk} gives an equivalence between $Ch^+({A}b)$ and $sAb $, simplicial objects in abelian groups (or equivalently, abelian group objects in $sSet$).  Furthermore, $sAb$ and $Ch^+ (Ab)$ have model structures.  The model structure on $sAb$ is induced by the forgetful functor $U : sAb \lra sSet$.  Specifically, $sAb$ inherits the weak equivalences and fibrations from $sSet$; $f$ is a weak equivalence in $sAb$ if and only if $U(f)$ is a weak equivalence in $sSet$, and $f$ is a fibration in $sAb$ if and only if $Uf$ is a fibration in $sSet$.  The model structure on $Ch^+ (Ab)$ has quasi-isomorphisms as the weak equivalences, degree-wise epimorphisms (in positive degree) as the fibrations, and degree-wise monomorphisms with projective cokernels as cofibrations.  Additionally, $Pic_\om$ inherits a model structure from $Ch^+(Ab)$ via the equivalence $Ch^+(Ab) {\lra} Pic_\om$.  The weak-equivalences in $Pic _\om$ are morphisms which are equivalences of the underlying $\om$-categories.  The Dold-Kan correspondence is in fact an equivalence of model categories, as is explained in \cite{scs}.  We have seen that $Pic _\om \simeq Ch^+(Ab) \simeq sAb$ as model categories, but also the following theorem of Brown relates these two correspondences in the following way.   

\begin{thm}\label{dk1} (\cite{bro}, \cite{ncat}) The composition $N\circ {P} : Ch^+ ({A}b) \lra  sAb $ is the same as the Dold-Kan correspondence.  In other words, the following diagram commutes up to isomorphism.  
\[
\begin{CD}
 Ch^+ ({A}b)    @> {P}  >>       Pic_\om \\
@VV D V        @VV N V \\
sAb    @=   sAb
\end{CD}
\]
where $D$ denotes the Dold-Kan correspondence.  
\end{thm}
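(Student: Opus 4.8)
The plan is to prove the logically equivalent statement that $K\circ N\circ {P}\cong \mathrm{id}$ on $Ch^+(Ab)$, where $K$ is the normalization functor of the Dold--Kan theorem cited just above. Since $K$ is an equivalence with quasi-inverse $D$, an isomorphism $K\circ N\circ {P}\cong\mathrm{id}$ yields, upon whiskering with $D$, the isomorphism $N\circ {P}\cong D\circ K\circ N\circ {P}\cong D$ asserted by the theorem. This reduces everything to one concrete computation: identifying the normalized chain complex of the simplicial abelian group $N({P}C)$ with $C$ itself, naturally in $C$.

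The heart of the argument is to compute $K(N({P}C))_n=\bigcap_{i=0}^{n-1}\ker d_i$. An $n$-simplex of $N({P}C)$ is a functor $x:\OO(\tilde\De^n)\lra {P}C$, hence is determined by its values $x(\langle\underline v\rangle)\in({P}C)_r$ on the atoms indexed by subsets $\underline v\subseteq[n]$ of size $r+1$, subject to the source/target relations of the oriental. Since $(d_i x)(\langle\underline w\rangle)=x(\langle\partial_i\underline w\rangle)$ and $\partial_i$ has image $[n]\setminus\{i\}$, the normalization condition $d_0x=\dots=d_{n-1}x=0$ forces $x(\langle\underline v\rangle)=0$ for every $\underline v$ that misses some $i\in\{0,\dots,n-1\}$, i.e.\ for every $\underline v$ not containing $\{0,1,\dots,n-1\}$. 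Only two cells can therefore survive: the top cell $t:=x(\langle(01\cdots n)\rangle)\in({P}C)_n$ and the back face $c:=x(\langle(01\cdots n{-}1)\rangle)\in({P}C)_{n-1}$.

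I would then pin down $t$ and $c$ using the explicit description of ${P}C$ from Proposition~\ref{functlemma1} together with the componentwise formulas for $s_n,t_n$ recorded in \S\ref{17nov2}. All faces of $c$ vanish, so $s_{n-2}c=t_{n-2}c=0$, which forces $c$ to be the element whose only nonzero component is some $c_{n-1}\in C^{n-1}$; likewise $t$ has a single nonzero top component $b\in C^n$, and membership in ${P}C$ imposes $db=t^+_{n-1}-t^-_{n-1}$, from which the source/target relation of the top cell gives $c_{n-1}=(-1)^n\,db$ (the sign records whether the back face $\delta_n[n]$ is a positive or a negative face, i.e.\ the parity of $n$). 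Thus $x\mapsto b$ is an isomorphism $K(N({P}C))_n\cong C^n$. The last face $d_n=\OO(\tilde\De(\partial_n))^*$ restricts $x$ to subsets of $\{0,\dots,n-1\}$, so $d_n x$ is supported on its top cell with value $c$, corresponding to $c_{n-1}=(-1)^n db$; hence the Dold--Kan differential $(-1)^n d_n$ corresponds to $b\mapsto db$, matching the differential of $C$. Naturality in $C$ is immediate, as every identification is built from the functorial data of ${P}$ and $N$.

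The main obstacle I anticipate is the bookkeeping in Street's nerve needed to justify the collapse of the source and target of the top cell to $0$ and $c$ (or $c$ and $0$): a priori $s_{n-1}t$ and $t_{n-1}t$ are composites, computed in ${P}C$, of $x$ over all the negative, respectively positive, codimension-one faces of $[n]$, and one must verify that composing the single surviving face $c$ with the identity $(n{-}1)$-cells on $0$ returns $c$ unchanged. This follows from the Picard composition law $a*_k b=a+b-s_k a$ of Proposition~\ref{groupobjects} together with $s_{n-2}c=0$, but it is precisely this interplay with the parity-complex combinatorics that requires care; the remaining verifications are routine.
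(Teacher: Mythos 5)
Your proposal is correct, but there is nothing in the paper to compare it against: Theorem \ref{dk1} is stated with citations to \cite{bro} and \cite{ncat} and no proof is given (the text proceeds immediately to consequences). So your blind argument does not reproduce or diverge from an internal proof; it supplies one, and it is the natural direct verification, built entirely from material the paper does develop (Street's orientals and their atoms from \S\ref{paritysection}, the explicit description of ${P}C$ from Proposition \ref{functlemma1}, and the Picard composition law of Proposition \ref{groupobjects}). I checked the key steps and they hold: an $n$-simplex $x:\OO (\tilde \De ^n) \lra {P}C$ lies in $\bigcap _{i=0}^{n-1}\ker d_i$ exactly when it kills every atom $\langle \underline v \rangle$ with $\underline v \not\supseteq \{0,\dots,n-1\}$, so only the top cell $t$ and the back face $c$ survive; the membership relation $db = t_{n-1}^+ - t_{n-1}^-$ in ${P}C$ together with the parity of the last face $\de _n$ gives $c_{n-1}=(-1)^n db$, and this sign cancels against the $(-1)^n d_n$ in the definition of $K$, so $K(N({P}C))\cong C$ naturally and whiskering with $D$ gives the theorem. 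Two points should be made explicit in a full write-up. First, the bijection $x\mapsto b$: injectivity follows from generation of $\OO (\tilde \De ^n)$ by its atoms, but surjectivity needs the \emph{freeness} on atoms recorded in \S\ref{paritysection}, to guarantee that the assignment (zero on non-surviving atoms, $c$ on the back face, $t$ on the top cell) actually extends to a functor once the source/target compatibilities are checked. Second, the collapse of the composites $s_{n-1}t$ and $t_{n-1}t$ to $0$ or $c$: since $a*_k b = a+b-s_k a$ gives $0*_k a = a*_k 0 = a$ whenever the composite is defined (using $s_k c = 0$ for $k\leq n-2$), an induction over the composite tree of atoms finishes it; you correctly flagged this as the crux. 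With those details spelled out, your argument is a complete, self-contained proof of the statement the paper outsources to the literature.
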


Since the Dold-Kan correspondence sends $X  \in  sAb$ to $A^\bullet$, where $A^n = \bigoplus _{i=0}^{n-1} Ker d_i $, it is now clear from the comments in section \ref{17nov2} that $N\inv : sAb \lra Pic_\om$ satisfies 
$N\inv (X ) \simeq \bigoplus _{n=0} ^\infty \cap _{i=0} ^{n-1} Ker d_i$. \\ 

In the Dold-Kan correspondence quasi-isomorphisms correspond to weak equivalences in $sSet$, and by Proposition~\ref{qisprop}, quasi-isomorphisms correspond to equivalences of Picard $\om$-categories.  Under the equivalence $N: Pic_\om \lra  sAb$, equivalences of $\om$-categories correspond to weak equivalences in $sAb$.  Moreover, upon taking the geometric realization : $|\, \cdot \, |:  sAb \lra Top$, the equivalences of $\om$-categories are identified with weak equivalences of topological spaces.  Localizing with respect to weak equivalences, we have an embedding of $Ho(Pic _\om)$ into the homotopy category of topological spaces.

\section{The Dold-Kan Correspondence for Sheaves}\label{sheafdksection} 
Throughout the next two sections, fix an essentially small site $\mathcal S$ with enough points equipped with a Grothenieck topology, such as the category of manifolds with the Etale topology or open sets on a fixed manifold $X$.  Henceforth, let ``prehseaf'' mean a presheaf on $\mathcal S$, i.e. a functor from $\mathcal S ^{op}$ into some category. 

\subsection{Definitions}
 
\begin{Def}\label{cechdescentdef} For a presheaf $F$ with values in model category $\MM$, an object $X $ of $\mathcal S$ and an open cover $\UU = \{U_i\} $ of $X$, let $\check F _\UU$ denote the cosimplicial diagram
\[
\prod F (U_i) \Rightarrow \prod F (U_{ij}) \Rrightarrow F (U_{ijk})...
\]
in $\MM$.  We write $\check F = \check F _\UU$ when the open cover is understood.  We say that $F$ satisfies \emph{\v Cech descent with respect to $\UU$} if the natural map $F(X)\lra holim \check F _\UU$ is a weak equivalence in $\MM$. We say that $F$ satisfies \emph{\v Cech descent} if $F$ satisfies \v Cech descent with respect to all objects $X \in \mathcal S$ and all open covers $\UU $ of $X$.  
\end{Def} 


Let $X$ be an object of $\mathcal S$, which we think of as a discrete presheaf of simplicial sets.  The concept of \emph{hypercover} $U \lra X$ is defined precisely in \cite{dhi}.  Informally, we may think of it as a resolution of a \v Cech cover of $X$.  Notice that for a hypercover $U\lra X$, and presheaf $F$ with values in model category $\MM$, $F(U)$ is a cosimplicial diagram in $\MM$ since $U$ is a simplicial diagram in $\mathcal S$, and we have a morphism $F(X) \lra F(U) $ in $\MM ^\De$, where $X$ is considered as a constant diagram.  
 
\begin{Def}\label{descentdef} Let $U \lra X$ be a hypercover and $F$ be a presheaf with values in model category $\MM$.  We say that $F$ satisfies \emph{descent with respect to $U \lra X$} if $F(X) \lra holim F(U)$ is a weak equivalence in $\MM$.  We say that $F$ satisfies \emph{descent} if it satisfies descent with respect to all hypercovers.
\end{Def}

By ``simplicial presheaf'' we mean a presheaf with values in $sSet$.  Let $\tps$ denote simplicial presheaves which are levelwise sheaves of sets (i.e. simplicial objects in sheaves of sets).  In general, for a category $\CC$, we denote presheaves on $\mathcal S $ with values in $\CC$ by $Pre _\CC$, and we let $\check Sh _\CC$ denote those presheaves which satisfy \v Cech descent and $Sh _\CC$ denote those satisfying descent, provided that $\CC$ is a model category.  For shorthand we write $Pre _\om$ for $Pre _{\om Cat}$ and $Pre_{\om Ab}$ for $Pre _{Pic _\om}$.  

\begin{rem}It was shown in \cite{dhi} that , $\cshs$ are the presheaves which satisfy descent for all bounded hypercovers and that there exist presheaves satisfying \v Cech descent but not descent for all hypercovers.    
\end{rem}


\begin{prop} For a presheaf $F$ of simplicial abelian groups, $F$ satisfies (\v Cech ) descent if and only if $UF: \mathcal S ^{op} \lra sSet$ satisfies (\v Cech) descent.
\end{prop}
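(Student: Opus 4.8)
The plan is to reduce both descent conditions to the corresponding statements after applying $U$, using two properties of the forgetful functor $U : sAb \lra sSet$: that it both preserves and reflects weak equivalences, and that it preserves the homotopy limits occurring in the descent conditions. The first property is immediate from the definition of the model structure on $sAb$ recalled in Section \ref{dksection}: a map $g$ is a weak equivalence in $sAb$ precisely when $Ug$ is a weak equivalence in $sSet$. The second property will follow from the observation that $U$ is a right Quillen functor.

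First I would verify that $U$ is right Quillen. Its left adjoint is the levelwise free abelian group functor $\bbz[-] : sSet \lra sAb$, and since the fibrations and weak equivalences of $sAb$ are by definition created by $U$, the functor $U$ preserves fibrations and acyclic fibrations; hence $(\bbz[-], U)$ is a Quillen adjunction and $U$ is right Quillen. As such, $U$ preserves homotopy limits of diagrams of fibrant objects: concretely, $holim$ over the cosimplicial indexing category is computed as the totalization of a Reedy fibrant replacement, and $U$ preserves ordinary limits (being a right adjoint) as well as Reedy fibrant replacements (being right Quillen). This yields a natural weak equivalence $U(holim \check F_\UU) \simeq holim \check{(UF)}_\UU$, and likewise $U(holim F(V)) \simeq holim (UF)(V)$ for any hypercover $V \lra X$.

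With these in hand the argument is formal. Fix $X \in \mathcal S$ and an open cover $\UU$ of $X$. The canonical map $F(X) \lra holim \check F_\UU$ in $sAb$ is a weak equivalence if and only if its image under $U$ is a weak equivalence in $sSet$; using the natural identification above, this image is (up to weak equivalence) the canonical map $UF(X) \lra holim \check{(UF)}_\UU$. Hence $F$ satisfies \v Cech descent with respect to $\UU$ if and only if $UF$ does, and letting $X$ and $\UU$ vary gives the \v Cech case. The hypercover case is identical, replacing $\check F_\UU$ by $F(V)$ for a hypercover $V \lra X$ and invoking the corresponding identification $U(holim F(V)) \simeq holim (UF)(V)$.

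The one step requiring genuine care — and the main obstacle — is the commutation of $U$ with $holim$. While it is standard that right Quillen functors preserve homotopy limits, one must check that the model structures and the explicit totalization model for $holim$ are set up compatibly, in particular that a Reedy fibrant replacement of the cosimplicial object $\check F_\UU$ in $sAb$ is sent by $U$ to a Reedy fibrant replacement in $sSet$. This is exactly where the two defining properties of $U$ (preservation of fibrations and of weak equivalences) combine, and once it is in place the equivalence of the two descent conditions is purely formal.
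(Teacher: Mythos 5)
Your proposal is correct and follows essentially the same route as the paper: both arguments use that $U$ preserves and reflects weak equivalences (by definition of the model structure on $sAb$) and then reduce everything to the commutation $U(holim\, G) \simeq holim\,(UG)$, which both you and the paper justify by $U$ being the right adjoint of the Quillen pair $(\bbz[-], U)$. Your added discussion of Reedy fibrant replacements just makes explicit the step the paper leaves implicit.
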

\begin{proof} Let $D :\Delta ^{op}  \lra \mathcal S$ be a simplicial diagram associated to a \c Cech complex $\check C _\UU \lra X$ (i.e. the \v Cech nerve of an open cover of some $X \in \mathcal S$).  We know that $F(X) \lra holim(FD)$ is a weak equivalence if and only if $UF(X) \lra Uholim(FD)$ is a weak equivalence. We would like to show that $F(X) \lra holim FD$ is a weak equivalence in $sAb$ if and only if $UF(X) \lra holim(UFD)$ is a weak equivalence in $sSet$.  By the two out of three property of weak equivalences, this is true provided that $Uholim(FD) \lra holim (UFD)$ is a weak equivalence in $sSet$.  Thus, to complete the proof, it suffices to show that $U(holim FD) \lra holim (UFD)$ is a weak equivalence.   Since $U : sAb \lra sSet$ is the right adjoint in a Quillen pair $( \bbz [ - ], U)$, it naturally follows that for any diagram $G $ in $sAb$, $U(holim G) \lra holim (UG)$ is a weak equivalence.  
\end{proof}

\begin{rem} Since the category ${\ca } b$ of sheaves of abelian groups on $\mathcal S$ is abelian, the Dold-Kan correspondence provides an equivalences $Ch^+ (\ca b) \lra \tpa$ because simplicial objects in $\ca b$ are the same as $\tpa$.  
\end{rem}

\begin{prop}  Neither $\csha$ nor $\tpa$ is contained in the other.
\end{prop}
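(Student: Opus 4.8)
The plan is to prove the two non-containments separately, by exhibiting in each direction an explicit presheaf of simplicial abelian groups that lies in one of the two subcategories of $\pa$ but not the other. The guiding principle is that membership in $\tpa$ is a \emph{strict}, levelwise condition (each $F_n$ must be a sheaf of abelian groups), whereas membership in $\csha$ is a \emph{homotopical} condition, invariant under levelwise weak equivalence by Definition~\ref{cechdescentdef}. These two conditions are of genuinely different character, and the two examples exploit exactly this mismatch.

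\textbf{An object of $\tpa$ not in $\csha$.} Take the site of open subsets of $X = S^1$ and let $\mathcal F = \bbz$ be the constant sheaf. Via the Dold-Kan correspondence applied levelwise to the complex with $\mathcal F$ in degree $1$ and $0$ elsewhere, form the presheaf $F = K(\mathcal F, 1)$ of simplicial abelian groups. Since each level of $K(\mathcal F,1)$ is a finite direct sum of copies of $\mathcal F$, and finite direct sums of sheaves are sheaves, $F$ is a levelwise sheaf, so $F \in \tpa$. To see $F \notin \csha$, fix the standard cover $\UU$ of $S^1$ by two arcs and compute $\pi_* \, holim \check F _\UU$ via the descent (Bousfield--Kan) spectral sequence $\check H^s(\UU, \pi_t F) \Rightarrow \pi_{t-s}(holim \check F _\UU)$. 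Since $\pi_t F = \mathcal F$ for $t=1$ and vanishes otherwise, the row $t=1$ is the only nonzero one, so the spectral sequence degenerates and $\pi_0(holim \check F _\UU) = \check H^1(\UU, \bbz) = \bbz$. On the other hand $\pi_0 F(X) = \pi_0 K(\mathcal F(X), 1) = 0$, so the natural map $F(X) \lra holim \check F _\UU$ is not a weak equivalence and $F$ fails \v Cech descent.

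\textbf{An object of $\csha$ not in $\tpa$.} Let $P$ be the constant \emph{presheaf} on $X$ with value $\bbz$ and identity restriction maps; on a space with a disconnected open, $P$ is not a sheaf. Form the two-term acyclic complex $P \xrightarrow{\mathrm{id}} P$ in degrees $1$ and $0$, and let $F$ be the presheaf of simplicial abelian groups obtained by applying Dold-Kan levelwise. Because the complex is acyclic, $F(U)$ is weakly contractible for every $U$. Each level $F_n$ is a nonzero finite direct sum of copies of $P$, hence not a sheaf, so $F \notin \tpa$. However $F \in \csha$: for any $X$ and cover $\UU$, every term of the cosimplicial diagram $\check F _\UU$ is a product of weakly contractible simplicial abelian groups, hence weakly contractible, so $holim \check F _\UU \simeq 0 \simeq F(X)$; since any map between weakly contractible simplicial abelian groups is a weak equivalence, $F$ satisfies \v Cech descent.

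\textbf{Main obstacle.} The easier direction is the second: producing a homotopically trivial, hence descent-satisfying, presheaf whose levels are not sheaves is straightforward. The delicate point is the first direction, namely verifying that the levelwise sheaf $K(\mathcal F, 1)$ genuinely fails \v Cech descent. This rests on correctly identifying the homotopy of the homotopy limit via the descent spectral sequence and observing that placing $\mathcal F$ in degree $1$, rather than degree $0$ where the sheaf condition alone suffices, pushes the nonvanishing class $\check H^1(\UU,\bbz)$ into $\pi_0$, where it obstructs the comparison map. I expect the only real care to be in justifying the spectral sequence computation and the homotopy-invariance of $holim$; both are standard, but an honest proof must spend its effort precisely there.
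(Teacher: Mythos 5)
Your proof is correct, and your two counterexamples are of exactly the same type as the paper's own. For an object of $\csha$ not in $\tpa$, the paper also uses an acyclic complex of non-sheaves: it takes the presheaf $A$ with $A(X)=A_0\neq 0$ and $A(U)=0$ for $U\neq X$, made into an (evidently acyclic) complex with identity differentials, where you take the constant presheaf $\bbz$ in degrees $1$ and $0$; both verifications amount to the observation that a levelwise contractible presheaf satisfies \v Cech descent trivially while its levels fail the sheaf axiom. For an object of $\tpa$ not in $\csha$, the paper likewise takes the Dold--Kan image of a sheaf placed in degree $1$, namely $N{P}(A\lra 0)$. The genuine difference is in how failure of \v Cech descent is verified: the paper notes that ${P}(A\lra 0)$ is a presheaf of $1$-categories (the one-object groupoid with automorphisms $A$) and invokes Hollander's theorem that a stack satisfies descent if and only if its nerve satisfies \v Cech descent, whereas you compute $\pi_0$ of the homotopy limit directly via the descent spectral sequence for the two-arc cover of $S^1$. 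Your route is self-contained and, importantly, more careful on a point where the paper is imprecise: it is not true that $N{P}(A\lra 0)$ fails \v Cech descent for \emph{any} sheaf $A$ (a flasque sheaf, having vanishing $\check H^1$ on every cover, does satisfy it); one needs a sheaf and a cover with $\check H^1\neq 0$, which your explicit choice $\mathcal F=\bbz$ on $S^1$ supplies, together with the degenerate one-row spectral sequence argument pushing that class into $\pi_0$. What the paper's formulation buys in exchange is brevity and a conceptual link to stack descent; what yours buys is a complete, checkable computation.
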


\begin{proof}
 To see this, consider the following example of a presheaf $F \in \ps$ which satisfies \v Cech descent but which is not a levelwise presheaf.  Let $\mathcal S = Op(X)$, open sets on a manifold $X$, and let $A_0$ be any non-zero abelian group.  Consider the presheaf of abelian groups $A$ such that $A(X) = A_0$ and $A (U) = 0$ if $U \neq X$ and the complex $A^* \in C ^+ (\ca b)$ which is $A$ in each degree and whose differential is the identity map on $A$.  The corresponding presheaf of simplicial abelian groups satisfies \v Cech descent but levelwise is not a sheaf of sets.\\

On the other hand, take any sheaf of abelian groups $A$ and consider the complex $A\lra 0$ in degrees $1$ and $0$.  The corresponding presheaf ${P} (A \lra 0)$ of $\om$-categories and in fact a presheaf of 1-categories.  However, it does not satisfy descent for stacks.  Hollander shows in \cite{hol} that a stack satisfies descent if and only if its nerve satisfies \v Cech descent as a simplicial presheaf.  Hence, $ N {P} (A \lra 0)$ is a presheaf of simplicial abelian groups which is levelwise a sheaf but does not satisfy \v Cech descent, showing that neither condition implies the other.    
\end{proof}

\subsection{Model Structures and Derived Dold-Kan}
There are several model structures on simplicial presheaves. There are, of course, the projective and injective model structures \cite{lur, hir}, which we denote by $Pre_{sSet}^{proj}$, $Pre_{sSet}^{inj}$ respectively.  Weak equivalences are the sectionwise weak equivalences.  In the projective model structure, fibrations are the sectionwise fibrations, and in the injective model structure, the cofibrations are the sectionwise cofibrations.  For each of these, one can take the left Bousfield localization $Pre_{sSet} ^{loc,inj}$ and $Pre_{sSet}^{loc,proj}$ at the hypercovers.  The existence of the localalization $Pre_{sSet} ^{loc,inj}$ follows from the work of Jardine \cite{jar}, and the construction of the local projective model structure is due to Blander \cite{bla}.  The weak equivalences in $Pre_{sSet}^{loc,proj}$ and $Pre_{sSet}^{loc,inj}$ are the stalkwise weak equivalences of simplicial sets since $\mathcal S$ has enough points \cite{jar2}.   The important feature of the local model structures is that in  $Pre_{sSet}^{loc,inj}$, the fibrant objects are the presheaves which are fibrant in $Pre_{sSet}^{inj}$ and satisfy descent for all hypercovers.  Fibrant objects in   $Pre_{sSet}^{loc,proj}$ are the ones which are sectionwise Kan complexes and satisfy descent for all hypercovers.  \\

Jardine shows the existence of a model structure on $Pre_{sAb}$ such that a morphism is a weak equivalence or fibration if and only if it is a weak equivalence or fibration in $Pre_{sSet} ^{loc,inj}$ \cite{jarch}.  From this, the next two results follow easily.  First we see that for any presheaf of simplicial abelian groups, there exists a sheafification (i.e. local fibrant replacement) which is also a presheaf of simplicial abelian groups.  The second result states that there is a derived Dold-Kan correspondence.     

\begin{lem}\label{brisu} Let $U : Pre _{sAb} \lra Pre_{sSet}$ denote the forgetful functor.  For every $ X\in \pa$, there is a map $X \stackrel{f}\lra Y$ in $\pa$ such that $Uf$ is a weak equivalence and $UY$ is a fibrant object in $Pre_{sSet}^{loc,inj}$ and $Pre_{sSet}^{loc,proj}$.
\end{lem}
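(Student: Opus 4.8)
The plan is to take $Y$ to be a fibrant replacement of $X$ in Jardine's model structure on $\pa$, and then to check that such a $Y$ is automatically fibrant in both local model structures on simplicial presheaves. The whole statement is really a formal consequence of the defining properties of that model structure together with a comparison of the two localizations.

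First I would invoke the factorization axiom for Jardine's model structure on $\pa = Pre_{sAb}$. Since $\pa$ is additive, it has a zero object $0$, which is terminal; factoring the canonical map $X \lra 0$ as a trivial cofibration followed by a fibration produces
\[
X \stackrel{f}\lra Y \lra 0 ,
\]
with $f$ a weak equivalence and $Y \lra 0$ a fibration, so that $Y$ is fibrant. By the very definition of this model structure, a morphism in $\pa$ is a weak equivalence (respectively a fibration) exactly when its image under $U$ is one in $Pre_{sSet}^{loc,inj}$. Hence $Uf$ is a weak equivalence. Moreover $U$ is a right adjoint, so it preserves terminal objects and $U(0)$ is terminal in $\ps$; thus $UY \lra U(0)$ is a fibration, i.e. $UY$ is fibrant in $Pre_{sSet}^{loc,inj}$.

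It remains to promote this to fibrancy in $Pre_{sSet}^{loc,proj}$, and this is the one step requiring an argument. Both $Pre_{sSet}^{loc,inj}$ and $Pre_{sSet}^{loc,proj}$ are left Bousfield localizations at the hypercovers, so by the discussion preceding the lemma they share the same weak equivalences, namely the stalkwise weak equivalences. Their cofibrations differ: the cofibrations of $Pre_{sSet}^{loc,proj}$ are the projective cofibrations, and these are in particular sectionwise monomorphisms, hence cofibrations of $Pre_{sSet}^{loc,inj}$. Consequently the trivial cofibrations of $Pre_{sSet}^{loc,proj}$ form a subclass of those of $Pre_{sSet}^{loc,inj}$, and a map with the right lifting property against the larger class has it against the smaller one. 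Therefore every $Pre_{sSet}^{loc,inj}$-fibration is a $Pre_{sSet}^{loc,proj}$-fibration; applying this to $UY \lra U(0)$ shows that $UY$ is fibrant in $Pre_{sSet}^{loc,proj}$ as well.

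I expect no genuine obstacle: the only content beyond citing Jardine's model structure on $\pa$ is the comparison of the two local model structures, which I regard as the key step, albeit a formal one resting solely on the inclusion of cofibration classes and the coincidence of weak equivalences. One may instead verify it concretely against the descriptions recalled above, since a $Pre_{sSet}^{loc,inj}$-fibrant object is injectively fibrant (hence a sectionwise Kan complex) and satisfies hypercover descent, which are precisely the two conditions defining a $Pre_{sSet}^{loc,proj}$-fibrant object.
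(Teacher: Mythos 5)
Your proof is correct, and its first two steps coincide with the paper's: both take a fibrant replacement $X \stackrel{f}\lra Y$ in Jardine's model structure on $\pa$ and observe that, since weak equivalences and fibrations there are created by $U$, the object $UY$ is fibrant in $Pre_{sSet}^{loc,inj}$ and $Uf$ is a local weak equivalence. Where you genuinely diverge is the promotion to local projective fibrancy. The paper argues concretely from the characterizations it has already recalled: local injective fibrancy gives hypercover descent, and sectionwise Kan-ness comes not from any fibrancy property but from the fact that $UY$ underlies a presheaf of simplicial abelian groups (simplicial groups are Kan complexes); these two conditions are exactly what characterizes the fibrant objects of $Pre_{sSet}^{loc,proj}$. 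You instead argue formally: projective cofibrations are sectionwise monomorphisms, hence injective cofibrations, and since both localizations have the stalkwise weak equivalences, every trivial cofibration of $Pre_{sSet}^{loc,proj}$ is one of $Pre_{sSet}^{loc,inj}$, so every local injective fibration is a local projective fibration --- in effect, the identity is right Quillen from $Pre_{sSet}^{loc,inj}$ to $Pre_{sSet}^{loc,proj}$. Your route buys generality: it shows that \emph{any} locally injective fibrant simplicial presheaf is locally projective fibrant, with no use of the abelian structure. The paper's route stays entirely within facts it has stated (at the cost of invoking the group structure), and your closing ``concrete'' variant --- deducing sectionwise Kan-ness from injective fibrancy by lifting against horns --- is also valid and is essentially the paper's argument with Moore's theorem replaced by a lifting argument.
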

\begin{proof} 
If $X \in \pa$, take a fibrant replacement $ X \lra Y$ for $X$ in $ Pre_{sAb}$ in the model structure of \cite{jarch} described above.  $UY \in  Pre_{sSet}^{loc,inj}$ is fibrant since $U :  Pre_{sAb} \lra  Pre_{sSet}^{loc,inj}$ preserves fibrations.   Additionally, since $UY$ is fibrant in $Pre_{sSet}^{loc,inj}$, it satisfies descent for all hypercovers.  Since it is a presheaf taking values in $sAb$, it is sectionwise fibrant.  Therefore, $UY$ is also fibrant in $Pre_{sSet}^{loc,proj}$.
\end{proof}

\begin{prop} Let $P^\prime$ denote the full subcategory of $Pre_{sAb}$ spanned by objects satisfying descent for hypercovers.  Localizing at local weak equivalences, we can form the homotopy categegory $Ho(P^\prime)$, and $Ho(P^\prime)$ is equivalent to $D^+(\ca b)$, the derived category of chain complexes of sheaves of abelian groups in non-negative degrees.  
\end{prop}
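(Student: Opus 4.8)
The plan is to realize $Ho(P')$ as the homotopy category of Jardine's local model structure on $\pa$ and then carry it across the Dold-Kan correspondence into the derived category, via a chain of equivalences
$$ Ho(P') \;\simeq\; Ho(\pa^{loc}) \;\simeq\; Ho(\tpa^{loc}) \;\simeq\; D^+(\ca b), $$
where $(-)^{loc}$ denotes localization at the local (stalkwise) weak equivalences. I would establish the three links in turn.

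For the first link I would invoke Jardine's local model structure on $\pa$ from \cite{jarch} (the one underlying Lemma \ref{brisu}), whose fibrant objects satisfy descent and hence lie in $P'$, so that $\MM_f \subseteq P' \subseteq \pa$ for $\MM_f$ the full subcategory of fibrant objects. Since the homotopy category of a model category is the localization of its fibrant objects, $Ho(\pa^{loc}) \simeq \MM_f[W_{loc}^{-1}]$. To promote this to $Ho(P') = P'[W_{loc}^{-1}]$ I would take a functorial fibrant replacement $R : \pa \to \MM_f$ and use the key input from \cite{dhi}: a presheaf $X$ satisfies descent exactly when the unit $X \to RX$ is a sectionwise weak equivalence. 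Restricted to $P'$, the functor $R$ then comes equipped with a natural sectionwise---hence local---weak equivalence $X \to RX$, so $R$ and the inclusion $\MM_f \hookrightarrow P'$ become mutually inverse upon localizing, giving $Ho(P') \simeq Ho(\pa^{loc})$.

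The second link is sheafification $a : \pa \to \tpa$: its unit $X \to aX$ is an isomorphism on stalks, hence a local weak equivalence, while its counit is an isomorphism because $a$ is left adjoint to the fully faithful inclusion of levelwise sheaves; thus $a$ descends to an equivalence on localized categories. For the third link I would use the sectionwise Dold-Kan equivalence $\tpa \simeq Ch^+(\ca b)$ noted above, under which a map is a stalkwise weak equivalence if and only if the associated chain map is a stalkwise quasi-isomorphism. As $\mathcal S$ has enough points, a stalkwise quasi-isomorphism of complexes of sheaves is precisely a quasi-isomorphism (an isomorphism on cohomology sheaves), so localizing $Ch^+(\ca b)$ at these maps returns exactly $D^+(\ca b)$.

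I expect the first link to be the principal obstacle. $P'$ is not itself a model category, and---as the earlier proposition showing $\csha$ and $\tpa$ are incomparable makes clear---a descent presheaf need not be a levelwise sheaf, so one cannot identify $P'$ with $\MM_f$ or with $\tpa$ directly. What rescues the argument is precisely the \cite{dhi} characterization of descent as the assertion that $X \to RX$ is a sectionwise equivalence; this is what makes fibrant replacement a genuine homotopy inverse to the inclusion and keeps the proof from collapsing into the false claim that descent objects and levelwise sheaves agree.
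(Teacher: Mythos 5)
Your proposal is correct and follows essentially the same route as the paper: reduce to showing $Ho(P') \simeq Ho(Pre_{sAb})$ by sandwiching the fibrant objects of Jardine's local model structure inside $P'$, then pass through the sheafification adjunction $Pre_{sAb} \leftrightarrows \tilde{Pre}_{sAb}$ and the sectionwise Dold--Kan equivalence $\tilde{Pre}_{sAb} \simeq Ch^+(\mathcal{A}b)$. The only difference is one of detail: where the paper says the equivalence $Ho(P') \simeq Ho(Pre_{sAb})$ is ``easy to check'' from $P_{cf} \subset P' \subset Pre_{sAb}$, you make the check explicit via fibrant replacement and the characterization of descent objects as those whose fibrant replacement map is a sectionwise weak equivalence (valid here since presheaves of simplicial abelian groups are sectionwise fibrant), which is a legitimate filling-in of the same argument.
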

\begin{proof} First observe that the inclusion $\tilde Pre_{sAb} \leftrightarrows Pre_{sAb}$ and the levelwise sheafification functors descend to equivalences of homotopy categories since weak equivalences in Jardine's model structure on $Pre_{sAb}$ are the local weak equivalences.  Because $Ch^+(\ca b)$ is equivalent to $\tilde Pre_{sAb}$,  we need only show that $Ho(Pre_{sAb})$ is equivalent to $Ho(P^\prime)$ to complete the proof.  Since the forgetful functor $U: Pre_{sAb} \lra Pre_{sSet} ^{proj, loc}$ preserves fibrant objects, as was noted in the proof of Lemma \ref{brisu}, the full subcategory $P_{cf}$ of cofibrant fibrant objects is contained in $P^\prime$.  It is easy to check that since $P_{cf} \subset P^\prime \subset Pre_{sAb}$, $Ho(P^\prime)$ exists and is equivalent to $Ho(Pre_{sAb})$.
\end{proof}

\begin{rem} Consider the case of a simplicial presheaf on a topological space $X$.  In general it is a stronger requirement on a simplicial presheaf on $X$ to satisfy descent for all hypercovers than it is to satisfy \v Cech descent.  Lurie explains in \cite{lur} that if $X$ has finite covering dimension, then the two conditions are the same.   However, we are interested in presheaves on manifolds, all of which have finite covering dimension.  If we consider sheaves on the site of all differentiable manifolds, then the result is unchanged since we are only considering the hypercovers of \cite{dhi} rather than the most general hypercovers.
\end{rem}

\section{Omega Descent}\label{ommdescentsection}
The standard definition of descent, used in \cite{hol, jar, dhi}, is given in Definitions \ref{cechdescentdef} and \ref{descentdef}.  In this section, however,  we describe a glueing condition for presheaves of $\om$-categories and show that in the case when the presheaf takes values in $Pic _\om$, it coincides with the homotopy limit descent condition.  This is an attempt to expand on the work of Hollander \cite{hol}, who showed that descent for 1-stacks can be described in a homotopy theoretic way which is consistent with descent for simplicial presheaves.  Our definition of the glueing condition is motivated by Breen's description of descent for 2-stacks \cite{bre}.  The idea is that a sheaf $\ca$ of $\om$-categories on $\mathcal S$ satisfies the glueing condition if one can glue 0-objects, 1-objects, and k-objects for any $k \geq 0$. \\

 Informally, glueing of 0-objects has the following meaning.  Given an open cover $\UU = \{ U _i\} _{i\in I}$ of $X \in \mathcal S$ the data for glueing of 0-objects consists of 0-objects $a_i \in \ca (U_i) _0$ which are identified on intersections via 1-morphisms  $a_{ij} \in \ca (U_{ij}) _1$, $a_{ij} : (a_i)_{|ij} \lra (a_j)_{|ij}$, the 1-morphisms $a_{ij}$ are identified on triple intersections via 2-morphisms $a_{ijk} : a_{jk} *_0 a_{ij} \Longrightarrow a_{ik}$ and so on.  The glueing condition for 0-objects states that for any such system $(\{a_i\}_{i \in I} , \{a_{ij}\}_{i,j \in I} , \{a_{ijk}\} ,...)$, there exists an 0-object $x \in \ca (X)$--unique up to isomorphism--with isomorphisms $x_{|U_i} \lra a_i$ which fit together in a consistent way.  In other words, the system can be glued to a global 0-object in an essentially unique way. \\ 

For $\ca$ to satisfy the glueing condition, it must satisfy the glueing condition for 0-objects, and for each pair of sections $x,y \in \ca (X)_k$, the presheaf of $\om$-categories $Hom_\ca  ^k(x,y)$ satisfies the glueing condition for 0-objects.  \\
    
To make this description formal,  Let $\YY : \Delta \lra sSet$ be the Yoneda embedding.  A system $(a_i \in \ca (U_i)_0 , a_{ij} \in \ca (U_{ij})_1,...)$ as above can be thought of as a morphism $a \in Hom_{sSet ^\Delta} (\YY , \check {N\ca})$.   Here, $N$ is the nerve functor so that $N\ca$ is a simplicial presheaf, and $\check{N\ca}$ is the diagram from Definition \ref{cechdescentdef}.  There is a restriction map of the constant diagram $ \rho : N \ca (X) _{const} \lra \check{N\ca}$.  In order to compare $N\ca (X) _0$ with, $Hom_{sSet^{\De}}(\YY , \cna)$, we identify $N\ca (X)_0$ with $\{F \in Hom_{sSet^\De}(\YY , N\ca (X)_{const}) \st F(\De ^n ) = F (\De ^0) \; \textrm{for all} \; n\}$, so $N\ca (X)_0$ is a subset of $Hom_{sSet ^\De} (\YY , N\ca (X)_{const})$ and $\rho$ maps $ N\ca (X)_0$ to $ Hom_{sSet ^\De}(\YY , \cna)$. 

\begin{rem} In Street's definition of the descent \cite{str4}, he defines a descent object $Desc(\check N\ca) \in \om Cat$ object of $\ca \in Pre_\om$ with respect to the \v Cech complex for $\UU$.  Using the adjunction, $F_\om: sSet \leftrightarrows \om Cat : N$, $Hom_{sSet ^\De}(\YY , \cna)$ is identified with the 0-objects of $Desc(\check N\ca)$.   
\end{rem}

\begin{rem}\label{simpenrichment} The category of cosimplicial objects in $sSet$ is a simplicial model category \cite{goj}.  For $X,Y \in sSet ^\De$, the enrichment over simplicial sets is given by $hom(X,Y) _n = Hom_{sSet ^\De}(X \times \De ^n, Y)$, where $hom(X,Y)\in sSet$. For $K \in sSet$, $X \in sSet ^\De$, $X\times K \in sSet ^\De$ is $(X\times K)_n = X_n \times K$, and for any $Y \in sSet ^\De$, $Hom_{sSet^\De} (X\times K, Y) \simeq Hom_{sSet}(K, hom(X,Y))$.  The homotopy of Definition \ref{0glue} is really a homotopy $H : \De ^1 \lra hom(\YY , \cna)$ between 0-simplices $F$ and $\rho G$ in $hom(\YY , \cna)$.  The simplicial set $hom (\YY , \cna)$ is commonly called the total space $Tot(\cna)$ of $\cna$.    
\end{rem}

\begin{Def}\label{0glue} Let $\ca $ be a presheaf of $\om$-categories on $\mathcal S$, and let $\UU = \{ U _i\} _{i\in I}$ be an open cover of $X \in \mathcal S$. We say that $\ca$ satisfies \emph{0-glueing} with respect to $\UU$ if for all $F \in Hom_{sSet ^\De}(\YY, \cna)$, there exists a homotopy $H : Hom_{sSet ^\De}(\YY \times \De ^1 , \cna)$ from $F$ to $\rho G$ for some $G \in N\ca (X)_0$. We say that $\ca$ also satisfies \emph{unique 0-glueing} with respect to $\UU$ if two 0-objects $a,b \in \ca (X)_0 = N \ca (X) _0$ are isomorphic in $\ca (X)$ whenever $\rho a$ and $\rho b$ are isomorphic in $\h$.
\end{Def}

The intuitive meaning of the uniqueness of glueing is that if $a,b \in \ca (X)_0$ are locally isomorphic in a consistent way, then $a,b$ are isomorphic. Hence if $G$ and $G^\prime \in \ca (X)_0$ glue $F \in \h$ in the notation of Definition \ref{0glue}, then $G$ and $G^\prime $ are isomorphic in $\ca (X)$.  \\

To describe $k$-glueing for $k>0$, first observe that there is a shift functor $[1]: \om Cat \lra \om Cat$, where if $A = (Ob(A),s_k , t_k, *_k)_{k \in \N}$, $A[1]$ is the $\om$-category $(Ob(A), s[1]_k = s_{k+1} , t[1]_k = t_{k+1}, *[1]_k  = *_{k+1})_{k \in \N}$.  For $x,y \in A_0$, we are especially interested in sub-$\om$-categories $ A[1]^{x,y} = Hom_A (x,y):= \{a \in A[1] \st s_0a = x \; , \; t_0a = y \}$.  More generally, for $k\geq 1$, $x,y \in A_{k-1}$, let $A[k]^{x,y} =   \{a \in A[k] \st s_{k-1}a = x \; , \; t_{k-1}a = y \}$.  Observe that $(A[k])[1] = A[k+1]$.  

\begin{rem} $A[k]^{x,y} = Hom_A ^k (x,y) $ from Definition \ref{homkdef}.
\end{rem}   

\begin{Def} Let $\ca$ be a presheaf of $\om$-categories, $X$ an object in $\mathcal S$ and $\UU$ an open cover of $X$.
\ben Suppose $k >0$.  Then $\ca$ satisfies \emph{the (unique) k-glueing condition with respect to $\UU$} if for $x,y \in \ca (X)_{k-1}$, $\ca [k] ^{x,y}$ satisfies (unique) 0-glueing whenever $s_{k-2}x = s_{k-2}y$ and $t_{k-2}x = t_{k-2}y$. We use the convention that $s_{-1}x = t_{-1}x = 0$ for all $x$.  
\i We say that $\ca$ satisfies \emph{$\om$-descent with respect to $\UU$} if for all $k\geq 0$, $\ca $ satisfies the unique k-glueing condition.  
\i We say that $\ca$ satisfies \emph{$\om$-descent for loops} if for all $x \in \ca (X)_0$, each $\ca [k] ^{x,x} $ satisfies the unique 0-glueing condition.
\een
\end{Def}

\begin{Def} Let $\ca$ be a presheaf of Picard $\om$-categories.  We say that $\ca$ satisfies \emph{$\om$-descent, k-glueing, etc.} if it satisfies $\om$-descent with respect to $\UU $, k-glueing with respect to $\UU$,  et cetera, respectively for all objects $X$ of $\mathcal S$ and open covers $\UU $ of $X$.  
\end{Def}
 
Our goal for the remainder of this section is prove the following theorem, which relates two notions of descent.   

\begin{thm}\label{equivdescent} Let $\ca$ be a presheaf site $\mathcal S$ with values in $Pic _\om$.  Then $\ca$ satisfies $\om$-descent if and only if it satisfies \v Cech descent.  
\end{thm}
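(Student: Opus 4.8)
The plan is to push everything through the nerve equivalence $N:Pic_\om \lra sAb$ and to read off the two notions of descent as statements about the homotopy groups of one and the same map. First I would note that, by Theorem \ref{dk1} and the fact that $N$ is an equivalence of model categories, a presheaf $\ca \in Pre_{Pic_\om}$ satisfies \v Cech descent if and only if $N\ca$ does in $Pre_{sAb}$; and, since a presheaf of simplicial abelian groups satisfies \v Cech descent exactly when its underlying simplicial presheaf does, this holds if and only if for every $X \in \mathcal S$ and every cover $\UU$ the canonical map $\rho : N\ca(X) \lra holim\,\cna$ is a weak equivalence. By Remark \ref{simpenrichment} the target is the total space $hom(\YY,\cna)=Tot(\cna)$ of Definition \ref{0glue}, and because $\cna$ is a cosimplicial diagram of simplicial abelian groups its homotopy limit is again a simplicial abelian group. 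Hence both source and target of $\rho$ are grouplike, so $\rho$ is a weak equivalence if and only if it is a bijection on $\pi_0$ and an isomorphism on $\pi_n$ at the zero basepoint for all $n \geq 1$.

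The second step matches the $\pi_0$ statement with $0$-glueing. By construction the $0$-simplices of $Tot(\cna)$ are the descent data $F \in Hom_{sSet^\De}(\YY,\cna)$, those in the image of $\rho$ are the $\rho G$ with $G \in N\ca(X)_0$, and two of them represent the same class in $\pi_0$ precisely when they are joined by a homotopy $H \in Hom_{sSet^\De}(\YY\times\De^1,\cna)$. Thus surjectivity of $\pi_0(\rho)$ is the existence half of $0$-glueing and injectivity is unique $0$-glueing, exactly as in Definition \ref{0glue}; here one uses that $\pi_0(N\ca(X)) = \ca(X)_0/{\cong}$ and that the same description of path components holds on the descent side.

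The decisive third step reduces the higher homotopy groups to the $\pi_0$ analysis applied to shifted presheaves. The key is that $N$ intertwines the Hom/shift construction with looping: on chain complexes $[k]$ is the degree shift, so $N(\ca[k]^{0,0})$ is the $k$-fold loop space of $N\ca$ at the zero section, whence $\pi_n N(\ca[k]^{0,0}) \cong \pi_{n+k}N\ca$. Because $\ca$ is grouplike I may translate any admissible pair $x,y$ to $x=y=0$, so the unique $k$-glueing condition for $\ca$ becomes unique $0$-glueing for the presheaf $\ca[k]^{0,0}$ of (shifted) Picard $\om$-categories. Writing $F_k = N(\ca[k]^{0,0})$ and using that looping and the total space are both homotopy limits and hence commute, one obtains $Tot\big(\check{F_k}\big) \simeq \Omega^k Tot(\cna)$ compatibly with $\rho$; the $\pi_0$ argument of the second step applied to $\ca[k]^{0,0}$ then identifies bijectivity of $\pi_k(\rho)$ with unique $k$-glueing. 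Ranging over all $k \geq 0$ (the case $k=0$ being the second step) shows that $\rho$ is an isomorphism on every homotopy group if and only if $\ca$ satisfies unique $k$-glueing for all $k$, i.e.\ if and only if $\ca$ satisfies $\om$-descent, which proves both implications.

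I expect the main obstacle to be this third step: making the identification $Tot\big(\check{F_k}\big) \simeq \Omega^k Tot(\cna)$ precise and compatible with $\rho$. This requires knowing that $hom(\YY,-)$ genuinely computes the homotopy limit of the cosimplicial diagram (a Reedy fibrancy/cofibrancy check on $\YY$ and $\cna$, cf.\ Remark \ref{simpenrichment}), that looping commutes with that homotopy limit, and that the translation trick really collapses the all-$x,y$ quantifier in the $k$-glueing definition to the based case $x=y=0$. Each of these rests on the grouplike structure available because $\ca$ takes values in $Pic_\om$, and without it the reduction of higher homotopy to $\pi_0$ would fail.
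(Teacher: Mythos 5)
Your first two steps and the loop-based part of the third step are sound, and they reproduce the paper's own argument for one implication: the identification of unique $0$-glueing with bijectivity of $\pi_0(\rho)$ is Lemma \ref{0glueequivlem}, the shift $\pi_{n+1}L^k \cong \pi_n L^{k+1}$ is Lemma \ref{pinshift}, and the fact that $L=[1]^{0,0}$ is right Quillen and so commutes with the homotopy limit gives exactly Proposition \ref{loopdescentthm}: \v Cech descent is equivalent to unique glueing \emph{for loops at $0$} (hence, by the translation isomorphism $p_a$ of Lemma \ref{loopmovelem}, for all loops). This proves that $\om$-descent implies \v Cech descent.

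The gap is in the converse, precisely at the sentence ``I may translate any admissible pair $x,y$ to $x=y=0$.'' Translation by $-x$ gives an isomorphism $\ca[k]^{x,y} \cong \ca[k]^{0,\,y-x}$, not $\ca[k]^{0,0}$; to go further you need a global $k$-morphism from $x$ to $y$ (equivalently from $0$ to $y-x$), and the existence of such a morphism is exactly the content of the existence half of the $k$-glueing condition, which is what you are trying to prove. If $\ca[k]^{x,y}(X)$ could always be identified with $\ca[k]^{0,0}(X)$ it would in particular always be nonempty, trivializing the statement. Note also that for $x \neq y$ the presheaf $\ca[k]^{x,y}$ is not a presheaf of groups (it is a torsor-like object, possibly empty), so the grouplike reduction of weak equivalences to $\pi_0$ and based $\pi_n$ is not available for it; this is why your proposed identification $Tot(\check F_k) \simeq \Omega^k Tot(\cna)$ only covers the based objects $\ca[k]^{0,0}$ and hence only the loop conditions. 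The paper closes this gap with genuinely new work: Lemma \ref{aatoab} shows that \emph{assuming \v Cech descent}, nonemptiness of $Hom_{sSet^\De}(\YY, \check N\ca[1]^{a,b})$ forces the existence of a global path from $a$ to $b$ (via an explicit map $p:\YY\times\De^1 \lra \YY^{(1)}$ and a fibrancy argument for $\check N\ca[1]^{a,b}$, which requires choosing compatible group structures section by section since $\ca[1]^{a,b}$ is not a group object); only then does Lemma \ref{shiftab} let one translate to $\ca[1]^{0,0}$, and Lemma \ref{biglemma} runs an induction on $k$ using $\ca[k+1]^{a,b} = (\ca[k]^{s_{k-1}a,\,t_{k-1}b})[1]^{a,b}$ to handle all higher pairs. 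Without an argument of this kind, your proof establishes only ``$\om$-descent $\Rightarrow$ \v Cech descent'' together with the loop part of the converse, not the full equivalence.
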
 

It is important to note that since $N: Pic _\om \lra sAb$ is an equivalence of model categories, a presheaf $\ca \in Pre _\om$ satisfies (\v Cech) descent if and only if its nerve $N\ca \in Pre_{sSet}$ satisfies (\v Cech) descent.  \\

To prove Theorem \ref{equivdescent}, we will show that for each open cover $\UU$, a presheaf $\ca$ of Picard $\om$-categories on $\mathcal S$ satisfies the unique glueing condition with respect to $\UU$ on $X$ if and only if it satisfies \v Cech descent with respect to $\UU$.  For the rest of the section, fix an object $X \in Ob(\mathcal S)$ and open cover $\UU = \{ U_i \}_{i \in I}$ of $X$.

\subsection{Loop and Path Functors}

We define a pair of adjoint functors $[-1] : Ch^+(Ab) \leftrightarrows Ch^+(Ab) : \Om$, which form a Quillen pair for the model category $Ch ^+ (Ab)$. The functor $[-1]$ is defined as follows.  Let $B \in Ch^+ (Ab)$.  Then $([-1]B)_i = B_{i-1}$ for $i>0$, and $([-1]B)_0 = 0$.  Thus, $[-1]B = ...\lra B_1 \lra B_0 \lra 0$, where $B_0 $ is in degree 1.  On the other hand, $\Om$ is defined by letting $(\Om A)_i = A_{i+1} $ for $i>0$, and $(\Om A)_0 = Ker(A_1 \stackrel{d}\lra A_0)$.   Thus, $([1]^{0,0} A) = ...\lra A_3 \lra A_2 \lra Ker(d)$, with $Ker(d)\subset A_1$ in degree $0$.  Clearly, $Hom_{ch^+(Ab)} ([-1]B,A) \simeq Hom_{ch^+(Ab)}(B, \Om A)$, so $[-1] $ is left adjoint to $\Om $. Since $[-1]$ preserves weak equivalences and cofibrations in $Ch^+ (Ab)$, $([-1], \Om )$ is a Quillen pair (Prop 8.5.3 in \cite{hir}).  

\begin{lem}\label{looplem} Given the equivalences $Pic _\om \simeq Ch ^+ (Ab) \simeq sAb$, the following pairs of endofunctors correspond to each other:
\ben $([-1], \Om) $ on $Ch^+(Ab)$ defined above,
\i $([-1], [1]^{0,0})$ on $Pic _\om$, where $[1]^{0,0}$ is defined after Definition \ref{0glue} and $[-1]$ is defined in Proposition \ref{shiftfunctor}, and
\i $(M,L )$, where $L$ is given by $L(X)_n = Ker(d_0 ^X : X _{n+1} \lra X_n )\cap Ker(d_1 ^n)$ and $d_n ^{L(X)} = -d_{n+1} ^X$, $s_n^{L(X)} = -s_{n+1} ^X$.  One can describe $L(X)_n$ as the $(n+1)$-simplices in $X$ such that the 0-th face and 0-th vertex of $x$ is $0$.  On the other hand, $M(X)_n = X_{n-1}$ for $n>0$ and $M_0 = 0$.  The structure maps are $d^{M(X)} _i = d^X_{i-1} $ for $i>0$ and $d_0 = 0$.   
\een
\end{lem}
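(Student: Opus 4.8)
The plan is to exploit that $P \colon Ch^+(Ab) \to Pic_\om$ and $N \colon Pic_\om \to sAb$ are equivalences (Theorem \ref{26oct1} and Theorem \ref{dk1}), with $D = N\circ P$ the Dold--Kan correspondence and $K = D^{-1}$ its normalization. Each of the three displayed pairs is an adjunction whose first entry is the left adjoint, so I would reduce the lemma to two tasks: first, that the three \emph{left} adjoints $[-1]_{Ch}$, $[-1]_{Pic}$, $M$ correspond under $P$ and $N$; and second, that $[1]^{0,0}$ and $L$ are genuinely right adjoint to $[-1]_{Pic}$ and $M$. The correspondence of the right adjoints $\Om$, $[1]^{0,0}$, $L$ then follows formally: an equivalence conjugates an adjunction to an adjunction, and right adjoints are unique up to natural isomorphism, so once the left adjoints are intertwined the right adjoints must be too. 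Concretely, if $E$ is an equivalence with $E L_1 \cong L_2 E$ and $L_1\dashv R_1$, $L_2 \dashv R_2$, then $E R_1 E^{-1}$ is right adjoint to $E L_1 E^{-1}\cong L_2$, hence $E R_1 E^{-1}\cong R_2$, i.e. $E R_1 \cong R_2 E$.

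For the left adjoints, the correspondence of $[-1]_{Ch}$ with $[-1]_{Pic}$ under $P$ is exactly Proposition \ref{shiftfunctor}. To see that $M$ corresponds to $[-1]$ under $D$, I would compare normalizations, i.e. prove $K\circ M \cong [-1]_{Ch}\circ K$. This is a direct computation: since $d_0^{MX}=0$ and $d_i^{MX}=d_{i-1}^X$ for $i>0$, we get $K(MX)_n = \bigcap_{i=0}^{n-1}\ker d_i^{MX} = \bigcap_{i=1}^{n-1}\ker d_{i-1}^X = K(X)_{n-1}$ for $n>0$, while $K(MX)_0 = M(X)_0 = 0$. Thus $K(MX)$ and $[-1]_{Ch}(KX)$ agree as graded groups; the differential $(-1)^n d_n^{MX} = (-1)^n d_{n-1}^X$ matches the shifted differential up to the sign $(-1)$, which is absorbed by the degreewise alternating-sign isomorphism (multiplication by $(-1)^n$), a natural transformation. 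Hence $M$ is conjugate to $[-1]_{Ch}$ along $D$.

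For the right adjoints I would verify the two outstanding adjunctions $[-1]_{Pic}\dashv[1]^{0,0}$ and $M\dashv L$ directly from the explicit formulas, the delicate point in each case being degree $0$. For $M\dashv L$ the check pairs a map $MX\to Y$ with a map $X\to LY$ degreewise, the shift $M(X)_n = X_{n-1}$ matching $L(Y)_n = \ker d_0 \cap \ker d_1$ in degree $n$; one must confirm that the signs $d_n^{LY}=-d_{n+1}^Y$ and $s_n^{LY}=-s_{n+1}^Y$ are precisely what makes the natural bijection of hom-sets respect the simplicial identities. For $[-1]_{Pic}\dashv[1]^{0,0}$ the analogous point is that the based-loop condition $s_0=t_0=0$ defining $[1]^{0,0}$ corresponds, under $P$, to the kernel $\ker(d\colon A_1\to A_0)$ placed in degree $0$ of $\Om$. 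With these two adjunctions established, the formal argument of the first paragraph—applied to $E=P$ and then $E=N$—identifies the transported right adjoints $P\Om P^{-1}$ and $N[1]^{0,0}N^{-1}$ with $[1]^{0,0}$ and $L$ respectively, completing the proof.

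The main obstacle I anticipate is entirely on the loop side: reconciling the three superficially different degree-$0$ truncations—the kernel $\ker(A_1\to A_0)$ in $\Om$, the pointed condition $s_0=t_0=0$ in $[1]^{0,0}$, and the double kernel $\ker d_0\cap\ker d_1$ in $L$—and fixing the signs in $L$ so that the adjunction isomorphisms are natural and compatible with the normalized-complex comparison. By contrast the suspension/shift side is a bookkeeping index shift and should present no genuine difficulty.
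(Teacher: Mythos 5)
Your overall scheme---intertwine the left adjoints under the equivalences, check that each displayed pair really is an adjunction, and then invoke uniqueness of right adjoints under conjugation by an equivalence---is sound as far as it goes, and for the pairs (1) and (2) it can be completed exactly as you describe: Proposition \ref{shiftfunctor} gives $P\circ[-1]_{Ch}\cong[-1]_{Pic}\circ P$, and your ``delicate degree-$0$ point'' is precisely the observation that $s_0x=t_0x=0$ in $P(C)$ forces $x_0^{\pm}=0$, so that $[1]^{0,0}(PC)\cong P(\Om C)$. Since the paper's own proof of this lemma is a one-line ``the result follows easily from the equivalences,'' your write-up is, for these two pairs, a legitimate expansion of it.

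The genuine gap is in the $sAb$ column, and it defeats both of your proposed routes for the pair $(M,L)$. The functor $M$ as defined in the statement is not an endofunctor of $sAb$ at all: the simplicial identity $d_0s_0=\mathrm{id}$ is incompatible with $d_0^{MX}=0$, so $MX$ admits no degeneracy operators whatsoever. Consequently ``$K(MX)$'' is not the normalization of an object of $sAb$ (your computation of $\bigcap_{i<n}\ker d_i^{MX}$ uses only the face maps and never checks simpliciality, and the appeal to the equivalence $K$ is exactly where the argument breaks), and $\mathrm{Hom}_{sAb}(MX,Y)$, which you need in order to ``verify $M\dashv L$ degreewise,'' is ill-defined; a face-compatible family $MX\lra Y$ also records strictly less data than a simplicial map $X\lra LY$, since the latter must respect degeneracies. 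No reinterpretation of $M$ can rescue the claim: the true Dold--Kan transport $D\circ[-1]_{Ch}\circ K$ of the chain shift (with $D=K^{-1}$) satisfies $D([-1]KX)_2\cong (KX)_1\oplus X_0^{\oplus 2}$, whereas $(MX)_2=X_1\cong (KX)_1\oplus X_0$, so the correct left adjoint of $L$ is the larger classifying-space--type functor $D[-1]K$ (isomorphic to $\overline{W}$), not the naive shift. This defect sits in the statement itself, which your blind attempt inherited, but your proof papers over it at the steps ``hence $M$ is conjugate to $[-1]_{Ch}$ along $D$'' and ``$M\dashv L$.'' The salvageable content---and the only part used later, in Lemma \ref{pinshift} and Proposition \ref{loopdescentthm}---is the correspondence of the right adjoints, and it should be proved directly rather than deduced from $M$: one checks $K(LX)=\Om(KX)$ on the nose, namely for $n\geq 1$, $K(LX)_n=\{x\in X_{n+1}\,:\,d_0x=\cdots=d_nx=0\}=(KX)_{n+1}$ (the vertex condition is then automatic), while $K(LX)_0=\ker d_0\cap\ker d_1=\ker\bigl(d:(KX)_1\lra (KX)_0\bigr)$, the signs $d^{L(X)}_n=-d^X_{n+1}$ being exactly what makes the differentials agree; combined with your (correct) identification $[1]^{0,0}P\cong P\Om$, this yields the correspondence of $\Om$, $[1]^{0,0}$, and $L$ without ever mentioning $M$.
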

\begin{proof} Using the equivalences $P: Ch^+(Ab) \lra Pic _\om$ and $K: sAb \lra Ch^+(sAb)$, the result follows easily.  
\end{proof}

Lemma \ref{looplem} suggests that we should think of $\Om (A)$ as loops in $A$ based at $0$.  We can also consider path functors. 

\begin{lem}\label{pathlem2} Given the equivalences, $Pic _\om \simeq Ch^+(Ab) \simeq sAb$, the following functors correspond to each other.  
\ben In $Pic_ \om$ the path functor $[1]: Pic _\om \lra Pic _\om$ is the restriction of $[1]: \om Cat \lra \om Cat$ defined immediately after Definition \ref{0glue}.
\i $\Pi : Ch^+ (Ab) \lra Ch^+ (Ab)$  defined by $\Pi (A) = ...\lra A_3 \lra A_2 \lra A_1 \oplus A_0$, with $A_1\oplus A_0$ in degree $0$ and differential $d\oplus 0 : A_2 \lra A_1 \oplus A_0$.  
\i $Path: sAb \lra sAb $ defined by $Path(X) = \hat S (X) \oplus X_0$, where $X_0$ is a discrete simplicial set with $X_0$ in degree $0$ and $\hat S : sAb \lra sAb$ is given by $\hat S(X)_n :=   Ker(d_0 ^X : X _{n+1} \lra X_n )$ and $d_n ^{\hat S(X)} = -d_{n+1} ^X$, $s_n^{\hat S(X)} = -s_{n+1} ^X$.
\een
\end{lem}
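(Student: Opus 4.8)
The plan is to transport both functors across the explicit equivalences $P : Ch^+(Ab) \to Pic_\om$ of Theorem \ref{26oct1} and the Dold--Kan functor $K : sAb \to Ch^+(Ab)$, and to verify the correspondence on objects; exactly as in Lemma \ref{looplem}, naturality in each variable is then automatic, since $[1]$, $\Pi$, $Path$, $P$, and $K$ are all given by componentwise formulas. Concretely, I would establish the two isomorphisms of functors $P\circ\Pi \cong [1]\circ P$ on $Ch^+(Ab)$ and $K\circ Path \cong \Pi\circ K$ on $sAb$, and the lemma follows by composing them.

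For the first isomorphism I would use the decomposition $P(A)\cong\bigoplus_{k\ge 0}A^k$ recorded in \S\ref{17nov2}, under which an element of the summand $A^k$ satisfies $s_n x=x$ exactly when $n\ge k$, while $t_n x$ equals $x$, $dx$, or $0$ according as $n\ge k$, $n=k-1$, or $n<k-1$. Passing to $P(A)[1]$ replaces $s_n,t_n$ by $s_{n+1},t_{n+1}$, so the summand $A^k$ first becomes a source-fixed point in degree $\max(k-1,0)$: thus $A^0$ and $A^1$ both sit in degree $0$, while $A^k$ sits in degree $k-1$ for $k\ge 2$. Computing the induced differential $d=t_{j-1}-s_{j-1}$ in the shifted structure sends the degree-$j$ summand $A^{j+1}$ into $A^j$ by the original $d$, landing in the $A^1$-factor of degree $0$ when $j=1$ and annihilating the $A^0$-factor. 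Reading off $Q(P(A)[1])$ via Lemma \ref{7nov2} then reproduces exactly $\Pi(A)=\cdots\to A^2\to A^1\oplus A^0$ with differential $d\oplus 0$, which is what I want.

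For the second isomorphism I would compute the normalized complex $K(Path(X))$ directly. Since $Path(X)=\hat S(X)\oplus X_0$ with $X_0$ discrete and $K$ additive, it suffices to treat each summand. The defining condition $\hat S(X)_n=Ker(d_0^X)$, together with the fact that the lower faces $d_i^{\hat S(X)}$ for $i<n$ are, up to sign, the shifts $d_{i+1}^X$ (and signs do not affect kernels), collapses $K(\hat S(X))_n=\bigcap_{i<n}Ker\,d_i^{\hat S(X)}$ to $\bigcap_{i=0}^{n}Ker\,d_i^X=K(X)_{n+1}$; moreover the signed top face $d_n^{\hat S(X)}=-d_{n+1}^X$ is precisely what makes the Dold--Kan differential $(-1)^n d_n^{\hat S(X)}$ agree with the differential $(-1)^{n+1}d_{n+1}^X$ of $K(X)$ in degree $n+1$. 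The discrete summand contributes $X_0=K(X)_0$ in degree $0$ and nothing in higher degrees. Assembling the two summands gives $K(Path(X))_0=K(X)_1\oplus K(X)_0$ and $K(Path(X))_n=K(X)_{n+1}$ for $n\ge 1$, with the degree-$1$ differential mapping into the $K(X)_1$ factor only; this is exactly $\Pi(K(X))$.

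The individual computations are routine, and I expect the only real obstacle to be bookkeeping: tracking the degree shift together with the signs so that the Dold--Kan differentials genuinely match (this is where the minus signs built into the definitions of $\hat S$ and $L$ in Lemma \ref{looplem} earn their keep), and verifying that the ``extra'' summand of $\Pi$ carrying the trivial incoming differential is correctly identified on both sides --- the copy of $A^0$ surviving as constants under the shift on the $Pic_\om$ side, and the discrete factor $X_0$ on the $sAb$ side.
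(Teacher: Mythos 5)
Your proposal is correct and follows essentially the same route as the paper: both verify the square $P\circ\Pi\cong[1]\circ P$ by direct computation with the explicit model of $P$, and both identify $Path$ with $\Pi$ by splitting off the discrete summand $X_0$ and matching $\hat S$ with the truncated shift $S\colon A\mapsto(\cdots\to A_2\to A_1)$ under Dold--Kan. The only differences are bookkeeping: the paper writes the isomorphism $P(\Pi(A))_n\cong (PA)_{n+1}$ directly on sequence representatives, whereas you pass through the direct-sum decomposition of \S\ref{17nov2} and the functor $Q$ (invoking $PQ\cong\mathrm{id}$), and you carry out explicitly the verification that $K\hat S=SK$, which the paper dismisses as ``easily verified.''
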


\begin{proof} To make the identification of $\Pi$ with $[1]$, let ${P} $ denote the equivalence ${P} : Ch^+(Ab) \lra Pic_\om$.  Identifying $((a_{n+1}^+,a_{n+1}^-),...,(a_2^+ , a_2 ^-), (a_1^+ + a_0^+ , a_1 ^- + a_0 ^-)) \in {P} (\Pi (A))_n$ with $((a_{n+1}^+,a_{n+1}^-),...,(a_2^+ , a_2 ^-), (a_1^+  , a_1 ^- ), (a_0^+ , a_0 ^+ - da_1 ^+)) \in ({P} A)_{n+1} = (PA)[1]_n$ establishes that $[1]\circ {P} = {P} \circ \Pi$. \\

To make the identification of $Path$ with $\Pi$, let $S: Ch^+ (Ab) \lra Ch^+ (Ab)$ denote the functor $A \mapsto (...\lra A_3 \lra A_2 \lra A_1)$.  It is easily verified using the Dold-Kan correspondence that $\hat S$ corresponds with $S$.  Observe that $\Pi (A) = S(A) \oplus (...0 \lra A_0)$ so that $K\Pi (A) = K(S(A)) \oplus K(A_0)$, where $K(A_0)$ is the discrete simplicial abelian group with $A_0$ in degree $0$.  
\end{proof}

For $A \in Pic_ \om$ and 0-objects $a,b \in A_0$, the sub-$\om$-category $A[1]^{a,b}$ of $A[1]$ is not a Picard $\om$-category.  However, we can still describe its nerve as a sub-object $  NA[1]^{a,b}$ of $ NA[1]$ in $sSet$.  Denote the path functor $sAb \lra sSet $ corresponding to $[1]^{a,b} : Pic _\om \lra \om Cat$ by $Path^{a,b}$.  First we argue that the n-simplices of $NA[1]$ can be viewed as the $(n+1)$-simplices of $NA$ for which the 0th face is $s_0 ^{n+1}b$ for some vertex $b\in NA_0$.  By Lemma \ref{pathlem2}, $NA[1]_n = \{y \in (NA)_{n+1} \st d_0 y = 0\} \oplus A_0$, and there is an inclusion $NA[1]_n \hookrightarrow NA_{n+1}$ given by $(y,b)\mapsto y + s_0 ^{n+1} b$.

\begin{lem} Let $A$ be a Picard $\om$-category.  Then $NA[1]^{a,b} _n$ consists of simplices $x \in NA_{n+1}$ for which the 0-th vertex $v_0 x  = (d_1 ) ^{n+1} x  = a$ and $d_0  x= b$.  
\end{lem}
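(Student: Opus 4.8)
The plan is to characterize the image of $NA[1]^{a,b}_n$ under the inclusion $\iota : NA[1]_n \hookrightarrow NA_{n+1}$, $(y,b') \mapsto y + s_0^{n+1}b'$, established just above this lemma. Write $\tilde x = \iota(x)$ for an $n$-simplex $x \in NA[1]_n$, regarded as a functor $x : \OO(\td^n) \lra A[1]$. Since $A[1]^{a,b} = \{c \in A[1] \st s_0 c = a,\ t_0 c = b\}$ is a sub-$\om$-category, $x$ lands in it exactly when every cell of its image does. First I would reduce this to a condition on the $0$-cells $x(\langle i\rangle)$, $0 \le i \le n$: because $x$ is a functor and $s[1]_0 = s_1$, we have $s_1 x(w) = x(s_0 w)$ with $s_0 w$ one of the vertices $\langle i\rangle$ of $\OO(\td^n)$; combined with the identities $s_0 s_1 = s_0$ and $t_0 t_1 = t_0$ (condition 2b of Definition \ref{16nov1}) this gives $s_0 x(w) = s_0 x(\langle i\rangle)$ and, dually, $t_0 x(w) = t_0 x(\langle j\rangle)$ for the appropriate vertices. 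Hence $x$ factors through $A[1]^{a,b}$ if and only if each vertex $x(\langle i\rangle) \in A[1]_0 = A_1$ is a $1$-morphism of $A$ with $s_0 x(\langle i\rangle) = a$ and $t_0 x(\langle i\rangle) = b$.

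The crux is to read off these vertices from $\tilde x$. Under the d\'ecalage correspondence of Lemma \ref{pathlem2}, the inclusion $\iota$ identifies the $A[1]$-vertex $x(\langle i\rangle)$ with the edge of $\tilde x$ running from its $0$-th vertex to its $(i{+}1)$-st vertex: the shift $[1]$ plays the role of coning off at the $0$th vertex, so the vertices of the shifted simplex become the edges of $\tilde x$ emanating from vertex $0$. Granting this, the source of $x(\langle i\rangle)$ is $\tilde x(\langle 0\rangle) = v_0\tilde x$ and its target is $v_{i+1}\tilde x$. Thus the condition on the $x(\langle i\rangle)$ becomes $v_0 \tilde x = a$ (the same for every $i$) together with $v_{i+1}\tilde x = b$ for $0 \le i \le n$, i.e. $v_1 \tilde x = \cdots = v_{n+1}\tilde x = b$.

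It remains to repackage these vertex conditions. Since $\tilde x$ lies in the image of $\iota$, we have $\tilde x = y + s_0^{n+1}b'$ with $d_0 y = 0$, so $d_0\tilde x = s_0^n b'$ is the degenerate $n$-simplex on the single vertex $b'$; in particular $v_1\tilde x, \dots, v_{n+1}\tilde x$ all equal $b'$. Consequently the target conditions are equivalent to $b' = b$, i.e. $d_0\tilde x = s_0^n b$, which is the assertion $d_0 x = b$ under the usual identification of a degenerate simplex with its vertex. Conversely, any $\tilde x \in NA_{n+1}$ with $d_0\tilde x = s_0^n b$ automatically lies in the image of $\iota$ (take $y = \tilde x - s_0^{n+1}b$). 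Finally, iterating $d_1$ omits vertices $1,2,\dots$ while retaining vertex $0$, so $(d_1)^{n+1}\tilde x = v_0\tilde x$, which yields the stated equality $v_0 x = (d_1)^{n+1} x = a$. Combining the two displayed families of conditions gives the claimed description of $NA[1]^{a,b}_n$.

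The main obstacle I anticipate is precisely the identification in the second paragraph of the $A[1]$-vertices with the edges-from-$0$ of $\tilde x$; everything else is bookkeeping with the $\om$-category axioms and the simplicial identities. This identification is the concrete shadow of the d\'ecalage/path-space picture, and to make it airtight one traces the definition of $\iota$ through the orientals $\OO(\td^{n})$ and $\OO(\td^{n+1})$ — equivalently, one verifies it on the $sAb$ side via Dold--Kan and Lemma \ref{pathlem2}, where $x(\langle i\rangle)$ appears visibly as the summand of $\tilde x$ recording the edge into vertex $i+1$.
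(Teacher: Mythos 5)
Your opening reduction is correct, and it is genuinely different from (and cleaner than) the start of the paper's argument: using functoriality of $x:\OO (\td ^n)\lra A[1]$ together with the globularity axiom (2b) of Definition \ref{16nov1} (so $s_0s_1=s_0$ and $t_0t_1=t_0$), you correctly show that $x$ lands in the sub-$\om$-category $A[1]^{a,b}$ if and only if every vertex $x(\langle i\rangle)\in A[1]_0=A_1$ satisfies $s_0x(\langle i\rangle)=a$ and $t_0x(\langle i\rangle)=b$; the closing bookkeeping ($d_0\tilde x=s_0^nb'$ forces $v_1\tilde x=\cdots=v_{n+1}\tilde x=b'$, the converse via $y=\tilde x-s_0^{n+1}b$, and $(d_1)^{n+1}\tilde x=v_0\tilde x$) is also fine. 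The genuine gap is exactly the step you flag and then ``grant'': the identification of the $A[1]$-vertex $x(\langle i\rangle)$ with the edge $\tilde x(\langle 0,i{+}1\rangle)$, equivalently the claim that $s_0x(\langle i\rangle)=v_0\tilde x$ and $t_0x(\langle i\rangle)=v_{i+1}\tilde x$. This is not routine d\'ecalage folklore in the present setting; it is the entire content of the lemma, namely the compatibility of the shift $[1]$ with the nerve. The isomorphism $N(A[1])\simeq \hat S(NA)\oplus (NA)_0$ furnished by Lemma \ref{pathlem2} is produced abstractly by passing through the Dold--Kan equivalences, with sign conventions ($d_n^{\hat S(X)}=-d_{n+1}^X$, $s_n^{\hat S(X)}=-s_{n+1}^X$), and nothing in that lemma tells you, at the level of functors on orientals, which $1$-morphism of $A$ the vertex $x(\langle i\rangle)$ actually is. The sign ambiguity is not cosmetic: if the composite identification carried a sign on the $\hat S$-summand, the source of $x(\langle i\rangle)$ would compute to $-(v_0\tilde x-b')+b'$, and your characterization would come out as $v_0\tilde x=2b-a$ instead of $v_0\tilde x=a$. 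So the middle step needs an actual proof, not a plausibility argument.

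For comparison, the paper closes precisely this hole by a different route: induction on $n$, using Street's theorem that $\OO (\td ^n)$ is freely generated by its atoms, so that $x$ amounts to its restriction $g$ to the $(n-1)$-skeleton together with a top cell $\al$ whose $(n-1)$-source and target are prescribed by $g$; then, since taking $s_0$ or $t_0$ of a composite only sees the last factor (properties (1d) and (2b) of Definition \ref{16nov1}), $s_0\al$ and $t_0\al$ equal $s_0$, resp.\ $t_0$, of $g$ applied to a single lower-dimensional atom, and the induction hypothesis applied to the faces of $x$ finishes the argument. If you want to keep your d\'ecalage route, you must prove your vertex--edge identification with a comparable amount of work -- either by tracing the Lemma \ref{pathlem2} isomorphism through $P$, $Q$ and the Dold--Kan signs, or by an oriental-level argument; Proposition \ref{naoneprop} in the appendix shows what such a verification looks like when written out in full, and it is not short.
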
 

\begin{proof} We prove by induction that for an n-simplex in $x \in NA[1]_n \subset NA_{n+1}$ such that $v_0 x = a$ and $d_0 x = b$, $x \in NA[1]^{a,b}$.  For $n = 0$, let $x$ be a 0-simplex in $NA[1]$ such that $v_0 x =a $ and $d_0 x = b$.  Since $(NA)_1 = A_1$, we can think of the 1-simplex $x$ in $NA$ as a 1-morphism in $A$, where the source of $ x$ is $s_0  x = d_1 x = v_0 x = a$ and the target of $ x$ is $t_0  x = d_0 x = b$.  Hence, $x \in N[1]^{a,b}_0$. \\

Now let $x$ be an n-simplex in $NA[1]_n \subset NA_{n+1}$ such that $v_0 x = a$ and $d_0 x = b$. Observe that $v_0 x = v_0 (d_i x)=a$ and $d_0x = d_0 d_i x = b$ for every $0 < i \leq n+1$, and in fact, $v_0 w = a$ and $d_0 w = b$ for every r-dimensional face $w$ of $x$.   We make use of the notation defined in \S \ref{paritysection} for the remainder of the proof.  By construction, an n-simplex in $NA[1]$ is a morphism $x \in Hom_{\om Cat} (\OO (\tilde \De ^n) , A[1])$, where $\OO(\tilde \De ^n)$ is Street's n-th oriental, defined in \S \ref{paritysection}.  For an $\om$-category $B = (Ob(B) , *_i , s_i , t_i){i \in \N}$, and $m\geq 0$, let $|B|_m = (B_m , *_i , s_i , t_i)_{0 \leq i \leq m-1}$ denote the $m$-category formed by taking all $k$-morphisms for $k\leq m$.  In \cite{str}, Street shows that a choice of morphism   $x \in Hom_{\om Cat} (\OO (\tilde \De ^n) , A[1])$ is equivalent to a morphism  $g \in Hom_{\om Cat} (|\OO (\tilde \De ^n)|_{n-1} , A[1])$ and $\al \in A[1]_n$ such that $s[1]_{n-1} \al = g(s_{n-1} \langle (01...n) \rangle)$ and $t[1]_{n-1} \al = g(t_{n-1} \langle (01...n) \rangle)$, where $\langle (01...n) \rangle$ is the unique non-trivial n-morphism in $\OO (\tilde \De ^n)$.  Hence, the 0-source and 0-target of $\al$ in $A$ are determined by $g$ because $s_0\al = s_0 s[1]_{n-1}\al = s_0 g(s_{n-1} \langle (0...n) \rangle ) = g(s_0s_{n-1}\langle (0...n)\rangle ) = g(s_0 \langle (0...n)\rangle) $, and similarly, $t_0\al = g(t_0 \langle (0...n)\rangle)$.  Street shows that $g(s_{n-1} \langle (0...n \rangle )$ and $ g(t_{n-1}\langle (0...n\rangle )$ are compositions of of $g$ applied to $\langle \be \rangle$ for some $\be : \De ^r \hookrightarrow \De ^n$ with $r <n$.  Using properties (1d) and (2b) of Definition \ref{16nov1}, taking $s_0$ or $t_0$ of a composition simply applies $s_0$ or $t_0$ to the last morphism in the chain of compositions.  Thus, $s_0 \al = s_0 g(\langle \be \rangle) $ and $t_0 \al =  t_0 g(\langle \be \rangle)$ for some $\be : \De ^r \hookrightarrow \De ^n$ with $r <n$.  But $\OO (\tilde \De ^r) \stackrel{\beta _*}\lra \OO (\tilde \De ^n) \stackrel{x}\lra A[1]$ is simply one of the r-faces of the n-simplex $x$.  Since $x \be _*$ is an r-face of $x$,  $a = v_0 x = v_0 (x\be _*)$ and $b = d_0 (x \be _*)$. By induction hypothesis, $s_0 (g \langle \be \rangle) = a $ and $t_0 (g \langle \be \rangle ) = b$, so  $s_0\al = a $ and $t_0 \al =  b$.  Also, for $r <n$, every r-dimensional face $x \gamma _*$ for $\gamma :\De ^r \hookrightarrow \De ^n$, $v_0(x \gamma _*) = v_0 x = a $ and $d_0 (x \gamma _* ) = d_0 x = b$, whence $s_0 (g\langle \gamma \rangle ) = a$ and $t_0 (g \langle \gamma \rangle ) = b$ by induction hypothesis. Thus, for $r \leq n$, every r-face $w$ of $x$, the r-morphism $g(\langle \gamma _w \rangle ) $in $A[1]$ representing $w$ has $a$ as its 0-source and $b$ as its 0-target.  Therefore, $x \in NA[1]^{a,b}$.      
\end{proof}

\begin{rem} For $X \in sAb$, $Path^{a,b} X$ (the paths in $Path (X)$ from $a$ to $b$) is isomorphic to the left mapping space $Hom_{X} ^L(a,b) $ of Lurie \cite{lur}.  
\end{rem}

\subsection{Equivalence of Descent Conditions}



\begin{lem}\label{0glueequivlem} Let $\ca$ be a presheaf of $\om$-categories.  The unique 0-glueing condition is equivalent to the condition that $\pi _0 (N\ca (X)) \simeq \pi _0 (holim \cna) $.
\end{lem}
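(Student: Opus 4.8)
The plan is to realize the asserted isomorphism $\pi _0(N\ca(X)) \simeq \pi _0(holim\, \cna)$ as the map induced on $\pi _0$ by the canonical comparison map $c : N\ca(X) \lra \h$, and then to check that its surjectivity is exactly $0$-glueing while its injectivity is exactly unique $0$-glueing. First I would record the identification of the target. By Remark \ref{simpenrichment} we have $\h = hom(\YY , \cna) = Tot(\cna)$, with $\h _0 = Hom_{sSet^\De}(\YY , \cna)$ and $\h _1 = Hom_{sSet^\De}(\YY \times \De ^1 , \cna)$. Since each $N\ca(U)$ is a Kan complex and the \v Cech cosimplicial object $\cna$ is Reedy fibrant, $Tot(\cna)$ computes the homotopy limit and is itself Kan; hence $holim\, \cna \simeq \h$ and $\pi _0(holim\, \cna) = \pi _0(\h)$, the latter being $\h _0$ modulo the relation generated by $\h _1$ (two elements $F$ are identified iff they are connected by a homotopy, i.e. iff they are isomorphic in $\h$).

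Next I would identify the map. The restriction $\rho$ is the map on $0$-simplices of the canonical comparison map $c : N\ca(X) \lra \h$ obtained by applying $hom(\YY , -)$ to the augmentation $N\ca(X)_{const} \lra \cna$ of the constant cosimplicial object. Thus $c$ induces $\pi _0(c) : \pi _0(N\ca(X)) \lra \pi _0(\h) = \pi _0(holim\, \cna)$, and this is the candidate for the isomorphism in the statement. On the source, two $0$-objects lie in the same class of $\pi _0(N\ca(X))$ iff they are connected by a chain of $1$-morphisms; for the groupoids at hand this coincides with being isomorphic in $\ca(X)$, so $\pi _0(N\ca(X))$ is the set of isomorphism classes of $0$-objects.

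With these identifications the equivalence is a matter of unwinding. For \textbf{surjectivity of $\pi _0(c)$}: this says every class of $\pi _0(\h)$ contains some $\rho G$ with $G \in N\ca(X)_0$, i.e. every $F \in \h _0 = Hom_{sSet^\De}(\YY , \cna)$ is joined to some $\rho G$ by an element of $\h _1$ --- which is verbatim the $0$-glueing condition of Definition \ref{0glue}. For \textbf{injectivity of $\pi _0(c)$}: this says that whenever $\rho a$ and $\rho b$ lie in the same class of $\pi _0(\h)$ (that is, are isomorphic in $\h$) the objects $a,b$ already agree in $\pi _0(N\ca(X))$ (that is, are isomorphic in $\ca(X)$) --- which is verbatim unique $0$-glueing. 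Hence $\pi _0(c)$ is a bijection iff $\ca$ satisfies both the $0$-glueing and the unique $0$-glueing conditions, which is what is meant by the unique $0$-glueing condition; this completes the argument.

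The only genuine content beyond unwinding definitions lies in the two supporting identifications, and this is where I expect the main obstacle to be. First, one must justify $holim\, \cna \simeq Tot(\cna) = \h$ at the level of $\pi _0$, which rests on the Reedy fibrancy of the \v Cech diagram (automatic here, since the values are Kan complexes, indeed simplicial abelian groups in the Picard case). Second, one must identify $\pi _0(N\ca(X))$ with isomorphism classes of $0$-objects, which is where one uses that the relevant $\om$-categories are groupoids, so that path-connectedness of $0$-objects coincides with isomorphism. Everything else is the formal observation that a map of sets is a bijection iff it is surjective and injective.
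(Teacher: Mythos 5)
Your proposal follows essentially the same route as the paper's proof: the paper likewise reduces to the identification $\pi _0 (\h) \simeq \pi _0 (holim\, \cna)$ (citing \cite{bok}, ch.\ 10--11, for the weak equivalence $\h \lra holim\, \cna$ when $\cna \in sAb ^\De$), and then unwinds, exactly as you do, that 0-glueing is surjectivity of $\rho _* : \pi _0 (N\ca (X)) \lra \pi _0 (\h)$ and unique 0-glueing is its injectivity.

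One correction to your justification of the comparison $Tot(\cna) \simeq holim\, \cna$: Reedy fibrancy of the \v Cech diagram is \emph{not} ``automatic since the values are Kan complexes'' --- a levelwise fibrant cosimplicial space need not be Reedy fibrant, because the matching maps need not be fibrations. What actually validates the comparison is the reason you mention only parenthetically: $\cna$ is a cosimplicial simplicial \emph{abelian group}, and cosimplicial simplicial groups are fibrant in the Bousfield--Kan sense (their matching maps are surjective homomorphisms of simplicial groups, hence fibrations); this is precisely the content of the result the paper cites. The distinction is not pedantic in this paper: for the presheaves $\ca [1]^{a,b}$, which carry no canonical group structure, fibrancy of the \v Cech diagram has to be proved by hand (choosing compatible group structures section by section), and that argument occupies an entire separate lemma. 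So your proof is correct in the Picard case --- the only case in which the lemma is ever applied --- but the primary justification you offer for the fibrancy step should be replaced by the group-structure argument.
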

\begin{proof} In \cite{bok} ch.10-11, it was proved that for $G \in sAb ^\De$, $hom(\YY , G) \lra holim G$ is a weak equivalence.  Therefore, $\pi _0 (\h ) \simeq \pi _0 holim \cna$.  We conclude the proof by arguing that $\pi _0 (N \ca (X)) \simeq \pi _0 (\h)$ if and only if $\ca$ satisfies the unique 0-glueing condition.\\

The 0-glueing condition states that for all $f \in Hom_{sSet ^\De } (\YY , \cna)$, there exists $H \in Hom_{sSet ^\De} (\YY \times \De ^1, \cna)$ such that $H_{|\YY \times \{0\}} = f$ and $H _{|\YY \times \{1\}} = \rho g$ for some $g\in \ca (X) _0$.  Since $  Hom_{sSet ^\De} (\YY \times \De ^1, \cna) = \h _1$, this is equivalent to asking that for every vertex $f \in \h _0$, there exists $H \in \h_1$ such that $d _1 H = f$ and $d _0 H = \rho g$ for some $g \in N\ca (X) _0 $.  In other words, The 0-glueing condition states that $\rho _* : \pi _0 (N\ca (X)) \lra \pi _0 (\h)$ is a surjection.  The uniqueness part of the unique 0-glueing condition states that $\rho _*$ is also injective.
\end{proof}

\begin{lem}\label{loopmovelem} Let $\ca$ be a presheaf of Picard $\om$-categories.  Then $\ca$ satisfies the unique glueing condition for all loops if and only if it satisfies the unique glueing condition for loops at $0$.  Furthermore, for any $a \in \ca (X)_0$, $\ca [1]^{0,0} \simeq \ca [1]^{a,a}$ in $Pre_{\om }$.  
\end{lem}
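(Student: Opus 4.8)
The plan is to produce, for each $a \in \ca(X)_0$, an explicit isomorphism of presheaves of $\om$-categories $\ca[1]^{0,0} \simeq \ca[1]^{a,a}$ by ``translating by $a$'', to extend this translation to every shift, and then to observe that the unique $0$-glueing condition is invariant under such isomorphisms, so that checking it at the canonical basepoint $0$ suffices.

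First I would construct the translation functor. Fix $a \in \ca(X)_0$ and, for each $U$ over $X$, write $a_U := a|_U \in \ca(U)_0$. Because $\ca(U)$ is a Picard $\om$-category and $a_U$ is a $0$-object, the map $\tau_a : \ca(U) \lra \ca(U)$, $x \mapsto x + a_U$, is a functor of $\om$-categories: since $s_k a_U = t_k a_U = a_U$ for every $k$ (as $a_U$ is a $0$-object) and $+$ commutes with all source and target maps, $\tau_a$ commutes with every $s_k$ and $t_k$; and since $+$ is a functor, together with the identity $a_U *_k a_U = a_U + a_U - s_k a_U = a_U$ from Proposition~\ref{groupobjects}, we get $(x + a_U) *_k (y + a_U) = (x *_k y) + (a_U *_k a_U) = (x *_k y) + a_U$, so $\tau_a$ commutes with every $*_k$. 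Translation by $-a_U$ is a two-sided inverse, so $\tau_a$ is an automorphism of $\ca(U)$, and the same computation applied to the shifted operations shows it is an automorphism of every shift $\ca(U)[k]$.

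Next I would restrict $\tau_a$ to the loop subcategories. If $s_0 x = t_0 x = 0$ then $s_0(x + a_U) = s_0 x + a_U = a_U$ and likewise $t_0(x + a_U) = a_U$, so $\tau_a$ carries $\ca(U)[1]^{0,0}$ into $\ca(U)[1]^{a_U,a_U}$, with inverse translation by $-a_U$; hence it is an isomorphism of $\om$-categories. Naturality in $U$ is automatic: restriction maps in $\ca$ are morphisms of Picard $\om$-categories, so they commute with $+$ and send $a_U$ to $a_V$ along each $V \to U$, whence the squares defining a morphism of presheaves commute. This yields $\ca[1]^{0,0} \simeq \ca[1]^{a,a}$ in $Pre_\om$, proving the ``furthermore'' clause. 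The identical argument, now using $s_{k-1} a_U = a_U$, gives $\ca[k]^{0,0} \simeq \ca[k]^{a,a}$ in $Pre_\om$ for every $k \geq 1$.

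Finally I would deduce the equivalence. By Lemma~\ref{0glueequivlem} the unique $0$-glueing condition for a presheaf $\BB$ of $\om$-categories is equivalent to $\pi_0(N\BB(X)) \simeq \pi_0(holim \, \check{N\BB})$, a property that depends only on the isomorphism class of $\BB$ in $Pre_\om$, since the nerve is a functor and $\pi_0$ and $holim$ are invariant under isomorphism. Consequently $\ca[k]^{a,a}$ satisfies unique $0$-glueing if and only if $\ca[k]^{0,0}$ does. The forward implication of the lemma is trivial, as loops at $0$ form a special case of all loops (note $0 \in \ca(X)_0$), and the reverse implication follows by transporting unique $0$-glueing along the isomorphisms $\ca[k]^{a,a} \simeq \ca[k]^{0,0}$ for every $a$ and every $k$. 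The main obstacle is precisely the verification that $\tau_a$ is a functor of $\om$-categories, namely that it respects every composition $*_k$; this is exactly where the Picard structure is indispensable and where one must invoke $a_U *_k a_U = a_U$ via Proposition~\ref{groupobjects}. Everything else, i.e.\ naturality in $U$ and the invariance of the glueing condition, is then formal.
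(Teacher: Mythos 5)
Your proposal is correct and takes essentially the same approach as the paper: both hinge on showing that translation $x \mapsto x + a$ is an isomorphism of $\om$-categories (though not of Picard $\om$-categories) carrying the basepoint $0$ to $a$, and then transporting the unique glueing condition along the resulting isomorphisms $\ca[k]^{0,0} \simeq \ca[k]^{a,a}$. The only cosmetic difference is that the paper checks compatibility with $*_k$ directly from the formula $x *_k y = x + y - s_k x$, whereas you use functoriality of $+$ together with $a *_k a = a$, and you spell out the naturality in $U$ and the final transport step that the paper leaves implicit.
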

\begin{proof} First we show that for $A \in Pic_\om$ and  $a \in A_0$, addition by $a$, $p_a  : \ca \lra \ca $ (mapping $x \mapsto x+a$) defines an isomorphism of $\om$-categories, though not of Picard $\om$-categories.  To see that $p_a$ is a morphism of $\om$-categories, let $\sigma = s$ or $t$ and $n\geq 0 $.  Then $ \sigma _n p_a (x) = \sigma _n (a + x ) = \sigma _n (a) + \sigma _n (x)  = a + \sigma _n (x) = p_a (\sigma _n x)$ since $a$ is a 0-object. Composition is also preserved; $p_a (x*_n y) = a + x*_n y = a+ x + y - s_n x = a + x + y +a -(s_n x +  a) = a + x + y +a  -s_n(x+a) = (x+a)*_n(y+a) = p_ax *_n p_a y$.  Therefore, $p_a$ is a morphism of $\om$-categories.  Since $p_a $ has inverse $p_{-a}$, it is an isomorphism.  If $\ca$ is a presheaf of $\om$-categories and $a \in \ca (X) _0$, then $p_a : \ca  \lra \ca $ is an isomorphism of presheaves of $\om$-categories sending basepoint $0$ to basepoint $a$.   
\end{proof}

\begin{lem}\label{pinshift} Let $G \in sAb$ and $L : sAb \lra sAb$ be the functor described above, corresponding to $[1]^{0,0} : Pic_\om \lra Pic _\om$.  Then $\pi _{n+1} L^k(G) \simeq \pi _n L^{k+1}G$ for all $k,n \geq 0$.  
\end{lem}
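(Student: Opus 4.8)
The plan is to transport the claim across the Dold-Kan equivalences and reduce it to a one-line degree-shift computation in $Ch^+(Ab)$. By Lemma \ref{looplem}, under the equivalences $Pic_\om \simeq Ch^+(Ab) \simeq sAb$ the functor $L$ corresponds to the loop functor $\Om$ on $Ch^+(Ab)$ (equivalently to $[1]^{0,0}$ on $Pic_\om$). Since the Dold-Kan correspondence identifies the homotopy groups $\pi_n$ of a simplicial abelian group with the homology $H_n$ of its associated normalized chain complex, it suffices to prove that for every $A \in Ch^+(Ab)$ and every $n \geq 0$ there is a natural isomorphism $H_n(\Om A) \simeq H_{n+1}(A)$.

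First I would read off the homology of $\Om A$ directly from its definition, namely $(\Om A)_i = A_{i+1}$ for $i > 0$ and $(\Om A)_0 = Ker(d : A_1 \lra A_0)$, with differential inherited from $A$. For $n \geq 2$ both $(\Om A)_n = A_{n+1}$ and $(\Om A)_{n-1} = A_n$ are genuine terms of $A$ and the connecting differential is the original $d$, so $H_n(\Om A) = Ker(d : A_{n+1} \lra A_n)/Im(d : A_{n+2} \lra A_{n+1}) = H_{n+1}(A)$. The only places requiring care are the two bottom degrees, which is where the good truncation $(\Om A)_0 = Ker(d)$ replaces the missing degree $-1$ term. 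For $n = 1$ the relevant kernel is $Ker(d : A_2 \lra (\Om A)_0)$; since $(\Om A)_0$ sits inside $A_1$ as a subgroup, this coincides with $Ker(d : A_2 \lra A_1)$, giving $H_1(\Om A) = H_2(A)$. For $n = 0$ one computes $H_0(\Om A) = (\Om A)_0/Im(d : (\Om A)_1 \lra (\Om A)_0) = Ker(d : A_1 \lra A_0)/Im(d : A_2 \lra A_1) = H_1(A)$, where $d^2 = 0$ guarantees that $Im(d : A_2 \lra A_1)$ indeed lands in $(\Om A)_0$. All of these isomorphisms are natural in $A$.

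Having established $H_n(\Om A) \simeq H_{n+1}(A)$, transporting back through Dold-Kan and Lemma \ref{looplem} yields $\pi_n(LX) \simeq \pi_{n+1}(X)$ for every $X \in sAb$ and every $n \geq 0$. Applying this to $X = L^k(G)$ then gives $\pi_n(L^{k+1}G) = \pi_n(L(L^k G)) \simeq \pi_{n+1}(L^k G)$, which is precisely the asserted isomorphism $\pi_{n+1}L^k(G) \simeq \pi_n L^{k+1}G$. I do not anticipate a serious obstacle here: the entire content is the single degree-shift identity for $\Om$, the only delicate point being the bookkeeping in degrees $0$ and $1$ where the kernel truncation intervenes, and the iteration over $k$ is then purely formal.
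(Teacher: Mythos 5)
Your proposal is correct, but it takes a genuinely different route from the paper. The paper's proof stays entirely at the simplicial level: it identifies $Hom_{sSet}(\De^n, LG)$ with the set of maps $f \in Hom_{sSet}(\De^{n+1}, G)$ satisfying $f((0)) = 0$ and $f d_0(01\ldots n) = 0$, deduces that spherical simplices (those killing $\partial \De^n$) in $LG$ correspond bijectively to spherical simplices in $G$ one dimension up, and then matches null-homotopies on both sides using the standard fact that in a simplicial group an $n$-cycle $f$ (all $d_i f = 0$) is null-homotopic iff there exists $h$ with $d_{n+1} h = f$ and $d_i h = 0$ for $i \leq n$. Your argument instead transports the whole question through Dold--Kan: you invoke Lemma \ref{looplem} to replace $L$ by the good-truncation loop functor $\Om$ on $Ch^+(Ab)$, use the standard identification $\pi_n(X) \simeq H_n(NX)$ for $X \in sAb$, and then verify the degree shift $H_n(\Om A) \simeq H_{n+1}(A)$ by inspection, with the only care needed in degrees $0$ and $1$ where the kernel truncation $(\Om A)_0 = Ker(d : A_1 \lra A_0)$ intervenes; your bookkeeping there is correct, including the observation that $d^2 = 0$ makes $Im(d : A_2 \lra A_1)$ land in $(\Om A)_0$. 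What each approach buys: yours is shorter and more conceptual, leaning on machinery the paper has already set up (Lemma \ref{looplem}) plus the external fact $\pi_n \simeq H_n \circ N$; the paper's is self-contained at the level of simplicial sets and never needs the homotopy-groups-equal-homology theorem, only the elementary characterization of null-homotopic cycles in simplicial groups. Both are complete proofs, and the iteration over $k$ is purely formal in either case.
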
 
\begin{proof} It follows from the definition of $L$ that  $ Hom_{sSet}(\De ^n , LG) \simeq \{ f \in Hom_{sSet} (\De ^{n+1} , G) \st (0)\mapsto 0 \; and \; fd_0(012...n) = 0\}$.  Therefore, $ \{ f \in Hom_{sSet}(\De ^n , LG) \st \partial \De ^n \lra 0 \} \simeq \{ f \in Hom_{sSet }(\De ^{n+1} , G) \st \partial \De ^{n+1} \lra 0\} $. Not only are they isomorophic as sets, but homotopies in  $ \{ f \in Hom_{sSet}(\De ^n , LG) \st \partial \De ^n \lra 0 \}$ coincide with those in $\{ f \in Hom_{sSet }(\De ^{n+1} , G) \st \partial \De ^{n+1} \lra 0\} $.  To see this, we will show that $f$ and $g$ are homotopic in  $ \{ f \in Hom_{sSet}(\De ^n , LG) \st \partial \De ^n \lra 0 \}$ if and only if the corresponding simplices in $\{ f \in Hom_{sSet }(\De ^{n+1} , G) \st \partial \De ^{n+1} \lra 0\} $ are homotopic.  However, since $G$ and $LG$ are simplicial abelian groups, it suffices to show that $f$ being homotopic to $0$ is the same in both sets.  It is a standard fact, found in \cite{hov2}, that for $f \in G_n$ with $d_i f = 0 $ for all $i$, $f$ is homotopic to $0$ if and only if and only if there is some $h \in G_{n+1}$ such that $d_{n+1} h = f $ and $d_i h =0$ for all $i \leq n$.  Clearly then, $f : \De ^n \lra LG$ is homotopic to $0$ if and only if the corresponding map $\De ^{n+1} \lra G$ is homotopic to $0$.   
\end{proof}

\begin{prop}\label{loopdescentthm} Let $\ca$ be a presheaf of Picard $\om$-categories.  Then $\ca$ is a satisfies \v Cech descent if and only if $\ca$ satisfies the unique glueing condition for loops.  
\end{prop}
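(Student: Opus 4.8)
The plan is to read \v Cech descent for $\ca$ as the statement that the comparison map $N\ca(X)\lra holim\,\cna$ is a weak equivalence of simplicial abelian groups, and then to recognize that a map of simplicial abelian groups is a weak equivalence precisely when it is an isomorphism on every homotopy group $\pi_k$ based at $0$ (via Dold--Kan and normalized chains). I will match these isomorphisms one at a time with the unique $0$-glueing conditions for the loop objects $\ca[k]^{0,0}$, using that $N$ is an equivalence of model categories so that $\ca$ satisfies \v Cech descent iff $N\ca$ does.

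First I would fix the basepoint. By Lemma \ref{loopmovelem} the unique glueing condition for loops at an arbitrary $a\in\ca(X)_0$ is equivalent to the condition at $0$, so unique glueing for loops amounts to requiring that $\ca[k]^{0,0}$ satisfy unique $0$-glueing for every $k\geq 0$, where the case $k=0$ is $\ca[0]^{0,0}=\ca$ itself (using the convention $s_{-1}=t_{-1}=0$) and recovers the $\pi_0$ statement. Next I would record the identity $\ca[k]^{0,0}=(\ca[1]^{0,0})^{\circ k}$: iterating $[1]^{0,0}$ and using $(A[k])[1]=A[k+1]$ together with the fact that in a Picard $\om$-category $s_1a=0$ forces $s_0a=0$, one checks that applying $[1]^{0,0}$ a total of $k$ times imposes exactly the conditions $s_{k-1}a=t_{k-1}a=0$ defining $\ca[k]^{0,0}$. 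Since $[1]^{0,0}$ corresponds to $L$ under $N$ by Lemma \ref{looplem}, and $L$ preserves products and hence commutes with the levelwise formation of the \v Cech diagram, this gives $N(\ca[k]^{0,0})(X)=L^kN\ca(X)$ and $\check{N(\ca[k]^{0,0})}=L^k\cna$.

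With these identifications I would apply Lemma \ref{0glueequivlem} to $\ca[k]^{0,0}$: it satisfies unique $0$-glueing exactly when
\[
\pi_0\bigl(L^kN\ca(X)\bigr)\simeq\pi_0\bigl(holim\,L^k\cna\bigr).
\]
The left-hand side equals $\pi_k(N\ca(X))$ by Lemma \ref{pinshift}. For the right-hand side I need $L^k$ to pass through the homotopy limit: since $L$ is the right adjoint in the Quillen pair $(M,L)$ corresponding to $([-1],\Om)$, and every object of $sAb$ is fibrant so that $L$ preserves all weak equivalences, $L^k$ commutes with the homotopy limit of the cosimplicial diagram up to weak equivalence, giving $holim\,L^k\cna\simeq L^k\,holim\,\cna$; then Lemma \ref{pinshift} applied to $G=holim\,\cna\in sAb$ yields $\pi_0(L^k\,holim\,\cna)\simeq\pi_k(holim\,\cna)$. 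Hence $\ca[k]^{0,0}$ satisfies unique $0$-glueing if and only if $N\ca(X)\lra holim\,\cna$ induces an isomorphism on $\pi_k$.

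Finally I would assemble the equivalences: unique glueing for all loops holds iff $\ca[k]^{0,0}$ satisfies unique $0$-glueing for every $k\geq 0$, iff $N\ca(X)\lra holim\,\cna$ is an isomorphism on $\pi_k$ for all $k\geq 0$, iff (both sides being simplicial abelian groups) this map is a weak equivalence, iff $\ca$ satisfies \v Cech descent by Definition \ref{cechdescentdef}. The main obstacle is not any single estimate but the bookkeeping in the middle: identifying the iterated loop objects $\ca[k]^{0,0}$ with $L^kN\ca$ on the nose, keeping careful track of the $\pi_0$/$k=0$ base case so that no homotopy group is omitted, and justifying that $L^k$ commutes with the homotopy limit of the \v Cech diagram.
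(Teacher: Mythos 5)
Your proposal is correct and follows essentially the same route as the paper's own proof: both reduce to loops at $0$ via Lemma \ref{loopmovelem}, identify the iterated loop objects with $L^k N\ca$ via Lemma \ref{looplem}, shift homotopy groups with Lemma \ref{pinshift}, commute $L$ past the homotopy limit using that it is right Quillen, and conclude with Lemma \ref{0glueequivlem}. The only difference is that you spell out a few bookkeeping identifications (e.g. $\ca[k]^{0,0}=(\ca[1]^{0,0})^{\circ k}$ and the fibrancy justification for $L$ preserving holims) that the paper leaves implicit.
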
 

\begin{proof}  Let $\GG = N\ca$ be the corresponding presheaf of simplicial abelian groups. Then $\GG$ satisfies descent for all hypercovers if and only if $\GG(X) \lra holim \check \GG$ is a weak equivalence, i.e. $\pi _n \GG(X) \lra \pi _n holim \check \GG$ is an isomorphism for each $n \geq 0$.  By lemma \ref{pinshift}, $\pi _n \GG(X) \lra \pi _n holim \check \GG$ is an isomorphism for each $n \geq 0$ if and only if $\pi _0 (L^n\GG(X)) \simeq \pi _0 (L^n(holim \GG))$ for all $n\geq 0$.   Since $L$ is right Quillen, it preserves homotopy limits.  Therefore, $ \pi _0 (L^n(holim \GG)) \simeq \pi _0 (holim (L^n \GG))$.  In summary, $\GG$ satisfies \v Cech descent if and only if $\pi _0 (L^n \GG (X)) \simeq \pi _0 (holim (L^n  \check \GG))$ for all $n\geq 0 $.  By Lemma \ref{0glueequivlem}, we conclude that $\GG$ satisfies \v Cech descent if and only if $L^n \GG$ satisfies the unique 0-glueing condition for each n.  To say that $L^n \GG$ satisfies the unique 0-glueing condition is just to say that $\GG$ satisfies the unique n-glueing condition.  Therefore, $\GG$ satisfies \v Cech descent if and only if $\GG$ satisfies the unique glueing condition for loops based at $0$.  However, Lemma \ref{loopmovelem} implies that this is equivalent to the unique glueing condition for all loops.
\end{proof}

\begin{cor} Let be a $\ca$ is a presheaf of Picard $\om$-categories.  If $\ca$ satisfies $\om$-descent, then it satisfies \v Cech descent.
\end{cor}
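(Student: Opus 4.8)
The plan is to deduce this immediately from Proposition~\ref{loopdescentthm}, which already identifies \v Cech descent with the unique glueing condition for loops. Thus it suffices to observe that $\om$-descent is a \emph{stronger} condition than the loop condition, i.e. that $\om$-descent implies the unique glueing condition for loops, and then to invoke that proposition. The argument is essentially a matter of unwinding the definitions and restricting the $k$-glueing condition to a diagonal.

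First I would unpack the hypothesis. By definition $\ca$ satisfies $\om$-descent means that it satisfies the unique $k$-glueing condition for every $k \geq 0$. For $k \geq 1$ this asserts that whenever $x,y \in \ca(X)_{k-1}$ satisfy $s_{k-2}x = s_{k-2}y$ and $t_{k-2}x = t_{k-2}y$, the presheaf $\ca[k]^{x,y}$ satisfies the unique $0$-glueing condition. The key step is to specialize to the diagonal $y=x$: for a single $(k-1)$-object $x$ the compatibility conditions $s_{k-2}x = s_{k-2}x$ and $t_{k-2}x = t_{k-2}x$ hold automatically, so the hypothesis yields that $\ca[k]^{x,x}$ satisfies unique $0$-glueing for \emph{every} $x \in \ca(X)_{k-1}$.

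Next I would restrict attention to $0$-objects. Every $0$-object $x \in \ca(X)_0$ is in particular a $(k-1)$-object for each $k \geq 1$: a $0$-object satisfies $s_j x = x$ for all $j \geq 0$ (using condition (3) of Definition~\ref{16nov1} to get $s_0 x = x$ and condition (2a) to propagate $s_j s_0 = s_0$ upward), whence $x = s_{k-1} x \in s_{k-1}\ca(X) = \ca(X)_{k-1}$. Combining this with the previous step, $\ca[k]^{x,x}$ satisfies the unique $0$-glueing condition for every $x \in \ca(X)_0$ and every $k \geq 1$, which is exactly the statement that $\ca$ satisfies $\om$-descent for loops; the remaining unshifted instance (plain unique $0$-glueing of $\ca$ itself) is the $k=0$ case of $\om$-descent, so it too is supplied by the hypothesis. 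Applying Proposition~\ref{loopdescentthm} then gives that $\ca$ satisfies \v Cech descent.

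I expect no genuine analytic obstacle here, since the proof is bookkeeping against the definitions; the only point that demands care is the indexing. Specifically, I would want to confirm that the diagonal instance $y=x$ of the unique $k$-glueing condition, taken with $x$ a $0$-object, produces exactly the hom-object $\ca[k]^{x,x}$ that figures in the loop condition, and that the inclusion $\ca(X)_0 \subseteq \ca(X)_{k-1}$ is used correctly for every $k$. Once these identifications are checked, the corollary is an immediate consequence of Proposition~\ref{loopdescentthm}.
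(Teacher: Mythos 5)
Your proof is correct and is essentially the paper's own route: the corollary is stated there without proof precisely because $\om$-descent contains $\om$-descent for loops as the special case $y=x$ with $x$ a $0$-object (the compatibility equations holding trivially and $\ca(X)_0 \subseteq \ca(X)_{k-1}$), after which Proposition \ref{loopdescentthm} gives \v Cech descent. One pedantic note: $s_0x = x$ for a $0$-object follows from $A_0 = s_0 A$ and idempotence (condition (1a) of Definition \ref{16nov1}), not from condition (3), though this does not affect the argument.
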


To complete the proof of Theorem \ref{equivdescent}, we now show that if a presheaf of simplicial abelian groups satisfies \v Cech descent for all hypercovers, it satisfies the unique glueing condition, not just for loops.  

\begin{lem}\label{shiftab} Let $\ca$ be a presheaf of Picard $\om$-categories, $X \in \mathcal S$, and $a,b \in \ca (X)_0$.  If there exists $f \in \ca (X)_1$ such that $s_0f = a$, $t_0 f = b$, then $\ca [1] ^{a,b} \simeq \ca [1]^{0,0} $ in $Pre _\om$, whence $\ca [1]^{a,b}$ is a presheaf of Picard $\om$-categories.  
\end{lem}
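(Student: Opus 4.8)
The plan is to translate the loop object $\ca[1]^{0,0}$ onto $\ca[1]^{a,b}$ using the given $1$-morphism $f$, exactly as in Lemma \ref{loopmovelem} one translates $\ca$ by a $0$-object; the only new feature is that we now translate by a $1$-morphism rather than a $0$-object.

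I would define, over each object $U$ of the site (using the restriction $f|_U$), the map $\Phi \colon \ca[1]^{0,0} \to \ca[1]^{a,b}$ by $\Phi(x) = x+f$. Since each $x \in \ca[1]^{0,0}$ satisfies $s_0 x = t_0 x = 0$, we get $s_0(x+f) = s_0 f = a$ and $t_0(x+f) = t_0 f = b$, so $\Phi$ does land in $\ca[1]^{a,b}$, with two-sided inverse $y \mapsto y-f$. The structural input is that $f \in \ca_1$ forces $s_n f = t_n f = f$ for every $n \ge 1$ (immediate from axioms 1a and 2a of Definition \ref{16nov1}). Because the structure maps of the shift $\ca[1]$ are precisely the maps $s_{n+1}, t_{n+1}, *_{n+1}$ of $\ca$ with $n \ge 0$ — hence of underlying index $\ge 1$ — this fact fixes $f$ under all of them, so $s_{n+1}(x+f) = s_{n+1}x + f = \Phi(s_{n+1}x)$ and likewise for targets, showing that $\Phi$ respects all shifted source and target maps.

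Next I would check preservation of composition, where the Picard hypothesis enters through Proposition \ref{groupobjects}, namely $u *_m v = u+v-s_m u$. A one-line expansion gives $(x+f) *_{n+1} (y+f) = x+y+f - s_{n+1}x = (x *_{n+1} y) + f = \Phi(x *_{n+1} y)$, again using $s_{n+1}f = f$. Thus $\Phi$ is a bijective functor, i.e.\ an isomorphism of $\om$-categories. Naturality in $U$ is automatic: the restriction maps of $\ca$ are Picard functors commuting with $+$ and they carry $f$ to its restriction, so they commute with translation by $f$. Hence $\Phi$ is an isomorphism $\ca[1]^{a,b} \simeq \ca[1]^{0,0}$ in $Pre_\om$.

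The Picard conclusion then follows by transport of structure. By Lemma \ref{looplem}, $[1]^{0,0}$ is an endofunctor of $Pic_\om$, so $\ca[1]^{0,0}$ is already a presheaf of Picard $\om$-categories; pushing its abelian group structure forward along $\Phi$ — explicitly $y +' y' = y+y'-f$, with unit object $f$ — makes $\ca[1]^{a,b}$ a presheaf of Picard $\om$-categories. I expect the one genuinely delicate point to be the index bookkeeping: one must verify that every structure map occurring in $\ca[1]$ has underlying index $\ge 1$, which is exactly what lets $f$ be fixed and makes $\Phi$ an $\om$-functor, even though $f$ is not a $0$-object and $\Phi$ is emphatically \emph{not} a morphism of Picard $\om$-categories (it does not preserve $0$). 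All remaining verifications are routine and parallel those carried out in Lemmas \ref{loopmovelem} and \ref{pathlem2}.
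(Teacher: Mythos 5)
Your proposal is correct and takes essentially the same approach as the paper: both are translation arguments using the Picard group structure, and your map $x \mapsto x+f$ is exactly the inverse of the composite the paper constructs (the paper factors $\ca[1]^{a,b} \lra \ca[1]^{a,a} \lra \ca[1]^{0,0}$ by first adding $f^{-1}-b$, a $0$-object of $\ca[1]$, and then invoking Lemma \ref{loopmovelem}; this composite is $y \mapsto y-f$). Your supporting verifications — that $s_nf = t_nf = f$ for $n\ge 1$, that translation is therefore an $\om$-functor, and that the Picard structure is transported along the isomorphism — are the same ingredients the paper draws from the proof of Lemma \ref{loopmovelem}, just carried out in one step rather than two.
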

\begin{proof} There is an isomorphism $\ca [1]^{a,b} \lra \ca [1]^{a,a}$ sending a section $y$ to $x\inv *_0 y = y+ (x\inv -b)$.  First, let us see that for $y \in \ca [1]^{a,b} $, $x\inv *_0 y \in \ca [1]^{a,a}$.  We easily see that $s_0 (y+ x\inv -b) = s_0y + s_0 x\inv -s_0 b= a +b-b = a$ and $t_o (y + x\inv -b) = t_0 y + t_0 x\inv -t_0 b = b +a-b = a$ so that $x\inv *_0 y \in \ca [1]^{a,a}$.  Since $x\inv -b \in \ca [1](X)_0$, addition by $x \inv -b$ is an isomorphism (The proof is identical to that of Lemma \ref{loopmovelem}).  Recall from Lemma \ref{loopmovelem} that $\ca [1]^{a,a} \simeq \ca [1]^{0,0}$.  
\end{proof}

\begin{lem} Let $\ca$ be a sheaf of $\om$-categories and $a,b \in  \ca (X)_0$.  Then $\ca[1]^{a,b} (X) \lra holim \check \ca [1] ^{a,b}$ is a weak equivalence if and only if $\ca [1]^{a,b} (X) \lra hom_{sSet ^\De }(\YY , \check \ca [1]^{a,b})$ is a weak equivalence. 
\end{lem}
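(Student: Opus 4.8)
The plan is to exhibit the two maps as the two legs of a commuting triangle and reduce the whole statement to a single comparison map. Abbreviate $G := \check \ca [1]^{a,b}$ for the \v Cech cosimplicial diagram (of nerves) attached to $\ca [1]^{a,b}$ and the cover $\UU$. There is a natural comparison $\beta : hom_{sSet ^\De}(\YY , G) \lra holim\, G$ from the totalization to the homotopy limit, and the two maps of the lemma fit into
\[
\ca [1]^{a,b}(X) \stackrel{\al}{\lra} hom_{sSet ^\De}(\YY , G) \stackrel{\beta}{\lra} holim\, G ,
\]
with composite $\beta\circ\al$ equal to the natural map $\ca [1]^{a,b}(X)\lra holim\, G$. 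Hence, as soon as $\beta$ is known to be a weak equivalence, the two-out-of-three property makes $\al$ a weak equivalence if and only if $\beta\circ\al$ is one, which is precisely the asserted equivalence. So everything reduces to showing that $\beta$ is always a weak equivalence.

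The obstacle is that the comparison $Tot\to holim$ was invoked in the proof of Lemma~\ref{0glueequivlem} only for diagrams of \emph{simplicial abelian groups} (the result of \cite{bok}), whereas $\ca [1]^{a,b}$ is a priori only a presheaf of $\om$-categories: as observed just after Lemma~\ref{pathlem2}, the hom-object $\ca [1]^{a,b}\subset \ca [1]$ is not closed under the group operation and so is not Picard. The first route around this is Lemma~\ref{shiftab}: when there is a global $f \in \ca (X)_1$ with $s_0 f = a$ and $t_0 f = b$, that lemma gives an isomorphism $\ca [1]^{a,b}\simeq \ca [1]^{0,0}$ in $Pre_{\om}$ and identifies $\ca [1]^{a,b}$ as a presheaf of \emph{Picard} $\om$-categories. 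Then $N(\ca [1]^{a,b})$ is a presheaf of simplicial abelian groups, $G$ is a \v Cech diagram in $sAb^{\De}$, and the statement of \cite{bok} used for Lemma~\ref{0glueequivlem} applies verbatim to show $\beta$ is a weak equivalence.

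To obtain $\beta$ uniformly --- in particular when no such global $f$ exists, so that $\ca [1]^{a,b}(X)$ has no $0$-objects --- I would instead argue sectionwise: over each $U$, $\ca [1]^{a,b}(U)$ is a hom-object inside the $\om$-groupoid $\ca [1](U)$ and is therefore itself an $\om$-groupoid, so its nerve is a Kan complex. Consequently $N(\ca [1]^{a,b})$ is sectionwise fibrant, the \v Cech cosimplicial diagram $G$ is Reedy fibrant, and the Bousfield--Kan theorem $Tot\simeq holim$ of \cite{bok} holds for $G$ in $sSet$, giving that $\beta$ is a weak equivalence in all cases. The step I expect to be the crux is exactly this justification of $\beta$ for a diagram that need not be levelwise a simplicial abelian group: the automatically-fibrant $sAb$ form of \cite{bok} is unavailable, and one must either pass through Lemma~\ref{shiftab} or verify the sectionwise Kan-fibrancy needed to invoke the $sSet$ form of the totalization theorem. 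With $\beta$ a weak equivalence, the two-out-of-three argument of the first paragraph completes the proof.
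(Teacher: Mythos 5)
Your opening reduction is exactly the paper's: both factor the natural map through $hom_{sSet^\De}(\YY , \check\BB) \lra holim\, \check\BB$ (where $\BB = N\ca[1]^{a,b}$) and apply two-out-of-three, so everything rests on the comparison map being a weak equivalence, and you have located the crux correctly. The genuine gap is in your uniform ``route 2'': sectionwise Kan fibrancy of $\BB$ does \emph{not} imply that the cosimplicial space $\check\BB$ is fibrant in the Bousfield--Kan model structure on $sSet^\De$. Fibrancy there requires each matching map $\check\BB^{n+1} \lra M^n\check\BB$ to be a Kan fibration, which is strictly stronger than all levels being Kan complexes, and the theorem $Tot \simeq holim$ of \cite{bok} is available only under that hypothesis (this is exactly why the $sAb$ form invoked for Lemma \ref{0glueequivlem} is convenient: cosimplicial simplicial abelian groups are automatically fibrant, whereas merely levelwise fibrant cosimplicial spaces are not, and $Tot$ need not compute $holim$ for them). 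So the sentence ``Consequently \dots the \v Cech cosimplicial diagram $G$ is Reedy fibrant'' is a non sequitur. Note also that your route 1 does not fill the hole: the case that matters for the paper's later arguments (e.g.\ Lemma \ref{aatoab}) is precisely when every $\BB(U_{i_0\cdots i_n})$ is nonempty but no global $f \in \ca(X)_1$ from $a$ to $b$ is known to exist, and there neither of your routes applies.

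The paper's proof spends essentially all of its effort on this fibrancy. It first disposes of the case where some $\BB(U_{i_0\cdots i_n}) = \emptyset$ (then all levels, $\BB(X)$, and both targets are empty, so both maps are trivially weak equivalences). In the remaining case it applies Lemma \ref{shiftab} section by section to endow each level $\check\BB^n$ with a simplicial abelian group structure, choosing the local trivializations inductively in $n$ so that all codegeneracies $s^m$ are group homomorphisms; it then proves each matching map $\check\BB^{n+1} \lra M^n\check\BB$ is a \emph{surjective homomorphism}, hence a fibration. The key combinatorial input --- which your proposal never supplies --- is the analysis of degenerate index tuples: using the cosimplicial identities one shows that if $\sigma_m^*\be = \sigma_l^*\ga$ then the corresponding components of a matching family agree, so a point of $M^n\check\BB$ is exactly a consistent assignment on degenerate tuples, which can then be extended (using nonemptiness) over the non-degenerate tuples. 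Your Kan-complex idea can in fact be repaired with the same combinatorics: that analysis identifies the matching map as the projection of a product of Kan complexes onto the sub-product of its degenerate factors, hence a fibration with no group structures needed. But that identification of $M^n\check\BB$ is the real content of the lemma; asserting Reedy fibrancy from levelwise fibrancy skips it.
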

\begin{proof} Let $\BB = N\ca [1]^{a,b} \in Pre _{sAb}$.  If some $\BB (U_{i_0...i_n}) = \emptyset$, then $\check \BB = \emptyset$, and $\BB (X) = \emptyset$, so both  $\ca[1]^{a,b} (X) \lra holim \check \ca [1] ^{a,b}$ and $\ca [1]^{a,b} (X) \lra hom_{sSet ^\De }(\YY , \ca [1]^{a,b})$ are weak equivalences.  Now suppose that each  $\BB (U_{i_0...i_n}) \neq \emptyset$.  By the two out of three axiom for weak equivalences, it suffices to show that 
$hom_{sSet ^\De }(\YY , \ca [1]^{a,b}) \lra holim \check \BB$ is a weak equivalence.  By Ch. 11, \S 4 in \cite{bok}, the result will follow if we show that $\check \BB$ is a fibrant object in $sSet ^\De$ with Bousfield and Kan's model structure.  \\

For $X \in sSet ^\De$, let the n-th matching space $M^n X = \{ (x_0,...x_n) \in X^n \times X ^n \times ...\times X^n \st s^i x_j = s^{j-1}x_i \; if \; 0 \leq i < j \leq n \}$, where $s^i$ denotes the i-th coface map $X(\sigma _i)$.  There is a map $X^{n+1} \lra M^n X$ in $sSet$ given by $x \mapsto (s^0x ,...,s^nx)$.  By definition (Ch. 10 \S 4 in \cite{bok}), $X$ is fibrant if and only if each $X^{n+1} \lra M^n X$, $n\geq 0$ and $X^0 \lra *$ are fibrations in $sSet$.  We now proceed to show that $\check \BB$ is fibrant.  \\

First we show that we can endow each each $\check \BB ^n$ with the structure of a simplicial abelian group such that all $s^i : \check \BB ^{n+1} \lra \check \BB ^n$ is a morphism in $sAb$.   For the open cover $\UU = \{ U_i \}_{i \in I}$, we are assuming that for each $n \geq 0$ and each $\al \in I^{[n]}$, $\BB (U_\al) \neq \emptyset$, so by Lemma \ref{shiftab}, we can choose a group structure on $\check \BB ^n$ by choosing a 1-simplex $f = \{f_\al \}_{\al \in I ^{[n]}}$ with $d_1 f = a$, $d_0f = b$.  The goal is to choose a group structure on each $\check \BB$ so that the coface maps are all morphisms of simplicial abelian groups.  First declare an equivalence relationship on $I^{[n]}$ by setting $\al \sim \be$ if $U_\al = U_\be$, and let $\overline I^{[n]}$ denote the set of equivalence classes.  Observe that the coface maps $s^m : \check \BB ^{n+1} \lra \check \BB ^n$ are given by $(s^m (\{ x_\al\} _{\al \in I^{[n+1]}}))_\be = x_{\sigma _m ^* \be}$, where $\sigma _m : [n+1] \lra [n]$ is the monotonic map which repeats only $m$. \\

To choose the group stuctures on $\check \BB ^n$, we start with $n = 0$.  Choosing any group structure  $f = \{f_\al\}_{\al \in I ^{[0]}}$ such that $f_\al = f_\be $ for $\al \sim \be$.  Now, having chosen group structures for all $\check \BB ^k$ for $k\leq n$ such that all coface maps are morphisms of simplicial abelian groups, choose any group structure   $f = \{f_\al\}_{\al \in I ^{[n+1]}}$ such that $f_\al = f_\be$ if $\al \sim \be$ and if $\al = \sigma _m ^* \be$ for some $\be \in I ^{[n]}$, $f_\al = f_{\be }$.   To see that such a choice exists, we simply observe that if $\al \sim \ga $ such that $\al = \sigma _m ^* \be  $ and $\ga =  \sigma ^* _l \de$, then $U_\be = U_{\sigma _m ^* \be} = U_\al = U_\ga = U_{\sigma _l ^* \de} = U_\de$, so $\ga \sim \de$ and $f_\ga = f_\de$.  The coface maps $(s^m (\{ x_\al\} _{\al \in I^{[n+1]}})_\be = x_{\sigma _m ^* \be}$ are morphisms of simplicial abelian groups because for $\be \in I^{[n]}$, $\pi _\be s^m$ is the composition of the identity map $\BB (U_{\sigma _m ^* \be}) \lra \BB (U_\be)$ with the projection $\pi _{\sigma _m ^* \be} : \check \BB ^{n+1} \lra \BB (U_{\sigma _m ^* \be})$, both of which are group maps since the group stuctures $f_{\sigma ^* _m \be}$ and $f_\be$ were chosen to coincide.  \\

We now demonstrate that each $\check \BB ^{n+1} \lra M^n \check \BB$ is surjective.  It will follow that these maps are levelwise epimorphisms of abelian groups, hence fibrations in $sSet$.  Choose any $n \geq 0$.  The proof that $\check \BB ^{n+1} \lra M^n \check \BB$ is surjective is very much the same as the proof in the previous paragraph.  For $x_0$...$x_0 \in \check \BB ^n$ ($x_i = \{ x_i ^\al \} _{\al \in I^{[n]}}$), $(x_0 ,...,x_m)\in M^n \check \BB$ if and only if for all $0 \leq l <  m \leq n$ and $\al \in I ^{[n-1]}$, $x_m ^{\sigma _l ^* \al} = x_l ^{\sigma _{m-1}^* \al}$.  Choose any $y = \{ y ^\al \} _{\al \in I^{[n+1]}}$ such that for all $0 \leq m \leq n$, $\be \in I ^{[n]}$, $y^{\sigma ^* \be} = x_m ^\be$.  Hence the map $\check \BB ^{n+1} \lra M^n \check \BB$ sends $y$ to $(x_0,...x_n)$.  However, we need to check that such a choice exists.  We need to show that if $\sigma _m ^* \be = \sigma _l ^* \ga$, then $x_m ^\be = x_l ^\ga$.  Suppose that $\sigma _m ^* \be = \sigma _l ^* \ga$.  Clearly $\al = \sigma _m ^* \sigma _l ^* \de$ for some $\de$, so $\al = (\sigma _l \sigma _m )^* \de = (\sigma _{m-1} \sigma _l) ^* \de = \sigma _l ^* \sigma _{m-1}^* \gamma$.  In general, if $\sigma ^* _m \tau = \sigma _m ^* \mu $, then $\tau = \mu$.  Hence, $\be  = \sigma ^* _l \de$ and $\ga = \sigma _{m-1}^* \ga $, so $x_m ^\be = x_m ^{\sigma _l ^* \de } = x_l ^{\sigma _{m-1} ^* \de} = x_l ^\ga$.  Therefore, $y^\al$ is well-defined.   
\end{proof}

\begin{lem}\label{aatoab} Let $\ca $ be a presheaf of Picard $\om$-categories that satisfies \v Cech descent, and let $a,b \in \ca (X)_0$.  If $Hom_{sSet ^\De}(\YY , \check N \ca [1]^{a,b})$ is non-empty, then there exists a path $x \in \ca (X)_1$ from $a$ to $b$, and $\ca [ 1] ^{a,b} \simeq \ca [1]^{a,a} \simeq \ca [1]^{0,0}$ as sheaves of $\om$-categories.  It follows that $\ca [1]^{a,b} \in Pre_{\om Ab}$.  
\end{lem}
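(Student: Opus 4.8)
The plan is to notice that the two displayed equivalences and the membership in $Pre_{\om Ab}$ are not really the content: once I produce a single global $1$-morphism $x\in\ca(X)_1$ with $s_0x=a$ and $t_0x=b$, Lemma \ref{shiftab} immediately gives an isomorphism $\ca[1]^{a,b}\simeq\ca[1]^{a,a}$ in $Pre_\om$ together with the Picard structure on $\ca[1]^{a,b}$, and Lemma \ref{loopmovelem} supplies $\ca[1]^{a,a}\simeq\ca[1]^{0,0}$; concatenating these yields $\ca[1]^{a,b}\simeq\ca[1]^{a,a}\simeq\ca[1]^{0,0}$ and hence $\ca[1]^{a,b}\in Pre_{\om Ab}$. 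So the entire task is to manufacture the global path $x$ from the hypothesis that $Hom_{sSet^\De}(\YY,\check N\ca[1]^{a,b})$ is non-empty. The idea is to convert a glueing datum for paths into an actual path between $a$ and $b$ in the homotopy limit $holim\,\cna$, and then to push that path down to $\ca(X)$ using \v Cech descent.

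For the mechanism I would exploit the based-path description of $\ca[1]^{a,b}$ established just above: the $n$-simplices of $N\ca[1]^{a,b}$ are exactly the $(n+1)$-simplices $y$ of $N\ca$ with initial vertex $v_0y=a$ and $d_0y=b$, i.e. $N\ca[1]^{a,b}$ is the based path object of $N\ca$ from $a$ to $b$ (the left mapping space of the Remark following the preceding Lemma). This décalage structure furnishes a natural evaluation map of simplicial presheaves $N\ca[1]^{a,b}\lra (N\ca)^{\De^1}$ into the sectionwise cotensor, whose endpoint maps $d_1$ and $d_0$ are the constant presheaves at $a$ and $b$. First I would apply $\check{(-)}$ and $hom(\YY,-)$ to this map. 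Since both the \v Cech construction (a limit) and the simplicial hom $hom(\YY,-)$ commute with the cotensor by $\De^1$, the evaluation induces a map $Hom_{sSet^\De}(\YY,\check N\ca[1]^{a,b})\lra hom(\YY,\cna)^{\De^1}$ whose two endpoints are the constant points $\rho a$ and $\rho b$. By hypothesis the source has a $0$-simplex, so its image is a $1$-simplex of $\h=hom(\YY,\cna)$ joining $\rho a$ to $\rho b$; thus $[\rho a]=[\rho b]$ in $\pi_0(\h)$. Because $\ca$ is a presheaf of Picard $\om$-categories, $\cna$ is fibrant in $sAb^\De$ (the same fibrancy argument as in the preceding Lemma, via \cite{bok}), so $\h\simeq holim\,\cna$ and therefore $[\rho a]=[\rho b]$ in $\pi_0(holim\,\cna)$.

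Finally I would descend: since $\ca$ satisfies \v Cech descent, $N\ca(X)\lra holim\,\cna$ is a weak equivalence, hence a $\pi_0$-isomorphism (this is precisely Lemma \ref{0glueequivlem}), so $[a]=[b]$ in $\pi_0(N\ca(X))$. As $N\ca(X)$ is a Kan simplicial abelian group, this says $a$ and $b$ are joined by an edge, i.e. there is $x\in N\ca(X)_1=\ca(X)_1$ with $s_0x=a$ and $t_0x=b$, which is the desired global path. I expect the main obstacle to be the second paragraph: making rigorous the passage ``glueing datum $\Rightarrow$ honest path in $holim\,\cna$,'' namely the naturality of the déclage evaluation map and the verification that the cotensor by $\De^1$ genuinely commutes with both $\check{(-)}$ and $hom(\YY,-)$ at the level needed, together with the fibrancy of $\cna$ that legitimizes replacing $\h$ by $holim\,\cna$. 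The concluding descent step and the reduction to Lemmas \ref{shiftab} and \ref{loopmovelem} are then routine.
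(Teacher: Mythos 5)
Your proposal is correct and follows essentially the same route as the paper: the paper likewise reduces everything to producing one global edge from $a$ to $b$ (then invokes Lemma \ref{shiftab} and Lemma \ref{loopmovelem}), and it builds the required 1-simplex of $\h$ exactly as you do, except that instead of your d\'ecalage evaluation into the cotensor $(N\ca)^{\De^1}$ it works with the adjoint transpose — an explicit shifted diagram $\YY^{(1)}$ with $\YY^{(1)}([n])=\De^{n+1}$ and a combinatorial map $p:\YY\times\De^1\lra\YY^{(1)}$. The concluding descent step differs only cosmetically: the paper chooses a homotopy inverse to $\rho$ and uses fibrancy of $N\ca(X)$ to extract the edge, while you pass to $\pi_0$ via Lemma \ref{0glueequivlem}; both arguments yield the same path $x\in\ca(X)_1$.
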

\begin{proof} Choose any $F\in Hom_{sSet ^\De}(\YY , \cna[1]^{a,b})$.  Let $\YY ^{(1)}$ denote the object in $sSet ^\De $ for which $\YY ^{(1)} ([n]) = \De ^{n+1}$, $\YY ^{(1)} ([n] \stackrel{\partial _i}\lra [n+1]) = \De ^{n+1} \stackrel{\partial_{i+1}}\lra \De ^{n+2}$, and  $\YY ^{(1)} ([n] \stackrel{\sigma _i}\lra [n-1]) = \De ^{n+1} \stackrel{\sigma_{i+1}}\lra \De ^{n}$.  First we observe that $Hom_{sSet ^\De}(\YY , \cna[1]^{a,b})$ is isomorphic to $Hom_{sSet ^\De} (\YY ^{(1)}, \cna )^{a,b} := \{ f \in Hom_{sSet ^\De} (\YY ^{(1)}, \cna ) \st f_n :\De ^{n+1} \lra \cna _n \; \textrm{satisfies} \; f((0)) = a,\; d_0f(01...n+1) = b\}$. \\

We will define a morphism $p: \De ^n \times \De ^1$ in $sSet$ which sends $\De ^n \times \{ 0\} $ to $a$ and $\De ^n \times \{1\} $ to $b$.  First observe that $\De ^n \times \De ^1$ has non-degenerate $(n+1)$-simplices $z_k : = (012..,k-1,k,k,k+1,...n, s_0^k (01) \in \De ^n _{n+1}\times \De ^1 _{n+1}$, $0\leq k \leq n$, which satisfy relations $d_kz_k = d_k z_{k-1}$, for $k\geq 1$.  For any $X \in sSet$, to give a morphism $f  \in Hom_{sSet}(\De ^n \times \De ^1, X)$, it is necessary and sufficient to give $(n+1)$-simplices $y_k = f(z_k) \in X_{n+1}$ such that $d_k y_k = d_k y_{k-1}$.  Let $p: \De ^n \times \De ^1 \lra \De ^{n+1}$ be the morphism given by $y_k = s_0 ^k d_1 ^k (012...n+1)$.  One may verify that $\De ^n \times \{0\} \lra s_0 ^n (0)$ and $\De ^n \times \{1\} \lra d_0 (01...n+1)$.  An easy but tedious calculation shows that $p$ extends to a morphism $p: \YY \times \De ^1 \lra \YY ^{(1)}$ in $sSet ^\De$.\\

Now, $F\in Hom_{sSet ^\De}(\YY , \cna[1]^{a,b})$ corresponds to some $f \in Hom_{sSet ^\De} (\YY ^{(1)}, \cna )^{a,b}$.  We can form the composition $fp\in Hom_{sSet ^\De }(\YY \times \De ^1 , \cna [1]) = hom(\YY , \cna)_1$, which sends $\YY \times \{0\}$ to $a$ and $\YY \times \{1\}$ to $b$.  Thus, $fp \in hom(\YY , \cna)$ is a 1-simplex in $\h$ from $a$ to $b$.  Since $\ca$ satisfies \v Cech descent, $\rho  :N\ca (X) \lra hom (\YY , \cna )$ is a weak equivalence of fibrant simplicial sets.  Therefore, there exists a morphism $G: hom (\YY , \cna ) \lra N\ca (X)$ such that $G \rho  $ is homotopic to the identity.  Since $N\ca (X)$ is fibrant, one can also find a 1-simplex in $N \ca (X)$ from $a$ to $b$, i.e. a path in $\ca (X)_1$ from $a$ to $b$. The result now follows from Lemma \ref{shiftab}.   
\end{proof}
\begin{rem} For $a,b \in \ca (X) _0$ as in Lemma \ref{aatoab}, the (presheaf of) abelian group structure of $\ca [1]^{a,b}$ is not the one endowed from being a sub-$\om$-category.
\end{rem}

\begin{lem}\label{biglemma} Let $\ca$ be a presheaf of Picard $\om$-categories which satisfies \v Cech descent.  Suppose $a,b \in \ca (X)_k$ are such that $s_{k-1} a = s_{k-1}b$ and  $t_{k-1}a = t_{k-1}b$.  Then $\ca [ k+1]^{a,b} $ satisfies \v Cech descent.  Additionally, if there exists a $(k+1)$-morphism $x$ from $a$ to $b$, then $\ca [k+1]^{a,b}$ is a presheaf of Picard $\om$-categories.  
\end{lem}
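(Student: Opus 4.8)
The plan is to induct on $k$, reducing the claim at level $k$ to its level-$0$ instance applied to a shifted auxiliary presheaf. Write $c = s_{k-1}a = s_{k-1}b$ and $d = t_{k-1}a = t_{k-1}b$, with the convention $c=d=0$ when $k=0$. The first thing I would record is the strict identity
\[
\ca[k+1]^{a,b} \;=\; \bigl(\ca[k]^{c,d}\bigr)[1]^{a,b},
\]
which follows from $(A[k])[1]=A[k+1]$ together with the observation that any $z$ with $s_k z=a$, $t_k z=b$ automatically satisfies $s_{k-1}z = s_{k-1}s_k z = s_{k-1}a = c$ and $t_{k-1}z = t_{k-1}t_k z = t_{k-1}b = d$ by condition (2b) of Definition \ref{16nov1}, so that every such $z$ already lies in $\ca[k]^{c,d}$. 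Moreover $s_k a=a$ and $s_k b = b$, so $a,b$ are $0$-objects of the shifted presheaf $\ca[k]^{c,d}$, and $a$ is itself a $k$-morphism from $c$ to $d$.

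The engine is the base case $k=0$, which I would isolate as: for any presheaf $\BB$ of Picard $\om$-categories satisfying \v Cech descent and any $a,b\in\BB(X)_0$, the presheaf $\BB[1]^{a,b}$ satisfies \v Cech descent, and is a presheaf of Picard $\om$-categories as soon as a $1$-morphism from $a$ to $b$ exists. The Picard statement is exactly Lemma \ref{shiftab}. For descent I would use the dichotomy supplied by the preceding fibrancy lemma, which reduces descent of $\BB[1]^{a,b}$ to the requirement that $\BB[1]^{a,b}(X) \lra hom_{sSet^\De}(\YY,\check N\BB[1]^{a,b})$ be a weak equivalence. If the target is empty, so is the source, and the map is trivially a weak equivalence. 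If it is non-empty, Lemma \ref{aatoab} produces a global $1$-morphism from $a$ to $b$ and an isomorphism $\BB[1]^{a,b}\simeq\BB[1]^{0,0}$ of presheaves of $\om$-categories; since the loop functor $[1]^{0,0}$ is right Quillen (as used in the proof of Proposition \ref{loopdescentthm}) it preserves homotopy limits and weak equivalences, so $\BB[1]^{0,0}$ inherits \v Cech descent from $\BB$ and descent for $\BB[1]^{a,b}$ follows by transport across the isomorphism.

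For the inductive step ($k\geq 1$) I would apply the induction hypothesis to the level-$(k-1)$ sections $c,d\in\ca(X)_{k-1}$, which satisfy $s_{k-2}c=s_{k-2}d$ and $t_{k-2}c=t_{k-2}d$ (again by condition (2b), with the degree $-1$ convention when $k=1$). The hypothesis yields that $\ca[k]^{c,d}$ satisfies \v Cech descent, and since $a$ is a genuine $k$-morphism from $c$ to $d$, the ``additionally'' clause of the hypothesis upgrades $\ca[k]^{c,d}$ to a presheaf of Picard $\om$-categories. Setting $\BB=\ca[k]^{c,d}$, the base case applies to $\BB$ with the $0$-objects $a,b$, and by the displayed identity $\BB[1]^{a,b}=\ca[k+1]^{a,b}$. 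This delivers both conclusions at once: $\ca[k+1]^{a,b}$ satisfies \v Cech descent, and if a $(k+1)$-morphism $x$ from $a$ to $b$ exists then $x$ lies in $\BB$ (since $s_{k-1}x=c$, $t_{k-1}x=d$) and is a $1$-morphism there, so $\ca[k+1]^{a,b}\in Pre_{\om Ab}$.

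The main obstacle I anticipate is bookkeeping rather than conceptual: one must verify carefully that the shift identity $\ca[k+1]^{a,b}=(\ca[k]^{c,d})[1]^{a,b}$ holds on the nose, including that the ambient abelian-group and $\om$-category structures agree under the relabelling of source, target and composition indices, and that the auxiliary presheaf $\ca[k]^{c,d}$ genuinely meets the hypotheses of the base-case lemmas — in particular that the element $a$ witnesses the non-emptiness needed to invoke the Picard upgrade in the inductive hypothesis. The sole homotopy-theoretic input, that the loop functor preserves \v Cech descent, is already available from the right-Quillen property recorded in the proof of Proposition \ref{loopdescentthm}, so no new analysis is required beyond assembling the pieces.
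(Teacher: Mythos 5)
Your proof is correct and matches the paper's own argument in all essentials: induction on $k$ through the identity $\ca[k+1]^{a,b} = (\ca[k]^{s_{k-1}a,\,t_{k-1}b})[1]^{a,b}$ (justified by axiom (2b) exactly as in the paper), with the base case $k=0$ split into the empty case (trivial descent via the fibrancy lemma) and the non-empty case (Lemma \ref{aatoab} plus Lemma \ref{shiftab}, transporting descent across the isomorphism $\ca[1]^{a,b}\simeq\ca[1]^{0,0}$), and the inductive step feeding $a$ itself in as the $k$-morphism that triggers the ``additionally'' clause of the hypothesis. The only cosmetic difference is that where the paper transfers \v Cech descent from $\ca$ to $\ca[1]^{0,0}$ by citing Proposition \ref{loopdescentthm} in both directions, you invoke the right-Quillen property of the loop functor directly -- which is precisely the fact driving that proposition's proof.
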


\begin{proof} We prove the statement by induction on $k$.  First let $k = 0$.  If $Hom_{sSet ^\De}(\YY , \check N \ca [1]^{a,b})$ is non-empty, then there exists a path $x \in \ca (X)_1$ from $a$ to $b$ and $\ca [ 1] ^{a,b} \simeq \ca [1]^{a,a} \simeq \ca [1]^{0,0} \in Pre_{\om Ab}$.  Since $\ca$ satisfies \v Cech descent, it satisfies the unique k-glueing condition for loops at $0$, and since $(\ca [1] ^{0,0})[k]^{0,0} = \ca [k+1]^{0,0}$, $\ca [ 1] ^{0,0}$ satisfies the unique k-glueing condition for loops at $0$.  Therefore, $\ca [1]^{0,0}$ is a presheaf of Picard $\om$-categories which satisfies \v Cech descent.  It follows that since $\ca [1]^{0,0} \simeq \ca [1]^{a,b}$, $\ca [1]^{a,b} $ is a presheaf of Picard $\om$-categories satisfying \v Cech descent.  \\

If, on the other hand, $Hom_{sSet ^\De}(\YY , \check N \ca [1]^{a,b}) = \emptyset$, then the simplicial set $hom_{sSet ^\De}(\YY , \check N \ca [1]^{a,b}) = \emptyset$.  But  $Hom_{sSet ^\De}(\YY , \check N \ca [1]^{a,b}) = \emptyset$ also implies that $N\ca [1]^{a,b}(X)_0 = \emptyset$, whence  $N\ca [1]^{a,b}(X) = \emptyset$, so $\ca [1]^{a,b}$ trivially satisfies \v Cech descent.  This proves the base case ($k=0$).  \\

Now suppose that  $a,b \in \ca (X)_k$ are such that $s_{k-1} a = s_{k-1}b$ and  $t_{k-1}a = t_{k-1}b$.  Then for any open $U \subset X$, as a set,
\begin{eqnarray*} \ca[k+1] ^{a,b} (U) & = & \{x \in Ob(\ca (U)) \st s_k x = a , \; t_k x  = b \}\\
                                  & = & \{ x \in Ob(\ca (U)) \st s_k x = a , \; t_k x  = b, \; s_{k-1} x = s_{k-1} a, \; t_{k-1} x = t_{k-1} b \}\\
                                  & = & \{ x \in Ob(\ca [k] ^{s_{k-1}a , t_{k-1}b} (U)) \st s_k x = a , \; t_k x  = b \} \\
                                  & = & \{ x \in Ob(\ca [k] ^{s_{k-1}a , t_{k-1}b} (U)) \st s[k]_0 x = a , \; t[k]_0 x  = b \} \\
                                  & = & (\ca [k] ^{s_{k-1}a , t_{k-1}b})[1]^{a,b} (U).
\end{eqnarray*}
Hence,$ \ca[k+1] ^{a,b} = (\ca [k] ^{s_{k-1}a , t_{k-1}b})[1]^{a,b} $.  But since $a$ is a $k$-morphism from $s_{k-1}a $ to $ t_{k-1}b$, $\ca [k] ^{s_{k-1}a , t_{k-1}b}$ is a prehseaf of Picard $\om$-categories satisfying \v Cech descent, by induction hypothesis.  By the base case, $ \ca[k+1] ^{a,b} = (\ca [k] ^{s_{k-1}a , t_{k-1}b})[1]^{a,b}$ satisfies \v Cech descent, and if there exists a $(k+1)$-morphism $x \in \ca (X)_{k+1}$ from $a$ to $b$, then $ \ca[k+1] ^{a,b} \in Pre_{\om Ab}$.
\end{proof}

Theorem \ref{equivdescent} now follows directly from Lemma \ref{biglemma}.  
\begin{proof}  Suppose that $\ca \in Pre _{\om Ab}$ and satisfies \v Cech descent.  Then by Lemma \ref{0glueequivlem}, $\ca$ satisfies the unique 0-glueing condition.  Lemma \ref{biglemma} ensures that each $\ca [k]^{a,b}$ satisfies \v Cech descent hence the unique 0-glueing property by Lemma \ref{0glueequivlem}.  Therefore $\ca$ satisfies the unique $k$-glueing property for each $k$, i.e. satisfies $\om$-descent. 
\end{proof}

\section{$\infty$-torsors}

We have added this section for completeness, as using $\om$-descent as a way to make $\infty$-torsors accessible was a primary motivation for establishing the equivalence of the two descent conditions.  The central ideas in this section (Definition \ref{zuho} and Propositions \ref{jjlis} and \ref{iuts}) are due to Fiorenza, Sati, Schreiber, and Stasheff \cite{fss,sss,sch2}.  We simply formulate them in a slightly different way, prefering to define objects up to homotopy. \\

\begin{Def}\label{zuho} Let $G$ be a presheaf of simplicial abelian groups on a site $\CC$.  Let $B G$ denote any delooping object of $G$ in the homotopy category of $Pre_{sSet} ^{proj, loc} (\mathcal C)$.  This means that $B G$ is an object with a point $* \lra B G$, and $G$ is the homotopy pullback of the diagram 

\[
\begin{CD} 
   @. *\\
@.  @VVV\\
* @>>> B G
\end{CD}
\]
We define $Tors _G = hom(-,\widetilde {B G})$, where $hom$ denotes simplicial enrichment in $Pre_{sSet} (\CC)$, and $\widetilde {B G}$ denotes a sheafification (i.e. fibrant replacement) of $B G$ \cite{fss}. 
\end{Def}

\begin{rem} Note that $Tors_G$ is well-definied up to weak equivalence due to the uniqueness up to homotopy of looping and delooping functors \cite{goj,hov}.  Furthermore, Lemma \ref{welem} shows that for two different choices $T_1$, $T_2$ for $Tors_G$, $T_1(X)$ is weakly equivalent to $T_2(X)$ for any $X \in \CC$.  
\end{rem}

\begin{lem}\label{welem} Let $F,G \in Pre_{sSet}^{proj,loc}$ be fibrant objects which are weakly equivalent. Then For any $X \in \CC$, $F(X)$ and $G(X)$ are weakly equivalent.
\end{lem}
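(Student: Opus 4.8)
The plan is to reduce the statement to the fundamental comparison property of left Bousfield localizations: a weak equivalence between fibrant objects of $Pre_{sSet}^{proj,loc}$ is already a weak equivalence in the underlying projective model structure $Pre_{sSet}^{proj}$, i.e.\ a sectionwise weak equivalence of simplicial sets. I interpret the hypothesis ``$F$ and $G$ are weakly equivalent'' as the assertion that $F$ and $G$ become isomorphic in the homotopy category $Ho(Pre_{sSet}^{proj,loc})$.

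First I would pass to cofibrant replacements $QF \to F$ and $QG \to G$. Since left Bousfield localization leaves the class of cofibrations unchanged, it also leaves the class of trivial fibrations unchanged (these are characterized by the right lifting property against all cofibrations), so these replacements may be taken to be trivial fibrations in $Pre_{sSet}^{proj}$ as well. A trivial fibration in the projective model structure is a sectionwise trivial fibration of simplicial sets; hence $QF(X)\to F(X)$ and $QG(X)\to G(X)$ are weak equivalences for every $X\in\CC$. Moreover $QF$ and $QG$ are fibrant in $Pre_{sSet}^{proj,loc}$: the composite $QF\to F\to *$ is a composite of fibrations in the local structure (the first a trivial fibration, the second because $F$ is fibrant), so $QF$ is cofibrant--fibrant, and likewise $QG$.

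Next, since $F\simeq G$ in $Ho(Pre_{sSet}^{proj,loc})$, the cofibrant--fibrant objects $QF$ and $QG$ are isomorphic in the homotopy category, and by the model-category form of Whitehead's theorem this isomorphism is realized by an actual weak equivalence $\phi\colon QF\to QG$ in $Pre_{sSet}^{proj,loc}$. Now comes the key step: because $\phi$ is a local weak equivalence between objects that are fibrant in $Pre_{sSet}^{proj,loc}$, the localization comparison theorem (Hirschhorn \cite{hir}, applied to the left proper cellular model structure $Pre_{sSet}^{proj}$) shows that $\phi$ is a weak equivalence in $Pre_{sSet}^{proj}$, hence sectionwise. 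Thus $\phi_X\colon QF(X)\to QG(X)$ is a weak equivalence of simplicial sets for every $X$. Concatenating the three legs, for each $X\in\CC$ we obtain a zig-zag
\[
F(X)\xleftarrow{\ \sim\ } QF(X)\xrightarrow{\ \sim\ } QG(X)\xrightarrow{\ \sim\ } G(X)
\]
of weak equivalences of simplicial sets, which proves that $F(X)$ and $G(X)$ are weakly equivalent.

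The only nontrivial ingredient is the invocation of the Bousfield-localization comparison in the key step; everything else is routine model-category bookkeeping. The essential content of the lemma is exactly that two descent-satisfying (local-fibrant) presheaves cannot be locally equivalent without being sectionwise equivalent, which is the statement that local-fibrant objects detect local equivalences as genuine objectwise equivalences.
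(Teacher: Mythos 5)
Your proof is correct, but it follows a genuinely different route from the paper's. The paper argues through the site-theoretic content of fibrancy: it picks a hypercover $V \to X$ with cofibrant replacement $V'$, uses the Dugger--Hollander--Isaksen characterization of descent to identify $F(X) \simeq hom(X,F) \xrightarrow{\sim} hom(V',F)$ (and likewise for $G$), and then applies the simplicial mapping-space invariance result (Hirschhorn, Corollary 9.3.3) to the weak equivalence $F \to G$ of fibrant objects to get $hom(V',F) \xrightarrow{\sim} hom(V',G)$, finishing with the same kind of zig-zag you produce. You instead stay entirely inside formal Bousfield-localization theory: cofibrant replacement by a trivial fibration (correctly observed to be the same class in both structures, hence sectionwise), Whitehead's theorem to realize the homotopy-category isomorphism between the cofibrant--fibrant objects $QF$ and $QG$ as a genuine local weak equivalence, and then the key localization fact that a local equivalence between local (fibrant) objects is a weak equivalence in the underlying projective structure (Hirschhorn, Theorem 3.2.13), which is sectionwise by definition. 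Your argument is the more general and more conceptual one --- it never mentions hypercovers, descent, or the simplicial enrichment, and it isolates exactly the slogan that local-fibrant objects detect local equivalences objectwise; it would work verbatim for any left Bousfield localization of an objectwise model structure. The paper's argument, by contrast, makes visible \emph{why} the lemma holds in this setting (fibrant presheaves compute their sections against hypercovers), which fits the expository thread of the section, but it buys no extra generality. One small point in your write-up worth tightening: the realization of the homotopy-category isomorphism by an actual map requires both cofibrancy and fibrancy, which is precisely why you pass to $QF$ and $QG$ --- you do handle this, but it is the one place where the bookkeeping is essential rather than routine.
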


\begin{proof} Let $V \lra X$ be a hypercover of $X$, and let $V^\prime$ be a cofibrant replacement of $V$.  Then By \cite{dhi} Lemma 4.4, $F(X)\simeq hom(X, F) \lra hom(V^\prime , F)$ and $G(X) \lra hom(V^\prime , G) $ are weak equivalences.  If there is a weak equivalence $F \lra G$, then it is a general fact \cite{hir} Corollary 9.3.3 that in a simplicial model category with cofibrant $V^\prime$ and a weak equivalence of fibrant objects $F \lra G$ then $hom(V^\prime , F) \lra hom(V^\prime , G) $ is a weak equivalence of simplicial sets.  Therefore, $F(X)$ is weakly equivalent to $G(X)$.  If there is a zigzag of weak equivalences from $F$ to $G$, then the result is the same: $F(X)$ and $G(X)$ are weakly equivalent. 
\end{proof}

The central observations of this section are Propositions \ref{jjlis} and \ref{iuts}.  The proof Proposition \ref{jjlis} is a modification of the proof of  Proposition 3.2.17 in \cite{fss}, which is for complexes concentrated in one degree only.  First we make use of the following fact about homotopy limits for presheaves. \\

A homotopy pullback is simply the homotopy limit in the model category $Pre_{sSet}^{proj, loc}$.  However, since every sectionwise weak equivalence is a local weak equivalence, the identity map $i: Pre_{sSet}^{proj} \lra Pre_{sSet}^{proj, loc}$ preserves weak equivalence and is adjoint to itself.  It is a general fact that if a functor $U $ between model categories is a right adjoint and preserves weak equivalences, then $U$ preserves homotopy limits.  Hence, to compute the homotopy limit in the local model structure, it is enough to compute it in the global projective model structure. \\

Given a complex $A \in Ch^+ (\ca b)$ of presheaves of abelian groups concentrated in non-negative degrees, recall that $A[1]$ is $A$ shifted up one degree so that $A[1]^n = A^{n-1}$. For a presheaf of simplicial groups $G$, let $G[1]$ be the presheaf of simplicial groups corresponding to the shift functor in $Ch^+ (\ca b)$ by the Dold-Kan correspondence.  

\begin{prop}\label{jjlis} Let $G$ be a presheaf of simplicial groups.  Then $G[1]$ is a delooping object of $G$.
\end{prop}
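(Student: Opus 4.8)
The plan is to unwind Definition \ref{zuho} and reduce the proposition to the single claim that $G$ is the loop object of $G[1]$. By that definition, $G[1]$ is a delooping of $G$ precisely when $G$ is the homotopy pullback of the diagram $* \to G[1] \leftarrow *$, the basepoint being the zero section of the presheaf of simplicial groups $G[1]$; this homotopy pullback is by definition the loop object, so it suffices to produce a local weak equivalence between it and $G$. First I would use the remark immediately preceding the proposition: a homotopy pullback, being a homotopy limit, may be computed in the global projective structure $\ps^{proj}$ instead of $\ps^{proj,loc}$, hence objectwise. Since the forgetful functor $U : sAb \lra sSet$ is the right adjoint of the Quillen pair $(\bbz[-],U)$ it preserves homotopy limits, exactly as invoked earlier in the paper, and $U$ carries the zero object to a point; therefore the loop object may be computed in presheaves of simplicial abelian groups, and then transported through the Dold-Kan equivalence $\tpa \simeq Ch^+(\ca b)$ into chain complexes, where I will do the actual work sectionwise.

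Next I would carry out the chain-level computation. By its definition in this section, the shift $G \mapsto G[1]$ is the one induced under Dold-Kan by the degreewise up-shift $A \mapsto A[1]$ with $A[1]^n = A^{n-1}$, which is the functor written $[-1]$ in the ``Loop and Path Functors'' subsection; there it is shown to be left adjoint to the loop functor $\Om$, given by $(\Om A)_i = A_{i+1}$ for $i>0$ and $(\Om A)_0 = \mathrm{Ker}(A_1 \to A_0)$. A direct calculation then gives $\Om(A[1]) = A$ on the nose: in positive degrees $(\Om(A[1]))_i = (A[1])_{i+1} = A_i$, while in degree zero $(\Om(A[1]))_0 = \mathrm{Ker}((A[1])_1 \to (A[1])_0) = \mathrm{Ker}(A_0 \to 0) = A_0$, and the differentials match. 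To see that this underived formula genuinely computes the homotopy pullback, I would invoke that $([-1],\Om)$ is a Quillen pair, so that $\Om$ preserves weak equivalences between fibrant objects, together with the observation that \emph{every} object of $Ch^+(\ca b)$ is fibrant (its map to the zero complex is a degreewise epimorphism in positive degrees); hence the derived loop functor coincides with $\Om$ and the loop object of $A[1]$ is $\Om(A[1]) = A$.

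Assembling these steps, the objectwise loop object of $G[1]$ is isomorphic to $G$, yielding a sectionwise — and therefore local — weak equivalence identifying $G$ with the homotopy pullback of $* \to G[1] \leftarrow *$, which is exactly the assertion that $G[1]$ is a delooping object. The main obstacle is not the closing chain computation, which is essentially formal once conventions are aligned, but the bookkeeping around it: reconciling the two clashing notations for the shift ($[1]$ here versus $[-1]$ in the Loop and Path subsection), and justifying rigorously that the homotopy pullback in $\ps^{proj,loc}$ may be replaced by the explicit, underived functor $\Om$ on chain complexes. That justification rests on the preservation of homotopy limits by $U$ and by the identity map $\ps^{proj}\to\ps^{proj,loc}$, on $U$ sending the basepoint to a point, and on the fibrancy of all objects of $Ch^+(\ca b)$ — points I would want to state carefully rather than leave implicit.
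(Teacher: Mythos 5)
There is a genuine gap at the pivotal step of your argument. The sentence in which you pass from ``$([-1],\Om)$ is a Quillen pair and every object of $Ch^+(\ca b)$ is fibrant'' to ``hence the derived loop functor coincides with $\Om$'' proves less than you need: those two facts show only that $\Om$ computes its own right derived functor, i.e.\ $\mathbf{R}\Om \simeq \Om$. They do not show that $\mathbf{R}\Om$ is the loop functor in the sense required by Definition \ref{zuho}, namely the homotopy pullback of $* \lra G[1] \lla *$. Nothing earlier in the paper supplies that identification either: Lemma \ref{looplem} only ``suggests that we should think of $\Om(A)$ as loops in $A$ based at $0$,'' and it is never proved before this proposition that $\Om$ computes such a homotopy pullback. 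So your proof assumes at its central step exactly what the proposition asserts (that the homotopy-pullback loop object of $A[1]$ is $A$); the computation $\Om(A[1]) = A$, though correct, does not discharge that assumption.

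What is missing is a fibrant replacement of the leg $* \lra A[1]$, and this is precisely what the paper's proof constructs: the acyclic complex $B$ with $B^n = A^n \times A^{n-1}$ (the cone on $A$, with differential $d(x,y) = (dx + (-1)^n y, dy)$), together with the projection $f = \pi_2 : B \lra A[1]$, which is a degreewise epimorphism in positive degrees and hence a fibration. Since all objects are fibrant, pullback along a fibration computes the homotopy pullback, and the strict pullback of $* \lra A[1] \lla B$ is checked to be $A$ (embedded by $x \mapsto (x,0)$); replacing $B$ by the weakly equivalent complex $0$ then exhibits $A$ as the homotopy pullback of $* \lra A[1] \lla *$. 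Note that your $\Om(A[1])$ is exactly the fiber of this $f$, so your formula is the shadow of the paper's argument, but the construction of the pair $(B,f)$ is the actual mathematical content, not bookkeeping. Alternatively, you could repair your route by proving that $\mathbf{L}[-1]$ is the suspension (the homotopy pushout of $* \lla A \lra *$) and invoking uniqueness of adjoints on the pointed homotopy category to identify $\mathbf{R}\Om$ with the loop functor; but that requires a homotopy pushout computation dual to, and no shorter than, the one you are trying to avoid.
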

\begin{proof}
We use the Dold-Kan correspondence between $Pre_{sAb}(\CC)$ and $Ch^+(\ca b)$.  Given a complex $A \in Ch^+ (\ca b)$ of presheaves of abelian groups concentrated in non-negative degrees, recall that $A[1]$ is $A$ shifted up one degree so that $A[1]^n = A^{n-1}$. Define $B = B(A)$ as follows.  Let $B^n = A^n \times A^{n-1}$, and let $d: B^n \lra B^{n-1}$ be $d (x,y) = (da+ (-1)^nb, db)$.  Clearly, $d^2 = 0$ so that $B \in Ch^+(\ca b)$.  We define $f: B \lra A[1]$ as the obvious map: $f^n : A^n \lra A^{n-1} \lra A^{n-1}$ is just $\pi _2$.  It is obvious that this is a chain map.  Furthermore, $B$ is acyclic in the sense that each homology class $H_nB = 0$.  Using the preceding facts about homotopy pullbacks, since $B \lra A[1]$ is a fibration, the pullback of the diagram 
\[
\begin{CD} 
   @. B\\
@.  @VVV\\
0 @>>> A[1]
\end{CD}
\]
is in fact the homotopy pullback.  The pullback $P$ has the property that any $g: C \lra B$ such that $fg = 0$ factors through $P $.  First we see that there is a map $h: A \lra B$ given by in degree $n$ by $x \mapsto (x,0)$.  and that $fh = 0$.  It is easy to see that a map $g$ such that $fg = 0$ is precisely a map $C \lra A \lra B$.  Hence, $A$ is the pullback.  Since there is a weak equivalence from the diagram 
\[
\begin{CD} 
   @. B\\
@.  @VVV\\
0 @>>> A[1]
\end{CD}
\]
to 
\[
\begin{CD} 
   @. 0\\
@.  @VVV\\
0 @>>> A[1],
\end{CD}
\]
$A$ is the homotopy pullback of the latter diagram, whence $A[1]$ is a delooping of $A$.  

\end{proof}

Since $G[1]$ is a delooping of $G$, it follows that $Tors _G(X) = \hom (X , \widetilde {G[1]}) \simeq \widetilde {G[1]} (X)$, whence $Tors_G = \widetilde {G[1]}$.  Having defined $Tors_G$, we define $Tors^n _G$, which is well defined up to local weak equivalence.  Let $Tors ^n _G := \widetilde{ G[n]}$, where the fibrant replacement $\widetilde {G[n]}$ is chosen to be a sheaf of simplicial abelian groups.  It was shown in Lemma \ref{brisu} that it is possible to make such a choice.  

\begin{prop}\label{iuts} Let $G $ be a presheaf of simplicial abelian groups.  Then $Tors ^n _G =  \widetilde {G[n]} = Tors_{Tors^{n-1}_G}$.
\end{prop}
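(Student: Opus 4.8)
The plan is to reduce everything to the identity $Tors_H = \widetilde{H[1]}$, established immediately after Proposition \ref{jjlis} for an arbitrary presheaf of simplicial abelian groups $H$, combined with the fact that the shift functor $[1]$ preserves local weak equivalences. First I would unwind the definitions. By the (inductive) definition of $Tors^{n-1}_G$ we have $Tors^{n-1}_G = \widetilde{G[n-1]}$, and this fibrant replacement was chosen via Lemma \ref{brisu} to be a sheaf of simplicial abelian groups; in particular it is again an object of $Pre_{sAb}$, so $Tors$ may legitimately be applied to it. Applying the formula $Tors_H = \widetilde{H[1]}$ with $H = \widetilde{G[n-1]}$ gives
\[
Tors_{Tors^{n-1}_G} = \widetilde{\big(\widetilde{G[n-1]}\big)[1]}.
\]
Since the first equality $Tors^n_G = \widetilde{G[n]}$ is definitional, and since shifting is compositional, $G[n] = (G[n-1])[1]$, the proposition reduces to exhibiting the weak equivalence
\[
\widetilde{(G[n-1])[1]} \simeq \widetilde{\big(\widetilde{G[n-1]}\big)[1]}.
\]

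The key step is then to analyze the fibrant replacement map $G[n-1] \lra \widetilde{G[n-1]}$, which is a local (stalkwise) weak equivalence by construction, being a fibrant replacement in Jardine's local model structure on $Pre_{sAb}$. I would next observe that the shift functor $[1]$ preserves local weak equivalences: under the Dold-Kan correspondence between $Pre_{sAb}$ and $Ch^+(\ca b)$, the functor $[1]$ corresponds to the degree shift on complexes of sheaves, and since $\mathcal S$ has enough points the local weak equivalences are precisely the stalkwise quasi-isomorphisms. The shift merely relabels cohomology on stalks, $H^m((-)[1]) = H^{m-1}(-)$, so it carries stalkwise quasi-isomorphisms to stalkwise quasi-isomorphisms. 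Consequently the map $(G[n-1])[1] \lra (\widetilde{G[n-1]})[1]$ obtained by applying $[1]$ is again a local weak equivalence.

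Finally I would conclude by invoking that fibrant replacement is homotopy invariant: it sends locally weakly equivalent objects to weakly equivalent fibrant objects (they become isomorphic in $Ho(Pre_{sAb})$, in the spirit of the discussion preceding Lemma \ref{welem}). Applying $\widetilde{(-)}$ to the local weak equivalence of the previous paragraph therefore yields exactly the required equivalence $\widetilde{(G[n-1])[1]} \simeq \widetilde{(\widetilde{G[n-1]})[1]}$, and stringing the identifications together gives $Tors^n_G = \widetilde{G[n]} \simeq Tors_{Tors^{n-1}_G}$. I expect the only genuine content to be the verification that $[1]$ preserves local weak equivalences; once that is granted, the remainder is bookkeeping with the definition of $Tors$ and the homotopy invariance of fibrant replacement. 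The one subtlety I would flag is that all the equalities in the statement must be read up to weak equivalence (equivalently, as identities in the homotopy category), since $\widetilde{(-)}$ is itself only well defined up to weak equivalence; pinning this reading down explicitly is the step where care is needed.
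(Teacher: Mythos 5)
Your proposal is correct and follows essentially the same route as the paper: both reduce the statement to $Tors_{Tors^{n-1}_G} = \widetilde{(\widetilde{G[n-1]})[1]}$ via $Tors_H = \widetilde{H[1]}$, and both rest on the key fact that the shift $[1]$ preserves local weak equivalences, argued through the Dold--Kan correspondence commuting with stalks, before concluding by homotopy invariance of fibrant replacement. Your closing remark that the equalities must be read up to weak equivalence is a point the paper leaves implicit, and is worth making explicit.
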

\begin{proof}
This follows by induction.  By definition, $Tors _{Tors^{n-1}_G} = Tors _{\widetilde {G[n-1]}} = ((G[n-1])^\sim[1])^\sim$.  Since the Dold-Kan correspondence commutes with taking stalks, which is to say that the diagram 
\[
\begin{CD} 
 Ch^+ (p \ca b)  @>>> Pre_{sAb}\\
@VVV  @VVV\\
Ch^+(Ab) @>>> sAb
\end{CD}
\]
commutes if the vertical arrows represent taking stalks at a point $x$.  Therefore, for a local weak equivalence $A \lra B$ in $Pre_{sAb}$, the induced map $A[1] \lra B[1]$ is a local weak equivalence.  Therefore, given $A \in Pre_{sAb}$ with sheafification $\tilde A$, the local weak equivalence $A \lra \tilde A$ induces a local weak equivalence $A[1] \lra \tilde A [1]$.  Hence, $\widetilde {A[1]} $ is weakly equivalent to $\tilde A [1]$. From here it is easy to see that $\widetilde {G[n]} $ is weakly equivalent to $((G[n-1])^\sim[1])^\sim$.  The result follows.   
\end{proof}

\subsection{Comparison with other Approaches}

Jardine and Luo have taken a different approach to principal bundles \cite{jar2,jal}, and we now compare their formulations with those of \cite{fss}.  Here we consider only the case where $\CC$ is the site of open sets on a space $X$ so that $X$ is the terminal object in $\CC$. Throughout this section, fix a space $X$ and a sheaf $G$ of simplicial groups on $X$.  Let $G-sPre(\CC)$ denote the simplicial presheaves on $X$ with $G$-action.

\begin{lem} There is a cofibrantly generated closed model structure on the category $G-sPre(\CC)$ of simplicial G-presheaves, where a map is a fibration (resp. weak equivalence) if the underlying map of simplicial presheaves is a global fibration (resp. local weak equivalence).
\end{lem}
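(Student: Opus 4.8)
The plan is to obtain this model structure by transfer along the forgetful functor $U : G\text{-}sPre(\CC) \lra sPre(\CC)$, starting from the local injective (Jardine) model structure on $sPre(\CC)$, in which the weak equivalences are the local weak equivalences, the cofibrations are the (sectionwise) monomorphisms, and the fibrations are exactly the global fibrations. This model structure is cofibrantly generated \cite{jar}; write $I$ and $J$ for chosen sets of generating cofibrations and generating trivial cofibrations. The forgetful functor $U$ has left adjoint $F = G \times (-)$, sending a simplicial presheaf $Y$ to $G \times Y$ equipped with the free $G$-action on the left factor; the adjunction isomorphism $Hom_{G\text{-}sPre(\CC)}(G\times Y , Z)\simeq Hom_{sPre(\CC)}(Y, UZ)$ is given by restriction along $\{e\}\times Y$. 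Since $sPre(\CC)$ is cartesian closed, $G\times(-)$ preserves all colimits, so $U$ creates both limits and colimits and $G\text{-}sPre(\CC)$ is complete and cocomplete.

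I would then apply the standard transfer (recognition) theorem for cofibrantly generated model structures, e.g. Theorem 11.3.2 of \cite{hir}: the desired model structure on $G\text{-}sPre(\CC)$, with generating cofibrations $FI$, generating trivial cofibrations $FJ$, and with weak equivalences and fibrations created by $U$, exists provided (a) the domains of $FI$ and $FJ$ are small relative to the relevant cell complexes, and (b) the \emph{acyclicity condition} holds, namely that $U$ carries every relative $FJ$-cell complex to a local weak equivalence. Smallness (a) is automatic, since $sPre(\CC)$ is a locally presentable presheaf category and $U$ preserves filtered colimits, so every object is small.

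The main obstacle is the acyclicity condition (b), and I would reduce it to a stalkwise computation. First I claim that $G\times(-)$ carries trivial cofibrations of $sPre(\CC)$ to trivial cofibrations. For a trivial cofibration $j : A\lra B$, the map $G\times j$ is again a monomorphism, hence a cofibration; and because $\mathcal S$ has enough points, it suffices to check that $G\times j$ is a local weak equivalence stalkwise \cite{jar2}. On the stalk at a point $x$ this map is $G_x\times(-)$ applied to the weak equivalence $j_x$ of simplicial sets, and product with a fixed simplicial set preserves weak equivalences (geometric realization preserves products, and a product of weak homotopy equivalences is a weak homotopy equivalence). Hence $G\times j$ is a local trivial cofibration for every $j\in J$. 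Now, applying $U$ to a pushout of $F(j)$ in $G\text{-}sPre(\CC)$ and using that $U$ preserves pushouts yields a pushout of $G\times j$ in $sPre(\CC)$; since trivial cofibrations are stable under pushout and transfinite composition in any model category, $U$ sends every relative $FJ$-cell complex to a trivial cofibration, in particular to a local weak equivalence. This verifies (b).

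With (a) and (b) established, the transfer theorem produces the cofibrantly generated model structure on $G\text{-}sPre(\CC)$ in which $f$ is a fibration (resp. weak equivalence) if and only if $Uf$ is a global fibration (resp. local weak equivalence), which is the assertion. The only delicate point is the acyclicity step above; all remaining axioms are inherited formally from the transfer theorem.
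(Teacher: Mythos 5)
Your argument is correct, but there is nothing in the paper to check it against: the paper states this lemma without proof, importing it from Jardine and Luo \cite{jal} as part of its comparison of their torsor formalism with that of \cite{fss}. Your transfer proof is the standard way to establish such a result, and each step holds up. The adjunction $G \times (-) \dashv U$ is the right one; $U$ creates limits and all colimits because the monad $G \times (-)$ is cocontinuous (so pushouts and transfinite compositions of $G$-presheaves are computed on underlying simplicial presheaves); smallness is automatic by local presentability; and the acyclicity condition for the Kan--Hirschhorn transfer theorem \cite{hir} reduces, exactly as you say, to showing that $G \times j$ is a local trivial cofibration whenever $j$ is a generating trivial cofibration of the Jardine structure \cite{jar}, after which closure of trivial cofibrations under pushout and transfinite composition finishes the verification. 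The one dependency worth flagging is that your stalkwise check of this last fact (stalks commute with finite products, and product with a fixed simplicial set preserves weak equivalences) uses that the site has enough points, citing \cite{jar2}; this is harmless here, since the paper's standing hypothesis fixes a site with enough points and the lemma is only invoked for the site of open subsets of a space $X$, whose sheaf topos always has enough points. If one wanted to drop that hypothesis, the same step follows from the fact that the local injective structure satisfies the pushout-product axiom for the cartesian product, with $G \times j$ the pushout-product of $\emptyset \lra G$ with $j$, but in the paper's setting your more elementary argument suffices.
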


\begin{Def} 
\ben Let $G-Tors$ be the category of cofibrant fibrant simplicial G-presheaves $P$ such that $P/G \lra *$ is a hypercover (i.e. local trivial fibration).  We call the objects in $G-Tors$ \emph{G-principal bundles}.  A G-principal bundle $P$ is called a principal bundle over $X = P/G$.  
\i Choose a factorization $\emptyset \lra EG \lra X$ in $G-sPre(\CC)$ where the first map is a cofibration and the second is a trivial fibration.  Let $\bbb G= EG/G$.
\een  
\end{Def}

Note that $EG$ and $\bbb G$ are defined up to weak homotopy equivalence in  $G-sPre(\CC)$.

\begin{lem}\label{delooplem} Any $\bbb G$ is a delooping object of $G$.  
\end{lem}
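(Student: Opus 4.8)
The plan is to exhibit $\bbb G$ as the base of a principal fibration whose total space $EG$ is contractible, so that the loop object $*\times^h_{\bbb G}*$ of Definition \ref{zuho} recovers the fibre $G$. By the remark preceding Proposition \ref{jjlis}, homotopy pullbacks in $Pre_{sSet}^{proj,loc}$ may be computed in the global projective structure, so I would work sectionwise throughout and take $*$ to be the terminal presheaf $X$.

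First I would record that $EG$ is weakly contractible and fix the basepoint. Since $X=*$ is terminal and $EG \lra X$ is a trivial fibration, the underlying map $EG \lra *$ is a local weak equivalence. Choosing a point $s:*\lra EG$ (possible since $EG \lra *$ is a trivial fibration of simplicial presheaves and $*$ is cofibrant), the composite $*\xrightarrow{s} EG \lra *=X$ is forced to be the identity by terminality, so $s$ is a weak equivalence by two-out-of-three. The point of $\bbb G$ demanded by Definition \ref{zuho} is then $q\circ s$, where $q:EG \lra \bbb G = EG/G$ is the quotient map.

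Next I would show that $q$ is a fibration with fibre $G$. Because $\emptyset \lra EG$ is a cofibration in $G-sPre(\CC)$, the $G$-action on $EG$ is free sectionwise; as each $G(U)$ is a Kan simplicial group acting freely on $EG(U)$, the quotient $EG(U) \lra EG(U)/G(U)$ is a principal fibration with fibre $G(U)$ \cite{goj}. Hence $q$ is a global projective fibration, and the strict pullback of $*\xrightarrow{q\circ s}\bbb G \xleftarrow{q} EG$ is the sectionwise fibre $G$; since $q$ is a fibration this strict pullback computes the homotopy pullback, so $W:=*\times^h_{\bbb G} EG \simeq G$.

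Finally I would apply the pasting law for homotopy pullbacks to the factorisation $*\xrightarrow{s} EG \xrightarrow{q}\bbb G$ of the basepoint. The right-hand square is the homotopy pullback just computed, with vertex $W\simeq G$; the left-hand square is the homotopy pullback of $*\xrightarrow{s} EG \longleftarrow W$, and since $s$ is a weak equivalence its base change $*\times^h_{EG}W \lra W$ is a weak equivalence. By pasting, the outer rectangle is the homotopy pullback of $*\xrightarrow{q\circ s}\bbb G \xleftarrow{q\circ s}*$, whose vertex is $\Om\bbb G := *\times^h_{\bbb G}*$; chasing the equivalences gives $\Om\bbb G \simeq *\times^h_{EG}W \simeq W \simeq G$, which is exactly the assertion that $\bbb G$ deloops $G$. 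The main obstacle is the middle step: confirming that the free action coming from cofibrancy genuinely makes $q$ a principal fibration with fibre precisely $G$ in the local setting, for which I would lean on the theory of principal fibrations of simplicial groups together with the reduction to the global projective model structure.
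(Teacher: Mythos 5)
Your proof is correct, but it takes a genuinely different route from the paper's. The paper never establishes that the quotient map $q\colon EG \lra \bbb G$ is a fibration; instead it invokes Remark 4 of \cite{jal} to produce a zig-zag $WG \stackrel{p}\lla \tilde WG \stackrel{j}\lra EG$ with $p$ a trivial fibration, $j$ a trivial cofibration and $\tilde WG$ cofibrant, notes that $p$ and $j$ descend to weak equivalences $\overline WG \lla \tilde WG/G \lra \bbb G$ on quotients, and uses invariance of homotopy pullbacks under objectwise weak equivalence of diagrams to trade $* \lra \bbb G \lla *$ for the classical diagram $* \lra \overline WG \lla WG$; the conclusion is then the textbook fact that $WG \lra \overline WG$ is a sectionwise fibration with fibre $G$, combined with right properness of $Pre_{sSet}^{proj}$. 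You argue directly on $q$ itself: cofibrancy of $EG$ in $G-sPre(\CC)$ forces the action to be sectionwise and degreewise free (the generating cofibrations have the form $G\times A \lra G\times B$ for monomorphisms $A \lra B$, so cell complexes and their retracts carry free actions), the principal-fibration theorem of \cite{goj} then makes $q$ a sectionwise, hence global projective, fibration, and the strict fibre over $q\circ s$ is the orbit of the section, which freeness identifies with $G$. Your pasting-law step plays exactly the role of the paper's diagram-replacement step, compensating for the fact that the total space of your fibration is $EG$ rather than $*$. As for what each buys: the paper outsources all equivariant input to the $W$-construction and to \cite{jal}, at the price of the zig-zag bookkeeping; your argument never mentions $WG$ or $\overline WG$ and is more self-contained, but it rests on the two equivariant facts you flag (cofibrant implies free, free implies principal fibration), both of which are standard and verifiable sectionwise in this induced model structure. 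Both proofs share the reduction of homotopy pullbacks to the global projective structure and the use of right properness to replace them by strict pullbacks along a fibration.
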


\begin{proof}
We would like to see that $G$ is a homotopy pullback of 
\[
\begin{CD} 
   @. *\\
@.  @VVV\\
* @>>> \bbb G
\end{CD}
\]
in (some/any) model structure where the weak equivalences are the local weak equivalences. 

In Remark 4 of \cite{jal}, the following observation is made.  Let $WG $ and $\overline WG$ be defined sectionwise.  That is to say, $(WG)(U) = W(G(U))$ and $(\overline WG)(U) = \overline W(G(U))$, where $W$ and $\overline W$ are the standard constructions \cite{goj,cur,kan}.  By taking cofibrant replacements and factorizing maps, we can find objects $EG$ and $\tilde WG$ in  $G-sPre(\CC)$ together with maps $WG \stackrel{p}\lla \tilde WG \stackrel{j}\lra EG$ such that $p$ is a trivial fibration in  $G-sPre(\CC)$, $j$ is a trivial cofibration in $G-sPre(\CC)$, and $\tilde WG$ is cofibrant.  Since $p$ is a weak equivalence of cofibrant simplicial G-spaces, it induces a weak equivalence $\tilde WG / G \lra WG/ G = \overline WG$.  Similarly, $j$ induces a weak equivalence $\tilde WG/ G \lra EG/G = \bbb G$.   We therefore have a sequence of weak equivalences in the diagram category:
from  
\[
\begin{CD} 
   @. \tilde W G\\
@.  @VVV\\
* @>>> \tilde WG / G
\end{CD}
\]
to 
\[
\begin{CD} 
   @. WG\\
@.  @VVV\\
* @>>> \overline WG
\end{CD}
\]
and also to
\[
\begin{CD} 
   @. EG\\
@.  @VVV\\
* @>>> \bbb G,
\end{CD}
\]
which maps to 
\[
\begin{CD} 
   @. *\\
@.  @VVV\\
* @>>> \bbb G.
\end{CD}
\]
by a weak equivalence.  Replacing an object in the diagram category by a weakly equivalent one does not change the homotopy pullback.  Hence, we need only show that $G$ is the homotopy pullback of 
\[
\begin{CD} 
   @. WG\\
@.  @VVV\\
* @>>> \overline WG.
\end{CD}
\]

First we show that the pullback of $* \lra \overline WG \longleftarrow WG$ is in fact a homotopy pullback.  The homotopy pullback is simply the homotopy limit in the model category $Pre_{sSet}^{proj, loc}$.  From the paragraph preceding Proposition \label{jjlis}, we know that to compute the homotopy limit in the local model structure, it is enough to compute it in the global projective model structure.  We know that $Pre_{sSet}^{proj}$ is proper \cite{dhi}.  It is well known that in a right proper model category, the pullback of a diagram $X\lra Z \lla Y$ is the homotopy pullback provided that one of the morphisms $X \lra Z$ or $Y \lra Z$ is a fibration.  In the global projective model structure, $WG \lra \overline WG$ is a fibration, so the homotopy pullback of the original diagram is simply the pullback of $* \lra WG \lla \overline WG$.  \\



The pullback can be taken sectionwise, and the reader can consult \cite{goj} for an exposition of the fact that sectionwise, $G$ is isomorphic to the pullback of 
\[
\begin{CD} 
   @. WG\\
@.  @VVV\\
* @>>> \overline WG.
\end{CD}
\]
\end{proof}

\begin{lem} Any delooping object $B G$ is weakly equivalent to a classifying space.   
\end{lem}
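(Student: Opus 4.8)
The plan is to reduce the statement to the essential uniqueness of deloopings, the fact already invoked in the remark following Definition~\ref{zuho}. By Lemma~\ref{delooplem} the classifying space $\bbb G = EG/G$ is itself a delooping object of $G$, so it suffices to construct a weak equivalence in $Pre_{sSet}^{proj, loc}(\CC)$ from an arbitrary delooping object $BG$ to $\bbb G$. I would phrase this through the loop--delooping comparison: writing $\Om$ for the functor sending a pointed object to the homotopy pullback $\ast \times^{h}_{(-)} \ast$, the defining property of a delooping object $BG$ is exactly that $\Om BG \simeq G$.

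First I would invoke the classical theorem of Goerss--Jardine \cite{goj} (Ch.~V) that the Kan loop group functor and the $\overline W$-construction form a Quillen equivalence between reduced simplicial sets and simplicial groups. Two consequences matter here: for any simplicial group $H$ one has $\overline W H \simeq \bbb H$ and $\Om \overline W H \simeq H$, and for any connected pointed object $X$ the counit gives $\overline W \Om X \simeq X$. Granting that $BG$ is connected on its basepoint component (addressed below), applying $\overline W$ to the weak equivalence $\Om BG \simeq G$ produces the chain
\[
BG \;\simeq\; \overline W\,\Om BG \;\simeq\; \overline W G \;\simeq\; \bbb G,
\]
where the last equivalence is the identification $\overline W G \simeq \bbb G$ established in the proof of Lemma~\ref{delooplem}. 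This is the desired weak equivalence.

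The two points needing care, which I expect to be the main obstacle, are connectivity and locality. For connectivity I would argue that a delooping object is connected on its distinguished component: the homotopy fibre sequence $G \lra \ast \lra BG$ gives $\pi_{n}(BG,\ast) \cong \pi_{n-1}(G)$ for $n \geq 1$, so replacing $BG$ by the component of its point changes neither the homotopy pullback computing $\Om BG$ nor the weak-equivalence type relevant to the step $\overline W \Om BG \simeq BG$, which legitimizes that step. For locality, I would lift the sectionwise statement to the presheaf setting exactly as in the proof of Proposition~\ref{iuts}: since $\CC$ has enough points, local weak equivalences are detected on stalks, and both $\overline W$ and the stalkwise computation of $\Om$ are computed sectionwise and commute with the stalk functor; hence the sectionwise chain of equivalences above is a stalkwise, and therefore local, weak equivalence. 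Combining these two reductions yields $BG \simeq \bbb G$ in $Pre_{sSet}^{proj, loc}(\CC)$, as required.
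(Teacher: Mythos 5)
Your outer skeleton coincides with the paper's: both of you reduce to ``$\bbb G$ is a delooping (Lemma \ref{delooplem}), and deloopings are unique up to weak equivalence.'' But where the paper simply cites that uniqueness (as in the remark following Definition \ref{zuho}, with references to \cite{goj,hov}) and stops, you try to prove it via the chain $BG \simeq \overline W \Om BG \simeq \overline W G \simeq \bbb G$, and that chain has a genuine gap at its middle link. First, $\Om BG$ as you define it is a homotopy pullback in $Pre_{sSet}^{proj,loc}$, hence a simplicial presheaf with no group structure, so $\overline W \Om BG$ is not even defined; the Goerss--Jardine counit actually reads $\overline W \bbg(BG) \simeq BG$, where $\bbg$ is the Kan loop group functor. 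Second, and more seriously, after replacing $\Om BG$ by $\bbg (BG)$, the delooping hypothesis hands you only a weak equivalence of underlying simplicial presheaves $\bbg(BG) \simeq G$, not a homomorphism (or zigzag of homomorphisms) of presheaves of simplicial groups. The functor $\overline W$ is homotopy-invariant only along homomorphisms: a space-level equivalence between simplicial groups does not induce an equivalence of classifying spaces. (Discrete counterexample: $\bbz /4$ and $\bbz/2 \times \bbz /2$ have weakly equivalent underlying simplicial sets but non-equivalent $\overline W$'s.) So the step $\overline W \Om BG \simeq \overline W G$ silently assumes that the loop-space equivalence can be upgraded to a group-level one, which is precisely the nontrivial content of ``deloopings are unique'' -- the very thing you set out to establish.

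By contrast, the paper never descends into the $\overline W$ machinery for this lemma, so it never needs the illegitimate step; it treats uniqueness of looping/delooping as a black box supplied by the references. Your attempt to open that box is instructive, because it shows exactly where the real work would lie: one must upgrade $\Om BG \simeq G$ to an equivalence compatible with the group (or at least $H$-space) structures before $\overline W$ can be applied. A secondary, smaller point: nothing in Definition \ref{zuho} forces a delooping object to be connected -- the homotopy pullback only sees the basepoint component -- so your connectivity paragraph asserts rather than proves that replacing $BG$ by its pointed component is harmless; at best the argument would identify that component, not all of $BG$, with $\bbb G$. (The paper elides this as well, but since your proof leans on the counit $\overline W \bbg(BG) \simeq BG$, which genuinely requires connectivity, the omission matters more for you.)
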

\begin{proof} Take a classifying space $\bbb G$.  By Lemma \ref{delooplem}, $\bbb G$ is a delooping of $G$.  However, delooping is well-defined up to weak equivalence, so since $B G$ and $\bbb G$ are both deloopings, they are isomorphic in the homotopy category.
\end{proof} 

Alternately, a straightforward calculation shows that $G[1]$ is weakly equivalent to $\overline WG$ and hence $\bbb G$.

\section{Appendix}




\subsection{Deligne's Theorem}\label{delignedetails}
\noi Let $Pic_\om ^1 = \{ {A} \in Pic_\om \st {A}  = {A} _1 \}$, and let $Pic$ denote the category of Picard categories in the sense of Deligne \cite{del}.  That is, a Picard category $\CC$ is a quadruple $(\CC , + , \sigma, \tau)$, where $+: \CC \times \CC \lra \CC$ is a functor such that for all objects $x \in \CC$, $x+: \CC \lra \CC$ is an equivalence, and addition is commutative and associative up to isomorphisms $\tau$ and $\sigma$. 
In this section we explain in detail the relationship between $Pic _\om ^1$ and $Pic$.  

\begin{lem}Let $\CC \in Pic$. For any objects $x$,$y \in \CC$, $id_x +id_y = id_{x +y}$ whenever $x$ is isomorphic to $y$.
\end{lem}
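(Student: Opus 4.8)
The plan is to reduce the claim to the identity-preservation axiom for functors. By definition of a Picard category the addition $+ : \CC \times \CC \lra \CC$ is a functor, and under the natural reading the symbol $id_x + id_y$ denotes nothing more than the value of this functor on the morphism $(id_x , id_y)$ of the product category $\CC \times \CC$. So the real content is just that a functor carries identities to identities.

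Concretely, I would first record that in $\CC \times \CC$ the identity endomorphism of the object $(x,y)$ is exactly the pair $(id_x , id_y)$. Then, since $+$ is a functor and therefore preserves identity morphisms, applying it gives the chain $id_x + id_y = {+}(id_x , id_y) = {+}(id_{(x,y)}) = id_{{+}(x,y)} = id_{x+y}$, which is the asserted equality. No use is made of the associativity or commutativity constraints $\tau , \sigma$, nor of any invertibility of the translation functors $x + (-)$, so the argument is genuinely one line once the bookkeeping is fixed.

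I expect the only real obstacle to be a bookkeeping/interpretation one rather than a mathematical one: one must be sure that $id_x + id_y$ means the bifunctor $+$ evaluated on identities, and not some separately defined addition of parallel morphisms coming from the group-like (Picard) structure on hom-sets. Under the functorial reading the hypothesis that $x$ be isomorphic to $y$ is superfluous, and I would flag this explicitly, observing that the equality in fact holds for every pair of objects and that the isomorphism hypothesis is presumably retained only because that is the generality in which the lemma is later invoked. Should the intended reading instead be the additive one on automorphism groups, the plan would be to transport $id_y$ along a chosen isomorphism $x \cong y$ into $\mathrm{Aut}(x)$ and check compatibility with $+$; but I would present the functoriality argument as the proof, since it is both shorter and the manifestly correct interpretation of $+ : \CC \times \CC \lra \CC$.
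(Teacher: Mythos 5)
Your proof is correct, but it is not the route the paper takes. The paper's proof chooses an isomorphism $g \in Hom_\CC (x,y)$ and uses the interchange law (functoriality of $+$ applied to compositions) to write $(g \circ id_x ) + (g\inv \circ id _y) = (g + g\inv ) \circ (id_x + id_y)$, hence $g+ g \inv = (g + g \inv)\circ(id_x + id_y)$, and then cancels $g + g\inv$ on the left to conclude $id_{x+y} = id_x + id_y$; this is precisely why the hypothesis that $x$ be isomorphic to $y$ appears in the statement. Your argument instead invokes the other functor axiom directly: $(id_x,id_y)$ is the identity of $(x,y)$ in $\CC \times \CC$, and a functor preserves identities, so $id_x + id_y = id_{x+y}$ for \emph{all} pairs of objects. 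Your reading of $id_x + id_y$ as the value of the bifunctor on $(id_x,id_y)$ is the correct one, and your flag that the isomorphism hypothesis is superfluous is justified. Your route buys brevity and generality; it also quietly repairs a weakness in the paper's version, since the cancellation step there needs $g + g\inv$ to be monic, and the natural justification (functors preserve isomorphisms, so $g+g\inv$ is invertible) itself rests on preservation of identities -- the very fact being proven. The paper's computation would only carry independent content if $+$ were merely assumed to preserve composition (a semifunctor), but in that weakened setting the cancellation is exactly what fails; under Deligne's definition, where $+$ is an honest functor, your one-line argument is the right proof.
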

\begin{proof}
Take any $g \in Hom_\CC (x,y)$.  Then 
\[ (g \circ id_x ) + (g\inv \circ id _y) = (g + g\inv ) \circ (id_x + id_y) 
\]
\[
g+ g \inv = (g + g \inv)(id_x + id_y) 
\]
\[
id_{x+y} = id_x + id_y
\]
\end{proof}

\begin{lem}\label{13nov1} 
\begin{enumerate} Assume $\CC \in Pic$ such that $+$ is strictly commutative and associative. 

\item For any $f \in Hom_\CC(x,y)$ in $\CC$, $f + f\inv = id_{x+y}$.

\item For any $f\in H_\CC(a,b)$, $g \in Hom_\CC(b,c)$, $id_b + g\circ f = g +f$.

\item Assume that for every $x \in Ob (\CC)$, $x + : \CC \lra \CC$ is actually an isomorphism.  
Then $Ob(\CC)$ is an abelian group. 
 For all $f \in Hom _\CC (x,y)$,  $f + id_0 = f $,
 and there exists a unique $h \in Hom_\CC (-x,-y)$
 such that $f +h = id_0$. Hence, 
$Hom_\CC = \cup _{x,y \in ob(\CC)} (x,y)$ is also an abelian group.  
\end{enumerate}
\end{lem}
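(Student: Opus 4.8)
The plan is to prove all three parts with a single device: rewrite a sum of two morphisms as a composite by means of the interchange law $(\phi+\psi)\circ(\phi'+\psi')=(\phi\circ\phi')+(\psi\circ\psi')$, and then exploit strict commutativity (which makes $\sigma$ the identity, so that $\phi+\psi=\psi+\phi$ as morphisms whenever the source and target objects agree) to interchange a single tensor factor. Throughout I would use freely that $\CC$ is a groupoid and that $id_a+id_b=id_{a+b}$, the preceding lemma, which applies here since the relevant objects are isomorphic.

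For part (1), let $f\in Hom_\CC(x,y)$. The interchange law gives $f+f\inv=(f+id_x)\circ(id_x+f\inv)$, because $f\circ id_x=f$ and $id_x\circ f\inv=f\inv$. The naive route — composing $f+f\inv$ with $f\inv+f$ — only shows that these two are mutually inverse, i.e. that $f+f\inv$ is an involution of $x+y$, which is not yet enough. The fix is to use strict commutativity to replace $f+id_x$ by $id_x+f$, after which the categorical inverse meets its partner and cancels:
\[
f+f\inv=(id_x+f)\circ(id_x+f\inv)=id_x+(f\circ f\inv)=id_x+id_y=id_{x+y},
\]
the middle equality being again the interchange law. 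Part (2) is the identical manoeuvre: for $f\in Hom_\CC(a,b)$ and $g\in Hom_\CC(b,c)$, write $g+f=(g+id_b)\circ(id_b+f)$ by interchange, replace $g+id_b$ by $id_b+g$ using strict commutativity, and recombine to get $g+f=(id_b+g)\circ(id_b+f)=id_b+(g\circ f)$.

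For part (3), I would first note that $Ob(\CC)$ is a strictly commutative, strictly associative monoid with unit $0$, and that the hypothesis that $x+(-)$ is an isomorphism makes each translation bijective on objects; surjectivity then yields for each $x$ an inverse $-x$ with $x+(-x)=0$, so $Ob(\CC)$ is an abelian group. The relation $f+id_0=f$ is strict unitality, i.e. that $(-)+0$ is the identity functor. To produce the inverse morphism, fix $f\in Hom_\CC(x,y)$ and consider the map $\alpha\colon Hom_\CC(-x,-y)\lra Hom_\CC(0,0)$, $\alpha(h)=f+h$, which is well-typed since $x+(-x)=0=y+(-y)$. Interchange factors it as $\alpha(h)=(f+id_{-y})\circ(id_x+h)$, where $h\mapsto id_x+h$ is a bijection of $Hom$-sets because $x+(-)$ is an isomorphism of categories, and post-composition with the fixed isomorphism $f+id_{-y}$ is a bijection since $\CC$ is a groupoid. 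Hence $\alpha$ is a bijection, and the unique preimage of $id_0$ is the desired $h$, giving existence and uniqueness simultaneously. Finally $+$ is a total, strictly associative and commutative operation on $Hom_\CC=\bigcup_{x,y}Hom_\CC(x,y)$ with two-sided unit $id_0$ and with inverses supplied by the previous step, so $Hom_\CC$ is an abelian group.

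The main obstacle is conceptual rather than computational. In part (1) the obvious functoriality argument, even using the categorical inverse, stalls at showing that $f+f\inv$ is an involution, and the real content is recognising that strict commutativity is precisely the ingredient that collapses it to the identity. In part (3) the cleanest treatment of both existence and uniqueness of the inverse morphism is to package them as the single statement that $\alpha$ is a bijection, and it is here that the full strength of the hypothesis — that $x+(-)$ is an \emph{isomorphism} and not merely an equivalence — is needed.
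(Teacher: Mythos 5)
Your proofs of (1) and (2) are correct and are essentially the paper's own argument: two applications of the interchange law $(\phi+\psi)\circ(\phi'+\psi')=(\phi\circ\phi')+(\psi\circ\psi')$ bracketing one application of strict commutativity (naturality of $\sigma=id$, which is what lets you trade $f+id_x$ for $id_x+f$); the paper runs the same computation from $id_{x+y}$ down to $f\inv+f$ rather than from $f+f\inv$ up to $id_{x+y}$. In (3), packaging existence and uniqueness of the additive inverse as bijectivity of the single map $\alpha(h)=f+h=(f+id_{-y})\circ(id_x+h)$ is a genuinely different and arguably cleaner route than the paper's, which exhibits the inverse explicitly as $h=f\inv+id_{-x-y}$ (using part (1), since $f+f\inv=id_{x+y}$ and $id_{x+y}+id_{-x-y}=id_0$) and then proves uniqueness separately by cancelling $id_0+(-)$.

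There is, however, one genuine gap: you dispose of the identity $f+id_0=f$ by calling it ``strict unitality, i.e. that $(-)+0$ is the identity functor.'' That is not among the hypotheses. The lemma assumes strictness only of the associativity and commutativity constraints $\tau$ and $\sigma$, plus invertibility of translations; Deligne's data for a Picard category contains no unit constraint, so there is no ``strict unitality'' to invoke. What comes for free is only that $(-)+0$ is the identity on \emph{objects} (because $Ob(\CC)$ is a group); the equality $f+id_0=f$ on \emph{morphisms} is precisely one of the assertions of part (3) and needs an argument --- indeed it is one of the places where the hypothesis that $x+(-)$ is an isomorphism (or at least faithful) does real work. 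The repair is short and is exactly the paper's: by functoriality $id_0+id_0=id_{0+0}=id_0$, hence by strict associativity $id_0+(id_0+f)=(id_0+id_0)+f=id_0+f$; since $0+(-)$ is an isomorphism of categories, in particular faithful, one cancels $id_0+(-)$ to get $id_0+f=f$, and strict commutativity then gives $f+id_0=f$. (Equivalently: $0+(-)$ is an idempotent automorphism of $\CC$, hence the identity functor.) With this step inserted, your part (3), and with it the group structure on $Hom_\CC$ with unit $id_0$, is complete.
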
 

\begin{proof}
\begin{enumerate}
\item $id_{x+y} = id _x + id_y = id_x + f \circ f \inv = (id _x \circ id _x )+(g \circ g\inv) = (id_x +g)\circ (id_x + g\inv)  = (id_x +g)\circ (g\inv + id_x)  = (id_x \circ g \inv ) + (g \circ id_x) = g \inv + g$

\item $g \circ f + id _b = (g \circ f ) + (g \inv \circ g) = (g + g\inv ) \circ (f + g) = id_{x+y} \circ (f+g) = f+g$

\item The first claim is obvious.  Since $0 + 0 = 0$, $id_0 + id _0 = id_0$.  Therefore, for $f \in Hom _\CC (x,y)$, $id_0 + (id_0 + f) = id_0 + f$.  However, since $0+ : \CC \lra \CC$ is an equivalence of categories, it gives a bijection $ Hom_\CC (x,y) \lra Hom_\CC (x,y)$ sending $f \mapsto id_0 + f$.  Since this is injective, $id_0 + f = f$.  We showed that $f + f\inv = id_{x+y}$, so $f + ( f\inv + id_{-x-y} ) = id_{x+y} + id_{-x-y} = id_0$.  We have showed the existence of an additive inverse $h = f\inv + id_{-x-y}$ to $f$.  To show uniqueness.  If $f + g = id_0 = f+h$, $g + id_0 = g + (f +h) = (g + f) + h = id_0 + h $.  Again, since $0+$ is an equivalence, $h = g$. 
\end{enumerate}
\end{proof}






\begin{prop} $Pic ^1 _{strict} $ consists of all small Picard categories in $Pic$ such that $+$ is strictly associative and commutative (i.e. $\tau$ and $\si$ are identities) and for each $x \in ob(\CC)$, $x+ : \CC \lra \CC$ is an isomorphism, not just an equivalence.
\end{prop}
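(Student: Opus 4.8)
The plan is to identify $Pic_\om^1$ with the full subcategory of $Pic$ described in the statement by exhibiting a functor $U$ from $Pic_\om^1$ to $Pic$ sending a Picard $\om$-category to its underlying Picard category, showing that its essential image is exactly the strict Picard categories with invertible translations, and constructing an inverse.

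First I would analyze $U$. Given $A \in Pic_\om^1$, the data $(A, s_0, t_0, *_0)$ is a genuine category by condition (1) of Definition \ref{16nov1}, and the group addition is a functor $+: A \times A \to A$ by the definition of a Picard $\om$-category; hence $(A,+)$ is a Picard category. Because $A$ is an abelian group object, $+$ is strictly associative and commutative, so the coherence isomorphisms $\tau$ and $\sigma$ are identities, and each translation $x+$ has the honest inverse $(-x)+$ and is therefore an isomorphism of categories, not merely an equivalence. Thus $U(A)$ lands in the claimed subcategory.

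Next I would construct the inverse. Let $\CC \in Pic$ be strict with each $x+$ an isomorphism. By Lemma \ref{13nov1}(3) both $Ob(\CC)$ and the total morphism set $Hom_\CC$ are abelian groups. Set $A := Hom_\CC$, with $s_0, t_0$ the source and target (regarded as identity morphisms), $*_0$ the composition, $+$ the group addition on morphisms, and $s_i = t_i = \mathrm{id}$ with $*_i$ trivial for $i \geq 1$, so that $A = A_1$ and the $\om$-category axioms of Definition \ref{16nov1} follow routinely from $\CC$ being a category. To see $A$ is a Picard $\om$-category I apply Proposition \ref{groupobjects}: one must check that $+$ is a functor and that $x *_i y = x + y - s_i(x)$. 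Functoriality of $+: \CC \times \CC \to \CC$ gives the interchange law $(f+g) *_0 (f'+g') = (f *_0 f') + (g *_0 g')$ and, since functors preserve identities, compatibility of $+$ with $s_0$ and $t_0$. The identity $x *_0 y = x + y - s_0(x)$ is precisely Lemma \ref{13nov1}(2), while for $i \geq 1$ it reduces to the trivial $a = a + a - a$.

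Finally I would check that $U$ and this construction are mutually inverse and functorial. Starting from $\CC$, the composition reconstructed on the new $\om$-category is $g + f - \mathrm{id}$, which is the original $g \circ f$ by Lemma \ref{13nov1}(2), and strictness of $+$ forces the recovered $\tau, \sigma$ to be identities; starting from $A$, the relation $x *_0 y = x + y - s_0(x)$ shows the categorical and group data determine one another, so no structure is lost. On morphisms, a functor in $Pic_\om^1$ is exactly an additive functor of underlying strict Picard categories, using Remark \ref{forgetfuncts} to see that a set map respecting $+$, $s_0$, and $t_0$ is automatically a morphism of Picard $\om$-categories. I expect the main obstacle to be the mutual-inverse bookkeeping: specifically, confirming that the composition law $g \circ f = g + f - \mathrm{id}_b$ of Lemma \ref{13nov1}(2) is literally the defining relation for $*_0$, and that strictness of the abelian group operation is exactly what collapses Deligne's coherence data, so that the two categories are identified on the nose rather than merely up to equivalence.
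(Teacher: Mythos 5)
Your proposal is correct and follows essentially the same route as the paper: the forward direction is a direct verification that a Picard $\om$-category concentrated in degrees $0,1$ gives a strict Picard category with invertible translations, and the converse uses Lemma \ref{13nov1} (parts (2) and (3)) to obtain the abelian group structure on $Hom_\CC$ and the relation $g \circ f = g + f - s_0(g)$, which via Proposition \ref{groupobjects} yields the Picard $\om$-category structure. The paper's proof is terser (it leaves the forward direction as ``clearly'' and does not spell out the mutual-inverse and functoriality bookkeeping you include), but the key ingredients and their roles are identical.
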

\begin{proof}
Clearly any 1-category in $ Pic ^1 _{strict} \subset Pic$ is a small Picard category satisfying these properties.  On the other hand, if a small Picard category $\CC \in Pic$ satisfies these properties, lemma~\ref{13nov1} shows that $\CC _1 = Hom _\CC$ is actually an abelian group, and sending the inclusion $ob(\CC) \hookrightarrow \CC _1$ ($x \mapsto id_x $) is an inclusion of abelian groups.  Furthermore, lemma~\ref{13nov1} demonstrates that the second property $ f\circ g = f + g - s_0 f$ is satisfied.     
\end{proof}


\subsection{Nerve and Path Functors for $\om$-categories}

We wish to see that $\om$-descent and \v Cech descent are equivalent for $\om$-groupoids generally, not just Picard $\om$-categories.  Integral to our proof for Picard $\om$-categories was a description of the nerve of $A[1]^{a,b}$.  As a step towards extending the proof to $\om$-groupoids, we give a characterization of the nerve of $A[1]^{a,b} $ for any $A \in \om Cat$ and $a,b \in A_0$.  

\begin{prop}\label{naoneprop} Let $A$ be an $\om$-category.  Then 
\[ Hom_{\om Cat} (\OO (\tilde \De ^n)), A[1]^{a,b} ) \simeq \{ f \in Hom_{\om Cat}(\OO (\tilde \De ^n \times \tilde \De ^1 ), A) \st f( \langle \underline u , 0 \rangle) = a , \; f(\langle \underline u , 1 \rangle ) = b \; \textrm{for all} \; \underline u \in \tilde \De ^n \times \tilde \De ^1 \}.
\]  
\end{prop}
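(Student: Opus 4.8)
The plan is to exhibit an explicit bijection $\Phi$ between the two sets and to verify it respects the $\om$-category structure by reducing everything to the generating atoms. Conceptually this is the suspension--loop adjunction: $A[1]^{a,b} = Hom_A(a,b)$ is the hom-$\om$-category consisting of the elements of $A$ whose $0$-source is $a$ and whose $0$-target is $b$, equipped with the shifted operations $s[1]_k = s_{k+1}$, $t[1]_k = t_{k+1}$, $*[1]_k = *_{k+1}$; meanwhile $\OO(\tilde \De^n \times \tilde \De^1)$, with its two ends collapsed to $a$ and $b$, plays the role of the suspension of $\OO(\tilde \De^n)$, and a map out of a suspension with prescribed ends is the same as a map into the loop object. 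Since the requisite internal-hom machinery is not developed in the paper, I would make this precise combinatorially rather than invoking an abstract adjunction.

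Concretely, recall from \S\ref{paritysection} that the cells of $\tilde \De^n \times \tilde \De^1$ are the bottom cells $(\underline u, (0))$, the top cells $(\underline u, (1))$, and the cylinder cells $(\underline u, (01))$, the last of which has dimension $\dim \underline u + 1$. Since $\OO(C)$ is freely generated by its atoms $\langle C\rangle$ (Street), a functor out of $\OO(\tilde \De^n \times \tilde \De^1)$ may be built by induction on skeleta, specifying its values on generators subject only to compatibility with sources and targets. Given $x \colon \OO(\tilde \De^n) \to A[1]^{a,b}$, I would define $\hat x$ by $\hat x(\langle \underline u, (0)\rangle) = a$, $\hat x(\langle \underline u, (1)\rangle) = b$, and $\hat x(\langle \underline u, (01)\rangle) = x(\langle \underline u\rangle)$. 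The end-cap assignments are automatically compatible, because $a, b \in A_0$ are fixed by every $s_j, t_j$ and the bottom (resp.\ top) subcomplex is isomorphic to $\tilde \De^n$, so $\hat x$ restricts to the constant functors at $a$ and $b$ there. Thus the only relations to check live on the cylinder generators.

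The crux is this last verification, and it is where I expect the real work. Using the product face formula $(\underline u, (01))^\xi = \underline u^\xi \times \{(01)\} \cup \{\underline u\} \times (01)^{\xi(p)}$ with $p = \dim \underline u$, the $p$-source of a cylinder atom decomposes into the cylinder atoms coming from $\underline u^-$ together with exactly one end-cap, namely $(\underline u, (0))$ or $(\underline u, (1))$ according to the parity of $p$ (and dually for the target). Applying $\hat x$, every cylinder face maps into $A[1]^{a,b}$, hence has $0$-source $a$ and $0$-target $b$, while the lone end-cap maps to the $0$-object $a$ or $b$; composing with such a $0$-object along $*_0$ is a unit operation by axiom (1b) of Definition~\ref{16nov1}, so the end-cap whiskers away. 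What remains must then be shown to equal $x$ applied to $s_{p-1}\langle \underline u\rangle$ in $\OO(\tilde \De^n)$, under the index shift $*_m \mapsto *_{m+1}$ built into $A[1]$. Matching the two composites cell-by-cell requires Street's explicit source/target formulas for atoms of a product parity complex, and the careful tracking of parities and composition indices there is the main obstacle.

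Finally I would run the argument in reverse: given $f$ satisfying the boundary conditions, set $x(\langle \underline u\rangle) = f(\langle \underline u, (01)\rangle)$; the same face computation shows that each such value has $0$-source $a$ and $0$-target $b$, so $x$ takes values in $A[1]^{a,b}$, and that $x$ respects the shifted operations, so it is a functor $\OO(\tilde \De^n) \to A[1]^{a,b}$. By construction $\Phi$ and this assignment are mutually inverse on generators, hence on all of $\OO(\tilde \De^n)$, and the correspondence is evidently natural in $[n]$, which yields the asserted isomorphism.
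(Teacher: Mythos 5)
Your bijection and overall strategy coincide with the paper's: the paper also defines the correspondence on atoms by $\langle \uu , (0)\rangle \mapsto a$, $\langle \uu , (1)\rangle \mapsto b$, $\langle \uu , (01)\rangle \mapsto x(\langle \uu \rangle)$, and also exploits free generation of $\OO(\tilde\De^n \times \tilde\De^1)$ by its atoms together with an induction on skeleta. However, your proposal stops exactly where the proof has to begin. The entire content of the proposition is the verification you label ``the main obstacle'': that the prescribed values on cylinder atoms are compatible with sources and targets, i.e. that $\si_{k+1}$ of the image of $\langle \uu ,(01)\rangle$ in $A$ agrees with the image of $\si_k\langle\uu\rangle$ computed in $A[1]^{a,b}$ under the index shift. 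Acknowledging that this requires ``careful tracking of parities and composition indices'' is not the same as doing it, and nothing in your text shows how the composite expressing $s_p\langle\uu,(01)\rangle$ as a pasting of lower atoms actually collapses to the correct element after applying $\hat x$.

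The paper closes this gap with a specific device you would need (or an equivalent): the projection $\Th$ sending $(M,P) \in \NN(\tilde\De^n\times\tilde\De^1)$ to its intersection with the cylinder part $\tilde\De^n\times\tilde\De^1_1$. It proves that $\Th$ commutes with all $s_k$, $t_k$ and with all compositions, that $\Th\langle\uu,(01)\rangle = \langle\uu\rangle\times\{(01)\}$ with the dimension shift $s_k(\langle\uu\rangle\times\{(01)\}) = s_{k-1}\langle\uu\rangle\times\{(01)\}$, and then the key lemma that any functor $f$ satisfying the boundary conditions obeys $f(z) = g\, p_1 \Th z$ on every element $z$ not generated by end-cap atoms, where $g$ is the candidate functor on $\OO(\tilde\De^n)$. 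That lemma is what makes all end-cap contributions disappear \emph{globally} and produces the desired identity $f(s_{k+1}\langle\uu,(01)\rangle) = s_{k+1}g(\langle\uu\rangle)$ without any cell-by-cell matching. Note also that your combinatorial picture is too naive: by the paper's inductive computation of $\mu(z)$ and $\pi(z)$, the source of a cylinder atom contains end-cap cells $S_k\times\{(0)\}\cup T_k\times\{(1)\}$ in \emph{every} dimension, not ``exactly one end-cap,'' so a single application of the unit axiom (1b) does not suffice; one needs the systematic argument that composition with $0$-objects is invisible to both $f$ and $\Th$, which is precisely statement (2) of the paper's proof.
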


\begin{proof} 
Let $C = \tilde \De ^n \times \tilde \De ^1$.  For $(M,P) \in \NN (C)$, let $\Theta ((M,P)) = (M \cap \tilde \De ^n \times \tilde \De ^1 _1 , P \cap \tilde \De ^n \times \tilde \De ^1 _1 ) \in \NN (C)$.  Notice that $\tilde \De ^n \times \tilde \De ^1 _1 = \tilde \De ^n \times \{ (01)\}$.  Recall from \S \ref{paritysection} that $\OO (C)$ is generated by atoms $\langle c \rangle  $ for $c \in C$.  This implies that every element in $\al \in \OO (C)$ is a gotten by a composition of atoms $\al = \langle c_1 \rangle *_{k_1}c_2 *_{k_2}...*_{k_{t-1}}\langle c_t \rangle $ for $c_1,...c_t \in C$.  We omit parentheses in such compositions for generality and ease of exposition.  Viewing $\Theta $ as a map of sets $\Theta : \OO (C) \lra \NN (C)$, for $\al \in \OO (C)$, let $\overline \al$ denote the set $\Theta \inv (\Theta \al)$.  We will prove the following statements:
\ben $\Theta (s_n (M,P)) = s_n (\Theta (M,P))$, and  $\Theta (t_n (M,P)) = t_n (\Theta (M,P))$.
\i If $N$, $Q \subset \tilde \De ^n \times \tilde \De ^1 _0$, then $\Th$ neglects composition with $(N,Q)$, i.e. $\Th ((M,P)*_k (N,Q)) = \Th (M,P)$ and $\Th ((N,Q)*_k (M,P)) = \Th (M,P)$ whenever the compositions are defined.  More generally, for any $(N,Q)$, $(M,P)$, $\Th ((N,Q)*_k (M,P)) = \Th (N,Q) *_k\Th (M,P)$   
\i For $\uu \in \tilde \De ^n$, $\Th \langle \uu , (01) \rangle = \langle \uu \rangle \times \{ (01)\}$, and $s_n(\langle \underline u \rangle \times \{ (01)\}) = s_{n-1} \langle \underline u \rangle \times \{(01)\}$ (similarly for $t_n$).    
\i If $\langle\underline u , (01) \rangle  \in \overline {(\langle \underline x , (01) \rangle )}$, $\langle \underline v , (01) \rangle  \in \overline {(\langle\underline y , (01) \rangle )}$ and we are able to compose $\langle\underline x , (01) \rangle *_k \langle\underline y , (01) \rangle $, then one can form the composition $\langle\underline u \rangle  *_{k-1}\langle\underline v \rangle $ in $\OO (\tilde \De ^n )$.  In this case, $\Th (\langle\underline x , (01) \rangle  *_k \langle\underline y , (01) \rangle  ) = \langle\underline u \rangle *_{k-1} \langle\underline v \rangle  \times \{ (01)\}$.
\een
The proofs of (1)-(3), are essentially based on the observation that the operations involved in taking the source and target and composition respect the decomposition of a subset $S_n \subset C_n = \bigsqcup _{p+q = n} \tilde \De ^n _ p \times \tilde \De ^1 _q$ into $S_n = \bigsqcup _{p+q = n} S_n \cap (\tilde \De ^n _ p \times \tilde \De ^1 _q)$.  To be more precise, the operations consist of replacing a set $M$ by $M_n$, $|M|_n$, or $(M\setminus M_n)$ as well as taking unions.  Let $C_{*,1} = \tilde \De ^n \times \tilde \De ^1 _1$.  Clearly, $(M \cap C_{*,1})_n = M_n \cap C_{*,1}$.  Also, $(P\cup M) \cap C_{*,1} = (P \cap C_{*,1})\cup (M\cap C_{*,1})$.  That $|M \cap C_{*,1}|_n = |M|_n \cap C_{*,1}$ follows from the previous two properties together with the fact that $|M|_n = \bigcup _{k=0} ^n M_k$.  Finally, the first property shows that intersecting with $C_{*,1}$ respects grading so that $(M \setminus M_n)\cap C_{*,1} = (M \cap C_{*,1})\setminus (M_n \cap C_{*,1}) =   (M \cap C_{*,1})\setminus (M \cap C_{*,1})_n$.  Statements (1) - (3) now follow easily. 
\ben $\Th (s_n (M,P)) = \Th (|M|_n , M_n \cup |P|_{n-1})= (|M|_n \cap  \tilde \De ^n \times \tilde \De ^1 _1 ,( M_n \cup |P|_{n-1} ) \cap \tilde \De ^n \times \tilde \De ^1 _1) = (|M \cap \tilde \De ^n \times \tilde \De ^1 _1|_n , (M_n \cap \tilde \De ^n \times \tilde \De ^1 _1) \cup |P \cap \tilde \De ^n \times \tilde \De ^1 _1|_{n-1}) = s_n (\Th (M,P))$.  The proof for $t_n$ is similar.  
\i First suppose that we can form the composition $(N,Q)*_k (M,P) = (M \cup (N \setminus N_n) , (P \setminus P_n)\cup Q)$.  It follows from the observations in the previous paragraph together with the fact that $\Th (N, Q) = (\emptyset, \emptyset)$ that $\Th ((N,Q)*_k (M,P)) = (M \cap C_{*,1} , (P \cap C_{*,1} )\setminus (P_n \cap C_{*,1})$.  In order for $(N,Q)$ and $(M,P)$ to be composable, we require that $s_n (N,Q) = t_n(M,P)$, which implies that $P_n = N_n$, whence $P_n \cap C_{*,1} = \emptyset$.  Therefore, $ \Th ((N,Q)*_k (M,P) = (M \cap C_{*,1} , P \cap C_{*,1} ) = \Th (M,P)$.  Showing that $\Th (M, P)*_n (N,Q) = \Th (M,P)$ follows similarly. More generally, for any $(M,P)$, $(N,Q)$, 
\begin{eqnarray*}
\Th ((N,Q)*_k (M,P))& = &(M \cap C_{*,1} , (P \cap C_{*,1} )\setminus (P_n \cap C_{*,1})\\
                    & = &((M\cup (N \setminus N_n))\cap C_{*,1} , (Q \cup (P \setminus P_n))\cap C_{*,1} \\
                    & = &((M\cap C_{*,1} )\cup (N\setminus N_n )\cap C_{*,1} , (Q \cap C_{*,1})\cup (P \setminus P_n)\cap C_{*,1})\\
                    & = &((M\cap C_{*,1} )\cup (N\cap C_{*,1} \setminus N_n \cap C_{*,1}) , (Q \cap C_{*,1})\cup (P \cap C_{*,1} \setminus P_n\cap C_{*,1}))\\
                    & = & (N\cap C_{*,1} , Q\cap C_{*,1} )*_k(M\cap C_{*,1} , P\cap C_{*,1} )\\
                    & = & \Th (N,Q) *_k \Th (M,P)
\end{eqnarray*}
\i First we verify the claim that for $\underline u \in \tilde \De ^n _r$ and $(M,P) = \Th (\langle  \underline u , (01) \rangle ) \in \NN (C)$, $M_{n-k} = \mu (u) _{n-k-1} \times \{(01)\}$ and $P_{n-k} = \pi (u)_{n-k-1} \times \{ (01) \}$ for $0\leq k \leq n$, where $\mu (z)$, and $\pi (z)$ are as in \S \ref{paritysection}. Let us prove the statement by induction on $k$.  Let $z = (\underline u , (01))$ so that $<z> = (\mu (z) , \pi (z))$.  For $k = 0$,  $\mu (z)_n = \{ (\underline u , (01) \}$, so $\Th (\mu (z))_n = \{ \underline u , (01) \}$.  Now suppose that the claim is true up for all $i\leq k$.  Then $\mu (z)_{n-k} = \mu (u) _{n-k-1} \times \{ (01) \} \cup S_k \times \{ (0) \} \cup T_k \times \{ (1) \}$ for some subsets $S_k$, $T_k \subset \tilde \De ^n _{n-k}$.  Hence,  
\begin{eqnarray*} 
\mu (z) _{n - (k+1)} &=& \mu (z)_{n-k} ^- \setminus \mu (z) _{n-k}^+\\
                    &=& (\mu (\underline u ) ^- _{n-k} \times \{ (01) \} \cup \mu (\underline u ) _{n-k} \times \{ (01) \}^\pm  \cup    S_k ^- \times \{ (0) \} \cup T_k ^- \times \{ (1) \} ) \\
                    & &\setminus  (\mu (\underline u ) ^+ _{n-k} \times \{ (01) \} \cup \mu (\underline u ) _{n-k} \times \{ (01) \}^\pm  \cup    S_k ^+ \times \{ (0) \} \cup T_k ^+ \times \{ (1) \} )\\
                    &=& \mu (\underline u ) _{n-k-1} \times \{ (01) \} \cup S_{k+1} \times \{(0) \} \cup T_{k+1} \times \{ (1) \}
\end{eqnarray*} for some sets $T_{k+1} $, $S_{k+1}$ and where $\pm$ means, that it can be $+$ or $-$ depending on the parity of $n$ and $k$.  Therefore, $\Th (\mu (z))_{n-(k+1)} = \mu (\underline u ) _{n-k-1} \times \{ (01) \}$.  It follows by induction that $\Th (\mu (z) )_{n-k} = \mu ( \underline u )_{n-k-1} \times \{ (01) \} $ for all $0 \leq k \leq n$.  A similar shows that  $\Th (\pi (z) )_{n-k} = \pi ( \underline u )_{n-k-1} \times \{ (01) \} $ for all $0 \leq k \leq n$.  We conclude that $\Th (\mu (z) , \pi (z)) = \langle \uu \rangle \times \{ (01)\}$.  Since $(\langle \underline u \rangle \times \{ (01)\})_n = \langle \underline u \rangle _{n-1} \times  \{ (01)\}$, an easy calculation shows that $s_n (\langle \underline u \rangle \times \{ (01)\}) = (s_{n-1} \langle \underline u \rangle) \times  \{ (01)\}$ and $t_n (\langle \underline u \rangle \times \{ (01)\}) = (t_{n-1} \langle \underline u \rangle) \times  \{ (01)\}$

\i Observe that $\Th (\langle \underline u , (01) \rangle ) = \Th (\langle x , (01) \rangle )$ and $\Th (\langle \underline v , (01) \rangle ) = \Th (\langle y , (01) \rangle )$.  Since $\langle \underline x , (01) \rangle$ and $ \langle y , (01) \rangle$ are composable, $s_k \langle \underline x , (01) \rangle = t_k  \langle y , (01) \rangle$, so by part (1), $s_k \Th (\langle \underline u , (01) \rangle) = t_k \Th (\langle \underline v , (01) \rangle)$ and we can form the composition $\Th (\langle \underline u , (01) \rangle) *_k \Th (\langle \underline v , (01) \rangle)$.  But we just showed that $\Th (\langle \underline u , (01) \rangle) = ( \mu (\underline u ) \times \{ (01) \} , \pi (\underline u ) \times \{ (01) \}) = \langle \underline u \rangle \times \{ (01) \}$ and $\Th (\langle \underline v , (01) \rangle) = ( \mu (\underline v ) \times \{ (01) \} , \pi (\underline v ) \times \{ (01) \}) = \langle \underline v \rangle \times \{ (01) \}$.   It follows that $\langle \underline u \rangle$ and $\langle \underline u \rangle$ are composable since $s_{k-1} \langle \underline u \rangle \times \{(01)\} = s_k(\langle \underline u \rangle \times \{ (01) \}) = s_k \Theta \langle \underline u  , (01) \rangle = t_k \Theta \langle \underline v  , (01) \rangle  = t_k (\langle \underline v \rangle \times \{ (01) \}) = t_{k-1} \langle \underline v \rangle \times \{(01)\}$.  Now, 
\begin{eqnarray*} 
\Th (\langle \underline x , (01) \rangle *_k  \langle \underline y , (01) \rangle) & = & \Th (\langle \underline x , (01) \rangle) *_k \Theta ( \langle \underline y , (01) \rangle) \\
 & = & \Th (\langle \underline u , (01) \rangle) *_k \Th (\langle \underline v , (01) \rangle)\\
  & = & ( \mu (\underline v ) \times \{ (01) \} \cup (\mu (\underline u ) \setminus \mu (\underline u ) _{k-1})\times \{ (01) \}, \pi (\underline u ) \times \{ (01) \} \cup ( \pi (\underline v) \setminus \pi (\underline v ) _{k-1})\times \{(01)\}) \\
   & = &  ( (\mu (\underline v ) \cup (\mu (\underline u ) \setminus \mu (\underline u ) _{k-1}))\times \{ (01) \}, (\pi (\underline u )  \cup ( \pi (\underline v) \setminus \pi (\underline v ) _{k-1}))\times \{(01)\}) \\
&= & (\langle \underline u \rangle *_{k-1} \langle \underline v \rangle ) \times \{ (01) \}.
\end{eqnarray*}  
\een
We are now able to prove the main proposition.  We will use induction to give a bijection 
\[Hom_{\om Cat} (|\OO (\tilde \De ^n )|_k , A[1]^{a,b}) \tilde \lra \HH ^n _k := \{ f \in Hom(|\OO (\tilde \De ^n \times \tilde \De ^1)|_{k+1} , A) \st f(\langle \underline u , (0) \rangle ) = a \; and \; f(\langle \underline u , (1) \rangle ) = a \; \textrm{for all} \;  \underline u \in \tilde \De ^n \}. 
\]
This bijection will send $g$ to the $f$ in $\HH ^n _k$ such that $f(\langle \underline u , (0) \rangle ) = a$,  $f(\langle \underline u , (0) \rangle ) = a$ and $f(\langle \underline u , (01)\rangle = g(\langle \underline u \rangle )$ for all $\underline u \in \tilde \De ^n$, and $f \in \HH ^n _k$ corresponds to $g$ such that $g( \langle \uu \rangle ) = f (\langle \uu , (01) \rangle )$.   We now proceed by induction.  \\

For $k = 0$, a functor $g \in Hom_{\om Cat} (|\OO (\tilde \De ^n )|_0 , A[1]^{a,b})$ consists of a choice of $n+1$ objects $g(0)$,...,$g(n) \in A[1]^{a,b} _0$.  Equivalently, this is a choice of $n+1$ 1-morphisms in $A$ from $a$ to $b$. Since $\OO (\tilde \De  ^n \times \tilde \De ^1)$ is freely generated by its atoms $\langle c \rangle$ for $c \in \tilde \De  ^n \times \tilde \De ^1$, an $f$ in $\HH ^n _1 \subset Hom_{\om Cat} (|\OO (\tilde \De  ^n \times \tilde \De ^1)|_1 , A)$ is freely determined by $f (\langle c \rangle) $ for $c \in  (\tilde \De  ^n \times \tilde \De ^1)_i$, $i = 0,1$ as long as $f$ is compatible with source and target maps $s_0$, $t_0$.  Since we require that $f(\langle \underline u , (0) \rangle )  = a$ and $f (\langle \underline u , (1)\rangle ) = b$ for all $u$, we can freely choose $f(\langle \uu , (01) \rangle ) \in A_1$ as long as $s_0 f(\langle \uu , (01) \rangle ) = a$ and $t_0 f(\langle \uu , (01) \rangle ) = b$ for $\uu \in \tilde \De ^n _0$.  Therefore, a choice of $f \in \HH ^n _1$ is a choice of $(n+1)$ 1-morphisms in $A$ from $a$ to $b$.  It is evident that under this identification, for $\uu \in \tilde \De ^n _0$, $g(\langle \uu \rangle ) = f(\langle \uu , (01) \rangle )$. \\

Now assume that $  Hom_{\om Cat} (|\OO (\tilde \De ^n )|_i , A[1]^{a,b})$ is identified with $\HH ^n _{i}$ as above for all $i \leq k$.  Since $\OO (\tilde \De ^n \times \tilde \De ^1)$ is freely generated by its atoms, a functor $\hat f \in \HH ^n _{k+1} $ is equivalent to a functor $f  \in \HH ^n _{k}$ together with any choice of $(k+2)$-morphisms $\hat f(\langle \uu , (01) \rangle ) $ for $\uu \in \tilde \De ^n _{k+1}$ such that $s_{k+1} f(\langle \uu , (01) \rangle ) = f(s_{k+1} (\langle \uu , (01) \rangle ))$ and $t_{k+1} f(\langle \uu , (01) \rangle ) = f(t_{k+1} (\langle \uu , (01) \rangle ))$.  This is because all $f(\langle \uu , (i) \rangle )$ for $\uu \in \tilde \De ^n _{k+2}$ are forced to be $a$ or $b$.  Also, $\OO (\tilde \De ^n)$ is freely generated by its atoms, so a choice of  $\hat g \in Hom_{\om Cat} (|\OO (\tilde \De ^n) |_{k+1} , A[1]^{a,b})$ is equivalent to a choice of  $ g \in Hom_{\om Cat} (|\OO (\tilde \De ^n )|_{k} , A[1]^{a,b})$ together with a choice of $\hat g \langle \uu \rangle $ for $\uu \in \tilde \De ^n _{k+1}$ with the appropriate $k$-source and target.  \\

Let us then begin with $f \in \HH ^n_{k}$, which corresponds to $g \in Hom_{\om Cat} (|\OO (\tilde \De ^n |_k , A[1]^{a,b})$.  First we show that if $x \in \OO (\tilde \De ^n \times \tilde \De ^1)$ is not in the sub-$\om$-category generated by elements of the form $\langle \uu , (i) \rangle$, then $f(x) =g p_1 \Th x$, where $p_1$ denotes projection onto the first factor. Choose such an $x \in \OO (\tilde \De ^n \times \tilde \De ^1)$. Since $\OO (\tilde \De ^n \times \tilde \De ^1)$ is freely generated by its atoms, it is also generated by its atoms, so is a composition of atoms: $x = \langle x_1 \rangle *_{l_1}\langle x_2 \rangle *_{l_2}...*_{l_{e-1}}\langle x_e \rangle  $ for some $x_1 , ...,x_e \in \tilde \De ^n \times \tilde \De ^1$ (omitting parentheses).  Since $f (\langle \uu , (i) \rangle )$ is a 0-object in $A$ for $i = 0,1$, the value of $f$ is not affected by composition with elements of the form $\langle \uu , (i) \rangle $.  Similarly, $\Th$ also neglects composition with these elements. Let $x_{i_1}$,...$x_{i_r}$ be the ones of the form $x_{i_k} = \langle \underline u_{i_k} , (01)\rangle $, so 
\begin{eqnarray*} f(x) &=& f\langle x_1 \rangle *_{l_1}f\langle x_2 \rangle *_{l_2}...*_{l_{e-1}}\langle fx_e \rangle\\
                       &=& f\langle x_{i_1} \rangle *_{l_{i1}}f\langle x_{i_2} \rangle *_{l_{i2}}...*_{l_{ir}}f\langle x_{i_r} \rangle\\
                       &=& f\langle u_{i_1}, (01) \rangle *_{l_{i1}}f\langle u_{i_2} , (01) \rangle *_{l_{i2}}...*_{l_{ir}}f\langle u_{i_r}, (01) \rangle, whereas
\end{eqnarray*} 
\begin{eqnarray*} 
\Theta(x) &=& \Theta \langle x_1 \rangle *_{l_1}\theta \langle x_2 \rangle *_{l_2}...*_{l_{e-1}}\langle \Theta x_e \rangle\\
                       &=& \Theta \langle x_{i_1} \rangle *_{l_{i1}} \Theta \langle x_{i_2} \rangle *_{l_{i2}}...*_{l_{ir}} \Theta \langle x_{i_r} \rangle\\
                       &=& \Theta \langle u_{i_1}, (01) \rangle *_{l_{i1}} \Theta \langle u_{i_2} , (01) \rangle *_{l_{i2}}...*_{l_{ir}} \Theta \langle u_{i_r}, (01) \rangle\\
                       &=&  (\langle u_{i_1} \rangle *_{l_{i1}-1}  \langle u_{i_2 }  \rangle *_{l_{i2}-1}...*_{l_{ir}-1} \langle u_{i_r} \rangle)\times \{(01)\}.
\end{eqnarray*}
Therefore, $gp_1 \Theta x = g \langle u_{i_1} \rangle *_{l_{i1}-1}  \langle g u_{i_2 }  \rangle *_{l_{i2}-1}...*_{l_{ir}-1} \langle g u_{i_r} \rangle)$, and 
$f(x) = f\langle u_{i_1}, (01) \rangle *_{l_{i1}}f\langle u_{i_2} , (01) \rangle *_{l_{i2}}...*_{l_{ir}}f\langle u_{i_r}, (01) \rangle$.  By induction hypothesis, $f\langle u_{i_j}, (01) \rangle = g \langle u_{i_j} \rangle$ for each $j$, and since the compositions appearing in $gp_1 \Theta x$ are in $A[1]^{a,b}$, $*[1]_n = *_{n+1}$ for any $n$ so that the compositions coincide too.  \\

Now, for  $\uu \in \tilde \De ^n _{k+1}$, $f(t_{k+1} (\langle \uu , (01) \rangle )) = gp_1 \theta t_{k+1} \langle \uu , (01) \rangle =  gp_1 t_{k+1} \theta \langle \uu , (01) \rangle = gp_1 t_{k+1} (\langle \uu \rangle \times \{ (01)\} = gp_1 t_k \langle \uu \rangle \times \{(01)\} = g(t_k \langle \uu \rangle ) = t[1]_k g(\langle \uu \rangle) = t_{k+1} g(\langle \uu \rangle) $.  Similarly, $f(s_{k+1} (\langle \uu , (01) \rangle ))  = s_{k+1} g(\langle \uu \rangle)$.  In summary, an extension $\hat f \in \HH ^n _{k+1}$ of $f$ is equivalent to a choice of $(k+2)$-morphisms $\hat f(\langle \uu , (01) \rangle ) $ for $\uu \in \tilde \De ^n _{k+1}$ such that $s_{k+1} f\langle \uu , (01) \rangle = s_{k+1} g(\langle \uu \rangle)$ and $t_{k+1} f\langle \uu , (01) \rangle = s_{k+1} g(\langle \uu \rangle)$.  \\

In comparison, an extension $\hat g$ of $g$ consists of any choice of $(k+1)$-morphisms $g\langle \uu \rangle  \in A[1]^{a,b}$ for $\uu \in \tilde \De ^n _{k+1}$ such that $s[1]_{k}\hat g \langle \uu \rangle = g s_k \langle \uu \rangle $ and $t[1]_{k}\hat g \langle \uu \rangle = g t_k \langle \uu \rangle$.  Since $g(s_k \langle \uu \rangle )
 = s[1]_k g \langle \uu \rangle  = s_{k+1} g \langle \uu \rangle$ and $g(t_k \langle \uu \rangle ) 
= t_{k+1} g \langle \uu \rangle$, an extension $\hat g$ of $g$ is the same as a choice of $(k+2)$-morphisms $\hat g \langle \uu \rangle  \in A_{k+2}$ such that  $s_{k+1}\hat g \langle \uu \rangle = 
s_{k+1}g \langle \uu \rangle  $ and  $t{k+1}\hat g \langle \uu \rangle = t_{k+1} g \langle \uu \rangle $. Note that this implies that $s_0 \hat g \langle \uu \rangle = a $ and $t_0 \hat g \langle \uu \rangle = b$.  It is now evident that the choice for an extension $\hat f$ of $f$ is equivalent to a choice of an extension $\hat g$ of $g$.  It follows that $\HH ^n_{k+2}$ is in bijection with $Hom_{\om Cat} (|\OO (\tilde \De ^n))|_{k+1}, A[1]^{a,b} )$, thus completing our proof by induction.  
\end{proof}

\begin{cor} For $A \in \om Cat$, 
\ben $NA[1]^{a,b}_n \simeq \{ f \in  Hom_{Cs} (\De ^n \otimes \De ^1 , \fN A) \st f_{|\De ^n \times 0 } =a , \; f_{|\De ^n \times 1 } =b\}$.  
\i If $A$ is an $\om$-groupoid, there is a weak homotopy equivalence $N(A[1]^{a,b}) \lra Hom^L_{NA} (a,b)$. 
\een
\end{cor}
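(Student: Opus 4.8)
The plan is to derive both statements from Proposition \ref{naoneprop} together with the Street--Roberts equivalence $\fN : \om Cat \lra Cs$ of Theorem \ref{src} and its left adjoint $\mathfrak F _\om$. For the first part I would start from the definition of the nerve, $NA[1]^{a,b}_n = Hom_{\om Cat}(\OO (\tilde \De ^n), A[1]^{a,b})$, and apply Proposition \ref{naoneprop} to identify this set with the subset of $Hom_{\om Cat}(\OO (\tilde \De ^n \times \tilde \De ^1), A)$ consisting of those $f$ with $f(\langle \uu , (0)\rangle ) = a$ and $f(\langle \uu , (1)\rangle) = b$ for all $\uu \in \tilde \De ^n$. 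The remaining task is to transport this across the adjunction $\mathfrak F _\om \dashv \fN$: since $Hom_{Cs}(\De ^n \otimes \De ^1, \fN A) \simeq Hom_{\om Cat}(\mathfrak F _\om (\De ^n \otimes \De ^1), A)$, it suffices to show $\mathfrak F _\om (\De ^n \otimes \De ^1) \simeq \OO (\tilde \De ^n \times \tilde \De ^1)$ as $\om$-categories. This is the compatibility of Verity's collapsing functor with the product of parity complexes: the thin $r$-simplices of $\De ^n \otimes \De ^1$ described earlier are precisely the cells that must be collapsed to recover the freely generated $\om$-category on $\tilde \De ^n \times \tilde \De ^1$. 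Under this identification the boundary atoms $\langle \uu , (0)\rangle$ and $\langle \uu , (1)\rangle$ correspond to the sub-complicial sets $\De ^n \times 0$ and $\De ^n \times 1$, so the two systems of boundary conditions agree; because the bijection of Proposition \ref{naoneprop} is natural in $[n]$, the levelwise identifications assemble into an isomorphism of simplicial sets, which is (1).

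For the second part I would forget the thin structure. Composing the identification of (1) with the forgetful functor $Cs \lra sSet$, $(X,tX)\mapsto X$, sends an $n$-simplex of $N(A[1]^{a,b})$ to a simplicial map $\De ^n \times \De ^1 \lra NA$ with the same boundary conditions, i.e. to an $n$-simplex of the standard $\De ^\bullet \times \De ^1$ model of Lurie's left mapping space $Hom^L_{NA}(a,b)$; this defines the comparison map $N(A[1]^{a,b}) \lra Hom^L_{NA}(a,b)$. It remains to see this map is a weak homotopy equivalence when $A$ is an $\om$-groupoid. Here I would use that in a groupoid every $n$-morphism with $n\geq 1$ is invertible, so $NA$ is a Kan complex and $\fN A$ is a Kan complicial set; likewise $A[1]^{a,b}$ is again an $\om$-groupoid (its higher cells are invertible cells of $A$ that retain $0$-source $a$ and $0$-target $b$), so both $N(A[1]^{a,b})$ and $Hom^L_{NA}(a,b)$ are Kan complexes. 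The comparison map is injective, and surjectivity up to homotopy rel boundary follows because in a Kan complicial set every simplex is equivalent to a thin one: admissible horns have thin fillers and invertibility lets one absorb the discrepancy, so any simplicial map $\De ^n \times \De ^1 \lra NA$ may be modified, without changing its class rel boundary, to one preserving thin simplices. Hence the map induces isomorphisms on all homotopy groups and is a weak homotopy equivalence. This is the homotopical irrelevance of the stratification on the groupoid side underlying Verity's and Nikolaus's comparisons of $Cs$, algebraic Kan complexes, and $sSet$.

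The main obstacle is the identification $\mathfrak F _\om (\De ^n \otimes \De ^1) \simeq \OO (\tilde \De ^n \times \tilde \De ^1)$ used in part (1): one must check carefully that collapsing exactly the thin simplices of the stratified product reproduces the product parity complex, by comparing the explicit thin-cell description with the atoms of $\tilde \De ^n \times \tilde \De ^1$ and their faces. In part (2) the delicate point is upgrading the evident levelwise inclusion to a genuine weak equivalence for groupoids, rather than a mere bijection on thin-preserving simplices; this is exactly where the Kan/invertibility hypothesis is essential, since it guarantees that the thin-preservation requirement discards no homotopical information.
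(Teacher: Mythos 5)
Your part (1) follows essentially the same route as the paper: apply Proposition \ref{naoneprop}, then transport across the adjunction $\FF_\om \dashv \fN$ using the isomorphism $\overline c^{\,n,1} : \FF_\om (\De ^n \otimes \De ^1) \lra \OO (\tilde \De ^n \times \tilde \De ^1)$. Two corrections of emphasis, though. What you call ``the main obstacle'' is not one: that isomorphism is a theorem of Verity \cite{ver}, which the paper simply cites; there is no need to reprove that collapsing the thin simplices of the stratified product recovers the product parity complex. Conversely, the step you dispatch in one clause --- ``the two systems of boundary conditions agree'' --- is where almost all of the paper's work lies: under the adjunction, $f$ corresponds to the simplicial map sending $(\al ,\be)$ to $f\circ \OO (\tilde\De (\al)\times\tilde\De (\be))\circ \OO (\nabla _r)$, and one must unwind this explicitly (the paper does so using the maps $\io _{(\al ,\be)}$, the elements $[\![ \cdot ]\!]$, and Theorem 255 of \cite{ver}) to check that $f(\langle \uu , (i) \rangle ) \in \{a,b\}$ on all atoms is equivalent to $F(x,(i)) \in \{a,b\}$ on all simplices. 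That equivalence is true but not formal, and your proposal leaves it unproved.

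Part (2) has a genuine gap. An $n$-simplex of the set in part (1), with the stratification forgotten, is a map $\De ^n \times \De ^1 \lra NA$ that is constant at $a$ and $b$ on the two ends; such maps form the two-sided mapping space $\{a\}\times _{NA} NA^{\De ^1}\times _{NA}\{b\}$, which is \emph{not} Lurie's $Hom^L_{NA}(a,b)$: the latter has as $n$-simplices the maps $\De ^{n+1} \lra NA$ carrying vertex $0$ to $a$ and the face opposite that vertex to the degenerate simplex at $b$. There is no ``$\De ^\bullet \times \De ^1$ model of $Hom^L$,'' so the comparison map you describe is never actually defined; bridging the two shapes $\De ^n \times \De ^1$ and $\De ^{n+1}$ is exactly what the paper's proof supplies, via the retraction $\De ^{n+1} \hookrightarrow \De ^n \times \De ^1 \stackrel{p}\lra \De ^{n+1}$ with $p$ chosen to carry thin simplices to degenerate ones --- a choice that simultaneously handles the stratification, since precomposing with $p$ automatically produces stratified maps. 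Second, your argument that forgetting thinness loses nothing rests on the principle that ``in a Kan complicial set every simplex is equivalent to a thin one,'' which is false: by the complicial axioms the only thin $1$-simplices are the degenerate ones, so a $1$-simplex between distinct vertices is never homotopic rel boundary to a thin one. What you actually need is that every map $\De ^n \times \De ^1 \lra NA$ with the stated boundary behaviour can be modified, rel boundary, into one sending the thin simplices of $\De ^n \otimes \De ^1$ to thin simplices of $\fN A$; for $\om$-groupoids this is plausible, but it is the crux of the assertion and your proposal asserts rather than proves it.
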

\begin{proof} 
\ben Let $\De ^n \otimes \De ^1$ denote the complicial set with underlying simplicial set $\De ^n \times \De ^1$ and for which the thin r-simplices are $(x,y) \in \De ^n _r \times \De ^1 _r $ such that for some $i \leq j$ in $[r]$, $x$ is degenerate at $i$ and $y$ is degenerate at $j$.  In \cite{ver}, Verity proves that there is an isomorphism of $\om$-categories $\overline c ^{n,1} : \FF _\om (\De ^n \otimes \De ^1) \lra \OO (\tilde \De ^n \times \tilde \De ^1)$.  Since $\FF _\om$ is left adjoint to the nerve functor $\fN : \om Cat \lra Cs$, $Hom_{\om Cat}(\OO (\tilde \De ^n \times \tilde \De ^1), A) \simeq Hom_{\om Cat } ( \FF _\om (\De ^n \otimes \De ^1) , A) \simeq Hom_{Cs}(\De ^n \otimes \De ^1 , \fN A) $.\\

\noi By Proposition \ref{naoneprop}, the only thing left to prove is that  $\{ f \in Hom_{\om Cat}(\OO (\tilde \De ^n \times \De ^1) , A) \st f( \langle \underline u , 0 \rangle) = a , \; f(\langle \underline u , 1 \rangle ) = b \; \textrm{for all} \; \underline u \in \tilde \De ^n \times \tilde \De ^1 \} \subset Hom_{\om Cat}(\OO (\tilde \De ^n \times \De ^1) , A)$ corresponds to $Hom_{Cs} (\De ^n \otimes \De ^1 , NA) \st f_{|\De ^n \times \{ 0\}} = a , \; f_{|\De ^n \times \{ 0\}} = b \}$ under the isomorphism $Hom_{\om Cat}(\OO (\tilde \De ^n \times \tilde \De ^1), A) \simeq Hom_{Cs}(\De ^n \otimes \De ^1 , \fN A)$.  To describe this isomorphism, we will first need the following two facts, which can be found in \cite{ver}.  One can take products of maps of parity complexes, so given morphisms $[r] \stackrel{\phi}\lra[n]$, $[s] \stackrel{\psi}\lra [m]$, there is a morphism $\tilde \De (\phi ) \times \tilde \De (\psi ): \tilde \De ^r \times \tilde \De ^l \lra \tilde \De ^n \times \tilde \De ^m$.  In fact, $\OO (\tilde \De (\phi)\times \tilde \De (\psi)) ( \langle \uu , \vv \rangle ) = \langle \phi \uu , \psi \vv \rangle $.  Secondly, there is a morphism $\nabla _r : \tilde \De ^r \lra \tilde \De ^r$ of parity complexes, which sends $\vv = (v_0v_1..v_r) $ to $\{ (v_0...v_s, v_s...v_r) \in \tilde \De ^r _s \times \tilde \De ^r _{r-s} \st s = 0,1...,r \}$. Now, given $f \in Hom_{\om Cat}(\OO (\tilde \De ^n \times \tilde \De ^1), A) $, we describe the corresponding $F \in  Hom_{Cs}(\De ^n \otimes \De ^1 , \fN A)$.  Given an r-simplex $(\al , \be) \in \De ^n _r \times \De ^1 _r$, which we think of as a pair $([r] \stackrel{\al}\lra [n] , [r]\stackrel{\be}\lra [1])$, $F(\al, \be )$ is the composition $\OO (\tilde \De ^r ) \stackrel { \OO (\nabla _r )}\lra \OO (\tilde \De ^r \times \tilde \De ^r) \stackrel {\OO (\tilde \De (\al) \times \tilde \De (\be))}\lra \OO (\tilde \De ^n \times \tilde \De ^1) \stackrel {f}\lra A$ (an r-simplex in $\fN A$).  By the universal property of $F _\om$, $f\circ \OO (\tilde \De (\al) \times \tilde \De (\be)) \circ \OO (\nabla _r ) = f \circ c^{n,1} \circ \io _{(\al , \be)}$.  Additionally, $\FF _\om (\De ^n \otimes \De ^1) $ is, by definition \cite{ver}, a quotient $q: F_\om (\De ^n \times \De ^1) \lra \FF _\om (\De ^n \otimes \De ^1)$ of  $F_\om (\De ^n \times \De ^1)$, and $\overline c^{n,1} \circ q = c^{n,1}$.\\

Now we will show that $f$ satisfies $f(\langle \uu , (0)\rangle) = a$ and $f(\langle \uu , (1) \rangle ) = b$ for all $\uu \in \tilde \De ^n$ if and only if $F(x,(0)) = a$ and $F(x,(1)) = b$ for all $r$ and all r-simplices $x \in \De ^n$.  For $i \in \{0 , 1\}$, we write $(i)$ as shorthand for the r-simplex $(ii...i)$ with $i$ listed r times.  Suppose $f$ satisfies this condition.  To show that $F$ has the desired property, it is enough to verify the statement for non-degenerate simplices $(x,(i))$ since if it is true for non-degenerate simplices, then $F(\sigma _k(x,(i))= \sigma _k F(x,(i)) = \sigma _k a = a$, where $\sigma _k$ is the k-th degeneracy map.  Since it is enough to verify the statement for non-degenerate simplices, we may also assume that $x$ is non-degenerate, since $x$ is degenerate if and only if $(x,(i))$ is degenerate. \\

We know that $F(\al , \be ) = f \circ c^{n,1}\circ \io _{(\al , \be)}$. Let $\al = (u_0u_1...u_r)$ non-degenerate and $\be = (i)$.  By definition, $\io _{(\al , \be)} (\langle 01...r\rangle ) = [\![ \al , \be ]\!]$, so $F(\al, \be) \langle 01...r \rangle = f \circ c^{n,1}([\![ \al , \be ]\!])$, which equals $f(\langle \underline u , (i) \rangle)$ by Theorem 255 of \cite{ver}.  Hence, $F(\al , \be)\langle 01...r \rangle = a$ if $i=0$ or equals $b$ if $i=1$.  For the general case, we show that $F(\al , \be ) \langle \vv \rangle = a$ or $b$.  A k-dimensional $\vv \in \td ^r _k$ corresponds to a strictly increasing morphism $[k]\stackrel{\phi}\lra [r]$.  We know that $\io _{(\al , \be)} \circ \OO (\td (\phi)) = \io _{\phi ^* (\al , \be)}$, whence $\io _{(\al , \be)} \langle \vv \rangle = \io _{(\al , \be)} \circ \OO (\td (\phi)) \langle 01...k \rangle = \io _{\phi ^* (\al , \be)} \langle 01...k \rangle = [\![ \phi ^* a , \phi ^* \be ]\!] =  [\![ \phi ^* \al , (i) ]\!]$.  Then, $F(\al , \be) \langle \vv \rangle = f\circ c^{n,1} \io _{(\al , \be)} \langle \vv \rangle = f \circ c^{n,1} ( [\![ \phi ^* \al , (i) ]\!])= f (\langle \uu , (i) \rangle )$ (again by Theorem 255 of \cite{ver}), which is equal to $a$ if $i = 0$ or $b$ if $i=1$.  Thus, $F$ is as desired.  This argument can be run backwards to show that if $F(\al , (0)) = a$, $F(\al , (1)) = b$ for all $\al \in \De ^n$, then $f(\langle \uu , (0) \rangle ) = a$ and $f(\langle \uu , (1) \rangle ) = b$ for all $\uu \in \td ^n$.  

\i $\De ^{n+1} $ is a retract of $\De ^n \times \De ^1$, with the inclusion $\De ^{n+1} \hookrightarrow \De^{n} \times \De ^1$ given by $(0,1,...,n+1)\mapsto (012...nn, 0....01)$ and $p: \De ^n \times \De ^1 \lra \De ^{n+1}$ chosen to send thin simplices to degenerate ones.  
\een
\end{proof}

\subsection{Homotopies}\label{homotopy}

In this section we make some basic observations about homotopies between morphisms of chain complexes from the perspective of $\om$-categories. 

\subsubsection{Homotopies}
Using the equivalence of $\cab$ with $Pic _\om$, we see that homotopies of maps of complexes correspond to the following notion of homotopy between maps $F$, $G : A \lra {B}$ in $Pic _\om$.  A homotopy consists of maps $H^n : A _n \lra {B} _{n+1}$ such that $H^n (A _{n-1} ) \subset {B} _n  $ and $(t_n - s_n )H^n + H^{n-1}(t_{n-1} - s_{n-1})$ agrees with $G - F$ when we pass to the quotient $A _n / A _{n-1} \lra {B} _n / {B} _{n-1}$.

\begin{lem}\label{hhomotoplem} For complexes $A$, $B \in \cab$, there is a $\CC \in \spics $ such that $\CC _0 = \{$ maps of complexes $A \lra B$ $\}$ and $Hom ^1 (F,G) = \{$ homotopies from $F$ to $G$ $\}$.
\end{lem}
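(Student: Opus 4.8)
The plan is to obtain $\CC$ as ${P}$ applied to a suitable truncation of the internal mapping complex of $A$ and $B$. First I would form the $\bbz$-graded mapping complex $E = \underline{Hom}(A,B)$, whose degree-$n$ term is $E^n = \prod_i Hom_{{A}b}(A^i , B^{i+n})$ and whose differential $\partial$ (lowering degree, in keeping with the conventions of Proposition~\ref{functlemma1}) is $(\partial f)_i = d_B f_i - (-1)^n f_{i-1} d_A$ for $f = (f_i)$ of degree $n$. With these signs a degree-$0$ cycle of $E$ is exactly a chain map $A \lra B$, while for $H$ of degree $1$ the relation $\partial H = G - F$ unwinds to $d_B H_i + H_{i-1} d_A = G_i - F_i$, which is precisely the assertion that $H = (H_i : A^i \lra B^{i+1})$ is a chain homotopy from $F$ to $G$. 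Under the equivalence ${Q}, {P}$ of Theorem~\ref{26oct1} this reproduces the description of homotopies in $Pic_\om$ recalled immediately before the lemma.

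Since $E$ will in general have nonzero terms in negative degree, I would pass to its canonical truncation $D := \tau_{\geq 0} E$, defined by $D^n = E^n$ for $n \geq 1$, $D^0 = \ker\!\big(\partial : E^0 \lra E^{-1}\big)$, and $D^n = 0$ for $n < 0$. Because $\partial^2 = 0$, the image of $\partial : E^1 \lra E^0$ is contained in $D^0$, so the differential restricts and $D$ is a genuine object of $\cab$. Using the \emph{canonical} truncation rather than the naive one is the whole point: it makes $D^0$ equal to the group of chain maps $A \lra B$, instead of the larger group $E^0$ of all degree-$0$ homomorphisms.

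I would then set $\CC := {P}(D)$, which is a Picard $\om$-category by Proposition~\ref{functlemma1}, and read off its low-dimensional cells from the explicit formulas for ${P}$. A $0$-object of ${P}(D)$ is a sequence $((f,f),(0,0),\dots)$ with $f \in D^0$, so $\CC_0 = D^0$ is exactly the set of chain maps $A \lra B$. A $1$-morphism from $F$ to $G$ is a sequence $((F,G),(H,H),(0,0),\dots)$ with $H \in D^1 = E^1$, the only constraint being $\partial H = G - F$ coming from the defining relation $d x_1^\al = x_0^+ - x_0^-$ of ${P}(D)$; by the computation of the first paragraph this is precisely a chain homotopy from $F$ to $G$. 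Hence the set $Hom^1_\CC(F,G)$ of $1$-morphisms from $F$ to $G$ is the set of homotopies from $F$ to $G$, and both abelian-group structures are the evident pointwise ones.

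The argument is a construction followed by routine verifications; the only delicate points are bookkeeping. I expect the main obstacle to be confirming that the canonical truncation is the correct choice — only then does $\CC_0$ come out as chain maps rather than as all degree-$0$ maps — together with pinning down the sign in $\partial$ so that the degree-$1$ relation reproduces the chain-homotopy identity exactly. Everything else follows directly from the explicit description of ${P}$ in Proposition~\ref{functlemma1} and Deligne's low-degree formula $Hom_{{P}D}(a,b) = \{f \in D^1 : df = b-a\}$.
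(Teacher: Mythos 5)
Your proposal is correct and follows essentially the same route as the paper: the paper's proof constructs exactly your complex $D$ directly --- $H^0 = Hom_{\cab}(A,B)$ (chain maps), $H^i$ equal to all degree-$i$ maps for $i>0$, with differential $\partial f = df + (-1)^{i-1}fd$, which matches your signs since $-(-1)^n = (-1)^{n-1}$ --- and then takes the associated Picard $\om$-category. Your packaging of this as the canonical truncation $\tau_{\geq 0}$ of the full $\bbz$-graded Hom complex followed by ${P}$ is the same construction, with the low-dimensional verifications (which the paper leaves implicit) spelled out.
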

\begin{proof} 
For $i>0$, let $H^i$ consist of all maps from $A \lra B[-i]$.  By this we mean $h \in H^i$ consists of a sequence of maps $h_k : A_k \lra B_{k-i}$ but not necessarily commuting with $d$.  Define $H^0 = Hom_\cab (A,B)$.
  Now define a differential $\partial : H^i \lra H^{i-1} $ by $\partial f = df + (-1)^{i-1} fd$. 
 This makes $H^*$ into a complex of abelian groups.  $\tilde H$ is the desired $\om$-category.
\end{proof}

Lemma \ref{hhomotoplem} gives the following corollary, an observation also made by Street \cite{str4}. 
 
\begin{cor} There is an $\om$ category, $A$ such that $A _0 = \cab$, $A _1 = \{$ maps of complexes $\}$, and $A _1 = \{$ homotopies of maps of complexes $\}$. 
\end{cor}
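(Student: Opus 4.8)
The plan is to assemble the hom-objects produced by Lemma~\ref{hhomotoplem} into a single $\om$-category $\TT$ whose $0$-objects are the complexes themselves. For each pair $A, B \in \cab$ the lemma gives a Picard $\om$-category $\CC(A,B) = P(H^\bullet(A,B))$, where $H^i(A,B)$ is the group of degree-$(-i)$ graded maps $A \lra B$ with differential $\partial f = df + (-1)^{i-1} fd$, its $0$-objects being the chain maps and its $1$-morphisms the homotopies. I would take the underlying graded set of $\TT$ to be $Ob(\cab)$ in dimension $0$ together with, for every pair $(A,B)$, the elements of $\CC(A,B)$ placed one dimension higher (an $n$-morphism of $\CC(A,B)$ becomes an $(n{+}1)$-morphism of $\TT$). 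In dimension $0$ the structure maps record endpoints: for $x \in \CC(A,B)$ set $s_0 x = A$ and $t_0 x = B$; in dimensions $n \geq 1$ the operations $s_n, t_n, *_n$ are the shifted operations $s_{n-1}, t_{n-1}, *_{n-1}$ of $\CC(A,B)$. With these choices every axiom of Definition~\ref{16nov1} that involves only indices $\geq 1$ holds automatically, since each $\CC(A,B)$ is already an $\om$-category.

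It remains to supply the horizontal composition $*_0$ and verify the axioms that mention it. I would build $*_0$ from the composition pairing on internal hom complexes: composition of graded maps gives bilinear maps $H^j(B,C) \times H^i(A,B) \lra H^{i+j}(A,C)$, $(g,f) \mapsto g \circ f$, and a short computation (the same one that shows $g \circ h$, $h' \circ f$, and $h' \circ h$ are the expected homotopies) yields the graded Leibniz identity $\partial(g \circ f) = (\partial g) \circ f + (-1)^j g \circ (\partial f)$; equivalently, composition is a chain map $H^\bullet(B,C) \otimes H^\bullet(A,B) \lra H^\bullet(A,C)$. Using the identification $\CC(A,B) \cong \bigoplus_i H^i(A,B)$ from Section~\ref{17nov2}, together with the explicit formulas for $s_n$ and $t_n$ recorded there, this pairing assembles into a map $*_0 : \CC(B,C) \times \CC(A,B) \lra \CC(A,C)$ that restricts to ordinary composition of chain maps on $0$-objects; I take this as the $0$-composition of $\TT$, with the identity chain maps $id_A$ as $0$-units.

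The outstanding axioms are then the category laws (1a)--(1d) for $*_0$ and the $2$-category laws (2a)--(2d) of Definition~\ref{16nov1} for every pair $0 < n$. Unitality and associativity of $*_0$, and compatibility of $s_0, t_0$ with $*_0$, follow at once from unitality, associativity, and endpoint bookkeeping for composition of graded maps. The crux is (2c), $\rho_n(y *_0 x) = \rho_n y *_0 \rho_n x$, and the interchange law (2d), $(y *_n y') *_0 (x *_n x') = (y *_0 x) *_n (y' *_0 x')$, which encode compatibility of horizontal composition with the differential and with vertical composition. I expect this to be the main obstacle: because $P$ sends direct sums to products, $*_0$ is \emph{bilinear} rather than additive, so it is a functor of plain $\om$-categories but not of Picard ones, and (2c)--(2d) cannot simply be read off the Picard structure. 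Instead I would prove them by expanding both sides in terms of the graded-composition pairing and invoking the Leibniz identity above, using the rule $x *_n y = x + y - s_n x$ from Proposition~\ref{groupobjects} within each hom-object; conceptually this is precisely the passage from a connective DG-category to a strict $\om$-category, $\TT$ being the instance attached to $\cab$ with its internal hom. Once these laws are verified, $\TT$ is an $\om$-category with $\TT_0 = \cab$, with the chain maps as $1$-morphisms and the homotopies as $2$-morphisms, as claimed.
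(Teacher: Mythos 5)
Your frame --- the hom-objects $\CC(A,B)=P(H^\bullet(A,B))$ of Lemma~\ref{hhomotoplem}, reindexed up one dimension with $s_0,t_0$ recording endpoints --- is the same starting point as the paper's, and your reduction of the problem to axioms (2c)--(2d) for a horizontal composition $*_0$ is exactly right. The gap is the step you defer: those axioms are \emph{false} for the bilinear $*_0$ you propose, and the Leibniz identity is precisely the computation that exhibits the failure, not a tool for proving the axioms. Take $2$-cells $h\colon f\Rightarrow f'$ and $k\colon g\Rightarrow g'$, i.e.\ elements $f+h$ and $g+k$ of $\CC(A,B)$, $\CC(B,C)$ with $\partial h=f'-f$, $\partial k=g'-g$. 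Your formula gives
\[ (g+k)*_0(f+h)\;=\;g\circ f\,+\,(g\circ h+k\circ f)\,+\,k\circ h ,\]
and two things go wrong. First, the degree-two term $k\circ h$ makes the composite a $3$-cell, whereas in any $\om$-category the $*_0$-composite of two $2$-cells is again a $2$-cell (apply (2c) with $\rho_j=s_2$). Second, using $t_1^{\TT}=t_0^{\CC(A,C)}$ and the formulas of \S\ref{17nov2}, together with the Leibniz rule,
\[ t_1\bigl((g+k)*_0(f+h)\bigr)\;=\;g\circ f+\partial(g\circ h+k\circ f)\;=\;g\circ f'+g'\circ f-g\circ f, \]
while (2c) demands this equal $t_1(g+k)*_0t_1(f+h)=g'\circ f'$; the discrepancy is $(\partial k)\circ(\partial h)=(g'-g)\circ(f'-f)$, which is nonzero already for $A=B=C=(\bbz\stackrel{id}{\lra}\bbz)$ in degrees $1,0$, with $f=g=0$ and $h=k\colon A_0\to A_1$ the identity of $\bbz$ (so $f'=g'=id_A$). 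Since (2c) fails, the two factors on the right-hand side of the interchange law (2d) are not even $*_1$-composable, so (2d) fails as well.

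Nor is this a defect of your particular formula that a cleverer expansion could repair: with vertical composition given by addition (which the Picard structure of the hom-objects forces) and whiskerings $g\circ h$, $k\circ f$, interchange would require $g'\circ h+k\circ f=g\circ h+k\circ f'$, and the difference of the two sides is exactly the Leibniz term $\partial(k\circ h)=(\partial k)\circ h-k\circ(\partial h)$, generically nonzero (e.g.\ $A=\bbz$ in degree $0$, $B=(\bbz\stackrel{id}{\lra}\bbz)$ in degrees $1,0$, $C=(\bbz\stackrel{id}{\lra}\bbz)$ in degrees $2,1$, with $h\colon A_0\to B_1$ and $k\colon B_1\to C_2$ the identity of $\bbz$ and $k=0$ on $B_0$). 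This is the classical obstruction: complexes, chain maps, and honest homotopies form a sesquicategory but not a strict $2$-category; interchange holds only after passing to homotopy classes of $2$-cells, or after replacing the cartesian product implicit in Definition~\ref{16nov1} by a lax (Gray/Steiner-type) tensor product, under which the chain-level composition pairing does induce the structure maps. That is why the paper never writes down a composition law at all: its proof of the Corollary consists of Lemma~\ref{hhomotoplem} together with an appeal to Street \cite{str4}, where the assembly of the hom-objects is handled through that monoidal (rather than cartesian) closed structure. As written, your argument cannot be completed; any honest proof along these lines must confront the interchange problem rather than derive (2c)--(2d) from bilinearity and Leibniz.
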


For any $\om$-category in abelian groups $A$, there is a group homomorphism $D : A \lra A$ given by $D = \sum _{n \geq 0} t_n - s_n$.  Since we assume that $A$ is a union of the $A _n$, this sum makes sense because it is a finite sum on $A _n$.  Now we can efficiently define a homtopy between two functors $F,G \in Hom_{Pic _\om} (A , {B})$ by reformulating the description at the beginning of this section.  

\begin{Def} Let $A , B \in \spics$ and  $F,G \in Hom_{Pic _\om} (A , B)$.  We say that a group homomorphism $H : A \lra B$ is a \emph{homotopy} if
\begin{enumerate}
\item  $H (A _n ) \subset B _{n+1}$, 
\item  $DH + HD  = G - F$, and 
\item $s_n H (s_n - s_{n-1}) = 0 $ for all $n \geq 0$. 
\end{enumerate}
\end{Def}

\begin{lem}\label{30mar2} Let $f,g : A \lra B$ be maps in $\cab$.  A homotopy $h : f \lra g$ is equivalent to a homotopy $H $ from $Pf$ to $P g$ in $Pic _\om$.  
\end{lem}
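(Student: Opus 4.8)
The plan is to carry out the entire comparison inside the explicit direct-sum model $PA \simeq \bigoplus_{i\geq 0} A^i$ recorded in \S\ref{17nov2}. Under that identification one has $P(A)_n = \bigoplus_{i=0}^{n} A^i$ (the image of $s_n$), the operator $s_n - s_{n-1}$ is precisely the projection onto the summand $A^n$, and $s_n$ retains exactly the summands $B^0,\dots,B^n$ while killing $B^{n+1}$ and higher. By Proposition~\ref{functlemma1}, $P$ of a chain map acts componentwise, so $G - F = Pg - Pf$ acts on the $A^n$-summand as $g_n - f_n : A^n \to B^n$. The first thing I would verify is that the operator $D = \sum_{n\geq 0}(t_n - s_n)$ reduces in this model to the internal differential: for $x$ in the $A^k$-summand the formulas $s_n x = t_n x = x$ for $n\geq k$, together with $t_{k-1}x = dx$ and $s_{k-1}x = 0$, give $Dx = dx$, so $D$ is the map $A^k \to A^{k-1}$ transported from the differential of $A$.

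Next I would show that conditions (1) and (3) in the definition of homotopy force $H$ to raise degree by exactly one. Condition (1) gives $H(A^n)\subset \bigoplus_{j=0}^{n+1} B^j$. Since $s_n - s_{n-1}$ projects onto $A^n$ and $s_n$ kills the top summand $B^{n+1}$, condition (3) $s_n H(s_n - s_{n-1}) = 0$ says exactly that the components of $H(A^n)$ in degrees $\leq n$ vanish; combined with (1) this forces $H(A^n)\subset B^{n+1}$. Thus an $H$ obeying (1) and (3) is completely determined by its components $h_n := H|_{A^n} : A^n \to B^{n+1}$, which are arbitrary group homomorphisms, and conversely any family $(h_n)$ assembles into $H = \bigoplus_n h_n$, which automatically satisfies (1) and (3). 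This sets up a bijection between degree-raising families $(h_n)$ and homomorphisms $H$ obeying (1) and (3), independently of $f,g$.

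Finally I would match condition (2) with the chain-homotopy identity. Restricting $DH + HD = G - F$ to the summand $A^n$ and using $D = d$ together with $H = \bigoplus_i h_i$, the composite $DH$ is $d\,h_n$ and $HD$ is $h_{n-1}\,d$, both landing in $B^n$, so condition (2) reads $d h_n + h_{n-1} d = g_n - f_n$ for every $n$ — precisely the statement that $(h_n)$ is a chain homotopy from $f$ to $g$ in $\cab$. Composing this with the bijection of the previous paragraph yields the desired correspondence. The main obstacle, and the only point requiring genuine care, is the normalization step: one must check that conditions (1) and (3) \emph{together} pin down the degree-$+1$ shape of $H$ (neither alone does) and that $D$ really collapses to the differential in the chosen model; once these two normalizations are established, condition (2) translates mechanically into the chain-homotopy relation.
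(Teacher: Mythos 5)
Your proof is correct and follows essentially the same route as the paper's: both work in the direct-sum model $PA \simeq \bigoplus_{i} A^i$ of \S\ref{17nov2}, use conditions (1) and (3) together to pin $H$ down as a degree-raising family $(h_n : A^n \to B^{n+1})$ of group homomorphisms, and then translate condition (2) into the chain-homotopy identity $dh_n + h_{n-1}d = g_n - f_n$. The only difference is presentational — the paper computes on explicit elements $((0,0),\dots,(0,dx_n),(x_n,x_n),(0,0),\dots)$ and defines $h = \pi_{n+1}H\si$, whereas you argue at the operator level (that $D$ collapses to $d$ and $s_n - s_{n-1}$ is the projection onto the $A^n$ summand), which is the same computation organized more transparently.
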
 

\begin{proof} Since $P A = \oplus K_i \simeq A^i $ $K_i = \si (A^i)$ 
as in \S \ref{17nov2} and $P B = \oplus L_i \simeq B^i$, a homomorphism $H : P A \lra P B$ is determined by $H_i : K_i  \lra P B$.
  And $s_n H (s_n - s_{n-1})$ 
for all $n$ if and only if 
$H_i (K_i ) \subset L_{i+1}$.
  Then for
 $x = ((0,0),...(0,dx_n),(x_n,x_n),(0,0),...) \in K_n$, $Hx =  ((0,0),...(0,dy_{n+1}),(y_{n+1},y_{n+1},(0,0),...) \in L_{n+1}$.
  Defining $h : A^n \lra B^{n+1} $
 by $h = \pi _{n+1} H \si $
 gives a bijection between maps 
$h : A \lra B[1]$ (not necessarily commuting with the differential $d$) and homomorphisms 
$H : PA \lra P B$ such that $s_n H (s_n - s_{n-1})$ and $H(P A _n ) \subset 
P B _{n+1}$.
  A direct computation shows that with 
$x = ((0,0),...(0,dx_n),(x_n,x_n),(0,0),...) \in K_n$, $(DH + HD)x =  ((0,0),...(0,(hd + dh)dx_n),((dh + hd)x_n,(dh + hd)x_n),(0,0),...) \in L_n$. 
 Hence $HD + DH = P g - P f $ 
if and only if $hd + dh = g-f$. 
\end{proof}

 \begin{rem} ${P} A$ is a projective abelian group if and only if each $A^i $ is projective.
 \end{rem}


\begin{prop} Let $\PP$ denote the full subcategory of $\spics $ consisting of objects which are projective abelian groups, and let $\overline \PP$ denote the category the objects of which are $ob(\PP)$ and the morphisms of which are morphisms in $\PP$ modulo homotopy.  There is an equivalence of categories $D^{\leq 0} (A b ) \lra \overline \PP$.  
\end{prop}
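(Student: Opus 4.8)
The plan is to transport everything across the equivalence $P : \cab \lra \spics$ of Theorem~\ref{26oct1} and then reduce to the classical fact that the homotopy category of complexes of projectives computes the derived category. First I would use the remark that $PA$ is a projective abelian group if and only if each $A^i$ is projective: this says precisely that $P$ carries the full subcategory $\cab_{proj}\subset \cab$ of complexes all of whose terms are projective abelian groups isomorphically onto $\PP$. Next, Lemma~\ref{30mar2} identifies a chain homotopy $h : f \lra g$ between maps of complexes of projectives with a homotopy $H$ from $Pf$ to $Pg$ in $\spics$. Hence the homotopy relation defining $\overline\PP$ is identified, via the isomorphism $P$, with the chain-homotopy relation on $\cab_{proj}$; in particular that relation is a congruence (so $\overline\PP$ is a genuine category), and $P$ descends to an equivalence $\overline{\cab_{proj}} \lra \overline\PP$, where $\overline{\cab_{proj}}$ denotes complexes of projective abelian groups with morphisms taken modulo chain homotopy. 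It therefore suffices to produce an equivalence $D^{\leq 0}(Ab) \lra \overline{\cab_{proj}}$.

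For this I would invoke the projective model structure on $\cab$ recalled in \S\ref{dksection}, whose weak equivalences are the quasi-isomorphisms and whose cofibrations are the degreewise monomorphisms with projective cokernel. Applying this to $0 \lra A$ shows that the cofibrant objects are exactly the complexes of projective abelian groups, i.e. the objects of $\cab_{proj}$, while every object is fibrant. The general description of a homotopy category as cofibrant--fibrant objects modulo homotopy, together with the fact that here all objects are fibrant, gives $Ho(\cab) \simeq \cab_{proj}/(\textrm{homotopy})$; and for a cofibrant source mapping to a fibrant target the model-category homotopy relation coincides with chain homotopy. Thus $\overline{\cab_{proj}} \simeq Ho(\cab)$. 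Finally, $Ho(\cab)$ is $\cab$ localized at quasi-isomorphisms, which is $D^{\leq 0}(Ab)$: this is the corollary of Proposition~\ref{qisprop} identifying $D^{\leq 0}(Ab)$ with $Ho(\spics)$, transported back along $P$, since the model structure on $\spics$ is precisely the one pulled back from $\cab$. Composing the equivalences yields $D^{\leq 0}(Ab) \simeq \overline\PP$.

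Concretely, the equivalence $D^{\leq 0}(Ab)\lra\overline\PP$ sends a complex to a free (hence projective) resolution of it and then applies $P$. Essential surjectivity is just cofibrant replacement, i.e. the existence of projective resolutions, which holds because $Ab$ has enough projectives and because a non-negatively graded complex admits a non-negatively graded projective resolution.

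The main obstacle is the full faithfulness hidden in the identification $\overline{\cab_{proj}} \simeq Ho(\cab)$: one must know that a morphism in $D^{\leq 0}(Ab)$ between two complexes of projectives is represented by an honest chain map, unique up to chain homotopy, rather than merely by a roof $P \lla P' \lra Q$. In model-categorical terms this is automatic once the source is cofibrant and the target fibrant, but the underlying analytic fact is that every chain map from a complex of projectives into an acyclic complex is null-homotopic; I would prove this by the standard induction that builds the contracting homotopy one degree at a time, starting from the lowest degree and using projectivity together with the lower boundedness of the complexes in $\cab$. Everything else is bookkeeping across the equivalences $P$ and Dold--Kan, plus the verification that the homotopy relation of the earlier Definition matches chain homotopy, which is exactly Lemma~\ref{30mar2}.
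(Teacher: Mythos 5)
Your proposal is correct and follows essentially the same route as the paper: identify $\overline\PP$ with the homotopy category $K^{\leq 0}(Proj)$ of complexes of projective abelian groups via Lemma~\ref{30mar2} (together with the remark that $PA$ is projective iff each $A^i$ is), and then invoke the classical equivalence $K^{\leq 0}(Proj) \lra D^{\leq 0}(Ab)$. The only difference is that the paper simply cites this last equivalence as known, whereas you expand it via the projective model structure and the null-homotopy argument for maps into acyclic complexes; that is a fuller justification of the same step, not a different proof.
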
 

\begin{proof} Let $K^{\leq 0} (Proj)$ denote the homotopy category of complexes of projective abelian groups in degrees $\leq 0$.  Lemma 
 \ref{30mar2} shows that $\HH : K^{\leq 0} (Proj) \lra \overline \PP$ is an equivalence of categories.  The theorem now follows since $K^{\leq 0} (Proj) \lra D^{\leq 0} (A b)$ is an equivalence. 
\end{proof}

\subsubsection{Homotopies from the Perspective of $\om$-Categories}
The homotopies described above are algebraic in nature, but we can still define homotopies for ordinary $\om$-categories.  The following definition extends the concept of homotopy for from $Pic_\om$ to $\om$-cat. 

\begin{Def}\label{30mar3} Let $f,g: A \lra B$ be two functors of $\om$-categories.  A \emph{homotopy} $H : F \lra G$ is a map $H : A \lra B$ with $H(A _n ) \subset B _{n+1}$, and for $x \in \ca _n$, $Hx$ is an $n+1$ isomorphism
\[ Ht_{n-1}x *_{n-1}(Ht_{n-2}x *_{n-2}(...(Ht_1x *_1 (Ht_0 x *_0 fx))..) \stackrel {Hx}{\lra _{n+1}} (...((gx*_0 Hs_0x)*_1 Hs_1x )*_2...)*_{n-1} Hs_{n-1}x \]
\end{Def}

\noi Intuitively, a homotopy $H : f \lra g$ looks like a ``natural transformation'' from $f$ to $g$, except any diagram of $n$-morphisms which should commute only commutes up to an $(n+1)$-isomorphism specified by $H$.   

\begin{prop} Let $f, g : A \lra B$ be maps of complexes in $\cab$.  A homotopy $h : f \lra g$ defines a homotopy $H$ in the sense of definition \ref{30mar3}.  
\end{prop}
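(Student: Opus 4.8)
The plan is to construct the homotopy $H$ explicitly from $h$ and then verify the two clauses of Definition~\ref{30mar3} inside the Picard model $P(B)$, exploiting that in a \spics every composite is governed by the formula $u *_i v = u + v - s_i u$ of Proposition~\ref{groupobjects}; since $s_i u = t_i v$ whenever $u *_i v$ is defined, this is equivalently $u *_i v = u + v - t_i v$, and having both forms available is what makes the bookkeeping tractable. The isomorphism clause is free: $P(B)$, being an abelian group object in $\om Cat$, is a groupoid, so every $(n+1)$-morphism is automatically invertible. Hence the whole content is that the $(n+1)$-morphism $Hx$ has the prescribed $n$-source and $n$-target.

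First I would telescope the two nested composites of Definition~\ref{30mar3}. The source is right-nested with the new factor always on the left, so repeated use of $u *_i v = u + v - s_i u$ collapses it; the target is left-nested with the new factor on the right, so repeated use of $u *_i v = u + v - t_i v$ collapses it, giving the closed additive forms
\[
S \;=\; fx + \sum_{i=0}^{n-1}\bigl(H t_i x - s_i H t_i x\bigr),
\qquad
T \;=\; gx + \sum_{i=0}^{n-1}\bigl(H s_i x - t_i H s_i x\bigr).
\]
Along the way one uses the relations $s_i t_i = s_i$, $s_j s_i = s_i$ ($i<j$), etc.\ of Definition~\ref{16nov1} to check that every intermediate composition is defined, and that $S$ and $T$ are \emph{parallel} $n$-morphisms ($s_{n-1}S = s_{n-1}T$, $t_{n-1}S = t_{n-1}T$), as they must be for an $(n+1)$-morphism to run between them.

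It then remains to exhibit $H$ and match. Writing elements of $P(A)$, $P(B)$ as sequences $(x_i^-,x_i^+)$ as in Proposition~\ref{functlemma1} (with the convention $x_{-1}^\pm = 0$), I would set
\[
(Hx)_i^- = f x_i^- + h\,x_{i-1}^+,
\qquad
(Hx)_i^+ = f x_i^+ + d\,h\,x_i^+ + h\,x_{i-1}^+ .
\]
A short computation, using that $f$ is a chain map and that $x_i^+-x_i^-$ is a cycle, shows this lands in $P(B)$, is a group homomorphism, and satisfies $H(P(A)_n)\subset P(B)_{n+1}$, which is clause~(1). One then computes $s_n(Hx)$ and $t_n(Hx)$ directly from the sequence description and compares with $S$ and $T$.

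The main obstacle is this final matching, and it has two subtle points. First, the correct $H$ is \emph{not} the naive algebraic homotopy $H'$ of Lemma~\ref{30mar2}, which would base everything at the negative faces $x_i^-$; one must feed $h$ the positive faces $x_{i-1}^+$ as above, and getting this twist right is exactly what makes $s_n(Hx)$ agree with the $f$-based source $S$ rather than a $g$-based expression. Second, the telescoped target $T$ is written in terms of $g$, whereas $t_n(Hx)$ is manifestly built from $f$; reconciling them is precisely the content of the chain-homotopy identity $dh + hd = g - f$, which rewrites the $g$-terms as $f$-terms plus boundary corrections that the $s_i,t_i$ telescoping absorbs (one sees this already in the base cases $n=0,1$, where $gx_0 = fx_0 + d h x_0$ supplies the target at position $0$). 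Verifying the two additive identities $s_n(Hx)=S$ and $t_n(Hx)=T$ for all $n$ is then a finite but delicate computation, cleanest to organize by an induction on the position index $i$.
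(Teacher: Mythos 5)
Your construction is correct, but it is essentially the closed-form unwinding of the paper's own (inductive) proof rather than a genuinely independent route: both arguments put $hx_n$ in the top coordinate, $(Hx)_{n+1}^{\pm}=hx_n$, and both rest on the same identity $d(hx_n)=(gx_n+hx_{n-1}^-)-(fx_n+hx_{n-1}^+)$, which is precisely what makes the level-$n$ entries of your telescoped $S$ and $T$ differ by a boundary. The organizational difference is this: the paper defines $H$ by induction on $n$ and leaves the lower coordinates of $Hx$ implicit (``determined by $H$ on lower-degree morphisms,'' i.e.\ forced by the requirements $s_nHx=S$ and $t_nHx=T$), whereas you write every coordinate explicitly and check the source/target equations additively. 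Your deferred verification does close, and more cleanly than you predict: substituting $dh+hd=g-f$ into your formula yields the symmetric form $(Hx)_j^-=fx_j^-+hx_{j-1}^+$ and $(Hx)_j^+=gx_j^+ +hx_{j-1}^-$, while in your telescoped sums only the summands $i=j$ and $i=j-1$ contribute at position $j$, giving $S_j=T_j=(Hx)_j$ for $j<n$, $S_n=fx_n+hx_{n-1}^+$, and $T_n=gx_n+hx_{n-1}^-$; comparing coordinates then gives $s_nHx=S$ and $t_nHx=T$ outright, with no induction on the position index needed. What your version buys is an explicit group homomorphism $H$ defined on all of $P(A)$ at once, plus a visible reason for the twist toward positive faces; what the paper's induction buys is brevity, since it never has to name the lower coordinates at all. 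Your remark that invertibility of $Hx$ is automatic because $P(B)$ is a groupoid is correct and consistent with how the paper uses the same fact in the proof of Proposition \ref{qisprop}.
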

\begin{proof} Having defined $H$ on $\tilde A _i$ for $i < n$, we define $H$ on $\tilde A _n$.  
Let $x = ((x_0 ^- , x_0 ^+ ),...,(x_{n-1} ^- , x_{n-1} ^+ ), (x_n , x_n ),(0,0),...) \in \tilde A _n$.  The chain homotopy formula $d(hx_n) = (gx_n + hx_{n-1}^- ) - ( fx_n + hx_{n-1} ^+)$ implies that $hx_n$ is an $(n+1)$-morphism
\[ Ht_{n-1}x *_{n-1}(Ht_{n-2}x *_{n-2}(...(Ht_1x *_1 (Ht_0 x *_0 fx))..) \stackrel {Hx}{\lra _{n+1}} (...((gx*_0 Hs_0x)*_1 Hs_1x )*_2...)*_{n-1} Hs_{n-1}x \]
as required, so $(Hx)_{n+1} ^\pm = hx_n$, and $(Hx)_i ^\al $ is determined by $H _{|\tilde A _{n-1}}$ for $i <n$. 
\end{proof}

If we prefer in Definition \ref{30mar3}, we can require that a homotopy $H$ satisfies some compatibility with compositions, eg. $(gy *_0 hx)*_1(hy*_fx) = h(y*_0x)$, etc.



\end{document}